\renewcommand{\vec}[1]{{\bf{#1}}}
\newtheorem{theorem}{Theorem}
\newtheorem{proposition}[theorem]{Proposition}
\newtheorem{lemma}[theorem]{Lemma}
\newtheorem{corollary}[theorem]{Corollary}
\theoremstyle{definition}
\newtheorem*{definition}{Definition}
\newtheorem*{remark}{Remark}
\newcommand{\cput}[3]{\put(#1,#2){\hbox to 0pt{\hss#3\hss}}}
\begin{document}

\title{Smooth polyhedral surfaces}

\author{Felix G\"unther\footnote{Erwin Schr\"odinger International Institute for Mathematical Physics, Boltzmanngasse 9, 1090 Vienna, Austria and Max Planck Institute for Mathematics, Vivatsgasse 7, 53111 Bonn, Germany. E-mail: fguenth@math.tu-berlin.de} \and Caigui Jiang\footnote{Max Planck Institute for Informatics, Campus E1 4, 66123 Saarbr\"ucken, Germany and Visual Computing Center, King Abdullah University of Science and Technology, KSA. E-mail: cjiang@mpi-inf.mpg.de} \and Helmut Pottmann\footnote{Center for Geometry and Computational Design, Technische Universit\"at Wien, Wiedner Hauptstra{\ss}e 8/104, 1040 Vienna, Austria. E-mail: pottmann@geometrie.tuwien.ac.at}}

\date{}
\maketitle

\begin{abstract}
\noindent
Polyhedral surfaces are fundamental objects in architectural geometry and industrial design. Whereas closeness of a given mesh to a smooth reference surface and its suitability for numerical simulations were already studied extensively, the aim of our work is to find and to discuss suitable assessments of smoothness of polyhedral surfaces that only take the geometry of the polyhedral surface itself into account. Motivated by analogies to classical differential geometry, we propose a theory of smoothness of polyhedral surfaces including suitable notions of normal vectors, tangent planes, asymptotic directions, and parabolic curves that are invariant under projective transformations. It is remarkable that seemingly mild conditions significantly limit the shapes of faces of a smooth polyhedral surface. Besides being of theoretical interest, we believe that smoothness of polyhedral surfaces is of interest in the architectural context, where vertices and edges of polyhedral surfaces are highly visible.\\ \vspace{0.5ex}

\noindent
\textbf{2010 Mathematics Subject Classification:} 52B70; 53A05.\\ \vspace{0.5ex}

\noindent
\textbf{Keywords:} Discrete differential geometry, polyhedral surface, smoothness, discrete Gaussian curvature, asymptotic direction, projective transformation.
\end{abstract}

\raggedbottom
\setlength{\parindent}{0pt}
\setlength{\parskip}{1ex}


\section{Introduction}\label{sec:intro}

In modern architecture, facades and glass roofs often model smooth shapes but are realized as polyhedral surfaces. Bad approximations may be observed as wiggly meshes, even though the polyhedral mesh is close to a smooth reference surface. High demands on the aesthetics of the realized surface necessitate adequate assessments of smoothness. But what does it mean for a polyhedral surface to be smooth?

\begin{figure}[htbp]
		\centerline{
			\begin{overpic}[height=0.3\textwidth]{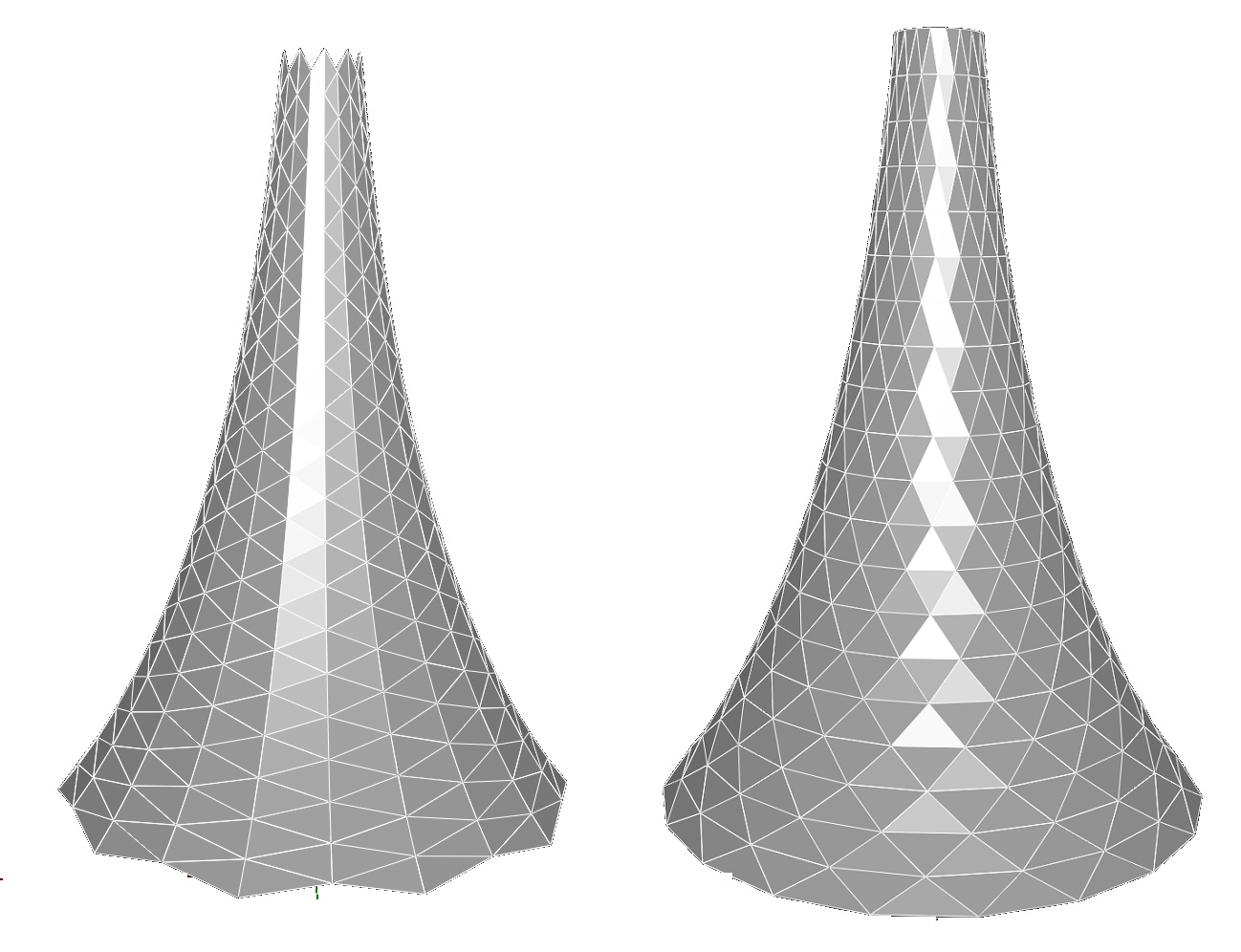}
				\put(2,68){\contour{white}{mesh 1}}
				\put(52,68){\contour{white}{mesh 2}}
			\end{overpic}
			\relax\\
			\begin{overpic}[height=0.3\textwidth]{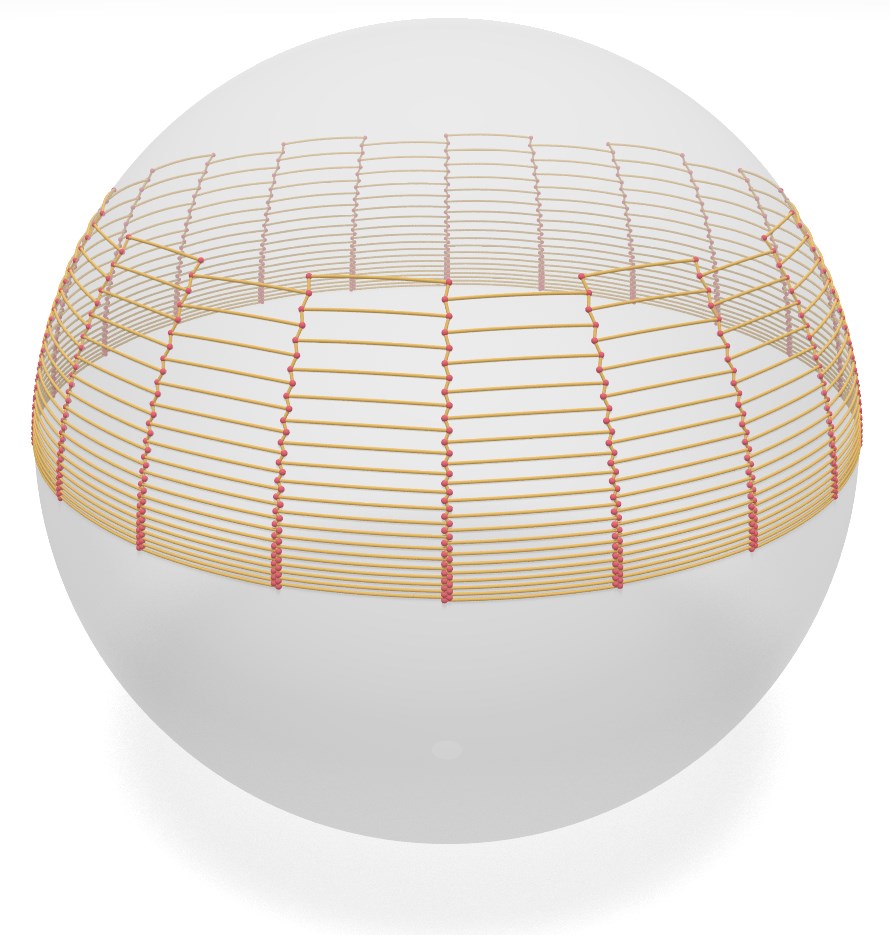}
				\cput{46}{2}{Gauss image 1}
			\end{overpic}
			\relax\\
			\begin{overpic}[height=0.3\textwidth]{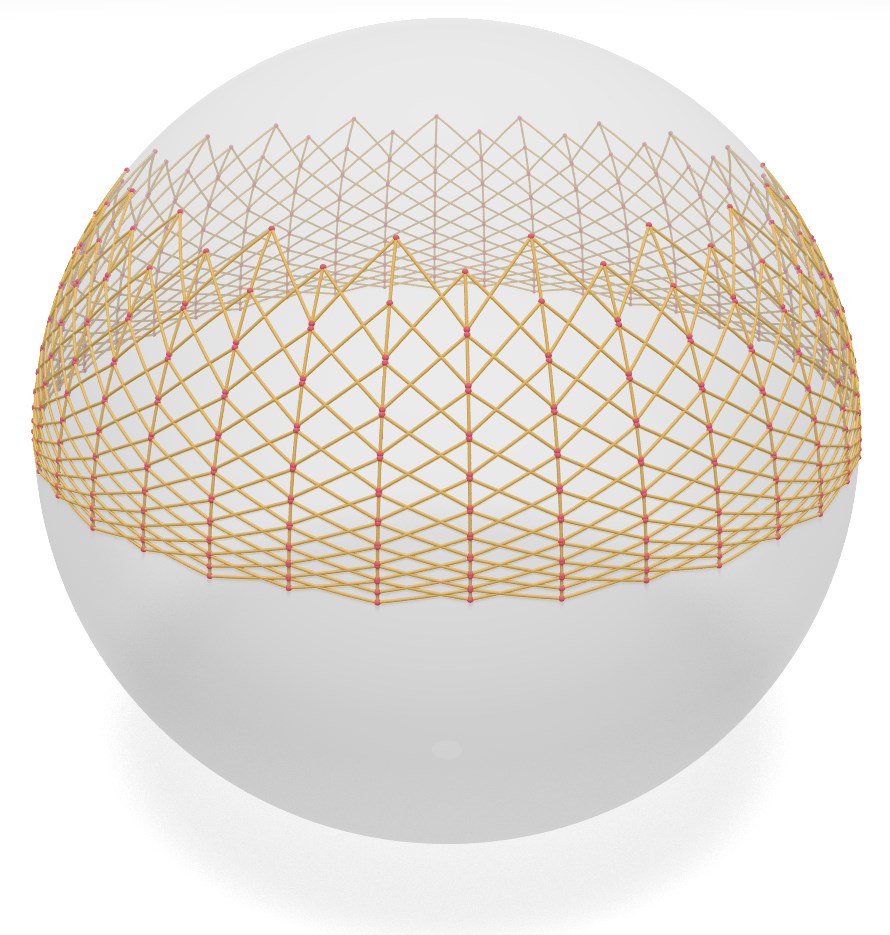}
				\cput{46}{2}{Gauss image 2}
			\end{overpic}
		}
	\caption{Evaluation of reflections as a subjective assessment of smoothness: The observation that the reflection pattern of mesh 1 behaves better than the reflections of mesh 2 is encoded in the regularity of its Gauss image}
	\label{fig:architecture}
\end{figure}


Reflective surfaces visibly expose the kink angles between adjacent faces. A reflection pattern that does not closely follow the actual shape of the polyhedral surface is not appealing to the human eye and may serve as a subjective assessment of smoothness, see Fig.~\ref{fig:architecture}. The reflections of parallel light rays are determined by the face normals of the polyhedral surface. So if we are looking for a reflection pattern that is reasonably aligned with the shape of the surface, the Gauss image should contain as few degeneracies such as overlaps and self-intersections as possible. Indeed, the different reflections of the two meshes depicted in Fig.~\ref{fig:architecture} exactly correspond to the different behavior of their Gauss images. Our first requirement for smoothness of a polyhedral surface will be therefore the absence of self-intersections in its Gauss image.


\subsection{Prior work}\label{sec:prior}

In \cite{JTVWP15} the design and optimization of polyhedral patterns, which are patterns of planar polygonal faces on freeform surfaces, were studied. It turned out that some patterns adapt their shape according to the sign of the Gaussian curvature and others do not. Polyhedral patterns of the first type look somewhat smoother than patterns of the other type and are better suitable to approximate smooth surfaces.

Orden et al. \cite{ORSSSW04} investigated planar frameworks of forces in static equilibrium. For a planar graph with forces in equilibrium, one can rearrange the forces at each vertex to obtain polygons of forces that themselves can be arranged to form a realization of the dual graph. This realization is called the reciprocal framework. The authors derived conditions on the shapes of polygons and the sign of forces (tension or compression) such that both the original and the reciprocal framework are free of self-intersections.
The framework on a planar graph can be viewed as a projection of a (possibly self-intersecting) polyhedron in Euclidean space onto a plane. The dihedral angles correspond to forces, such that inflection faces are encoded by sign changes of the forces. The reciprocal is then the projection of the polar of the original polyhedron, where polarity with respect to the paraboloid $z=x^2+y^2$ is considered. The result of \cite{ORSSSW04} is that the only planar faces which produce a reciprocal without self-intersections are convex polygons, pseudo-quadrilaterals and pseudo-triangles, where zero, four, or two or four sign changes appear, respectively.


\subsection{Contributions and overview}\label{sec:contributions}

The aim of our paper is to find suitable assessments of smoothness of polyhedral surfaces without a smooth reference surface. The property we start with in Section~\ref{sec:Gauss} is that the Gauss image of the star of a vertex of either positive or negative discrete Gaussian curvature shall have no self-intersections. Already this property limits the possible shapes of Gauss images to convex spherical polygons in the case of positive discrete Gaussian curvature and to spherical pseudo-digons, -triangles, and -quadrilaterals in the case of negative discrete Gaussian curvature. If we restrict to vertex stars that exhibit a so-called transverse plane onto which they orthogonally project in a one-to-one way, then spherical pseudo-digons do not appear as Gauss images and we recover the result of \cite{ORSSSW04}. In contrast to the authors of \cite{ORSSSW04}, we use arguments of spherical geometry only and do not use a projective dualization argument. That is also why the existence of a transverse plane is not necessary for our argument.

Whereas the normal of such a transverse plane is a suitable normal at the vertex of the polyhedral surface, we propose a possibly different tangent plane that is motivated by a discretization of the Dupin indicatrix. In the smooth theory, planes parallel and close to the tangent plane at a point of positive or negative curvature intersect the surface in curves that resemble ellipses and hyperbolas, respectively. A similar behavior in the discrete setup is achieved when the Gauss image of a negatively curved vertex star is star-shaped. A normal vector with respect to which the Gauss image is star-shaped then defines a suitable tangent plane. As a result, the asymptotic directions lie within the faces of inflection at the vertex star.

Assessing smoothness around a face in Section~\ref{sec:global} leads to similar observations. For example, in regions of negative curvature the polyhedral faces have to be pseudo-triangles or -quadrilaterals. The fact that they are dual to the shapes of Gauss images of vertex stars lies in the projective invariance of our notion that is discussed later. This also explains the analogy to \cite{ORSSSW04}. Where we are relating angles of the face to spherical angles, Orden et al. considered the planar angles in the reciprocal vertex star, corresponding to the projective dual described in Theorem~\ref{th:dual}. For their construction to work they use the fact that the face is an interior face of the framework, translating to the face being a transverse plane for its neighborhood. In that situation, we can apply the dualization of Theorem~\ref{th:dual}. In contrast, our arguments are valid even if this theorem cannot be applied.

In addition, we define asymptotic directions in star-shaped faces of negative discrete Gaussian curvature. Even the shapes of faces with vertices of positive and of negative curvature are limited. Suitable constraints allow us to define a discrete parabolic curve as a line segment connecting the two edges where the discrete Gaussian curvature changes it sign.

The notion we finally propose for a smooth polyhedral surface can be found in Section~\ref{sec:smooth}. We demand that the discrete Gaussian curvature is non-zero for all vertices and that its sign changes at either zero or two edges of a face. Furthermore, the Gauss images of all interior vertex stars shall be star-shaped and contained in open hemispheres. For each face, the interior angles of Gauss images of neighboring vertex stars shall add up to $2\pi$ or $0$ depending on whether the discrete Gaussian curvature has the same sign for all vertices or not. In the first case, we demand in addition that $f$ is star-shaped; in the latter case, we require that the convex hull of vertices of positive curvature does not contain any vertex of negative curvature.

In Section~\ref{sec:projective}, we show that our notion of smooth polyhedral surfaces is projectively invariant. Moreover, projective transformations map discrete tangent planes, discrete asymptotic directions and discrete parabolic curves to the corresponding objects of the image surface. Certain duality properties such as the similar shapes of faces of a negatively curved smooth polyhedral surface and of its Gauss image are reflected by the fact that the projective dual of a negatively curved smooth polyhedral surface is again a negatively curved smooth polyhedral surface.

Finally, we would like to remark that the optimization of a given polyhedral surface approximating a given smooth reference surface toward greater smoothness in the sense of star-shaped Gauss images is the objective of the recent paper \cite{JGWP16} for which the current paper lays the theoretical foundation. 


\section{Gauss image of vertex stars}\label{sec:Gauss}

In this section, we investigate the Gauss image of a vertex star of a polyhedral surface and gradually propose conditions on the polyhedral surface that correspond to smoothness. We motivate these criteria by relating them to corresponding properties of smooth surfaces and by giving examples.

Of crucial importance for our investigation is the fact that the algebraic area enclosed by the Gauss image $g({\vec{v}})$ of the star of a vertex $\vec{v}$ equals the discrete Gaussian curvature $K({\vec{v}})$. Even though this statement is widely believed in discrete differential geometry, the first complete proof was given in the recent paper \cite{BG16}. It is based on the index $i({\vec{v}},{\vec{n}})$ introduced by Banchoff \cite{B70}. Due to its importance for our present discussion, we will shortly discuss it here.

We will start with some basic notations and fundamental lemmas in Sections~\ref{sec:notation} and~\ref{sec:lemmas} before we will explore the shape of Gauss images that are free of self-intersections in Section~\ref{sec:selfintersection}. The discrete Dupin indicatrices and asymptotic directions will be investigated in Section~\ref{sec:Dupin}.


\subsection{Notation}\label{sec:notation}

Let $P$ be an orientable polyhedral surface \textit{immersed} into three-dimensional Euclidean space. That means, we consider a connected 2-manifold that is a union of polygonal regions such that each vertex of $P$ possesses a neighborhood that is embedded in $\mathds{R}^3$. We do not require any specific condition on the faces $f$ of $P$ besides being simple and planar, but we assume that faces that share an edge are not coplanar and that no interior angle equals $\pi$. However, faces can be non-convex. Clearly, non-convex faces do not occur in simplicial, i.e., triangulated surfaces, but the faces of the projective dual of a triangular surface have in general more than three vertices and non-convexity usually arises in areas of negative discrete Gaussian curvature.

A vertex of $P$ shall be denoted by $\vec{v}$. In what follows, $\vec{v}$ should always be an \textit{interior vertex} of $P$, boundary vertices will not be considered. Two vertices $\vec{v}_1$ and $\vec{v}_2$ are \textit{adjacent} if they are connected by an edge, two faces $f_1$ and $f_2$ are \textit{adjacent} if they share an edge. A face $f$ or an edge $e$ is \textit{incident} to a vertex $\vec{v}$, if the vertex is a corner of the face or the edge, respectively. For adjacency and incidence we will use the notations $\vec{v}_1 \sim {\vec{v}}_2$, $f_1 \sim f_2$ and $f,e \sim {\vec{v}}$, $\vec{v} \sim f,e$, respectively. See Fig.~\ref{fig:vertex_star} for the notation of the \textit{star} of the interior vertex $\vec{v}$, i.e., the set of all faces of $P$ incident to $\vec{v}$.

$\vec{v}$ is said to be \textit{convex}, if the star of $\vec{v}$ determines the boundary of a convex polyhedral cone.

\begin{figure}[!ht]
	\centerline{
		\begin{overpic}[height=0.25\textwidth]{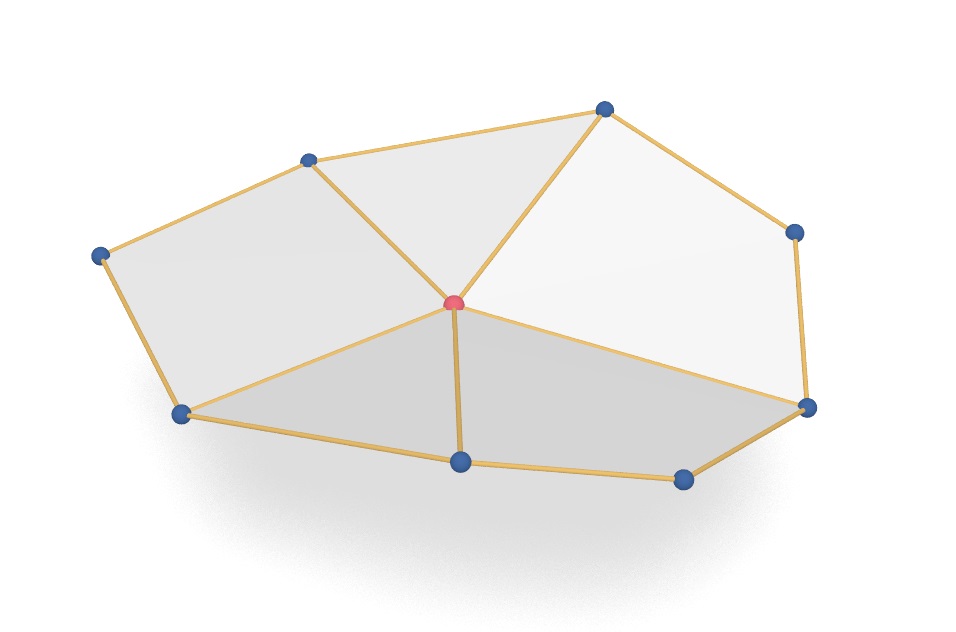}
			\put(66,38){\contour{white}{$f_s$}}
			\cput{47}{36}{$\vec{v}$}
			\cput{65}{57}{$\vec{v}_s$}
			\cput{92}{23}{$\vec{v}_{s-1}$}
		\end{overpic}
	}
	\caption{Notation for the star of the vertex $\vec{v}$}\label{fig:vertex_star}
\end{figure}

We fix one orientation of $P$ and denote by ${\vec{n}}_f \in S^2$ the outer unit normal vector of a face $f$. For an interior vertex $\vec{v}$, we connect the normals of adjacent faces ${\vec{n}}_{s-1},{\vec{n}}_{s} \sim {\vec{v}}$ by the shorter of the two great circle arcs connecting ${\vec{n}}_{s-1}$ and ${\vec{n}}_{s}$. Since $P$ is immersed, ${\vec{n}}_{s-1}\neq \pm{\vec{n}}_{s}$, so the shorter arc is non-trivial and well defined. Hence, the normals ${\vec{n}}_f$ for $f \sim {\vec{v}}$ define a spherical polygon $g({\vec{v}})$ that we call the \textit{Gauss image of the vertex star}.

\begin{figure}[!ht]
	\centerline{
		\begin{overpic}[height=0.3\textwidth]{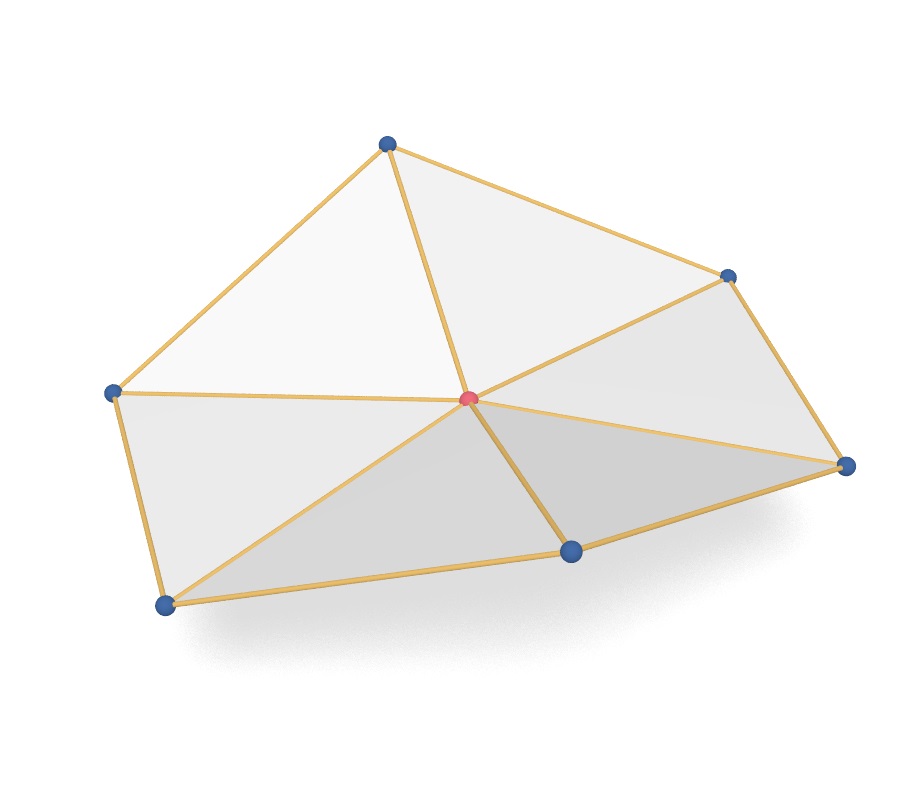}
			\put(30,37){\contour{white}{$f_1$}}
			\put(45,32){\contour{white}{$f_2$}}
			\put(63,34){\contour{white}{$f_3$}}
			\put(70,45){\contour{white}{$f_4$}}
			\put(55,55){\contour{white}{$f_5$}}
			\put(33,50){\contour{white}{$f_6$}}
			\cput{52}{46}{$\vec{v}$}
		\end{overpic}
		\relax\\
		\begin{overpic}[height=0.3\textwidth]{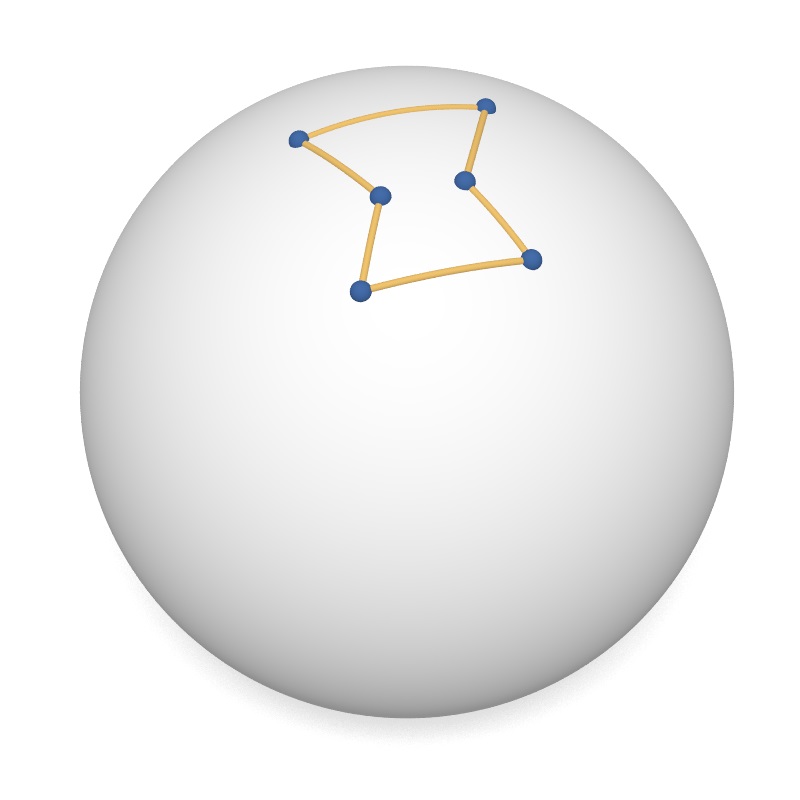}
			\cput{41}{75}{$\vec{n}_{1}$}
			\cput{33}{85}{$\vec{n}_{2}$}
			\cput{63}{89}{$\vec{n}_{3}$}
			\cput{64}{77}{$\vec{n}_{4}$}
			\cput{68}{62}{$\vec{n}_{5}$}
			\cput{43}{59}{$\vec{n}_{6}$}	
		\end{overpic}
	}
	\caption{A vertex star and its Gauss image for negative discrete Gaussian curvature}\label{fig:Gaussian}
\end{figure}


Even though we have an intuition how the Gauss image should be extended across a vertex in a way that reflects the geometry of the vertex star, it is not that trivial to give a precise definition. Still, the algebraic area of $g({\vec{v}})$ can be defined in a way that agrees with our intuition, and equals the discrete Gaussian curvature $K({\vec{v}})$. We will discuss this relation in the following subsection.

\begin{definition}
Let $\vec{v}$ be fixed and let $\alpha_f$ be the angle of a face $f\sim{\vec{v}}$. Then, the \textit{discrete Gaussian curvature} at $\vec{v}$ is defined as \[K({\vec{v}}):=2\pi-\sum\limits_{f \sim {{\vec{v}}}}\alpha_f.\]
\end{definition}

To compare the angles of the faces of the vertex star at $\vec{v}$ with the angles of the spherical polygon $g({\vec{v}})$ in Lemma~\ref{lem:angle}, we introduce the notion of an \textit{inflection face}.

\begin{definition}
Let $f$ be a face of the star of the vertex $\vec{v}$. If the two other faces of the vertex star that are adjacent to $f$ lie in different half-spaces that the plane through $f$ determines, then $f$ is said to be an inflection face.
\end{definition}


\subsection{Gauss image and discrete Gaussian curvature}\label{sec:lemmas}

That the discrete Gaussian curvature of a vertex equals the algebraic area that its Gauss image encloses, is a popular exercise in some lecture notes on discrete differential geometry. However, the idea of proof going back to Alexandrov \cite{A05} by comparing the spherical angle $\angle {\vec{n}}_{s+1}{\vec{n}}_s{\vec{n}}_{s-1}$ with the angle $\alpha_{s}$ only works in the case that $\alpha_{s}<\pi$ and no inflection faces are present, i.e., in the case of a convex corner. The general situation is slightly more complicated.

\begin{lemma}\label{lem:angle}
Let $f_1, f_2, f_3$ be three consecutive faces of the star of the vertex $\vec{v}$ that are ordered in counterclockwise direction around $\vec{v}$ with respect to the given orientation of $P$. Let $\alpha=\alpha_{2}$ be the angle of $f_2$ at $\vec{v}$ and $\alpha'$ the spherical angle $\angle {\vec{n}}_{3}{\vec{n}}_{2}{\vec{n}}_{1}$.
\begin{enumerate}
\item If $\alpha < \pi$ and $f_2$ is not an inflection face, then $\alpha'=\pi-\alpha$.
\item If $\alpha < \pi$ and $f_2$ is an inflection face, then $\alpha'=2\pi-\alpha$.
\item If $\alpha > \pi$ and $f_2$ is not an inflection face, then $\alpha'=3\pi-\alpha$.
\item If $\alpha > \pi$ and $f_2$ is an inflection face, then $\alpha'=2\pi-\alpha$.
\end{enumerate}
\end{lemma}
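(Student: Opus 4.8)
The plan is to set up a local orthonormal frame adapted to the middle face $f_2$ and then track how the edge directions and face normals rotate. Place $\vec{v}$ at the origin, let $\vec{n}_2 = (0,0,1)$ so that $f_2$ lies in the $xy$-plane, and let the two edges of $f_2$ emanating from $\vec{v}$ be unit vectors $\vec{e}_0$ (shared with $f_1$) and $\vec{e}_1$ (shared with $f_3$), enclosing the interior angle $\alpha$. The face $f_1$ is obtained by folding about the line spanned by $\vec{e}_0$, and $f_3$ by folding about the line spanned by $\vec{e}_1$; let the (signed) dihedral angles be $\beta_1$ and $\beta_3$. The sign of each dihedral angle is governed precisely by whether the far face lies above or below the plane of $f_2$, i.e. by the inflection-face condition: $f_2$ is an inflection face iff $\beta_1$ and $\beta_3$ have opposite signs.

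First I would compute $\vec{n}_1$ and $\vec{n}_3$ explicitly. Since $f_1$ contains the line $\mathrm{span}(\vec{e}_0)$, its normal $\vec{n}_1$ is obtained from $\vec{n}_2=(0,0,1)$ by a rotation through $\beta_1$ about the axis $\vec{e}_0$; hence $\vec{n}_1 = \cos\beta_1\,(0,0,1) + \sin\beta_1\,(\vec{e}_0^{\perp})$, where $\vec{e}_0^{\perp}$ is the unit vector in the $xy$-plane orthogonal to $\vec{e}_0$, oriented consistently. Similarly $\vec{n}_3 = \cos\beta_3\,(0,0,1) + \sin\beta_3\,(\vec{e}_1^{\perp})$. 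The spherical angle $\alpha' = \angle\,\vec{n}_3\vec{n}_2\vec{n}_1$ is, by definition, the angle at $\vec{n}_2$ in the spherical triangle, i.e. the dihedral angle along the edge $\vec{n}_2$ between the planes $\mathrm{span}(\vec{n}_2,\vec{n}_1)$ and $\mathrm{span}(\vec{n}_2,\vec{n}_3)$. Equivalently, $\alpha'$ is the angle between the tangent vectors at $\vec{n}_2$ of the two great-circle arcs $\vec{n}_2\vec{n}_1$ and $\vec{n}_2\vec{n}_3$; these tangent directions are exactly $\pm\vec{e}_0^{\perp}$ and $\pm\vec{e}_1^{\perp}$ in the tangent plane at $\vec{n}_2$ (which is again the $xy$-plane). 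So $\alpha'$ equals the angle between $\vec{e}_0^{\perp}$ and $\vec{e}_1^{\perp}$, up to a sign flip of one or both of them that is dictated by (a) whether the short great-circle arc toward $\vec{n}_i$ moves in the $+\vec{e}_i^{\perp}$ or $-\vec{e}_i^{\perp}$ direction — this depends on $\mathrm{sgn}\,\beta_i$ — and (b) whether $\alpha$ itself is less than or greater than $\pi$, since the angle between $\vec{e}_0^{\perp}$ and $\vec{e}_1^{\perp}$ is either $\pi-\alpha$ or its supplement-type reflection depending on the configuration of the two edges.

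Carrying this out, the angle between $\vec{e}_0^{\perp}$ and $\vec{e}_1^{\perp}$, measured appropriately, is $\pi - \alpha$ when $\alpha<\pi$ (rotating $\vec{e}_0$ to $\vec{e}_1$ by $\alpha$ inside the face rotates the outward-pointing perpendiculars by $\pi-\alpha$ on the "convex corner" side), and one gets a $2\pi$ correction when the orientation of traversal reverses. Concretely: in case (i), $\alpha<\pi$ with no inflection, both short arcs leave $\vec{n}_2$ on the same side and one reads off $\alpha' = \pi-\alpha$ directly; in case (ii), $\alpha<\pi$ with an inflection, the opposite signs of $\beta_1,\beta_3$ flip one of the tangent directions, so the measured spherical angle becomes $2\pi-(\pi-\alpha)-\pi = $ — more carefully, the arcs wind the other way and one obtains $\alpha' = 2\pi - \alpha$; in cases (iii) and (iv), $\alpha>\pi$ means $\vec{e}_0^{\perp}$ and $\vec{e}_1^{\perp}$ are separated by $\pi-\alpha$ with a negative value, i.e. really by $\alpha - \pi$ on the reflex side, which combined with the inflection data gives $3\pi-\alpha$ (no inflection) or $2\pi-\alpha$ (inflection). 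Throughout, the key bookkeeping fact is that $\alpha'$ must be taken in $(0,2\pi)$ as a genuine spherical polygon angle (it is the interior angle of $g(\vec{v})$, which may well exceed $\pi$), so the "reduction mod $2\pi$" is not allowed — the four cases are exactly the four sign patterns of $(\,\mathrm{sgn}(\pi-\alpha),\ \text{inflection or not}\,)$.

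The main obstacle I anticipate is not any single computation but the careful tracking of orientations and signs: deciding, for each of the four cases, which way the shorter great-circle arc from $\vec{n}_2$ to $\vec{n}_i$ departs, and how that interacts with the orientation of $P$ and the counterclockwise ordering of $f_1,f_2,f_3$ around $\vec{v}$. I would handle this by fixing once and for all the orientation convention (outer normals, counterclockwise boundary) and then verifying each case against a concrete small example — e.g. a shallow convex corner for (i), a "saddle" configuration for (ii) and (iv), and a reflex non-convex face folded convexly for (iii) — so that the sign in each case is pinned down unambiguously rather than argued in the abstract. A clean way to package the sign analysis is to note that $\cos\alpha' = \langle \vec{e}_0^{\perp}, \vec{e}_1^{\perp}\rangle$ up to sign is forced, so $\alpha' \in \{\pi-\alpha,\ \alpha-\pi,\ \pi+\alpha,\ 3\pi-\alpha,\ \dots\}$ modulo the constraint $\alpha'\in(0,2\pi)$, and then only the orientation argument selects the correct representative in each regime; this reduces the problem to the four checks above.
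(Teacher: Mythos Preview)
Your approach is correct but takes a genuinely different route from the paper. The paper proves case~(i) synthetically---using the two planes orthogonal to the edges of $f_2$ through $\vec v$ and a planar quadrilateral angle sum---and then reduces the remaining cases to~(i): case~(ii) by continuously ``hinging'' $f_1$ across the plane of $f_2$ and observing that $\vec n_1$ passes through $\vec n_2$ (adding $\pi$ to the spherical angle), and cases~(iii),~(iv) by replacing $f_2$ with the opposite triangle $f_2'$ so that $\vec n_{f_2'}=-\vec n_2$ and the reflex angle $\alpha$ becomes the convex angle $2\pi-\alpha$, reducing to~(i) and~(ii) respectively. Your coordinate approach instead handles all four cases in one frame by identifying the tangent directions of the two great-circle arcs at $\vec n_2$ with $\pm\vec e_0^{\perp}$ and $\pm\vec e_1^{\perp}$, so that the four formulas correspond to the four sign patterns. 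The paper's reductions are cleaner in that each step isolates a single binary choice and no coordinates appear; your method is more uniform but concentrates all the orientation bookkeeping into one place. One caveat: your plan to ``verify each case against a concrete small example'' to pin down the signs is not really a proof step. Once you fix the convention that $\vec e_i^{\perp}$ points out of $f_2$ and observe that the short arc $\vec n_2\vec n_i$ departs in the direction $\operatorname{sgn}(\beta_i)\,\vec e_i^{\perp}$, the correct representative among $\pi-\alpha$, $2\pi-\alpha$, $3\pi-\alpha$ is forced in each regime; you should carry that out rather than defer to examples.
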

\begin{proof}

(i) Consider the two planes each of which is orthogonal to one of the two edges of $f_2$ incident to $\vec{v}$ and intersects this edge. Since both ${\vec{n}}_{1}$ and ${\vec{n}}_{2}$ are orthogonal to the edge that is shared by $f_1$ and $f_2$, and ${\vec{n}}_{2}$ and ${\vec{n}}_{3}$ are orthogonal to the edge that is shared by $f_2$ and $f_3$, the spherical angle $\angle {\vec{n}}_{3}{\vec{n}}_{2}{\vec{n}}_{1}$ equals one of the two intersection angles of these planes.

Using that $f_2$ is not an inflection face, $\alpha'$ corresponds to the angle opposite to $\alpha$ in the quadrilateral determined by $\vec{v}$, the two edges of $f_2$ incident to $\vec{v}$, and the two planes constructed above. The two angles between these edges and the two planes are given by $\pi/2$, such that $\alpha'=\pi-\alpha$ follows.

\begin{figure}[!ht]
	\centerline{
		\begin{overpic}[height=0.3\textwidth]{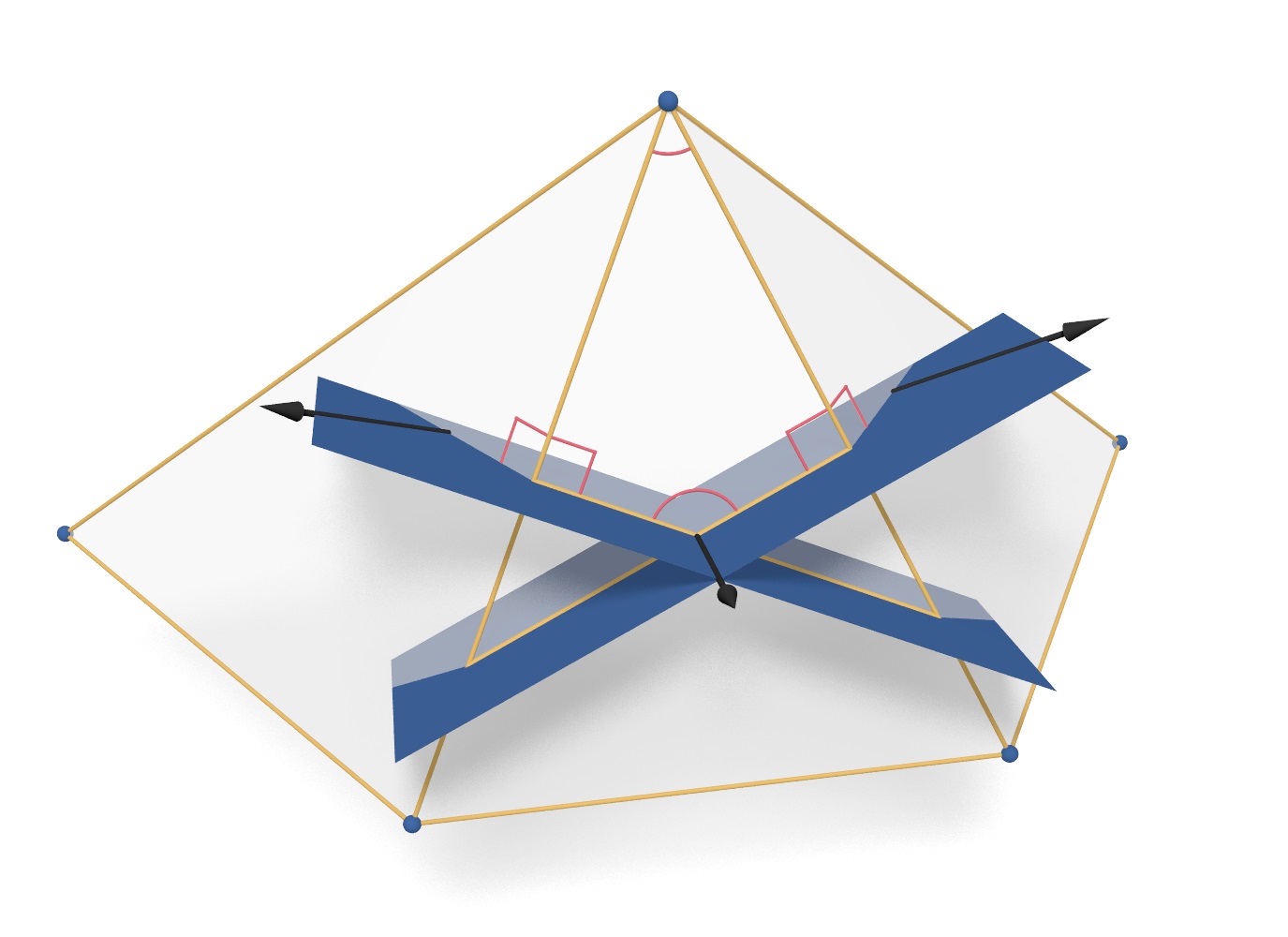}
			\put(25,28){\contour{white}{$f_1$}}
			\put(50,18){\contour{white}{$f_2$}}
			\put(78,32){\contour{white}{$f_3$}}
			\cput{52}{67}{$\vec{v}$}
			\cput{20.5}{37}{$\vec{n}_{1}$}
			\cput{58}{22}{$\vec{n}_{2}$}
			\cput{88}{49}{$\vec{n}_{3}$}
			\color{red}
			\cput{54}{36}{$\alpha'$}
			\cput{52}{57}{$\alpha$}
		\end{overpic}
		\relax\\
		\begin{overpic}[height=0.3\textwidth]{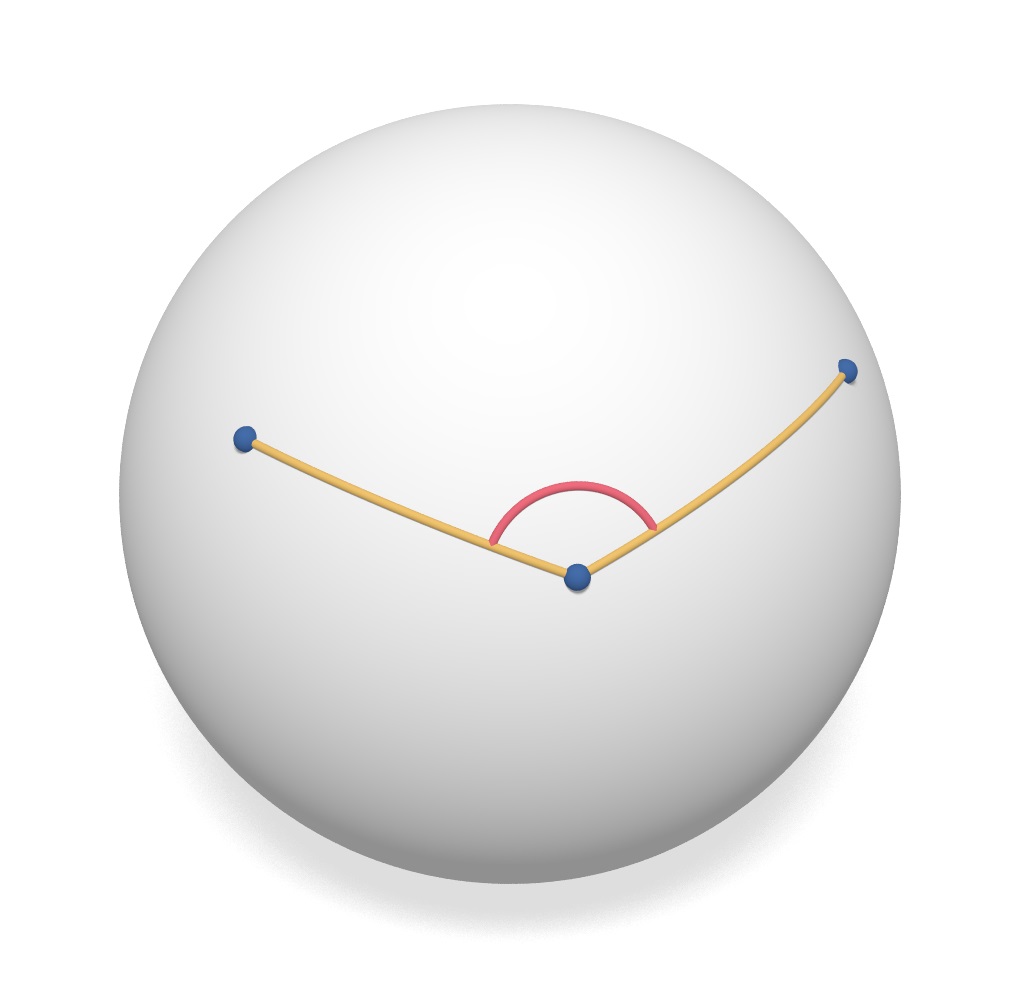}
			\cput{21}{50}{$\vec{n}_{1}$}
			\cput{58}{35}{$\vec{n}_{2}$}
			\cput{88}{61}{$\vec{n}_{3}$}
			\color{red}
			\cput{57}{44}{$\alpha'$}
		\end{overpic}
	}
	\caption{Lemma~\ref{lem:angle} (i): $\alpha < \pi$ and $f_2$ is not an inflection face}\label{fig:angle1}
\end{figure}

(ii) Consider the edge neighboring to $f_1$ and $f_2$ as a hinge, and move $f_1$ to the other side of $f_2$ such that we obtain the situation of (i). Now, hinge $f_1$ down. By (i), the spherical angle $\angle {\vec{n}}_{3}{\vec{n}}_{2}{\vec{n}}_{1}$ equals $\pi-\alpha$ as long as $f_1$ is on the same side of $f_2$ as $f_3$. Thus, ${\vec{n}}_{1}$ moves on the great circle through ${\vec{n}}_{2}$ that intersects the arc ${\vec{n}}_{2}{\vec{n}}_{3}$ in an angle of size $\pi-\alpha$ as in situation (i). When $f_1$ comes to the other side of $f_2$, ${\vec{n}}_{1}$ passes through ${\vec{n}}_{2}$ and the spherical angle $\angle {\vec{n}}_{3}{\vec{n}}_{2}{\vec{n}}_{1}$ becomes larger by $\pi$. Hence, $\alpha'=\pi+\pi-\alpha=2\pi-\alpha$.

\begin{figure}[!ht]
	\centerline{
		\begin{overpic}[height=0.28\textwidth]{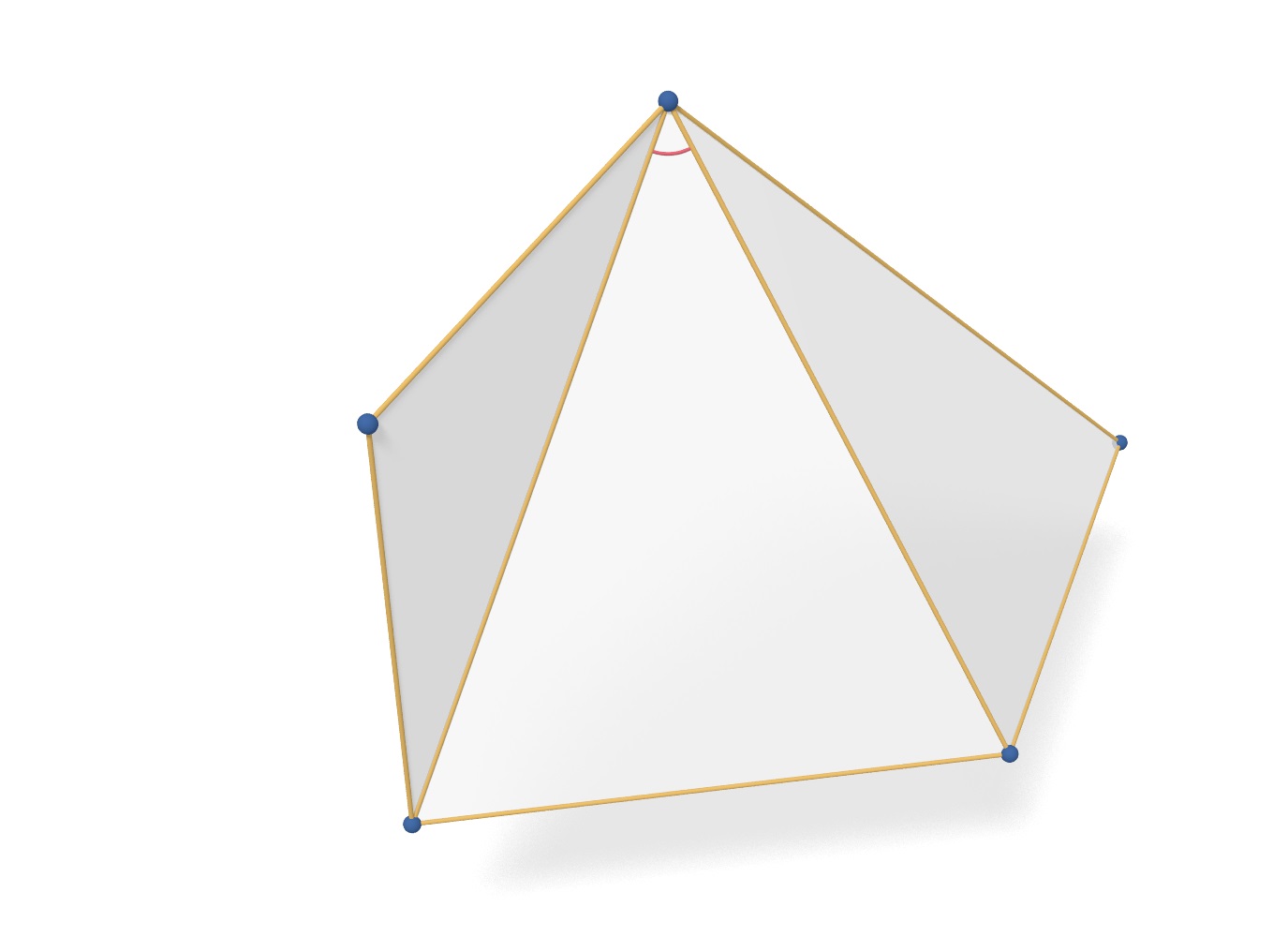}
		\put(34,35){\contour{white}{$f_1$}}
		\put(50,20){\contour{white}{$f_2$}}
		\put(76,32){\contour{white}{$f_3$}}
		\cput{52}{67}{$\vec{v}$}
		\color{red}
		\cput{52}{57}{$\alpha$}
		\end{overpic}
		\relax\\
		\begin{overpic}[height=0.3\textwidth]{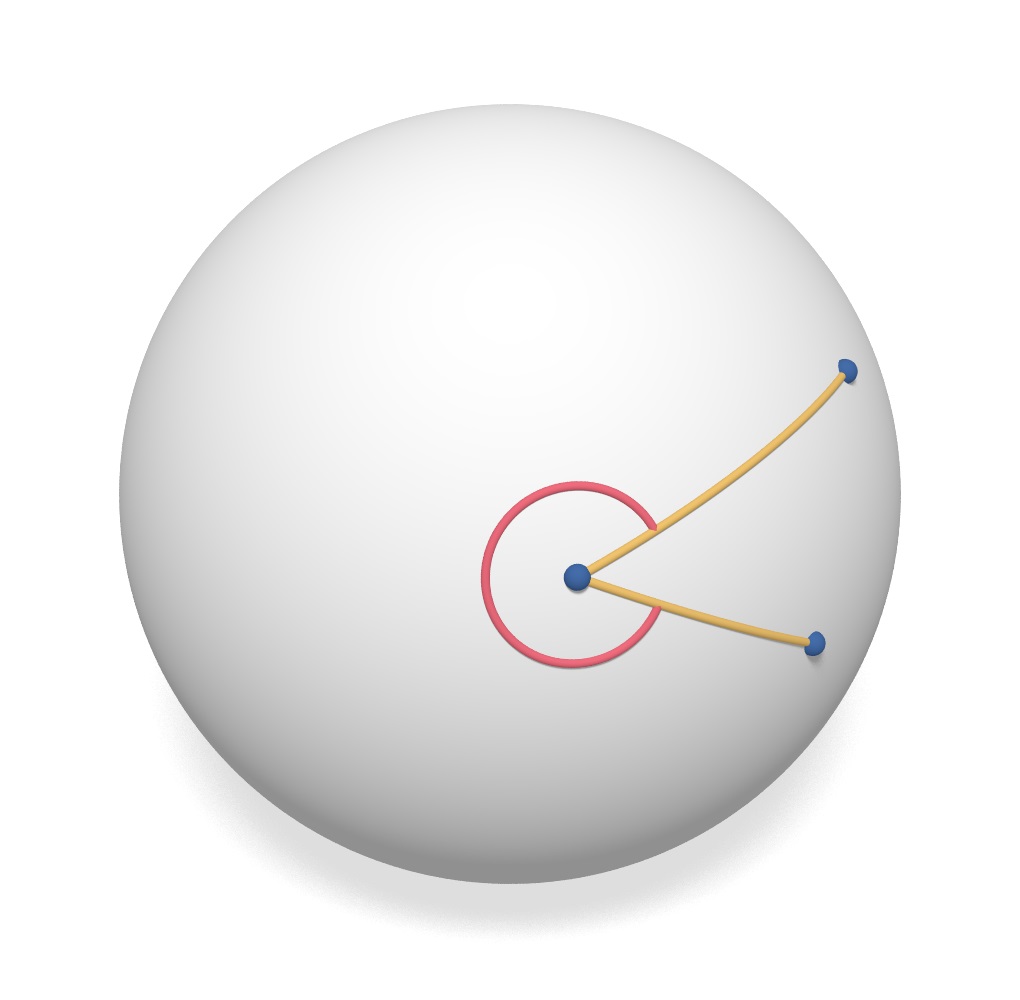}
			\cput{83}{38}{$\vec{n}_{1}$}
			\cput{67}{40.5}{$\vec{n}_{2}$}
			\cput{88}{61}{$\vec{n}_{3}$}
			\color{red}
			\cput{53}{40}{$\alpha'$}
		\end{overpic}
	}
	\caption{Lemma~\ref{lem:angle} (ii): $\alpha < \pi$ and $f_2$ is an inflection face}\label{fig:angle2}
\end{figure}

(iii) Consider the triangle $f'_2$ spanned by the two edges of $f_2$ incident to $\vec{v}$ and replace $f_2$ by $f'_2$. Then, we are in case (i) and ${\vec{n}}_{f'_2}=-{\vec{n}}_{2}$. It follows that the spherical angle $\angle {\vec{n}}_{3}(-{\vec{n}}_{2}){\vec{n}}_{1}$ equals $\pi-(2\pi-\alpha)=\alpha-\pi$. Thus, \[\alpha'=\angle {\vec{n}}_{3}{\vec{n}}_{2}{\vec{n}}_{1}=2\pi-\angle {\vec{n}}_{1}{\vec{n}}_{2}{\vec{n}}_{3}=2\pi-\angle {\vec{n}}_{3}(-{\vec{n}}_{2}){\vec{n}}_{1}=3\pi-\alpha.\]

\begin{figure}[!ht]
	\centerline{
		\begin{overpic}[height=0.3\textwidth]{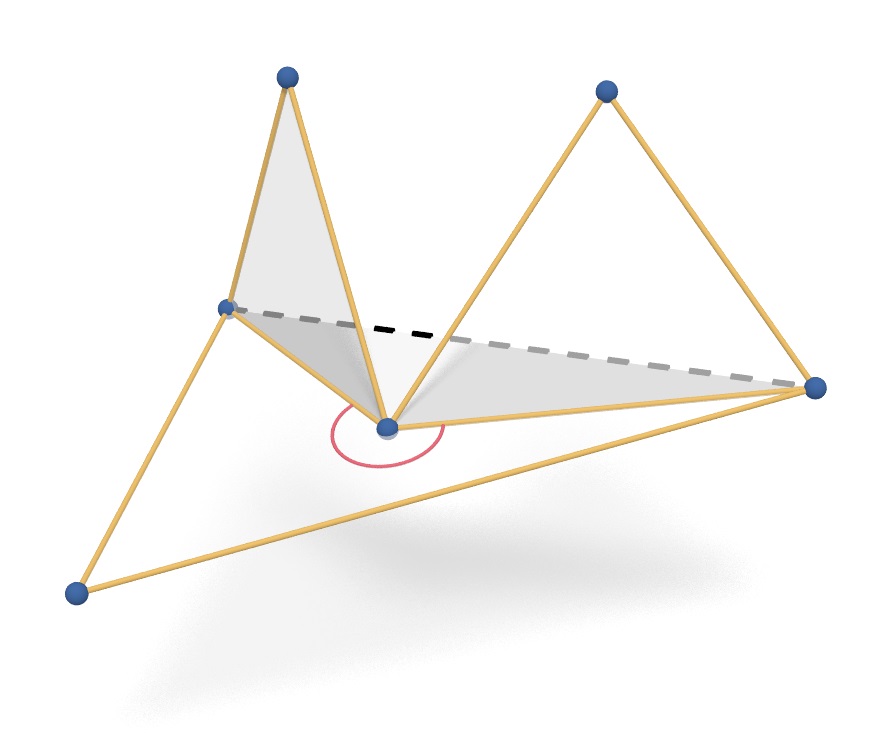}
			\put(31,49){\contour{white}{$f_1$}}
			\put(26,29){\contour{white}{$f_2$}}
			\put(65,48){\contour{white}{$f_3$}}
			\put(42,45){\contour{white}{$f_2'$}}
			\cput{47}{34}{$\vec{v}$}
			\color{red}
			\cput{41}{32}{$\alpha$}
		\end{overpic}
		\relax\\
		\begin{overpic}[height=0.3\textwidth]{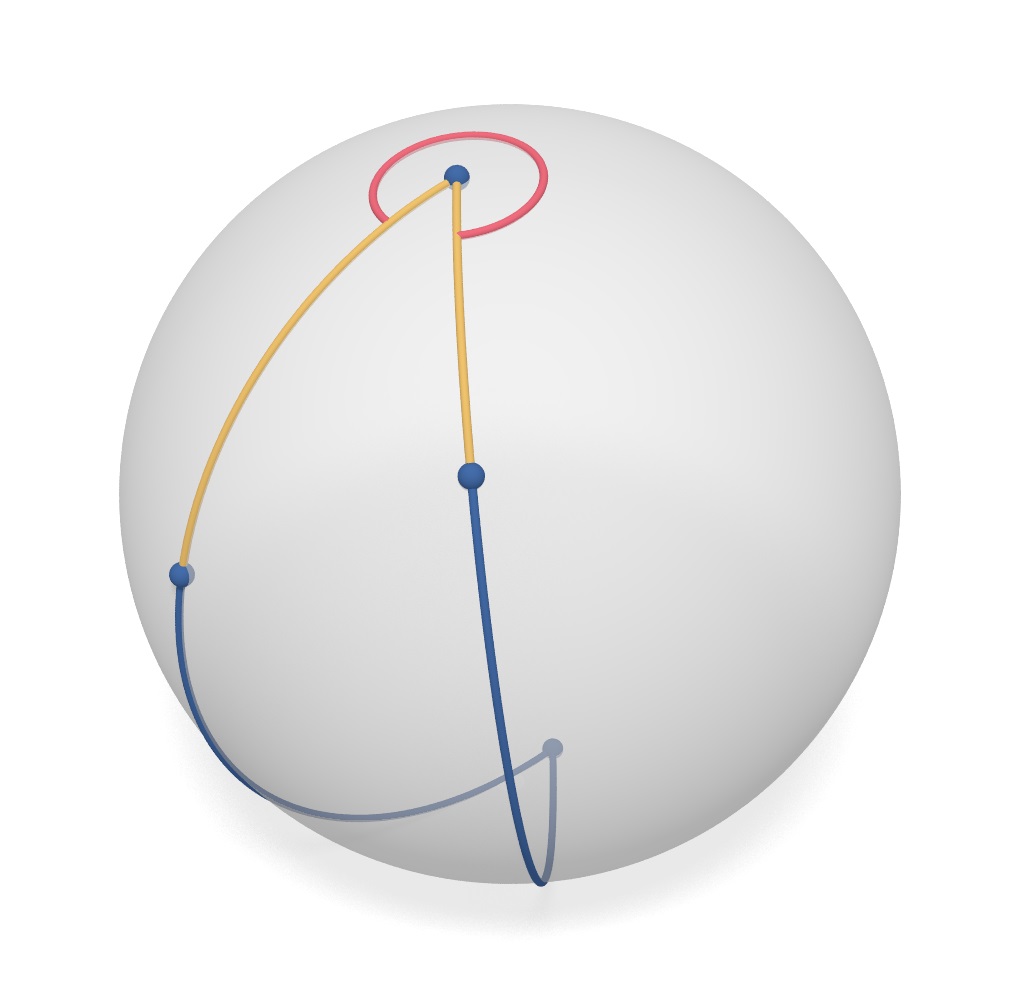}
			\cput{13}{39}{$\vec{n}_{1}$}
			\cput{45}{87}{$\vec{n}_{2}$}
			\cput{52}{50}{$\vec{n}_{3}$}
			\cput{57}{27}{$\mbox{-}\vec{n}_{2}$}
			\color{red}
			\cput{49}{77}{$\alpha'$}
		\end{overpic}
	}
	\caption{Lemma~\ref{lem:angle} (iii): $\alpha > \pi$ and $f_2$ is not an inflection face}\label{fig:angle3}
\end{figure}

(iv) With a similar argument as in (iii), we can use the result in (ii) to obtain that the spherical angle $\angle {\vec{n}}_{3}(-{\vec{n}}_{2}){\vec{n}}_{1}$ equals $2\pi-(2\pi-\alpha)=\alpha$, such that \[\alpha'=\angle {\vec{n}}_{3}{\vec{n}}_{2}{\vec{n}}_{1}=2\pi-\angle {\vec{n}}_{3}(-{\vec{n}}_{2}){\vec{n}}_{1}=2\pi-\alpha.\qed\]

\begin{figure}[!ht]
	\centerline{
		\begin{overpic}[height=0.3\textwidth]{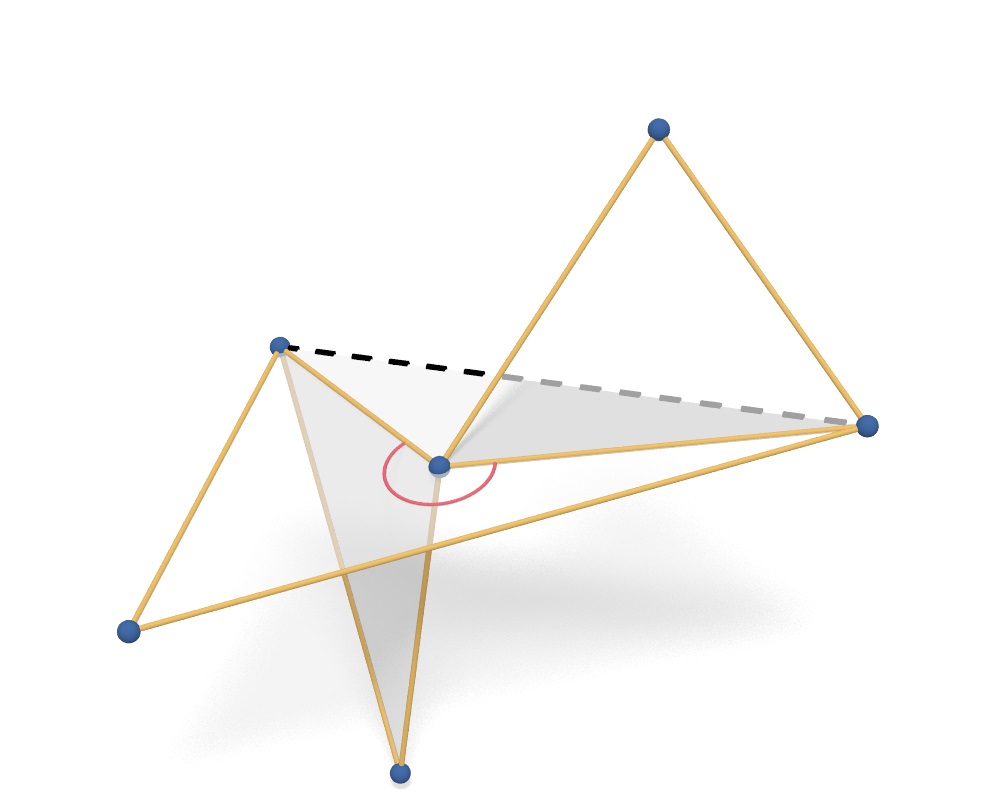}
			\put(37,18){\contour{white}{$f_1$}}
			\put(28,28){\contour{white}{$f_2$}}
			\put(65,48){\contour{white}{$f_3$}}
			\put(40,40){\contour{white}{$f_2'$}}
			\cput{48}{34}{$\vec{v}$}
			\color{red}
			\cput{42.5}{30.5}{$\alpha$}
		\end{overpic}
		\relax\\
		\begin{overpic}[height=0.3\textwidth]{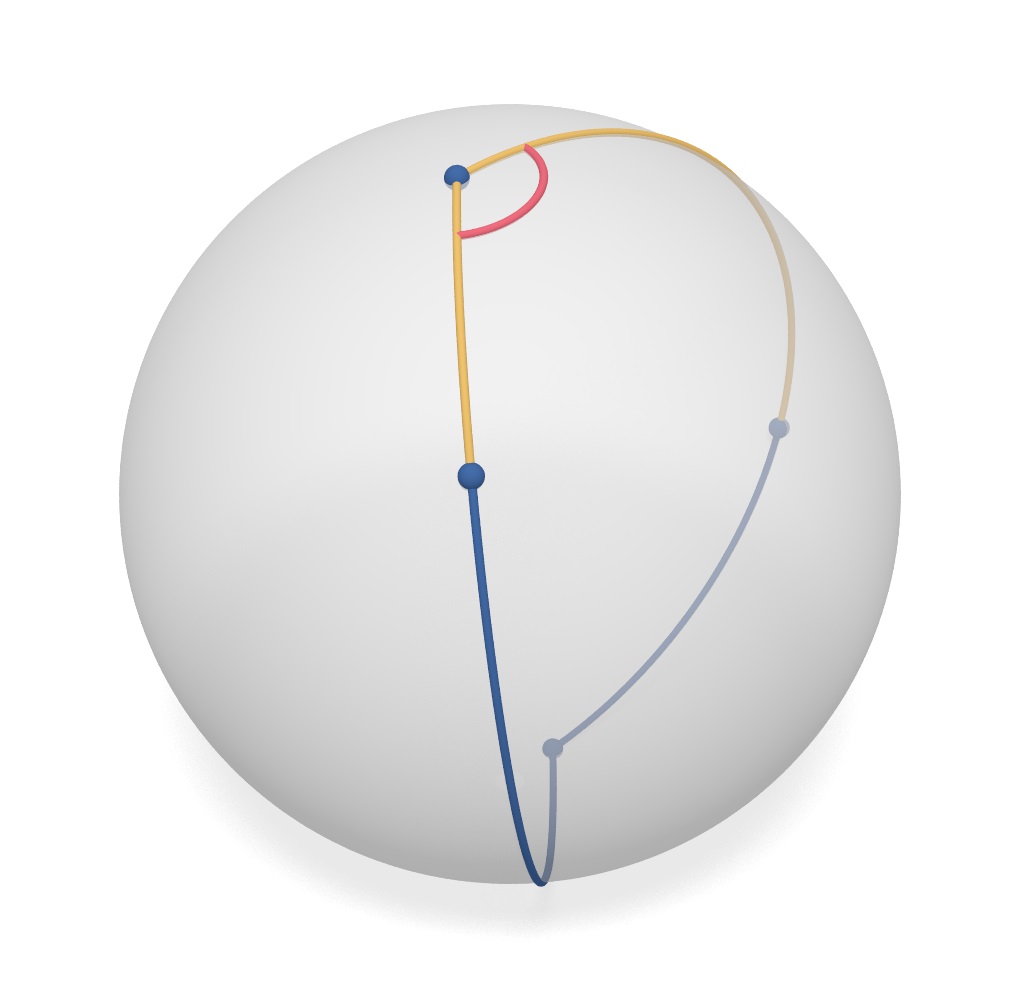}
			\cput{82}{55}{$\vec{n}_{1}$}
			\cput{43}{85}{$\vec{n}_{2}$}
			\cput{52}{50}{$\vec{n}_{3}$}
			\cput{62}{22}{$\mbox{-}\vec{n}_{2}$}
			\color{red}
			\cput{49}{77}{$\alpha'$}
		\end{overpic}
	}
	\caption{Lemma~\ref{lem:angle} (iv): $\alpha > \pi$ and $f_2$ is an inflection face}\label{fig:angle4}
\end{figure}

\end{proof}

\begin{definition}\label{def:index}
Let $\vec{v}$ be an interior vertex of the polyhedral surface $P$ and let $\xi \in S^2$. We define the \textit{index} \[i({\vec{v}},\xi):=1-\frac{1}{2}\left(\textnormal{number of incident triangles for which }{\vec{v}}\textnormal{ is middle for }\xi\right).\]

Here, we triangulate the faces of $P$ and call $\vec{v}$ \textit{middle} for $\xi$ in an incident triangle $\triangle$ if $\langle{\vec{v}},\xi\rangle$ lies in between $\langle{\vec{v}}_1,\xi\rangle$ and $\langle{\vec{v}}_2,\xi\rangle$ for the other two vertices $\vec{v}_1,{\vec{v}}_2$ of $\triangle$.
\end{definition}

Clearly, the above definition that goes back to Banchoff \cite{B70} does not depend on the choice of triangulations of the faces of $P$. Banchoff showed that integration of the index with respect to all $\xi \in S^2$ yields twice the discrete Gaussian curvature $K({\vec{v}})$. In \cite{BG16}, it is shown that $i({\vec{v}},\xi)$ equals the sum of appropriate algebraic indices of ${\vec{n}}$ and $-{\vec{n}}$ with respect to $g({\vec{v}})$. These algebraic indices encode the corresponding algebraic multiplicities of these points in the area enclosed by $g({\vec{v}})$, such that integration of the index $i({\vec{v}},{\vec{n}})$ over the unit sphere gives twice the algebraic area enclosed by $g({\vec{v}})$. This sketches the proof of the following key lemma \cite{BG16}:

\begin{lemma}\label{lem:area}
The algebraic area of the Gauss image $g({\vec{v}})$ equals the discrete Gaussian curvature $K({\vec{v}})$.
\end{lemma}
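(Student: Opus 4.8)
The plan is to build the proof on the index formula of Banchoff together with the identification, established in \cite{BG16}, of the Banchoff index $i(\vec{v},\xi)$ with an algebraic multiplicity coming from the Gauss image. Concretely, I would proceed as follows. First I would recall Banchoff's integral formula
\[
2K(\vec{v})=\int_{S^2} i(\vec{v},\xi)\,d\xi,
\]
which follows by triangulating the star, applying the definition of the index triangle by triangle, and integrating; the contribution of each incident triangle to the right-hand side reduces to an elementary spherical-area computation that matches the defect $2\pi-\sum\alpha_f$ in the definition of $K(\vec{v})$. This step is essentially Banchoff's and I would only sketch it.

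Second, I would invoke the main technical result of \cite{BG16}: for generic $\xi=\vec{n}\in S^2$ (i.e. $\vec{n}$ not lying on any of the great-circle arcs composing $g(\vec{v})$ nor antipodal to such a point), the index $i(\vec{v},\vec{n})$ equals the signed winding number, or algebraic multiplicity, $\mu(\vec{n})$ of the closed spherical polygon $g(\vec{v})$ around $\vec{n}$, plus the corresponding multiplicity at the antipode $-\vec{n}$. This is the conceptual heart of the argument and I would treat it as a citable black box, since the excerpt explicitly allows assuming results stated earlier. The point is that $i(\vec{v},\cdot)$, which is defined purely combinatorially from the polyhedron, is recognized as the pullback of a function that measures how many times $g(\vec{v})$ wraps around a given unit vector.

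Third, I would combine the two. By definition, the algebraic (signed) area enclosed by a closed spherical curve $\gamma$ is $\int_{S^2}\mu_\gamma(\xi)\,d\xi$ up to the overall factor coming from the fact that every $\xi$ and its antipode are counted; making the bookkeeping precise, one gets
\[
\int_{S^2} i(\vec{v},\xi)\,d\xi \;=\; \int_{S^2}\bigl(\mu(\xi)+\mu(-\xi)\bigr)\,d\xi \;=\; 2\,A\bigl(g(\vec{v})\bigr),
\]
where $A(g(\vec{v}))$ denotes the algebraic area enclosed by the Gauss image. Comparing with Banchoff's formula yields $2K(\vec{v})=2A(g(\vec{v}))$, hence $K(\vec{v})=A(g(\vec{v}))$, as claimed. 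I would also remark that Lemma~\ref{lem:angle} is what guarantees that the spherical polygon $g(\vec{v})$ is the ``right'' curve to take the winding number of: it controls how the exterior angles of $g(\vec{v})$ relate to the face angles $\alpha_f$, so that the Gauss--Bonnet-type accounting of the enclosed area is consistent with the curvature defect even in the presence of inflection faces and reflex angles.

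The main obstacle is the second step, and in a self-contained treatment it would be the only nontrivial one: making sense of the ``algebraic area enclosed by $g(\vec{v})$'' when the spherical polygon is non-convex, self-overlapping, or wraps around the sphere, and proving that the combinatorial index $i(\vec{v},\vec{n})$ really does compute this multiplicity for almost every $\vec{n}$. Since the excerpt attributes the first complete proof of exactly this identification to \cite{BG16} and sketches it in the paragraph preceding the lemma, I would cite it rather than reprove it; the remaining steps are then routine integration and bookkeeping, taking care only that the factor of $2$ (from the $\xi\leftrightarrow-\xi$ symmetry of the sphere) is handled correctly and that sets of measure zero, where $i(\vec{v},\cdot)$ or the multiplicity jumps, are discarded before integrating.
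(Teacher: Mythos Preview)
Your proposal is correct and follows essentially the same route as the paper: the paper does not give a separate proof environment for this lemma but sketches exactly your three steps in the paragraph immediately preceding it---Banchoff's integral identity $\int_{S^2} i(\vec{v},\xi)\,d\xi = 2K(\vec{v})$, the identification from \cite{BG16} of $i(\vec{v},\xi)$ with the sum of algebraic multiplicities of $g(\vec{v})$ at $\xi$ and $-\xi$, and the resulting equality with twice the algebraic area. Your remark about Lemma~\ref{lem:angle} is an embellishment the paper does not make here (indeed the paper explicitly notes that the naive Alexandrov-style argument via Lemma~\ref{lem:angle}(i) alone fails beyond convex corners), but it does no harm.
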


\begin{remark}
We would like to remark that Lemma~\ref{lem:area} is a popular exercise in some lecture notes and text books. However, the proof the authors usually have in mind relies on Lemma~\ref{lem:angle}~(i) that is just valid for a convex corner $\vec{v}$. In the general case, it is a priori not that clear how the algebraic image of the Gauss image shall be defined since the area a curve on the sphere encloses is at first defined only up to a multiple of $4 \pi$. But in the case we are mainly interested in, namely that the Gauss image is contained in an open hemisphere $H$, Banchoff's index $i({\vec{v}},\xi)$ coincides with the geometrically correct algebraic multiplicity of the point $\xi \in H$ in $g({\vec{v}})$. Note that $i({\vec{v}},\xi)=i({\vec{v}},-\xi)$, so we cannot claim this statement for all $\xi \in S^2$.
\end{remark}

In \cite{BG16}, it was discussed how Lemma~\ref{lem:area} significantly limits the possible shapes for $g({\vec{v}})$. The case of polyhedral surfaces with non-convex faces was also considered in the end. Since non-convex faces play an important role in the investigation of polyhedral patterns \cite{JTVWP15} and occur in the projective dual of negatively curved simplicial surfaces, we will repeat the classification of possible Gauss images without self-intersections in Theorem~\ref{th:shape}.


\subsection{Gauss image without self-intersections}\label{sec:selfintersection}

In the smooth theory, the Gauss map is locally injective in areas of non-zero curvature. In the case of positive Gaussian curvature, the Gauss map is orientation-preserving, in the case of negative curvature it is orientation-reversing.

So a natural requirement for \textit{smoothness} of a polyhedral surface is that if $K({\vec{v}})$ is non-zero, then $g({\vec{v}})$ shall have no self-intersections. In this case, we can also talk about orientation: going counterclockwise around the star of the vertex $\vec{v}$,  we traverse its Gauss image $g({\vec{v}})$ in the same orientation if $K({\vec{v}})>0$ and in the reverse orientation if $K({\vec{v}})<0$, see Fig.~\ref{fig:Gaussian}.

In the following, we use Lemma~\ref{lem:angle} and Lemma~\ref{lem:area} to deduce which shapes of $g({\vec{v}})$ actually occur if it has no self-intersections. For negative discrete Gaussian curvature, only spherical pseudo-$n$-gons occur, where $n=2,3,4$.

\begin{definition}
A spherical polygon without self-intersections is called a \textit{pseudo-$n$-gon}, if exactly $n$ of its interior angles are less than $\pi$. The corresponding $n$ vertices are called \textit{corners}.
\end{definition}

\begin{theorem}\label{th:shape}
Assume that $g({\vec{v}})$ has no self-intersections.
\begin{enumerate}
\item If $K({\vec{v}})> 0$, then $g({\vec{v}})$ is a convex spherical polygon. It follows that $\vec{v}$ is a convex corner.
\begin{figure}[!ht]
	\centerline{
		\begin{overpic}[height=0.25\textwidth]{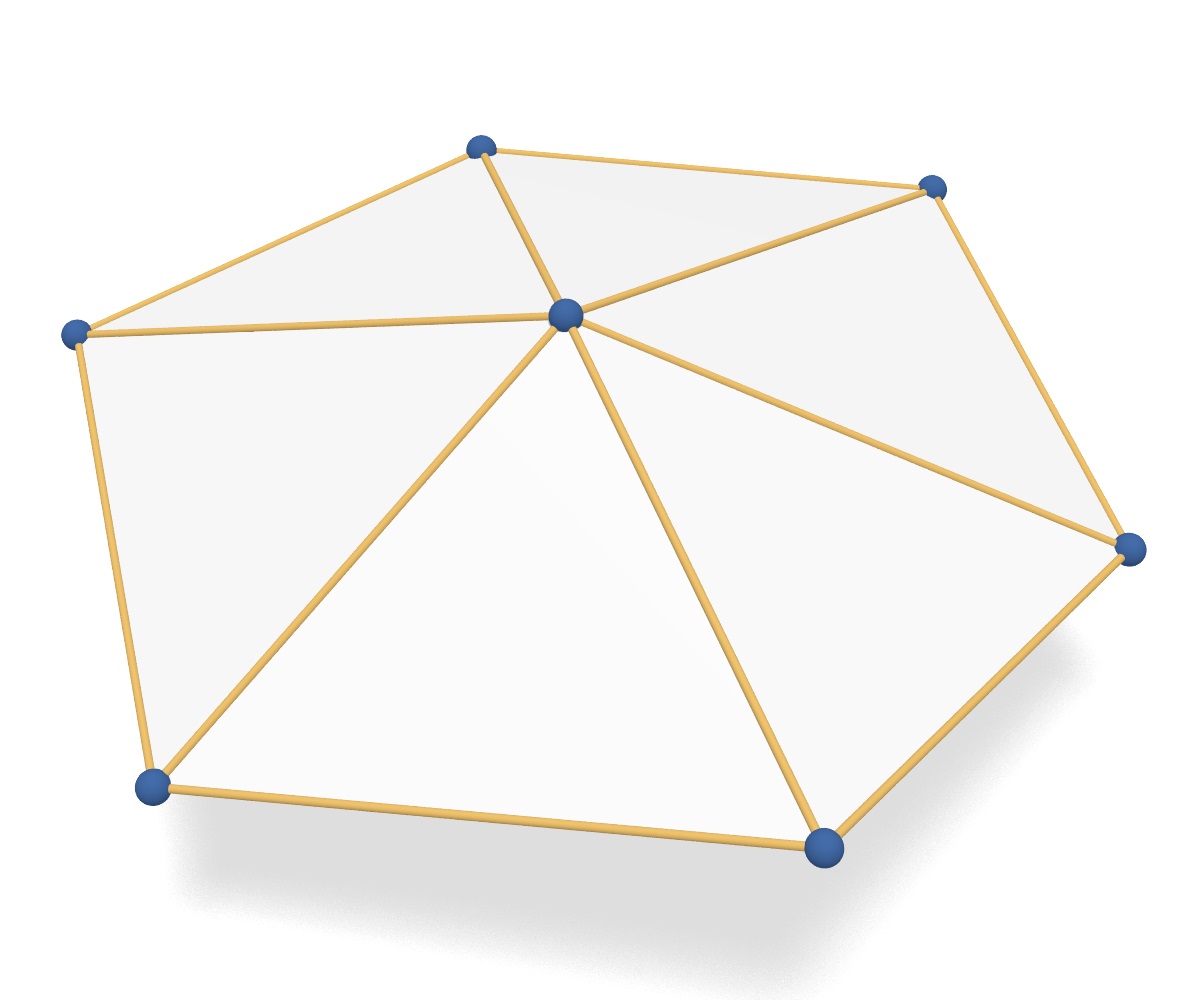}
						\put(21,42){\contour{white}{$f_1$}}
						\put(41,32){\contour{white}{$f_2$}}
						\put(63,36){\contour{white}{$f_3$}}
						\put(70,52){\contour{white}{$f_4$}}
						\put(50,63){\contour{white}{$f_5$}}
						\put(30,60){\contour{white}{$f_6$}}
		\end{overpic}
		\relax\\
		\begin{overpic}[height=0.3\textwidth]{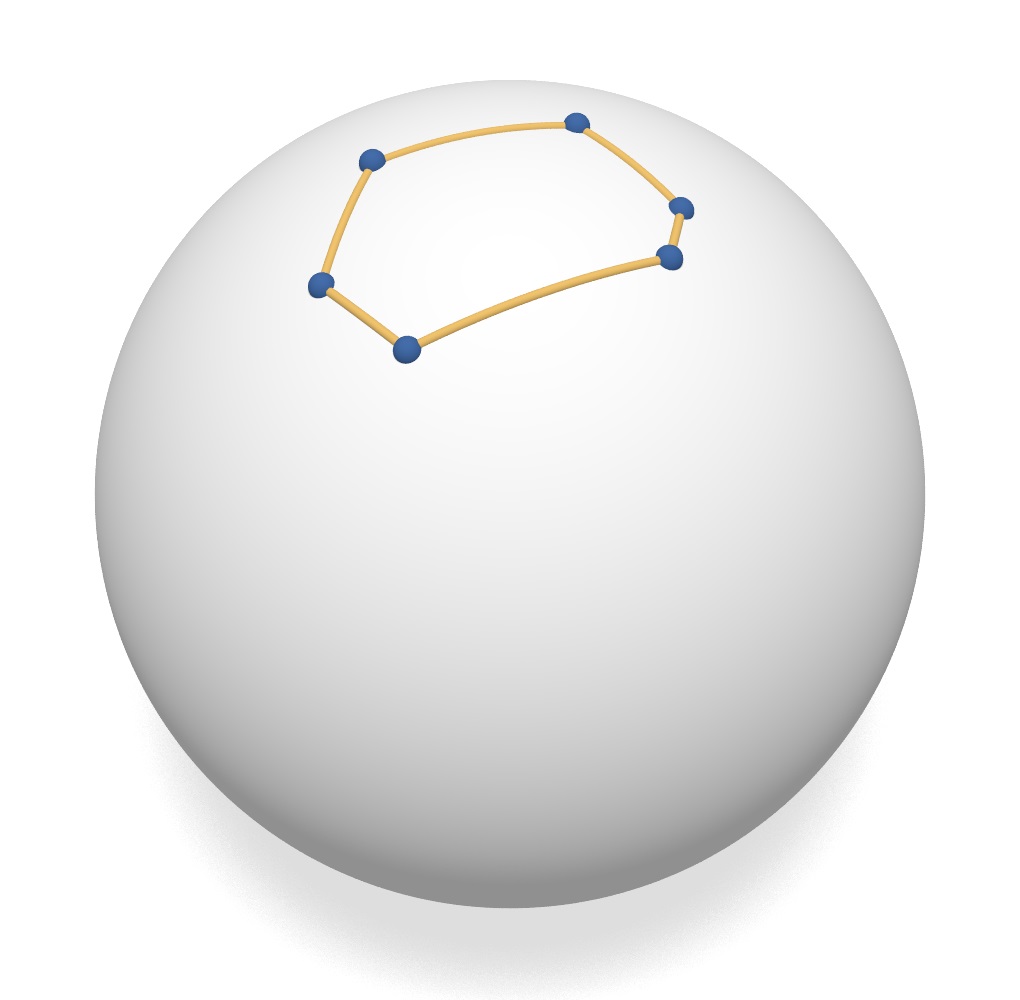}
				\cput{30}{65}{$\vec{n}_{1}$}
				\cput{43}{58}{$\vec{n}_{2}$}
				\cput{70}{67}{$\vec{n}_{3}$}
				\cput{73}{77}{$\vec{n}_{4}$}
				\cput{61}{87}{$\vec{n}_{5}$}
				\cput{33}{85}{$\vec{n}_{6}$}			
		\end{overpic}
	}
	\caption{Theorem~\ref{th:shape} (i): $g({\vec{v}})$ is a convex spherical polygon}\label{fig:shape1}
\end{figure}
\item If $K({\vec{v}})< 0$ and $\alpha_f< \pi$ for all $f \sim {\vec{v}}$, then $g({\vec{v}})$ is a spherical pseudo-quadrilateral. Its four corners are the normals to the exactly four inflection faces in the star of $\vec{v}$.

\begin{figure}[!ht]
	\centerline{
				\begin{overpic}[height=0.3\textwidth]{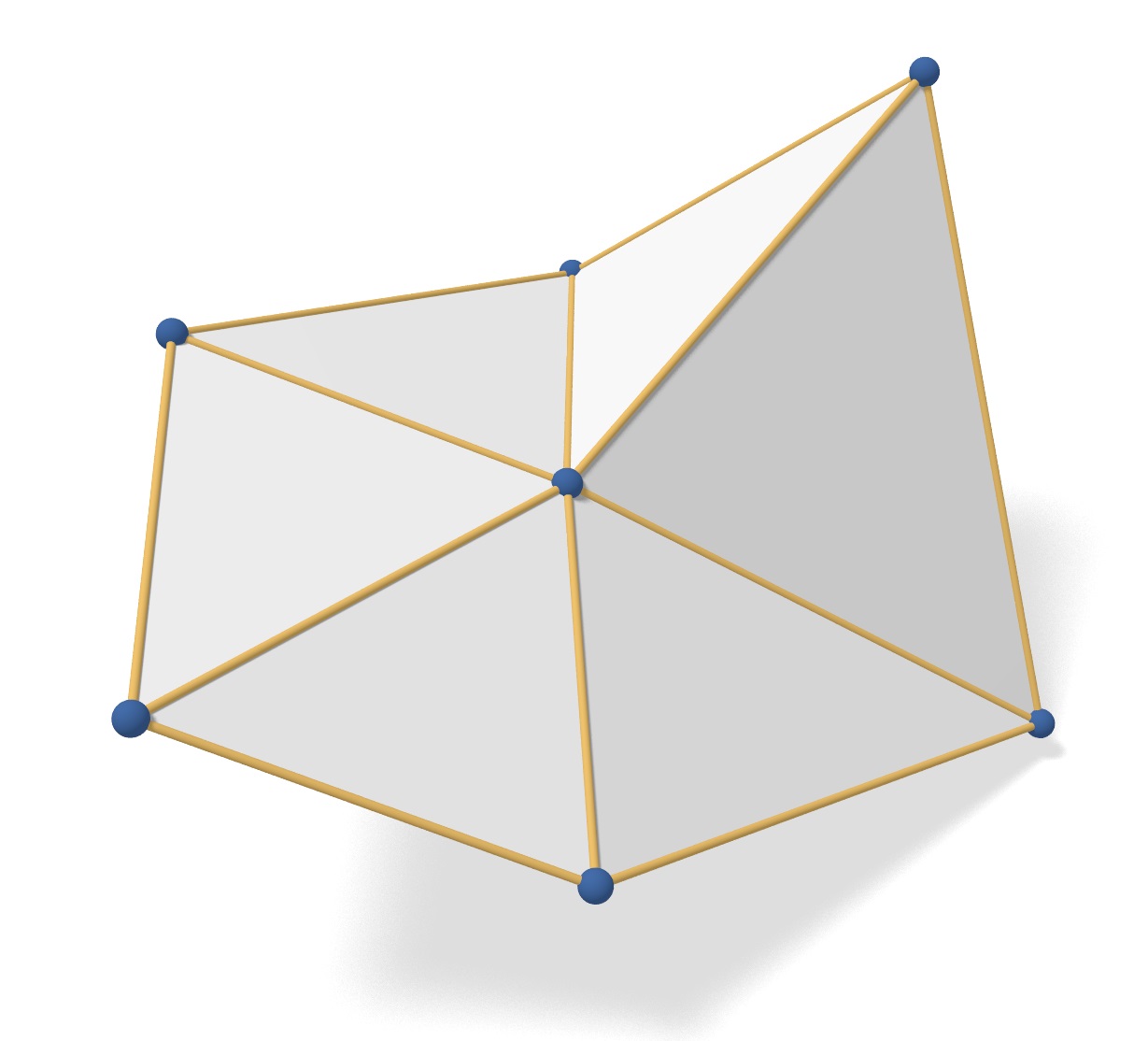}
					\put(20,42){\contour{white}{$f_1$}}
					\put(36,30){\contour{white}{$f_2$}}
					\put(57,32){\contour{white}{$f_3$}}
					\put(70,55){\contour{white}{$f_4$}}
					\put(54,64){\contour{white}{$f_5$}}
					\put(37,59){\contour{white}{$f_6$}}
				\end{overpic}
				\relax\\
				\begin{overpic}[height=0.3\textwidth]{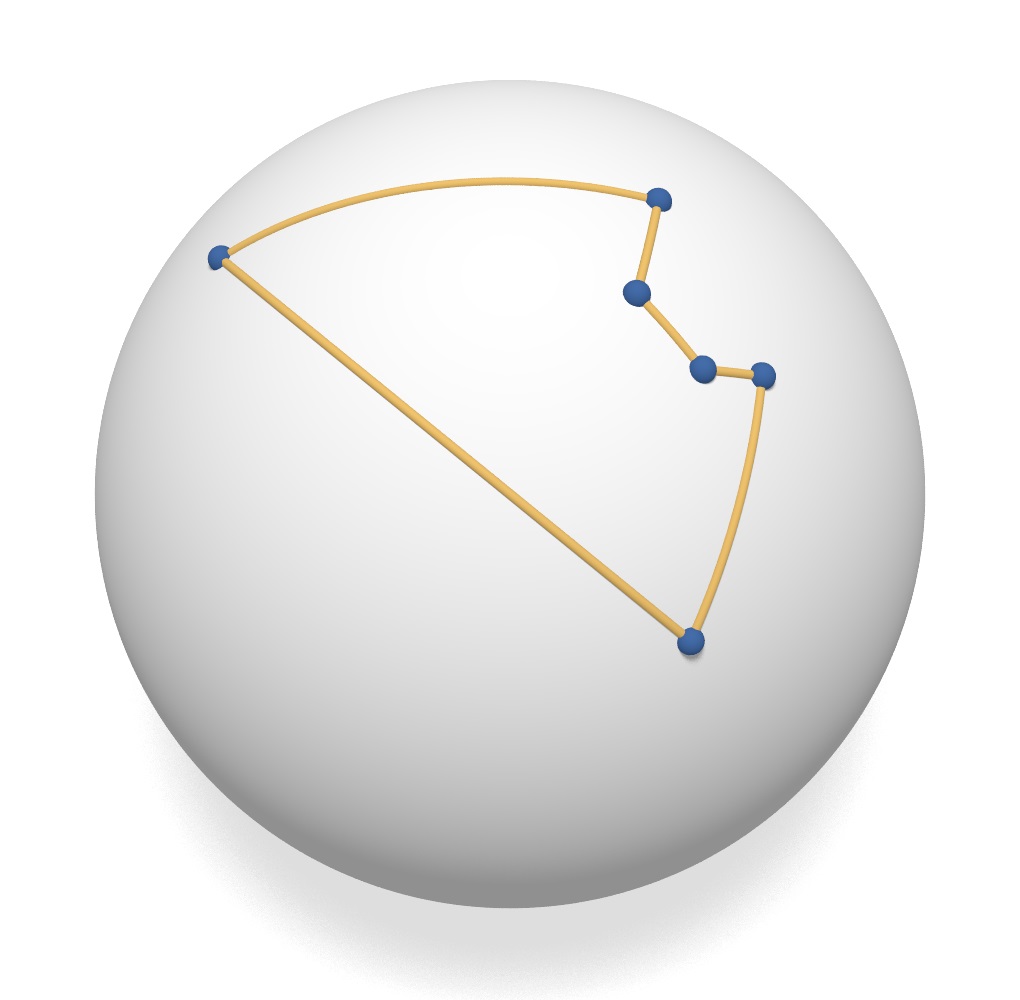}
					\cput{58}{66}{$\vec{n}_{1}$}
					\cput{65}{57.5}{$\vec{n}_{2}$}
					\cput{80}{61}{$\vec{n}_{3}$}
					\cput{70}{28}{$\vec{n}_{4}$}
					\cput{16}{72}{$\vec{n}_{5}$}
					\cput{66}{81}{$\vec{n}_{6}$}	
				\end{overpic}
	}
	\caption{Theorem~\ref{th:shape} (ii): $g({\vec{v}})$ is a spherical pseudo-quadrilateral, $f_3,f_4,f_5,f_6$ are inflection faces}\label{fig:shape2}
\end{figure}

\item If $K({\vec{v}})< 0$ and $\alpha_f > \pi$ for exactly one $f \sim {\vec{v}}$, then $g({\vec{v}})$ is a spherical pseudo-triangle. In the case that the face $f \sim {\vec{v}}$ with $\alpha_f > \pi$ is an inflection face, the three corners of $g({\vec{v}})$ are the normals of the three other inflection faces. If this $f$ is not an inflection face, ${\vec{n}}_f$ is a corner of $g({\vec{v}})$ and there are just two inflection faces in the star of $\vec{v}$, whose normals are the other two corners of $g({\vec{v}})$.

\begin{figure}[!ht]
	\centerline{
		\begin{overpic}[height=0.28\textwidth]{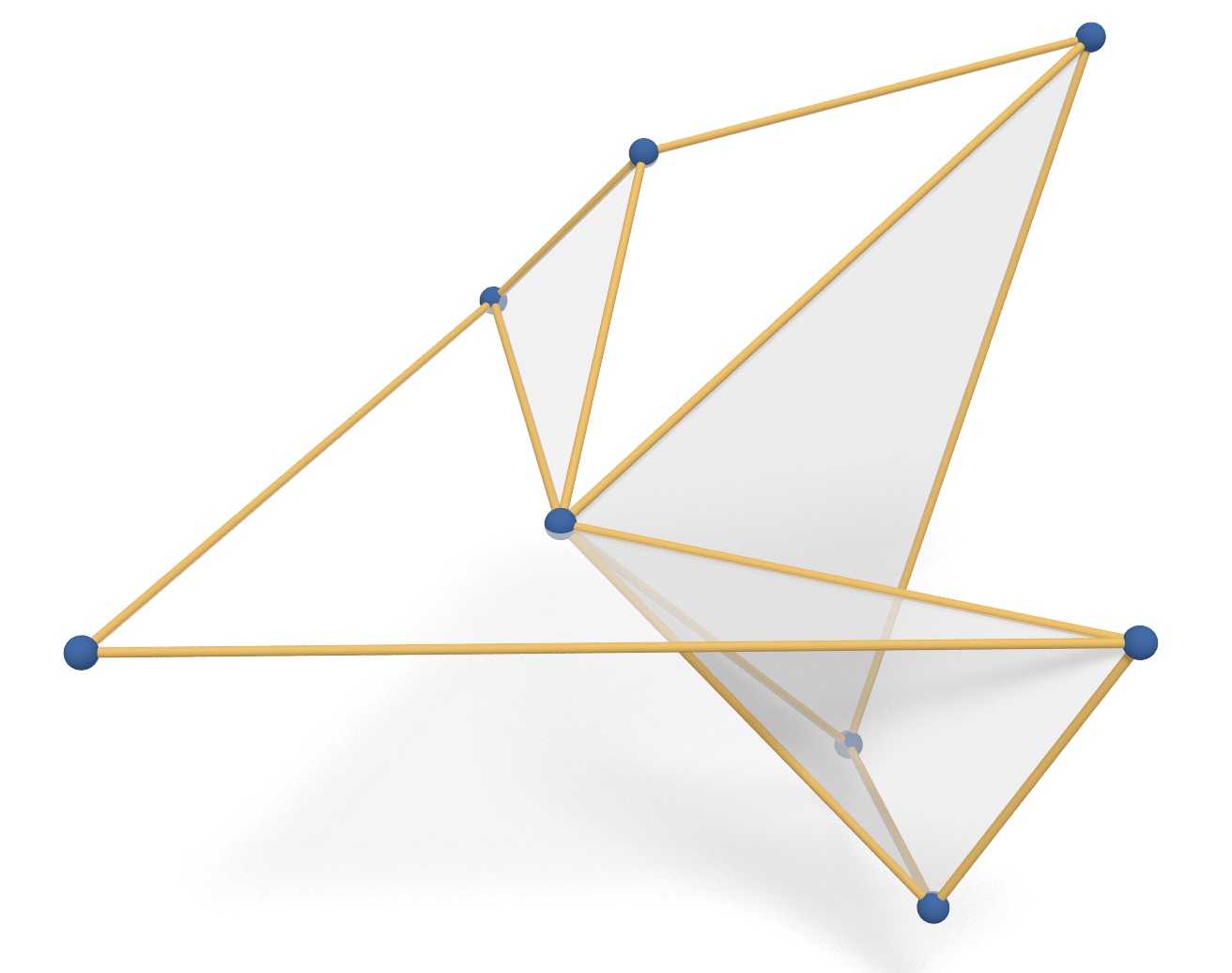}
			\put(42,52){\contour{white}{$f_1$}}
			\put(25,32){\contour{white}{$f_2$}}
			\put(75,18){\contour{white}{$f_3$}}
			\put(63,15){\contour{white}{$f_4$}}
			\put(65,42){\contour{white}{$f_5$}}
			\put(55,55){\contour{white}{$f_6$}}	
		\end{overpic}
		\relax\\
		\begin{overpic}[height=0.28\textwidth]{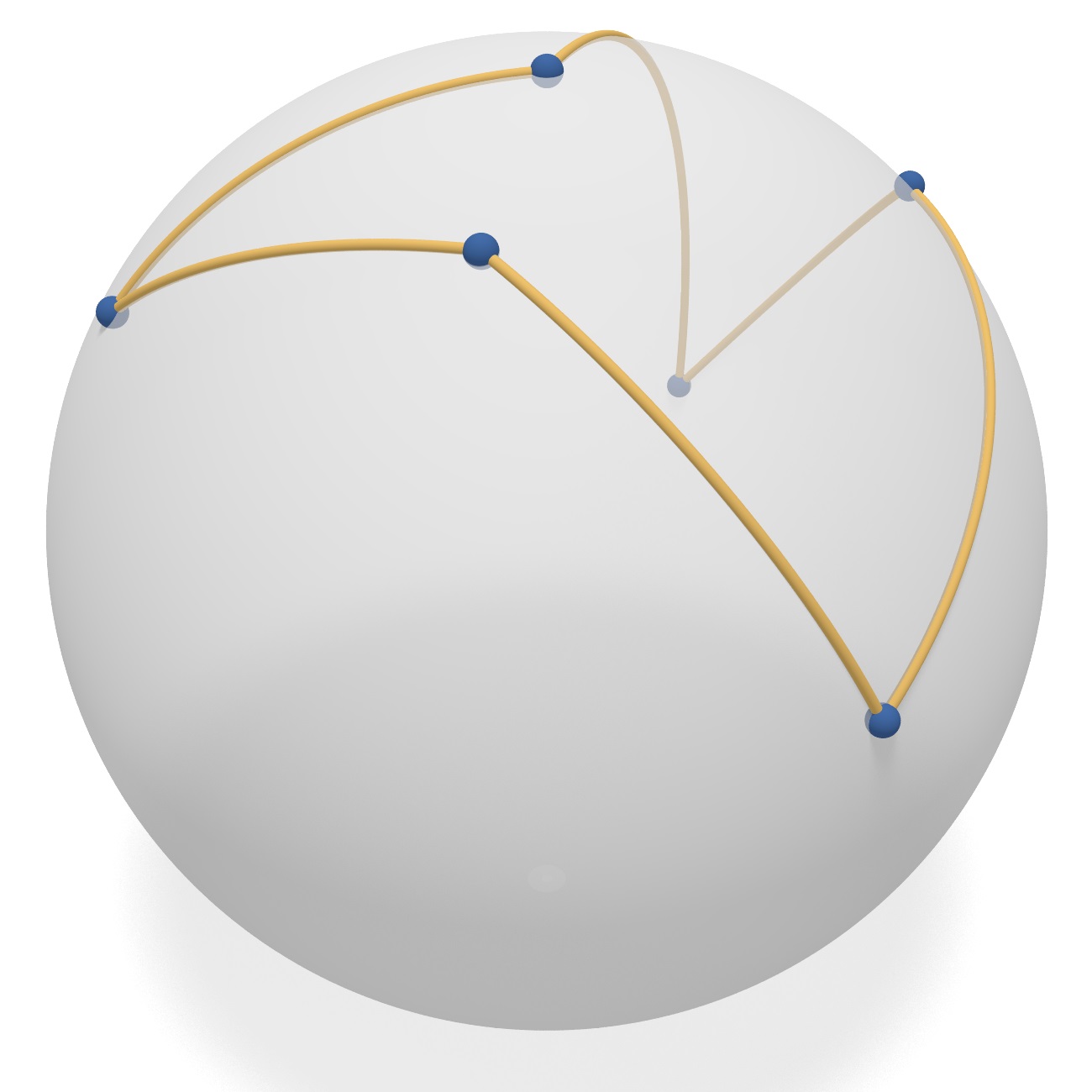}
				\cput{11}{65}{$\vec{n}_{1}$}
				\cput{47}{98}{$\vec{n}_{2}$}
				\cput{70}{62}{$\vec{n}_{3}$}
				\cput{84}{86}{$\vec{n}_{4}$}
				\cput{85}{27}{$\vec{n}_{5}$}
				\cput{41}{71}{$\vec{n}_{6}$}
		\end{overpic}
	}
	\caption{Theorem~\ref{th:shape} (iii): a) $g({\vec{v}})$ is a spherical pseudo-triangle and there are exactly four inflection faces $f_1,f_2,f_3,f_5$}\label{fig:shape31}
\end{figure}


\begin{figure}[!ht]
	\centerline{
				\begin{overpic}[height=0.28\textwidth]{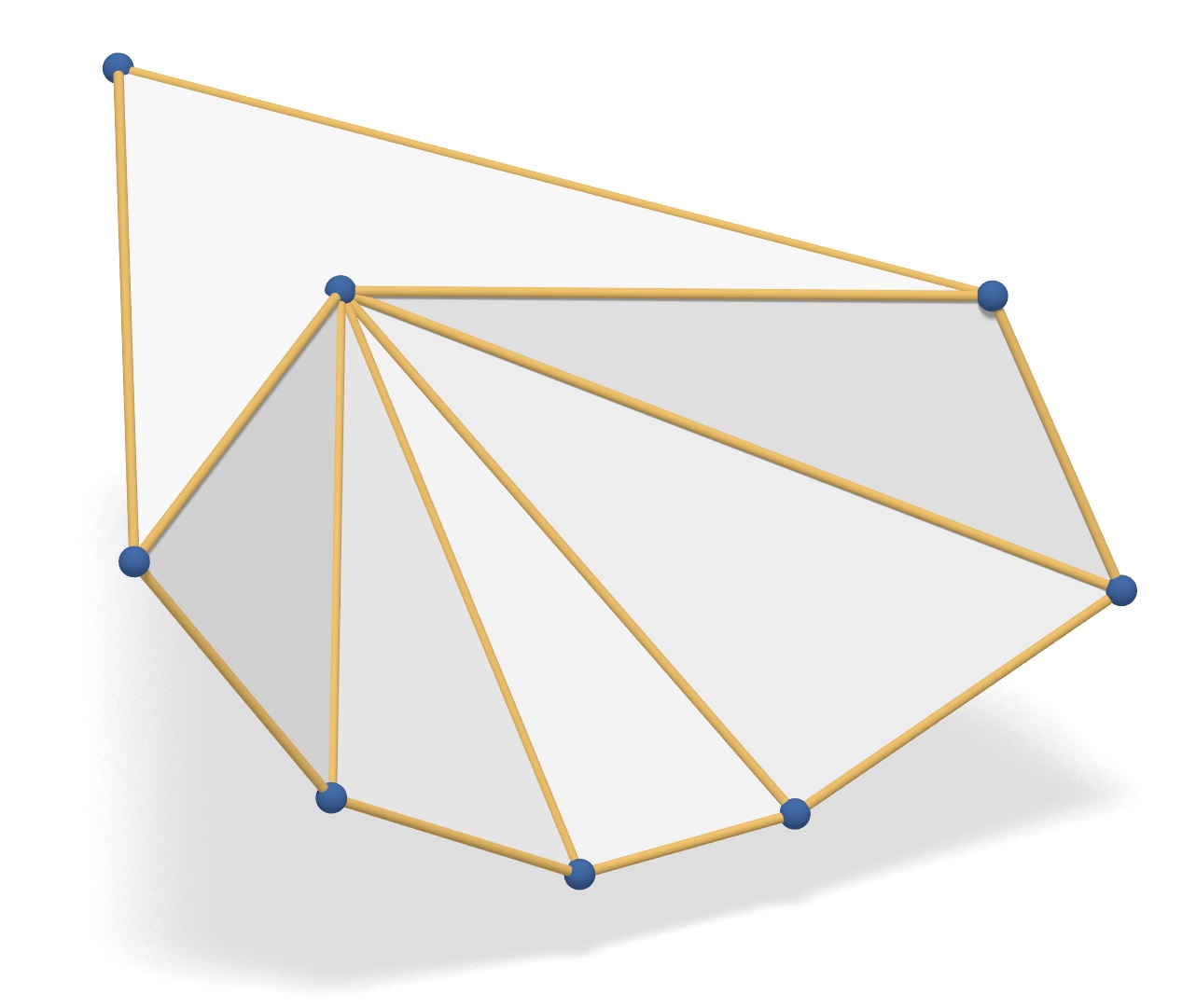}
					\put(78,47){\contour{white}{$f_1$}}
					\put(18,65){\contour{white}{$f_2$}}
					\put(17,36){\contour{white}{$f_3$}}
					\put(31,21){\contour{white}{$f_4$}}
					\put(50,18){\contour{white}{$f_5$}}
					\put(66,28){\contour{white}{$f_6$}}
				\end{overpic}
				\relax\\
				\begin{overpic}[height=0.28\textwidth]{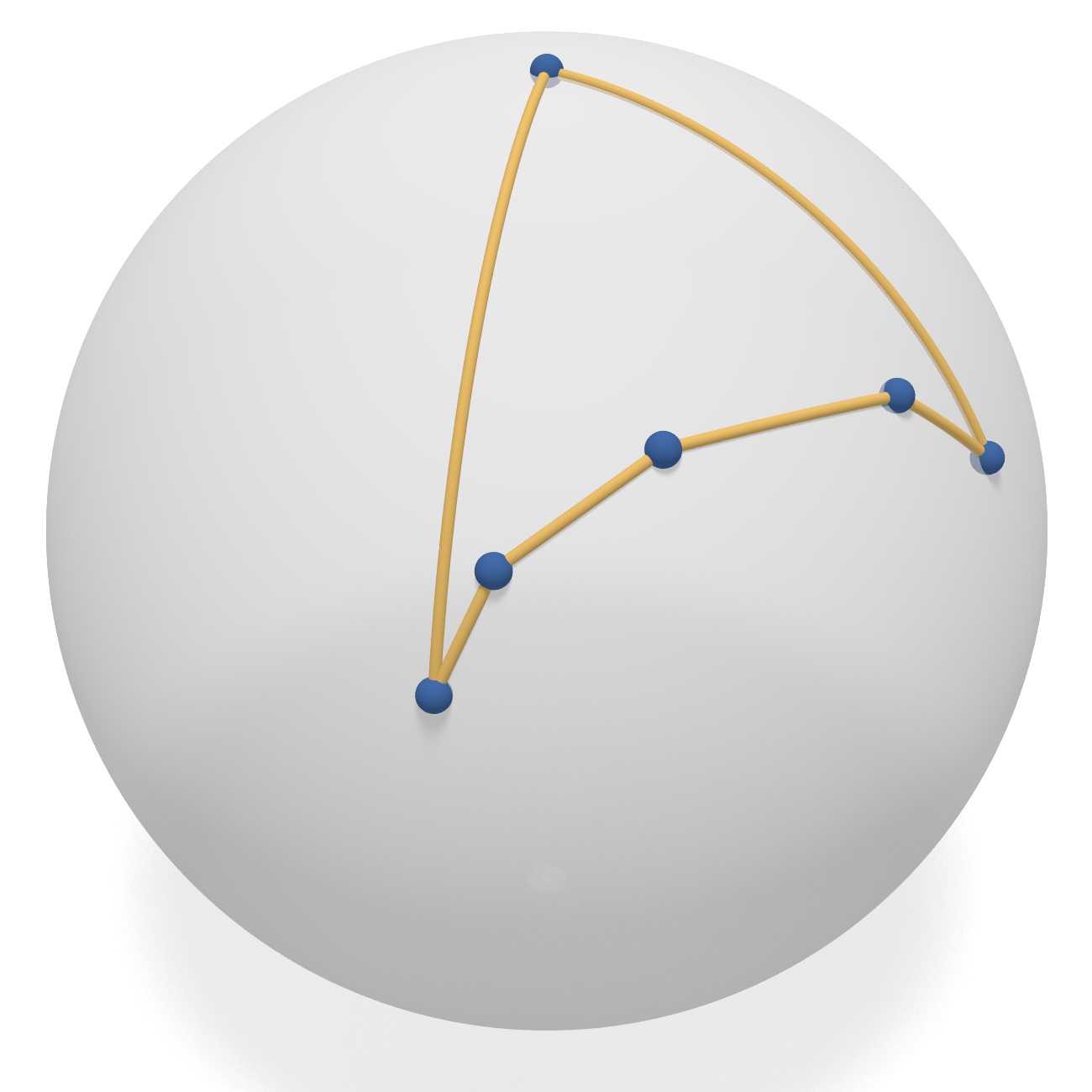}
					\cput{36}{29}{$\vec{n}_{1}$}
					\cput{43}{95}{$\vec{n}_{2}$}
					\cput{94}{52}{$\vec{n}_{3}$}
					\cput{81}{56}{$\vec{n}_{4}$}
					\cput{65}{52.5}{$\vec{n}_{5}$}
					\cput{52}{43}{$\vec{n}_{6}$}	
				\end{overpic}
	}
	\caption{Theorem~\ref{th:shape} (iii): b) $g({\vec{v}})$ is a spherical pseudo-triangle and there are exactly two inflection faces $f_1,f_4$}\label{fig:shape32}
\end{figure}


\item If $K({\vec{v}})< 0$ and $\alpha_f > \pi$ for more than one $f \sim {\vec{v}}$, then $g({\vec{v}})$ is a spherical pseudo-digon. There are exactly two faces $f$ such that $\alpha_f > \pi$ and these two faces are inflection faces. The two corners of $g({\vec{v}})$ are the normals of the other two inflection faces in the vertex star.

\begin{figure}[!ht]
	\centerline{
		\begin{overpic}[height=0.25\textwidth]{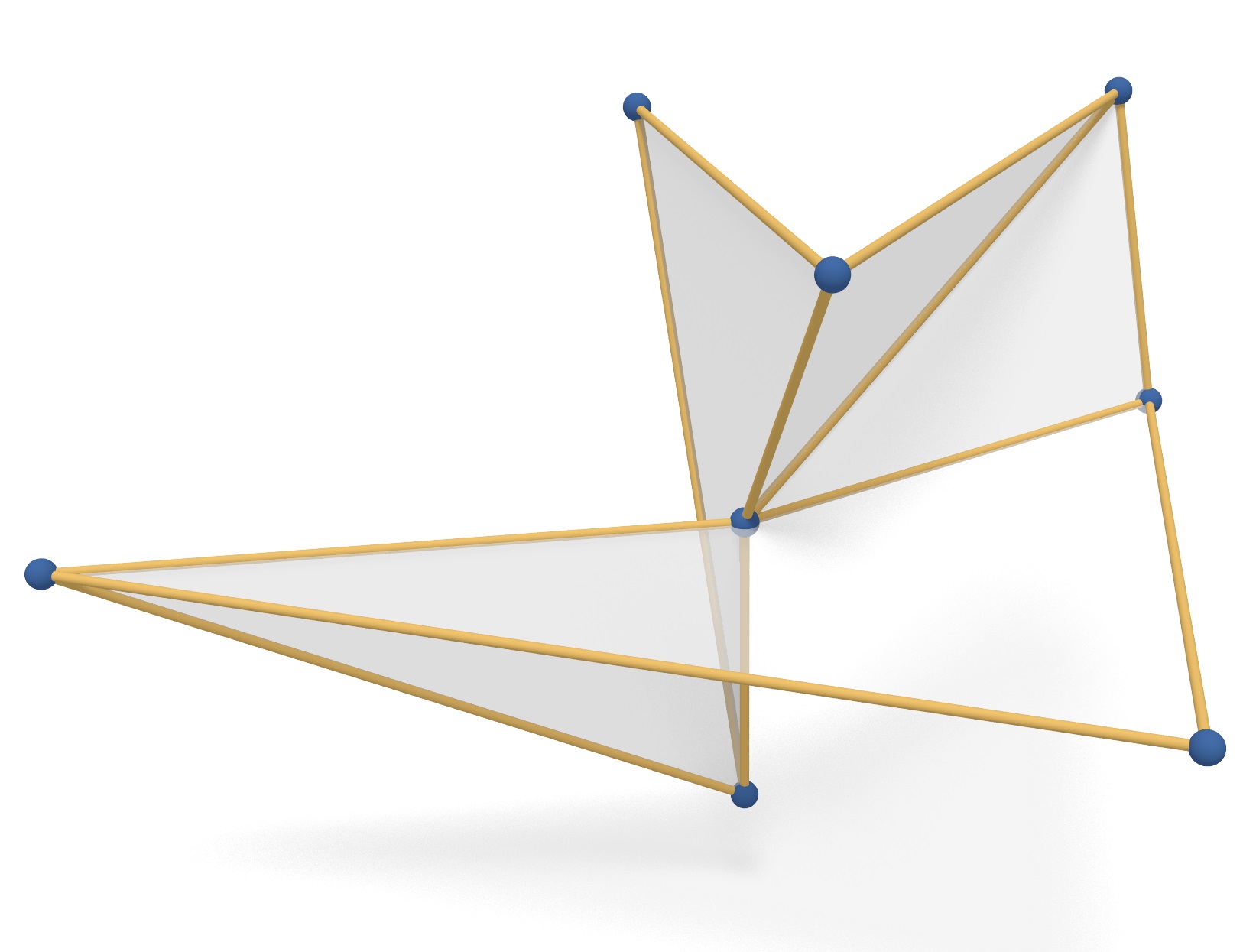}
			\put(50,16){\contour{white}{$f_1$}}
			\put(79,25){\contour{white}{$f_2$}}
			\put(76,46){\contour{white}{$f_3$}}
			\put(67,51){\contour{white}{$f_4$}}
			\put(55,45){\contour{white}{$f_5$}}
		\end{overpic}
		\relax\\
		\begin{overpic}[height=0.28\textwidth]{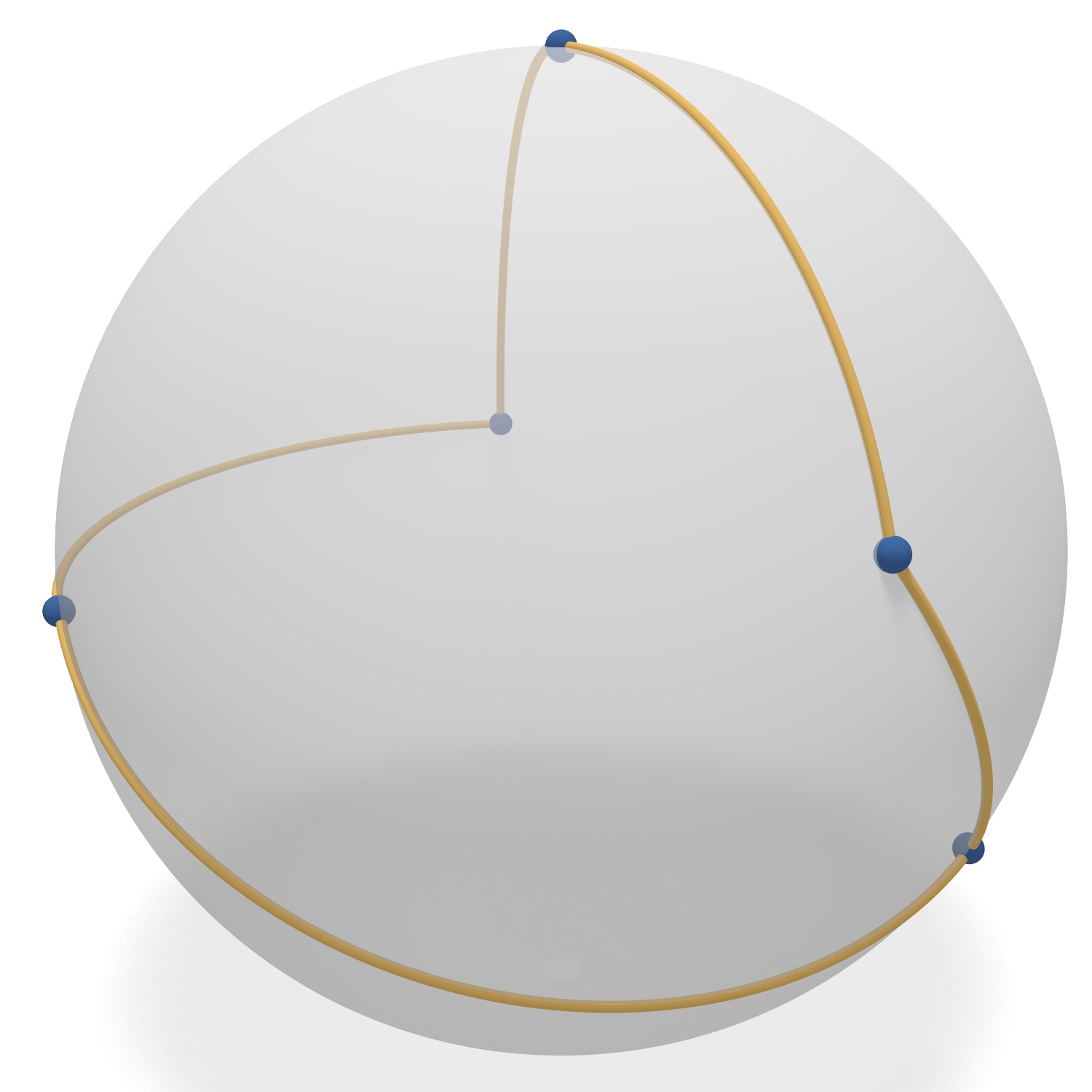}
			\cput{45}{55}{$\vec{n}_{1}$}
			\cput{44}{96}{$\vec{n}_{2}$}
			\cput{89}{49}{$\vec{n}_{3}$}
			\cput{94}{16}{$\vec{n}_{4}$}
			\cput{12}{43}{$\vec{n}_{5}$}
		\end{overpic}
	}
	\caption{Theorem~\ref{th:shape} (iv): $g({\vec{v}})$ is a spherical pseudo-digon, $f_1,f_2,f_3,f_4$ are inflection faces}\label{fig:shape4}
\end{figure}

\end{enumerate}
\end{theorem}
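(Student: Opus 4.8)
The unifying idea is to combine two invariants: the \emph{number of inflection faces} in the star, which equals the number of sign changes of the signed distance of faces from the plane through $f_2$ as we walk around $\vec{v}$, and the \emph{algebraic area} of $g(\vec{v})$, which equals $K(\vec{v})$ by Lemma~\ref{lem:area} and is negative here. First I would record two elementary facts. (a) The cyclic sequence of face planes produces an even number $2k$ of inflection faces, since walking once around $\vec{v}$ returns to the starting side; moreover $k\ge 1$ whenever $K(\vec{v})\ne 0$, because $K(\vec{v})=0$ would be forced if the vertex star were convex (no inflection faces and all $\alpha_f<\pi$ give a convex cone, hence $g(\vec{v})$ a convex spherical polygon with positive area). (b) A face $f$ contributes a corner (interior angle $<\pi$) to $g(\vec{v})$ precisely when its spherical angle $\alpha'$ satisfies $\alpha'<\pi$, and Lemma~\ref{lem:angle} converts this into a condition on $\alpha_f$ together with whether $f$ is an inflection face: from the four cases, $\alpha'<\pi$ holds exactly when ($\alpha_f<\pi$ and $f$ inflection), or ($\alpha_f>\pi$ and $f$ \emph{not} inflection). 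Equivalently: among inflection faces, the corners are the ones with $\alpha_f<\pi$; among non-inflection faces, the corners are the ones with $\alpha_f>\pi$.

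Next I would count corners. Let $f$ range over the star. Partition the faces into inflection faces ($2k$ of them) and non-inflection faces. By (b) the number of corners of $g(\vec{v})$ equals
\[
\#\{\text{inflection }f:\ \alpha_f<\pi\}\ +\ \#\{\text{non-inflection }f:\ \alpha_f>\pi\}.
\]
Case analysis on how many faces have $\alpha_f>\pi$ (a non-convex face at $\vec{v}$). If \emph{all} $\alpha_f<\pi$ (case (ii)): corners $=$ number of inflection faces with $\alpha_f<\pi$ $=2k$, so $g(\vec{v})$ is a pseudo-$2k$-gon. If \emph{exactly one} face $f_0$ has $\alpha_{f_0}>\pi$ (case (iii)): if $f_0$ is an inflection face, corners $=(2k-1)$ (the other inflection faces) $+0$; if $f_0$ is not an inflection face, corners $=2k+1$. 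If \emph{at least two} faces have $\alpha_f>\pi$: here one must argue, using $K(\vec{v})<0$, that in fact exactly two such faces occur and both are inflection faces (case (iv)); granting that, corners $=(2k-2)+0=2k-2$. So in every case the corner count is an explicit function of $k$ and of the non-convex faces, and it remains to pin down $k$.

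To pin down $k$ I would use $K(\vec{v})<0$ together with the Gauss–Bonnet relation for a simple spherical polygon with no self-intersections: the signed area equals $(\text{sum of exterior turning angles})-2\pi$ up to sign, equivalently $2\pi - \sum(\pi-\beta_i)$ over vertices with interior angles $\beta_i$; since the Gauss map reverses orientation when $K<0$, the area $K(\vec{v})$ being negative forces the polygon to be traversed clockwise, and a simple clockwise spherical polygon of negative signed area must have \emph{at least} $\min$ nonconvex structure — concretely, I would show a simple spherical $m$-gon with negative signed area can have at most $\min(m, 4)$... no: the sharp statement I want is that a simple spherical polygon whose signed area is negative has between $2$ and $4$ corners (interior angles $<\pi$). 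One direction: it needs at least $2$ corners since a polygon all of whose angles are $\ge\pi$ is impossible for a simple closed spherical curve bounding negative area of magnitude $<2\pi$. The other direction: if it had $\ge 5$ corners, summing the Gauss–Bonnet defect forces the enclosed area to be positive (too much "convexity"), contradicting $K(\vec{v})<0$; this is the crux. Thus $2\le$ number of corners $\le 4$, which combined with the corner-count formulas above forces $k\in\{1,2\}$ and restricts the non-convex configurations exactly to the four listed cases, and identifies the corners as the stated face normals. \textbf{The main obstacle} is precisely this last spherical-geometry step — proving that a simple spherical polygon enclosing negative algebraic area has at most four convex corners, and organizing the bookkeeping so that the case "two or more $\alpha_f>\pi$" collapses to "exactly two, both inflection" — which I would handle by a careful Gauss–Bonnet estimate on the Gauss image, treating reflex spherical angles and the orientation reversal with sign care, rather than by the projective-duality route of \cite{ORSSSW04}.
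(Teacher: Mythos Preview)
Your overall strategy---combine Lemma~\ref{lem:angle} with the identity $\mathrm{area}\,g(\vec{v})=K(\vec{v})$ from Lemma~\ref{lem:area}---is exactly what the paper does, and your characterization of the corners of $g(\vec{v})$ via Lemma~\ref{lem:angle} is correct. But the organization has a real gap.

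The step you flag as the ``main obstacle'' is not just hard, it is \emph{false} as a general statement of spherical geometry: a simple, negatively oriented spherical polygon can certainly have more than four interior angles below $\pi$. A convex spherical pentagon traversed clockwise already has five such angles and negative signed area; there is nothing to prevent this abstractly. So the bound on the number of corners cannot come from a Gauss--Bonnet estimate on $g(\vec{v})$ alone---it must use the specific values of $\hat\alpha_f$ that Lemma~\ref{lem:angle} forces. The paper does this directly: since $g(\vec{v})$ is simple, its area equals $\sum_f\hat\alpha_f-(n-2)\pi$, and equating with $-K(\vec{v})$ yields the \emph{exact} relation
\[
\sum_{f\sim\vec{v}}\hat\alpha_f=\sum_{f\sim\vec{v}}(\pi+\alpha_f)-4\pi.
\]
Now compare term by term: with the orientation reversal, Lemma~\ref{lem:angle} gives $\hat\alpha_f\in\{\pi+\alpha_f,\ \alpha_f,\ \alpha_f-\pi,\ \alpha_f\}$ in its four cases, so each face contributes $0$, $\pi$, $2\pi$, or $\pi$ to the total deficit $\sum_f\bigl((\pi+\alpha_f)-\hat\alpha_f\bigr)=4\pi$. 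This single linear equation immediately pins down the inflection/reflex counts in (ii)--(iv), and the corner count drops out afterwards---the reverse of the order you propose.

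Your treatment of (iv) has a second, independent gap. You assert that with two or more reflex angles the configuration ``collapses'' to exactly two reflex faces, both inflection, but the counting equation above by itself does not force this (for instance $0\cdot\pi+1\cdot 2\pi+2\cdot\pi=4\pi$ is compatible with one reflex non-inflection face and two reflex inflection faces). The paper supplies the missing ingredient: a geometric argument using embeddedness of a disk neighborhood of $\vec{v}$ shows that between any two faces with a reflex angle at $\vec{v}$ there must lie at least one inflection face (otherwise the intervening locally convex fan would force an intersection with one of the reflex faces). Combining this with the deficit equation then forces exactly two reflex faces, both inflection, and two further convex inflection faces---the pseudo-digon.
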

\begin{proof}
For a face $f\sim{\vec{v}}$, let $\hat{\alpha}_f>0$ be the interior angle of $g({\vec{v}})$ at ${\vec{n}}_{f}$. Let $n$ be the number of faces in the star of $\vec{v}$. Then, $g({\vec{v}})$ is a spherical $n$-gon without self-intersections, so it can be triangulated into $n-2$ spherical triangles. Thus, its algebraic area is $\pm\left(\sum_{f\sim{\vec{v}}}\hat{\alpha}_f-(n-2)\pi\right)$ depending on the orientation of $g({\vec{v}})$.

The orientation of $g({\vec{v}})$ is the same as the one of the vertex star if $K({\vec{v}})> 0$, and it is opposite to it if $K({\vec{v}})< 0$. It follows by Lemma~\ref{lem:area} that if $K({\vec{v}})> 0$, then \begin{align}&\sum\limits_{f\sim{\vec{v}}}\hat{\alpha}_f-(n-2)\pi=K({\vec{v}})=2\pi-\sum\limits_{f\sim{\vec{v}}} \alpha_f\notag \\ \Longrightarrow &\sum\limits_{f\sim{\vec{v}}}\hat{\alpha}_f=\sum\limits_{f\sim{\vec{v}}} (\pi-\alpha_f)\label{eq:area1},\end{align}

and that if $K({\vec{v}})< 0$, then \begin{align}&-\sum\limits_{f\sim{\vec{v}}}\hat{\alpha}_f+(n-2)\pi=K({\vec{v}})=2\pi-\sum\limits_{f\sim{\vec{v}}} \alpha_f\notag \\ \Longrightarrow &\sum\limits_{f\sim{\vec{v}}}\hat{\alpha}_f=\sum\limits_{f\sim{\vec{v}}} (\pi+\alpha_f)-4\pi.\label{eq:area2}\end{align}

(i) By Lemma~\ref{lem:angle}~(i) and (ii), $\hat{\alpha}_f=\pi-\alpha_f$ if $f$ is not an inflection face and $\hat{\alpha}_f=2\pi-\alpha_f$ otherwise. Comparison with Equation~(\ref{eq:area1}) yields $\hat{\alpha}_f=\pi-\alpha_f$ for all $f$, so $g({\vec{v}})$ is convex and no face is an inflection face. In particular, the star of $\vec{v}$ forms the boundary of a convex cone.

(ii) Having the opposite orientation of $g({\vec{v}})$ in mind, we deduce from Lemma~\ref{lem:angle}~(i) and (ii) that $\hat{\alpha}_f$ equals $2\pi-(2\pi-\alpha_f)=\alpha_f$ if $f$ is an inflection face and $\hat{\alpha}_f$ equals $2\pi-(\pi-\alpha_f)=\pi+\alpha_f$ else. Comparison with Equation~(\ref{eq:area2}) gives $\hat{\alpha}_f=\pi+\alpha_f$ for all but four $f$. It follows that exactly four faces of the vertex star are inflection faces. Only for such a face $f$, $\hat{\alpha}_f=\alpha_f$ is less than $\pi$. Therefore, $g({\vec{v}})$ is a spherical pseudo-quadrilateral and its four corners are the normals of the inflection faces.

(iii) In addition to the argument given in the second part, by Lemma~\ref{lem:angle}~(iii) and (iv) we have that $\hat{\alpha}_f=2\pi-(2\pi-\alpha_f)=\alpha_f$ if the face $f$ with $\alpha_f> \pi$ is an inflection face and $\hat{\alpha}_f=2\pi-(3\pi-\alpha_f)=\alpha_f-\pi$ else. Note that if $\alpha_f=\pi$, the latter case cannot occur since $g({\vec{v}})$ has no self-intersections.

In the case that the face $f$ with $\alpha_f> \pi$ is an inflection face, we deduce from Equation~(\ref{eq:area2}) that $\hat{\alpha}_{f'}=\alpha_{f'}$ for exactly three other faces $f'$ of the vertex star. Then, $g({\vec{v}})$ is a spherical pseudo-triangle and its three corners are the normals of the inflection faces that have an angle of less than $\pi$ at ${\vec{v}}$.

If the face $f$ with $\alpha_f> \pi$ is not an inflection face, then $\alpha_f \neq \pi$. Hence, comparison with Equation~(\ref{eq:area2}) yields that $\hat{\alpha}_{f'}=\alpha_{f'}$ for exactly two faces $f'$ of the vertex star. Then, $g({\vec{v}})$ is a spherical pseudo-triangle and its three corners are the normals of the two inflection faces and of the face $f$. 

(iv) Let $f$ and $f'$ be two faces of the vertex star for which $\alpha_f,\alpha_{f'}>\pi$. Then, $f$ intersects the planar convex cone spanned by the two edges of $f'$ incident to $\vec{v}$ and vice versa (if say $\alpha_{f'}=\pi$, consider the corresponding opposite half-plane), see Fig.~\ref{fig:concave}. In particular, these two faces cannot share an edge since a disk neighborhood of the vertex star is embedded.

\begin{figure}[!ht]
	\centerline{
		\begin{overpic}[height=0.3\textwidth]{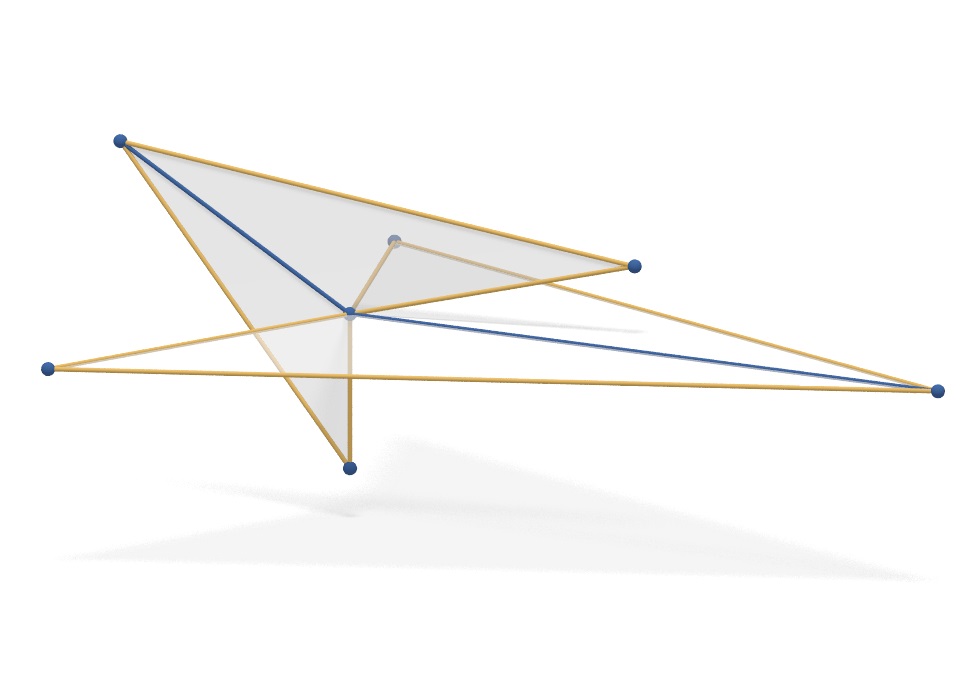}
			\put(19,37){\contour{white}{$f$}}
			\put(50,27){\contour{white}{$f_{'}$}}
			\put(25,39){\contour{white}{$f_1$}}
			\put(32,45){\contour{white}{$f_2$}}
			\put(55,37){\contour{white}{$f_{2'}$}}
			\put(37,33){\contour{white}{$f_{1'}$}}
			\cput{38}{39}{$\vec{v}$}
		\end{overpic}
		\relax\\
		\begin{overpic}[height=0.25\textwidth]{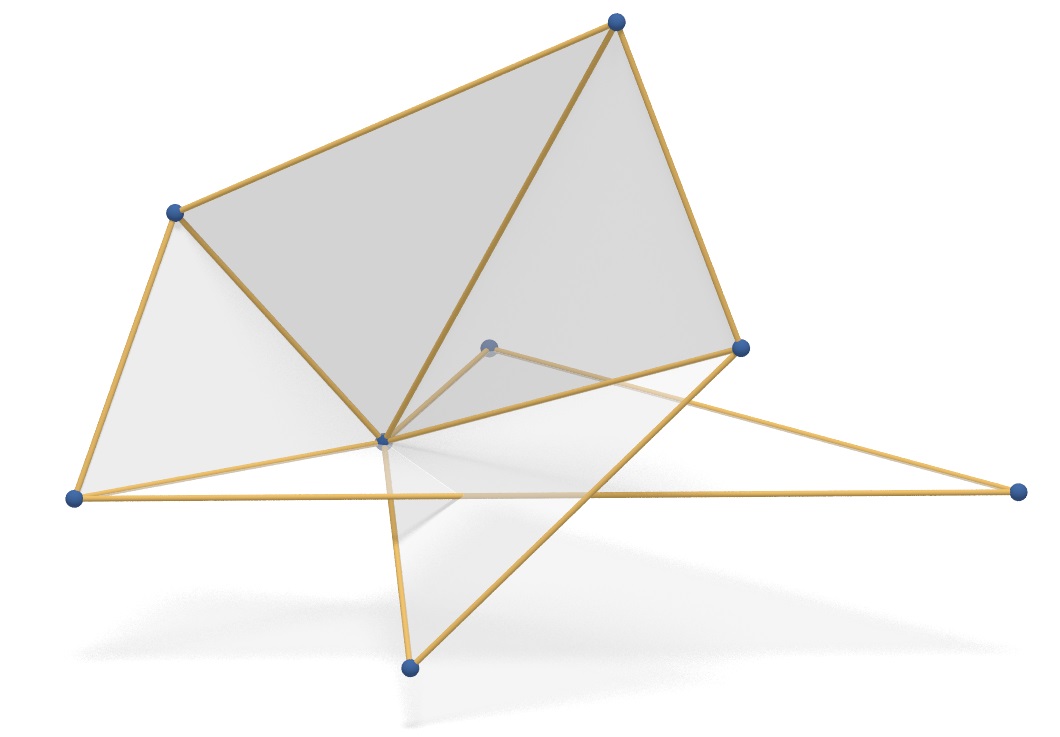}
		\end{overpic}
	}
	\caption{Between any two faces with a reflex angle at $\vec{v}$ there has to be an inflection face}\label{fig:concave}
\end{figure}

We claim that if we go along the star of $\vec{v}$ from $f$ to $f'$, there has to be at least one inflection face in between. Let us split $f$ and $f'$ each into two faces $f_1$, $f_2$ that have an angle of less than $\pi$ at $\vec{v}$ and suppose that $f_1,f_2,f'_1,f'_2$ appear in counterclockwise order around the vertex star. If we consider the faces in the star of $\vec{v}$ following $f_2$, then they will form part of the boundary of a convex cone till the first inflection face or $f'_2$ is reached.

Suppose there is no inflection face in between $f_2$ and $f'_1$. Then, the faces $f_2$ and the ones following it form part of the boundary of a convex cone. If not all of these faces lie in the same of the two half-spaces determined by $f$, one face would intersect $f$, contradicting embeddedness of a disk neighborhood of the vertex star as in Fig.~\ref{fig:concave}. But $f'$ intersects the plane through $f$, so $f'_1$ and $f'_2$ cannot lie in a common half-space. So assuming the absence of an inflection face between $f$ and $f'$ results in an intersection of these two faces. 

It follows that there at least as many inflection faces as faces $f$ with $\alpha_f>\pi$. Using the arguments given in (ii) and (iii) and comparing with Equation~(\ref{eq:area2}) gives that there are just two faces $f$ with $\alpha_f>\pi$, these two faces are inflection faces, and exactly two other faces are inflection faces as well. The normals of the latter are the corners of $g({\vec{v}})$, which is a spherical pseudo-digon.
\end{proof}

Note that the first three cases of Theorem~\ref{th:shape} occurred in the paper \cite{ORSSSW04} of Orden et al. There, the authors investigated planar embeddings of planar frameworks and their reciprocals. The forces of a self-stress on the planar graph correspond to the dihedral angles of a possibly self-intersecting spherical polyhedron that projects to the graph. Sign changes of forces hence correspond to inflection faces. Considering polarity with respect to the paraboloid $z=x^2+y^2$ then leads to the reciprocal. Orden et al. were interested in the case when both the original framework and its reciprocal are crossing-free, translating to non-self-intersecting Gauss images in our setting. The requirement that the spherical polyhedron projects to the graph translates to the existence of a plane onto which the Gauss image projects in a one-to-one way. In Corollary~\ref{cor:digon} we will show that this condition excludes the digon case. In Section~\ref{sec:global}, we will encounter the dual shapes of faces whose vertices all have non-self-intersecting Gauss images of the same orientation (meaning that the sign of discrete Gaussian curvature is the same for all vertices). If also the neighborhood of a face bijectively projects onto the face plane as was required in \cite{ORSSSW04}, then the cases discussed in Propositions~\ref{prop:face_positive} and~\ref{prop:face_negative} are indeed the projective duals of the vertex stars discussed in Theorem~\ref{th:shape}. Actually, we can take the projective dual under slightly more general circumstances, see Theorem~\ref{th:correlation}.

\begin{remark}
If we think of a smooth saddle of negative Gaussian curvature, we have two hills and two valleys, so four inflections in between. We encounter exactly the same behavior in the generic case of a vertex $\vec{v}$ of discrete negative Gaussian curvature and a Gauss image $g({\vec{v}})$ without self-intersections, see Theorem~\ref{th:shape}~(ii). Note that requiring exactly four inflection faces does not prevent $\vec{v}$ of having self-intersections as is shown in Fig.~\ref{fig:saddlef}.

\begin{figure}[htbp]
		\centerline{
			\begin{overpic}[height=0.3\textwidth]{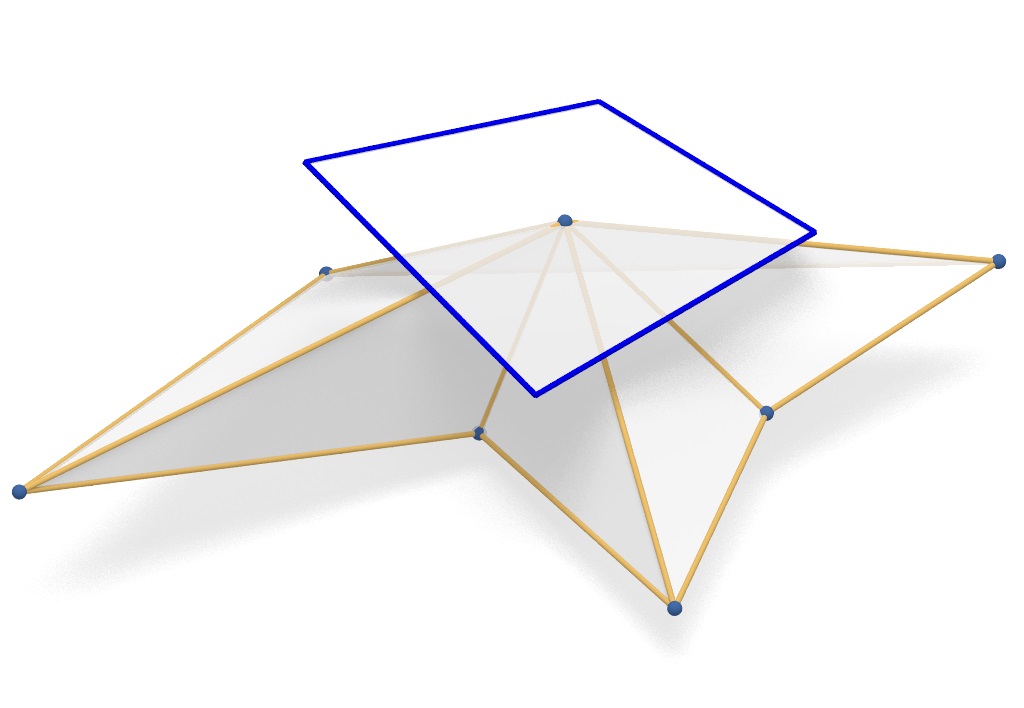}
				\put(81,45){\contour{white}{$f_1$}}
				\put(30,38){\contour{white}{$f_2$}}
				\put(34,29){\contour{white}{$f_3$}}
				\put(53,25){\contour{white}{$f_4$}}
				\put(66,25){\contour{white}{$f_5$}}
				\put(74,36){\contour{white}{$f_6$}}
				\cput{53}{48}{$\vec{v}$}	
			\end{overpic}
			\relax\\
			\begin{overpic}[height=0.3\textwidth]{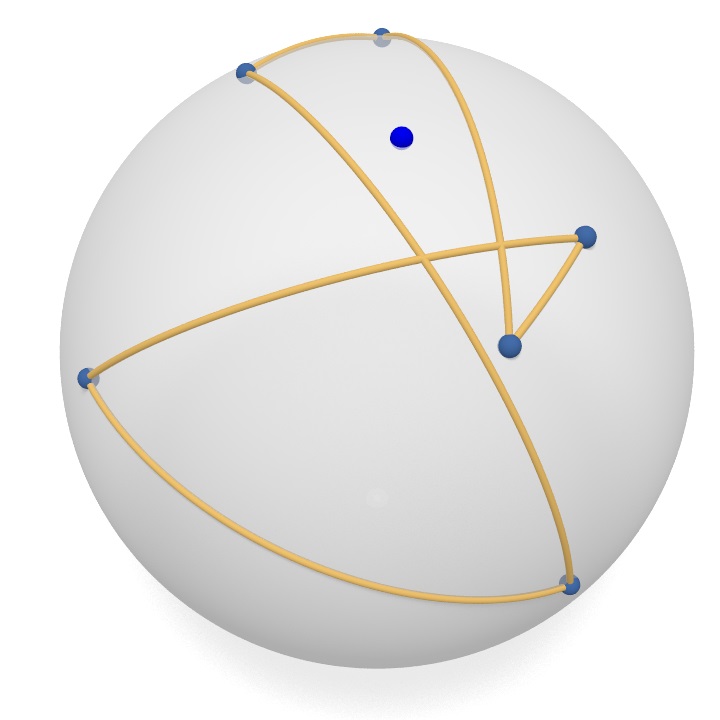}
				\cput{50}{80}{\color{blue}{$\vec{n}$}}
					\cput{53}{98}{$\vec{n}_1$}
					\cput{33}{85}{$\vec{n}_2$}
					\cput{85}{17}{$\vec{n}_3$}
					\cput{6}{46}{$\vec{n}_4$}
					\cput{88}{65}{$\vec{n}_5$}
					\cput{74}{46}{$\vec{n}_6$}
			\end{overpic}
		}
	\caption{Four inflection faces in the vertex star, but $g({\vec{v}})$ has self-intersections}\label{fig:saddlef}
\end{figure}

In Fig.~\ref{fig:saddlef}, we have exactly four inflection faces, but the vertex star does not look like a saddle even though $K({\vec{v}})<0$. Indeed, the whole vertex star lies on the same side of a plane through $\vec{v}$ whose normal ${\vec{n}}$ is in the component of $g({\vec{v}})$ with positive algebraic area. Such a plane does not exist in the classical theory, so it is reasonable to exclude this case by demanding that $g({\vec{v}})$ has no self-intersections rather than just requiring that exactly four faces of the vertex star are inflection faces.

Note that in the second case of Theorem~\ref{th:shape}~(iii), there are just two inflection faces in the vertex star. Having a classical saddle in mind, two inflections are missing; however, they are both incorporated in the face $f$ with $\alpha_f>\pi$. In this sense, $f$ counts as two inflection faces although it is in fact not an inflection face. The two cases of Theorem~\ref{th:shape}~(iii) will lead to slightly different shapes of their discrete Dupin indicatrices, as we will discuss in Section~\ref{sec:Dupin}.
\end{remark}

In Section~\ref{sec:projective}, we will investigate the projective dual of $P$. In the smooth theory, the projective dual of a negatively curved surface is again negatively curved since asymptotic directions are mapped to asymptotic directions. So ideally, the projective dual $P^*$ of a smooth polyhedral surface $P$ of negative curvature should be again a smooth polyhedral surface of negative curvature. Now, the faces of $P^*$ correspond to the faces of the Gauss image of $P$ by projection. But there is no planar polygon that projects to a spherical pseudo-digon as in Theorem~\ref{th:shape}~(iv). Hence, we would like to exclude this case.

Locally, the Gauss image of a smooth surface patch covers just a small piece of the sphere. A reasonable additional condition for smoothness of $P$ is therefore that for any vertex $\vec{v}$, $g({\vec{v}})$ should be contained in some open hemisphere. Not only that this excludes the case of Theorem~\ref{th:shape}~(iv), it also provides us with the existence of a \textit{transverse plane}.

\begin{definition}\label{def:transverse}
A plane $E$ passing through an interior vertex $\vec{v}$ of $P$ is said to be a \textit{transverse plane} if a disk neighborhood of the star of $\vec{v}$ projects orthogonally to $E$ in a one-to-one way.
\end{definition}

In the case of simplicial surfaces, this definition goes back to Banchoff and mimics the corresponding property of the tangent plane of a smooth surface. Existence of a transverse plane is equivalent to the condition that $g({\vec{v}})$ is contained in an open hemisphere.

\begin{proposition}\label{prop:transverse}
There exists a transverse plane through an interior vertex $\vec{v}$ of $P$ if and only if $g({\vec{v}})$ is contained in an open hemisphere.
\end{proposition}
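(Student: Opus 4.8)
The plan is to establish both directions by relating the normal of a candidate plane $E$ to the position of the Gauss image $g(\vec{v})$ on the sphere. Write $\vec{n}$ for a unit normal of a plane $E$ through $\vec{v}$. The key observation is that a disk neighborhood of the star of $\vec{v}$ projects orthogonally onto $E$ in a one-to-one way if and only if, for every face $f\sim\vec{v}$, the projection restricted to (a neighborhood of $\vec{v}$ in) $f$ is orientation-coherent, which happens exactly when the face normal $\vec{n}_f$ lies strictly in the open hemisphere centered at $\vec{n}$, i.e. $\langle \vec{n}_f,\vec{n}\rangle>0$ after fixing the sign of $\vec{n}$ consistently. So the heart of the matter is to show that the projection along $\vec{n}$ is a local homeomorphism near $\vec{v}$ precisely when all the $\vec{n}_f$ lie in one open hemisphere, and then to upgrade ``local homeomorphism plus the combinatorics of a vertex star'' to an honest injective projection of a small disk.

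First I would prove the easy direction: if a transverse plane $E$ with normal $\vec{n}$ exists, then the orthogonal projection $\pi_E$ is a homeomorphism on a disk neighborhood $D$ of the star. Restricting $\pi_E$ to the (two-dimensional) piece of each face $f$ incident to $\vec{v}$, injectivity forces $\pi_E|_f$ to be a nondegenerate affine embedding, hence $f$ is not ``vertical'' with respect to $E$, i.e. $\langle\vec{n}_f,\vec{n}\rangle\neq 0$; and since the faces around $\vec{v}$ wrap once around $\vec{v}$ while their images wrap once around $\pi_E(\vec{v})$ (as $\pi_E|_D$ is a homeomorphism of a disk onto a disk, preserving the cyclic order of the faces), all the signs $\langle\vec{n}_f,\vec{n}\rangle$ must agree. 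Choosing the sign of $\vec{n}$ so that these are all positive puts every $\vec{n}_f$ in the open hemisphere $\{\xi\in S^2:\langle\xi,\vec{n}\rangle>0\}$, and since the great-circle arcs of $g(\vec{v})$ connecting consecutive $\vec{n}_f$ are the shorter arcs, each such arc also stays in that open hemisphere (an open hemisphere is geodesically convex). Hence $g(\vec{v})$ is contained in an open hemisphere.

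For the converse, suppose $g(\vec{v})\subset H$ for some open hemisphere $H$ with pole $\vec{n}$, so $\langle\vec{n}_f,\vec{n}\rangle>0$ for all $f\sim\vec{v}$. Let $E$ be the plane through $\vec{v}$ with normal $\vec{n}$ and $\pi_E$ the orthogonal projection. Each face $f\sim\vec{v}$, being non-vertical, projects affinely and injectively; and consecutive faces $f_{s-1},f_s$ sharing an edge $e_s\sim\vec{v}$ project to two triangles at $\pi_E(\vec{v})$ lying on opposite sides of the line $\pi_E(e_s)$ — this is the step that needs the hemisphere hypothesis, because the two outward normals $\vec{n}_{s-1},\vec{n}_s$ both having positive inner product with $\vec{n}$ controls on which side of $\pi_E(e_s)$ each face falls. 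Consequently the projected face-sectors fit together without overlap and sweep out a total angle of $2\pi$ around $\pi_E(\vec{v})$, so on a sufficiently small disk $D$ the map $\pi_E|_D$ is a bijection onto a disk in $E$; being a continuous bijection from a compact disk it is a homeomorphism, and $E$ is a transverse plane.

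The main obstacle is the converse, specifically making rigorous the claim that the projected sectors tile a full neighborhood of $\pi_E(\vec{v})$ with no overlaps. One must rule out that, even with all $\langle\vec{n}_f,\vec{n}\rangle>0$, some later face in the cyclic order ``folds back'' over an earlier one. The clean way to do this is to track the total turning: going once around the star, the projected edge directions $\pi_E(e_s)/|\pi_E(e_s)|$ make a closed loop on the unit circle in $E$, and one shows the winding number is exactly $\pm1$ — equivalently, the sum of the projected face angles is $2\pi$ rather than a larger multiple — using that each face contributes a positive angle $<\pi$ (because the face is non-vertical and $P$ is immersed so the face cannot wrap around) and invoking the link between face angles, the angle sum $2\pi-K(\vec{v})$, and the hemisphere constraint. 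Once the winding number is pinned to $1$, a standard covering-space / monodromy argument gives injectivity on a small enough disk. I would isolate this winding-number computation as the crux and present the rest as routine bookkeeping.
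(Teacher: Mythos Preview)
Your forward direction is correct and essentially the same as the paper's: both argue that a one-to-one projection forces each face to project orientation-preservingly, hence all $\vec{n}_f$ lie in a common open hemisphere (you add the remark that the shorter arcs then also stay in that hemisphere, which is a nice touch the paper omits).

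For the converse, you correctly identify the crux that the paper glosses over: the paper simply asserts ``by a similar reasoning'' that orientation-preservation on each face yields a bijective projection of a disk neighborhood, without justifying why the projected sectors wrap around $\pi_E(\vec{v})$ exactly once rather than $k\geq 2$ times. You are right to isolate this winding-number computation as the real content. However, your proposed resolution does not work. First, the claim that each projected face angle is $<\pi$ is false in general: if $\alpha_f>\pi$ (which the paper explicitly allows, see Theorem~\ref{th:shape}(iii),(iv)), then the affine projection preserves the reflex angle. Second, even if all projected angles were $<\pi$, there is no identity linking $\sum\beta_s$ to $2\pi-K(\vec{v})$ or to the hemisphere constraint in the way you suggest; with many faces one cannot rule out $\sum\beta_s=4\pi$ by angle bookkeeping alone.

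What actually pins down $k=1$ is the embeddedness of the link. Let $\tilde L\subset S^2$ be the radial projection of a small circle in $D$ around $\vec{v}$; since $P$ is immersed, $\tilde L$ is a \emph{simple} closed spherical polygon, and each of its arcs lies on the great circle $C_f=\{\xi:\langle\xi,\vec{n}_f\rangle=0\}$. The hemisphere hypothesis $\langle\vec{n}_f,\vec{n}\rangle>0$ implies that the map $\tilde L\to S^1$ (project along $\vec{n}$, then radially) is a local homeomorphism with all crossings of any fixed meridian from $\vec{n}$ to $-\vec{n}$ having the same sign. But a simple closed curve on $S^2$ has signed intersection number $0$ or $\pm 1$ with any arc whose endpoints lie off the curve, so the degree is forced to be $1$. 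This is the step you should substitute for the angle-sum argument.
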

\begin{proof}
Let $E$ be a transverse plane through $\vec{v}$, and let ${\vec{n}}$ be the normal of $E$ consistent with the orientation of $P$, i.e., the counterclockwise direction around the star of $\vec{v}$ agrees with the counterclockwise direction around ${\vec{n}}$.

\begin{figure}[!ht]
	\centerline{
		\begin{overpic}[height=0.3\textwidth]{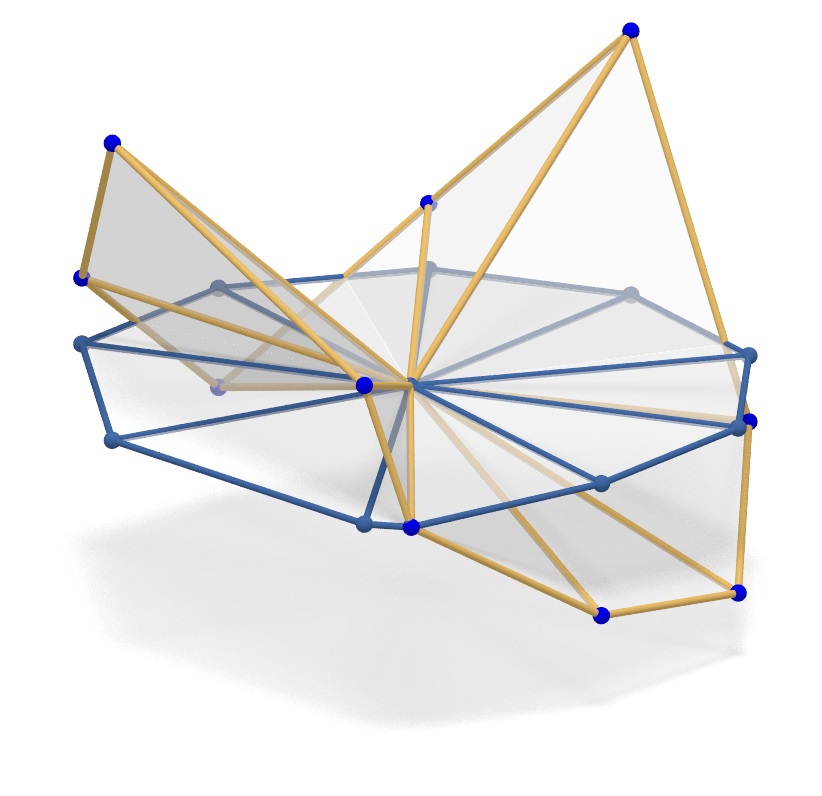}
	\cput{47}{53}{$\vec{v}$}	
		\end{overpic}
		\relax\\
		\begin{overpic}[height=0.3\textwidth]{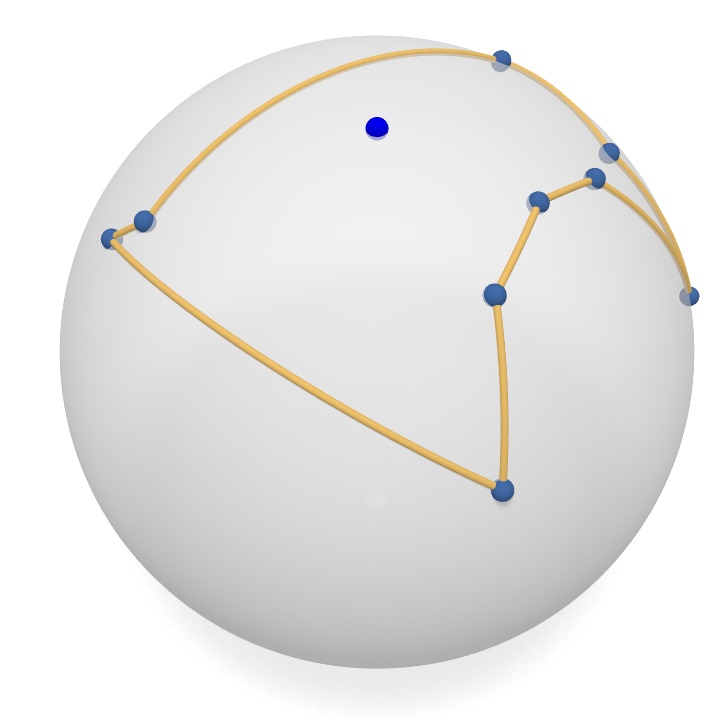}
			\cput{47}{80}{\color{blue}{$\vec{n}$}}	
		\end{overpic}
	}
	\caption{Relation between existence of a transverse plane and the Gauss image}\label{fig:transversal}
\end{figure}

For a face $f \sim {\vec{v}}$, its image under orthogonal projection to $E$ when seen from the side of ${\vec{n}}$ has the same orientation as $f$ if ${\vec{n}}_f$ is contained in the upper hemisphere with pole ${\vec{n}}$, it degenerates to a line segment if ${\vec{n}}_f$ is contained in the equator, and the orientation is reversed if ${\vec{n}}_f$ is contained in the lower hemisphere. Since a disk neighborhood of the star of $\vec{v}$ projects to $E$ in a one-to-one way, it follows that for all faces $f \sim {\vec{v}}$, ${\vec{n}}_f$ is contained in the upper hemisphere with pole ${\vec{n}}$. In particular, $g({\vec{v}})$ is contained in that open hemisphere.

Conversely, if $g({\vec{v}})$ is contained in the open hemisphere defined by a vector ${\vec{n}} \in S^2$, we conclude by a similar reasoning that a disk neighborhood of the star of $\vec{v}$ orthogonally projects to the plane $E$ through $\vec{v}$ orthogonal to ${\vec{n}}$ in a bijective and orientation-preserving way.
\end{proof}

\begin{remark}
Note that the Gauss image only depends on a disk neighborhood of the vertex star. In particular, it may happen that even though there exists a plane onto which a disk neighborhood of the vertex star projects orthogonally and bijectively, it is not true for the whole vertex star. However, such a situation cannot occur for simplicial surfaces.
\end{remark}

Due to Theorem~\ref{th:shape}~(i), $g({\vec{v}})$ is contained in an open hemisphere if it has no self-intersections and $K({\vec{v}})>0$. Hence, the assumption that $g({\vec{v}})$ is contained in an open hemisphere is only a restriction in the case of negative discrete Gaussian curvature.

\begin{definition}
Let $\vec{v}$ be an interior vertex of $P$ and assume that $g({\vec{v}})$ is contained in an open hemisphere with pole ${\vec{n}}\in S^2$ lying inside $g({\vec{v}})$. Then, ${\vec{n}}$ is said to be a \textit{discrete normal}.
\end{definition}

\begin{remark}
If ${\vec{n}}$ is a discrete normal, then the plane orthogonal to it is a transverse plane. The reason why ${\vec{n}}$ is a suitable normal of the polyhedral surface, but its orthogonal complement is not necessarily a suitable tangent plane, lies in the fact that we are aiming at projectively invariant notions. Projective transformations do not preserve angles. Since given a discrete normal ${\vec{n}}$, any projection of a disk neighborhood of the vertex star onto a plane along lines parallel to ${\vec{n}}$ is one-to-one, the direction of projection and not the plane itself leads to a definition that is preserved under projective transformations. We will see in the following section how suitable tangent planes are motivated.
\end{remark}

We already mentioned before that if $g({\vec{v}})$ is contained in an open hemisphere $H$, $i({\vec{v}},\xi)$ agrees with the correct algebraic multiplicity of $\xi \in H$ in $g({\vec{v}})$. In the setting that $g({\vec{v}})$ is free of self-intersections, this means that $i({\vec{v}},\xi)=\textnormal{sign } K({\vec{v}})$ for all $\xi$ inside $g({\vec{v}})$.

\begin{corollary}\label{cor:digon}
Let $\vec{v}$ be an interior vertex of $P$ of negative discrete Gaussian curvature and assume that $g({\vec{v}})$ is contained in an open hemisphere. Then, $g({\vec{v}})$ is not a spherical pseudo-digon.
\end{corollary}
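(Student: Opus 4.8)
The plan is to combine the classification of pseudo-digons from Theorem~\ref{th:shape}~(iv) with the geometric constraint imposed by the existence of a transverse plane, which Proposition~\ref{prop:transverse} tells us is equivalent to $g(\vec v)$ lying in an open hemisphere. In the pseudo-digon case we know from Theorem~\ref{th:shape}~(iv) that there are exactly two faces $f$, $f'$ in the star of $\vec v$ with reflex angle $\alpha_f,\alpha_{f'}>\pi$, and both of these are inflection faces. I would fix a discrete normal $\vec n$, so that a disk neighborhood of the star of $\vec v$ projects orthogonally and bijectively onto the plane $E$ through $\vec v$ with normal $\vec n$.

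The key step is to derive a contradiction from the presence of a face with reflex angle. Consider the projection $\pi_E$ of the vertex star onto $E$. Since $\pi_E$ is one-to-one on a disk neighborhood of the star, the image is again a (topological) disk whose boundary is the image of the link of $\vec v$, and each face $f\sim\vec v$ maps to a planar polygon $\pi_E(f)$ with $\vec v$ on its boundary. The interior angle of $\pi_E(f)$ at $\pi_E(\vec v)=\vec v$ equals the angle, measured in $E$, between the projections of the two edges of $f$ incident to $\vec v$; because projection along $\vec n$ is injective on the star, these projected angles must sum to exactly $2\pi$ as we go once around $\vec v$. Now I would argue that if $\alpha_f>\pi$ then the projected angle of $\pi_E(f)$ at $\vec v$ is \emph{also} greater than $\pi$: the plane of $f$ is transverse to $\vec n$ (its normal $\vec n_f$ lies in the open hemisphere with pole $\vec n$, since $\vec n_f\in g(\vec v)$), so orthogonal projection from the plane of $f$ to $E$ along $\vec n$ is an orientation-preserving affine isomorphism of planes, hence sends the reflex sector at $\vec v$ inside $f$ to a reflex sector at $\vec v$ inside $\pi_E(f)$. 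Thus two of the angles in the sum $\sum_{f\sim\vec v}(\text{projected angle at }\vec v)=2\pi$ already exceed $\pi$, forcing the remaining angles to contribute a negative total — which is impossible, as projected angles of the bijectively-projected faces are all positive. Hence no reflex face can occur, contradicting Theorem~\ref{th:shape}~(iv), so $g(\vec v)$ cannot be a pseudo-digon.

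An alternative, perhaps cleaner route avoids the angle bookkeeping: by the argument in the proof of Theorem~\ref{th:shape}~(iv), a reflex face $f$ at $\vec v$ has its plane crossed by the neighboring faces (indeed $f'$ crosses the plane through $f$), and more concretely the face $f$ ``wraps around'' $\vec v$ in the sense that the two edges of $f$ at $\vec v$ together with the interior of $f$ cover directions spanning more than a half-plane. Projecting along $\vec n$ onto $E$, the two edges of $f$ at $\vec v$ project to two rays in $E$; since the full star projects injectively and covers a neighborhood of $\vec v$ in $E$ exactly once, the directions at $\vec v$ are partitioned into the projected sectors of the faces. But the projected sector of $f$ is still reflex (by the orientation-preserving projection of planes argument above), and likewise for $f'$, and these two sectors are disjoint; two disjoint reflex sectors cannot fit inside the $2\pi$ of directions around $\vec v$. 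This is the contradiction.

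The main obstacle I anticipate is making precise and rigorous the claim that a reflex interior angle at $\vec v$ inside a face $f$ \emph{stays} reflex under the projection along $\vec n$ onto $E$ — one has to be careful that ``reflex'' here means the interior of $f$ occupies more than half the directions around $\vec v$ within the plane of $f$, and that the affine isomorphism $\text{plane of }f\to E$ induced by projection along $\vec n$ is orientation-preserving precisely because $\vec n_f$ lies in the hemisphere with pole $\vec n$ (consistent orientation) — this is exactly the content used in the proof of Proposition~\ref{prop:transverse}. Once that local statement is nailed down, the global count (two disjoint reflex sectors cannot coexist among the $2\pi$ of directions around a point in a plane) is immediate, and together with Theorem~\ref{th:shape}~(iv) it rules out the pseudo-digon. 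I would also double-check the boundary subcase $\alpha_f=\pi$, handled in Theorem~\ref{th:shape}~(iv) by passing to the opposite half-plane, to confirm it is likewise excluded.
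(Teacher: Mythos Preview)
Your proposal is correct and follows essentially the same route as the paper: use Proposition~\ref{prop:transverse} to get a transverse plane, argue that reflex angles remain reflex under the projection (the paper asserts this in one line, while you spell out the orientation-preserving affine isomorphism), and conclude that two disjoint reflex sectors cannot fit into the $2\pi$ of directions around the projected vertex, contradicting Theorem~\ref{th:shape}~(iv). Your ``alternative route'' is really the same argument restated, and your identification of the reflex-preservation step as the point needing care is apt---the paper handles it more tersely than you do.
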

\begin{proof}
By Proposition~\ref{prop:transverse}, there is a plane $E$ such that a disk neighborhood of the star of $\vec{v}$ projects orthogonally to $E$ in a one-to-one way. Under orthogonal projection, any reflex angle will be mapped to a reflex angle (or a straight angle in the degenerate case). It follows that the star of $\vec{v}$ contains at most one face $f$ with $\alpha_f>\pi$. In particular, $g({\vec{v}})$ cannot be a spherical pseudo-digon due to the classification result in Theorem~\ref{th:shape}.
\end{proof}

To sum up, our assertion for smoothness of $P$ so far is that the Gauss image of any vertex of non-zero discrete Gaussian curvature has no self-intersections and is contained in an open hemisphere. 


\subsection{Discrete Dupin indicatrices and asymptotic directions}\label{sec:Dupin}

\begin{figure}[htbp]
   \begin{center}
    \includegraphics[scale=1.3]{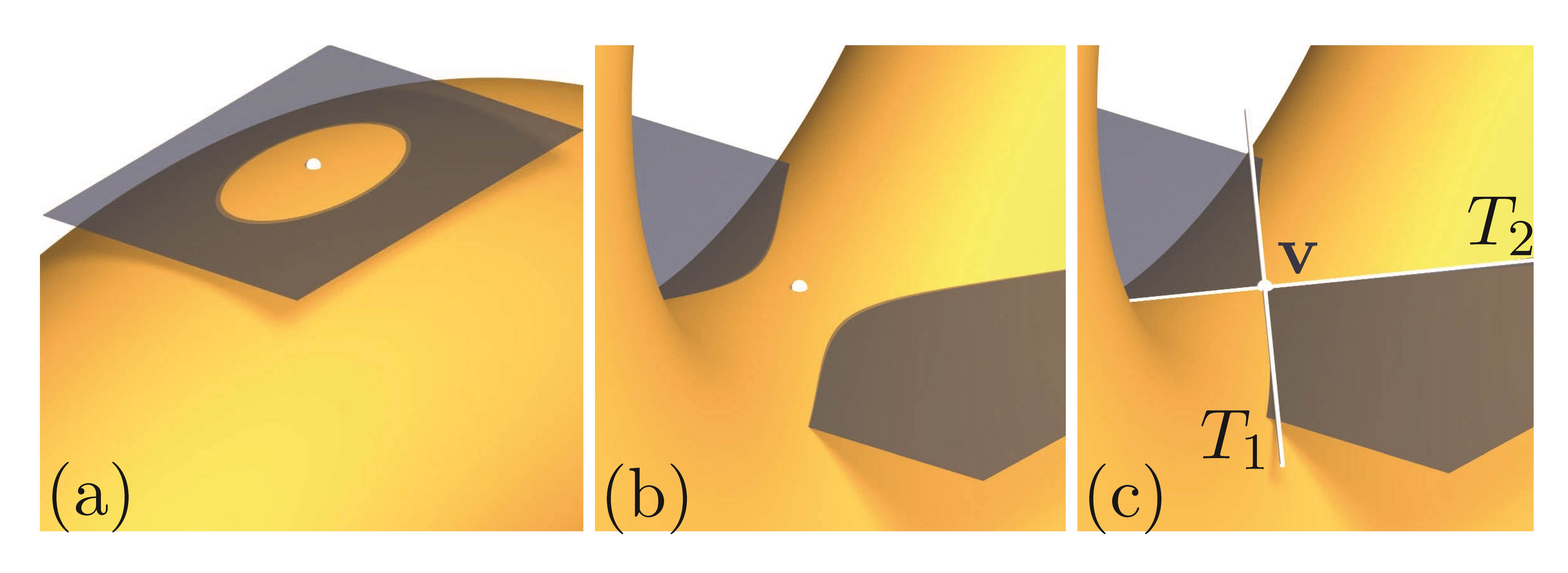}
   \caption{In the smooth theory, the Dupin indicatrix indicates how the intersection of the surface with a plane parallel to the tangent plane at a point that is close to it looks like. (a) For points of positive Gaussian curvature the indicatrix is an ellipse. (b) For points of negative Gaussian curvature it is a hyperbola whose asymptotes indicate the asymptotic directions. (c) In the latter case, the asymptotic directions $T_1,T_2$ at a point are defined as the tangents of the two smooth curves that arise in the intersection of the tangent plane with the surface}
   \label{fig:dupin}
	\end{center}
\end{figure}


\subsubsection{Discrete Dupin indicatrices}\label{sec:discrete_Dupin}

In the following, we will discuss a discrete analog of the Dupin indicatrix for polyhedral surfaces. For this purpose, we propose as suitable tangent planes to a vertex $\vec{v}$ planes orthogonal to some ${\vec{n}}$ such that $g({\vec{v}})$ is star-shaped with respect to ${\vec{n}}$. Of course, this is an additional requirement to $g({\vec{v}})$ if $K({\vec{v}})<0$, but we will explain why it is well-motivated.

\begin{definition}
A convex polygon is said to be a \textit{discrete ellipse}, see for example Fig.~\ref{fig:Dupin_positive}. A \textit{discrete hyperbola} is the union of two infinite simple polylines, none of which contains an inflection edge, such that the convex hulls of the polylines are disjoint as in Fig.~\ref{fig:Dupin_quad}.
\end{definition}

The case of positive discrete Gaussian curvature is fairly simple.

\begin{proposition}\label{prop:Dupin_positive}
Assume that $K({\vec{v}})>0$ and that $g({\vec{v}})$ has no self-intersec\-tions. Then, $g({\vec{v}})$ is star-shaped with respect to any point ${\vec{n}}\in S^2$ in its interior. A plane $E$ orthogonal to ${\vec{n}}$ and close to, but not passing through $\vec{v}$, either does not intersect the star of $\vec{v}$ or intersects it in a discrete ellipse.

\begin{figure}[!ht]
	\centerline{
		\begin{overpic}[height=0.3\textwidth]{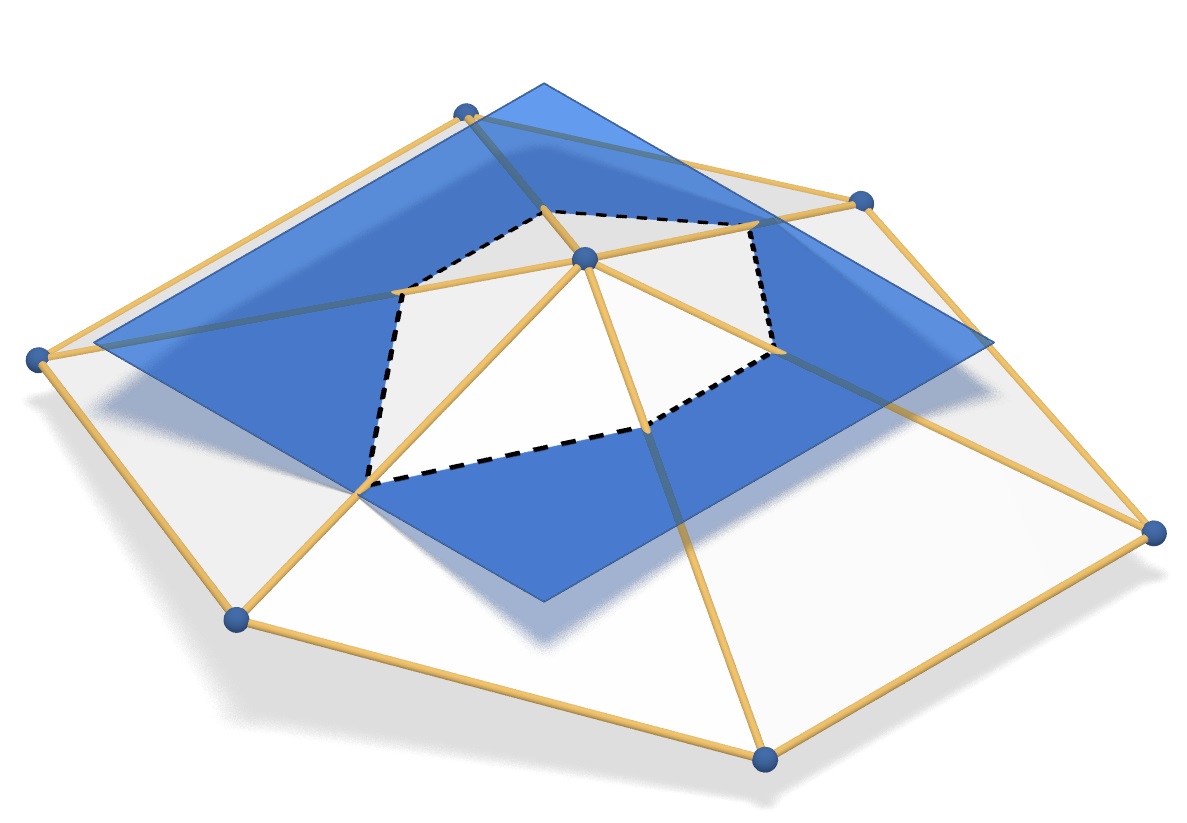}
			\put(45,26){\contour{white}{$f_1$}}
			\put(63,32){\contour{white}{$f_2$}}
			\put(68,44){\contour{white}{$f_3$}}
			\put(51,52){\contour{white}{$f_4$}}
			\put(33,50){\contour{white}{$f_5$}}
			\put(25,34){\contour{white}{$f_6$}}
		\end{overpic}
		\relax\\
		\begin{overpic}[height=0.3\textwidth]{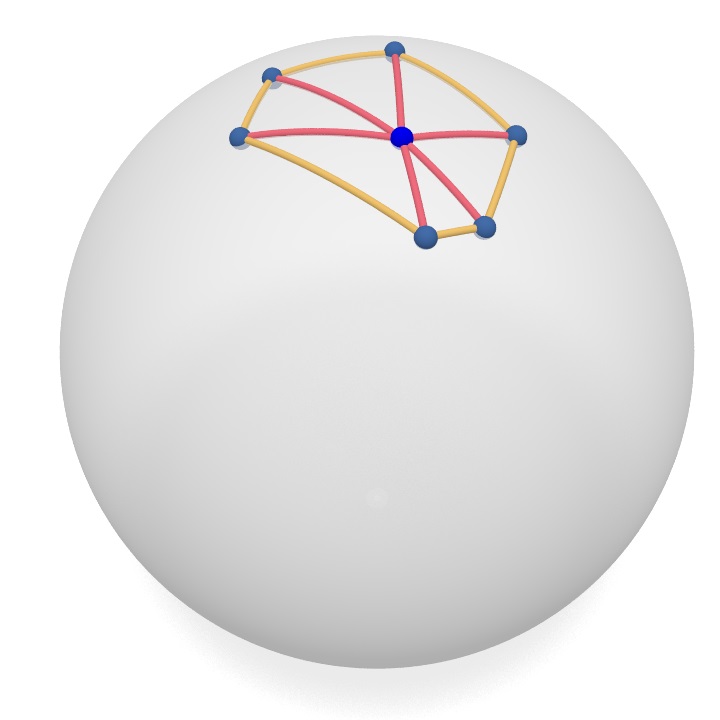}
			\cput{61}{61}{$\vec{n}_{1}$}
			\cput{72}{62}{$\vec{n}_{2}$}
			\cput{78}{79}{$\vec{n}_{3}$}
			\cput{61}{93}{$\vec{n}_{4}$}
			\cput{33}{92}{$\vec{n}_{5}$}
			\cput{31}{75}{$\vec{n}_{6}$}	
				\cput{60}{82}{\color{blue}{$\vec{n}$}}
		\end{overpic}
	}
	\caption{Proposition~\ref{prop:Dupin_positive}: Discrete ellipse as discrete Dupin indicatrix}\label{fig:Dupin_positive}
\end{figure}

\end{proposition}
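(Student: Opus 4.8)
The plan is to read off both statements from the description of a convex corner in Theorem~\ref{th:shape}~(i). Since $K(\vec{v})>0$ and $g(\vec{v})$ has no self-intersections, that theorem tells us $g(\vec{v})$ bounds a convex spherical polygon and $\vec{v}$ is a convex corner; by the text following Theorem~\ref{th:shape}, $g(\vec{v})$ is moreover contained in an open hemisphere. The first claim is then immediate: a spherically convex region in an open hemisphere contains, for any interior point $\vec{n}$ and any point $p$ of the region, the short great-circle arc from $\vec{n}$ to $p$, which is exactly star-shapedness with respect to $\vec{n}$.

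For the second claim, let $\vec{e}_1,\dots,\vec{e}_n$ be the edge directions at $\vec{v}$ in cyclic order and let $C:=\vec{v}+\mathrm{cone}(\vec{e}_1,\dots,\vec{e}_n)$ be the tangent cone; a disk neighborhood of the star of $\vec{v}$ coincides with $C$, and since $\vec{v}$ is a convex corner $C$ is a convex polyhedral cone. It is full-dimensional because no two adjacent faces are coplanar, and it is pointed, for a line in $C-\vec{v}$ would force every $\vec{n}_{f_s}$ into a common plane $\vec{d}^{\perp}$ and hence $g(\vec{v})$ onto a great circle, enclosing zero area --- impossible since $K(\vec{v})>0$. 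The crucial step is to identify the region enclosed by $g(\vec{v})$ with $(C-\vec{v})^{\circ}\cap S^2$, the spherical section of the polar cone. Each arc of $g(\vec{v})$ joins $\vec{n}_{f_s}$ to $\vec{n}_{f_{s+1}}$, both orthogonal to the shared edge direction $\vec{e}_{s+1}$, so it lies on the great circle $\vec{e}_{s+1}^{\perp}\cap S^2$; since $\langle\vec{n}_{f_j},\vec{e}_{s+1}\rangle\le 0$ for all $j$, this great circle supports the pointed cone $(C-\vec{v})^{\circ}$, meeting it in the facet generated by $\vec{n}_{f_s},\vec{n}_{f_{s+1}}$, and because $(C-\vec{v})^{\circ}$ lies in an open half-space the boundary arc of $(C-\vec{v})^{\circ}\cap S^2$ between these two extreme rays is the short one, i.e.\ the arc of $g(\vec{v})$. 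Hence $g(\vec{v})$ traces $\partial\big((C-\vec{v})^{\circ}\cap S^2\big)$ and encloses it. Now $\vec{n}$ is an interior point of $g(\vec{v})$ iff $\vec{n}\in\mathrm{int}\,(C-\vec{v})^{\circ}$, and by the standard fact that $\mathrm{int}\,K^{\circ}=\{\vec{y}:\langle\vec{y},\vec{x}\rangle<0\text{ for all }\vec{x}\in K\setminus\{0\}\}$ for a pointed full-dimensional closed convex cone $K$, this means $\langle\vec{n},\vec{x}-\vec{v}\rangle<0$ for all $\vec{x}\in C\setminus\{\vec{v}\}$; equivalently, $\langle\,\cdot\,,\vec{n}\rangle$ has a strict maximum over $C$ at $\vec{v}$.

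It remains to cut $C$ by planes orthogonal to $\vec{n}$. Write $h_0:=\langle\vec{v},\vec{n}\rangle$ and $E_h:=\{\vec{x}:\langle\vec{x},\vec{n}\rangle=h\}$. For $h>h_0$ the plane $E_h$ misses $C$ entirely, since $\langle\,\cdot\,,\vec{n}\rangle\le h_0$ on $C$; so on that side a plane sufficiently close to $\vec{v}$ does not meet the star. For $h<h_0$ the slice $E_h\cap C$ is a compact convex polygon: compact because $\langle\,\cdot\,,\vec{n}\rangle$ is strictly negative on the unit vectors of the pointed cone $C-\vec{v}$, so that $\{\vec{x}\in C:\langle\vec{x},\vec{n}\rangle\ge h\}$ is bounded; two-dimensional because $E_h$ meets the interior of $C$. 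That same strict negativity forces $\{\vec{x}\in C:\langle\vec{x},\vec{n}\rangle\ge h\}$ to contract to $\{\vec{v}\}$ as $h\uparrow h_0$, so for $h$ close enough to $h_0$ the slice lies inside the disk neighborhood on which the star equals $C$, where it coincides with $E_h\cap\mathrm{star}(\vec{v})$. Thus a plane orthogonal to $\vec{n}$, close to but not through $\vec{v}$, meets the star either in the empty set or in a convex polygon, i.e.\ a discrete ellipse.

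I expect the middle paragraph --- the dictionary identifying the interior of the Gauss image with the strict interior of the polar of the tangent cone --- to be the main obstacle; once it is in place the plane sections are elementary convex geometry, the only subtlety being the bookkeeping that keeps the sections inside the region where the star agrees with its tangent cone.
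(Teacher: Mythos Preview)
Your argument is correct and follows the same route as the paper: invoke Theorem~\ref{th:shape}(i) to obtain that $g(\vec{v})$ is a convex spherical polygon and $\vec{v}$ a convex corner, then observe that slices of the convex tangent cone by planes orthogonal to an interior normal $\vec{n}$ are convex polygons or empty. The paper's proof is much terser---it simply asserts that the star lies on one side of the plane through $\vec{v}$ orthogonal to $\vec{n}$---whereas you justify this carefully by identifying the region enclosed by $g(\vec{v})$ with the spherical section of the polar cone $(C-\vec{v})^{\circ}$, which is a nice addition. One small slip to clean up: near $\vec{v}$ the star of $\vec{v}$ is the \emph{lateral boundary} of the solid cone $C$, not $C$ itself, so $E_h\cap\mathrm{star}(\vec{v})$ is the boundary curve of the convex set $E_h\cap C$ rather than that set; this does not affect the conclusion.
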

\begin{proof}
By Theorem~\ref{th:shape}~(i), $g({\vec{v}})$ is a convex spherical polygon, in particular star-shaped with respect to any of its interior points. Furthermore, $\vec{v}$ is convex. It follows that for any ${\vec{n}}$ in the interior of $g({\vec{v}})$, the star of $\vec{v}$ lies on one side of the plane orthogonal to ${\vec{n}}$ and passing through $\vec{v}$. If the plane is moved toward the vertex star, then the intersection is a convex polygon, if it is moved away, then the intersection is empty.
\end{proof}

\begin{theorem}\label{th:Dupin_negative}
To prevent disconnected intersections in the intersection of a plane with a face in the star of $\vec{v}$, we consider in the following an embedded disk neighborhood of $\vec{v}$ in the star of $\vec{v}$ that is composed of one triangle per original face $f \sim {\vec{v}}$ with convex angle at $\vec{v}$ and two triangles per face with a reflex angle. For simplicity, we consider these triangles to be infinitely large and only bounded by their edges incident to $\vec{v}$. The resulting infinite vertex star is denoted by $V$.

Assume that $K({\vec{v}})<0$ and that $g({\vec{v}})$ has no self-intersections. Let ${\vec{n}}\in S^2$ be a point in its interior and $E$ be a plane orthogonal to ${\vec{n}}$ and not passing through $\vec{v}$. 

\begin{enumerate}
\item If $-{\vec{n}}$ is contained in $g({\vec{v}})$, then the intersection of $E$ with $V$ has three connected components.

\begin{figure}[!ht]
	\centerline{
		\begin{overpic}[height=0.3\textwidth]{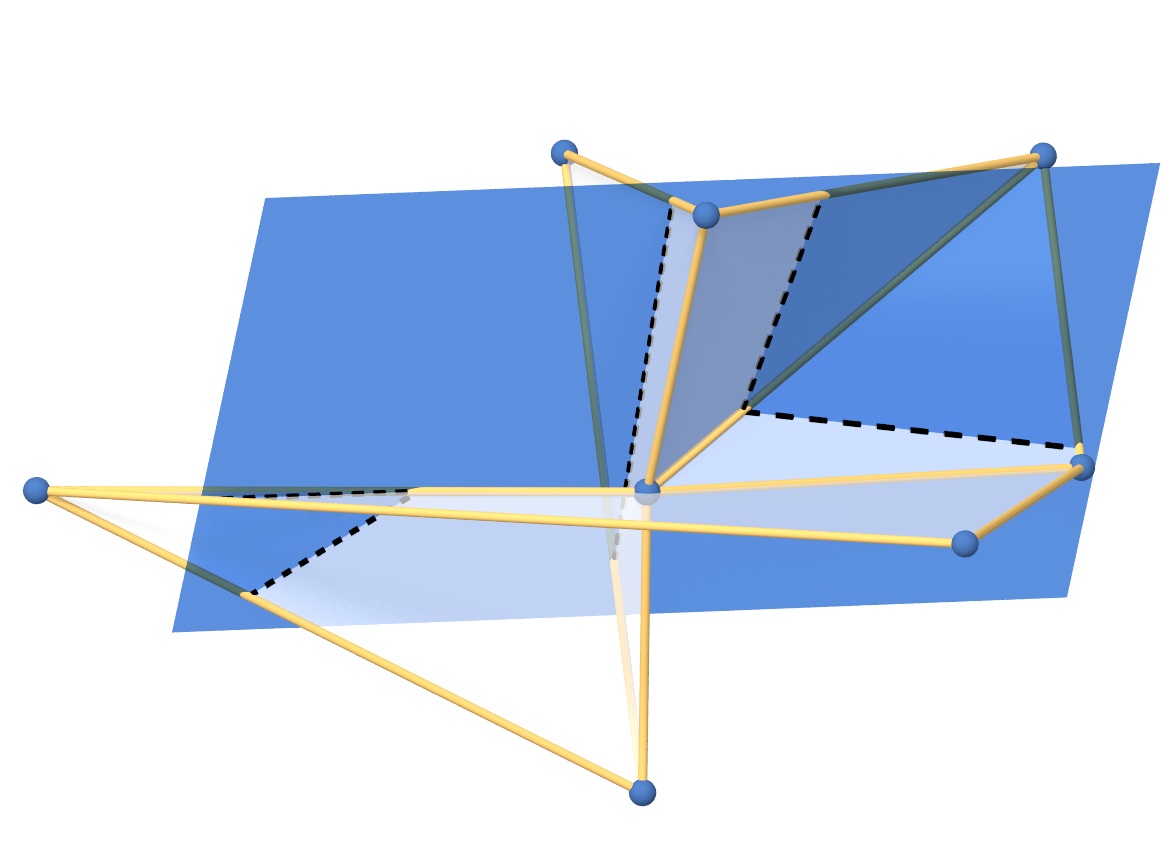}
			\put(40,22){\contour{white}{$f_1$}}
			\put(76,30){\contour{white}{$f_2$}}
			\put(80,43){\contour{white}{$f_3$}}
			\put(65,49){\contour{white}{$f_4$}}
			\put(53,45){\contour{white}{$f_5$}}
		\end{overpic}
		\relax\\
		\begin{overpic}[height=0.3\textwidth]{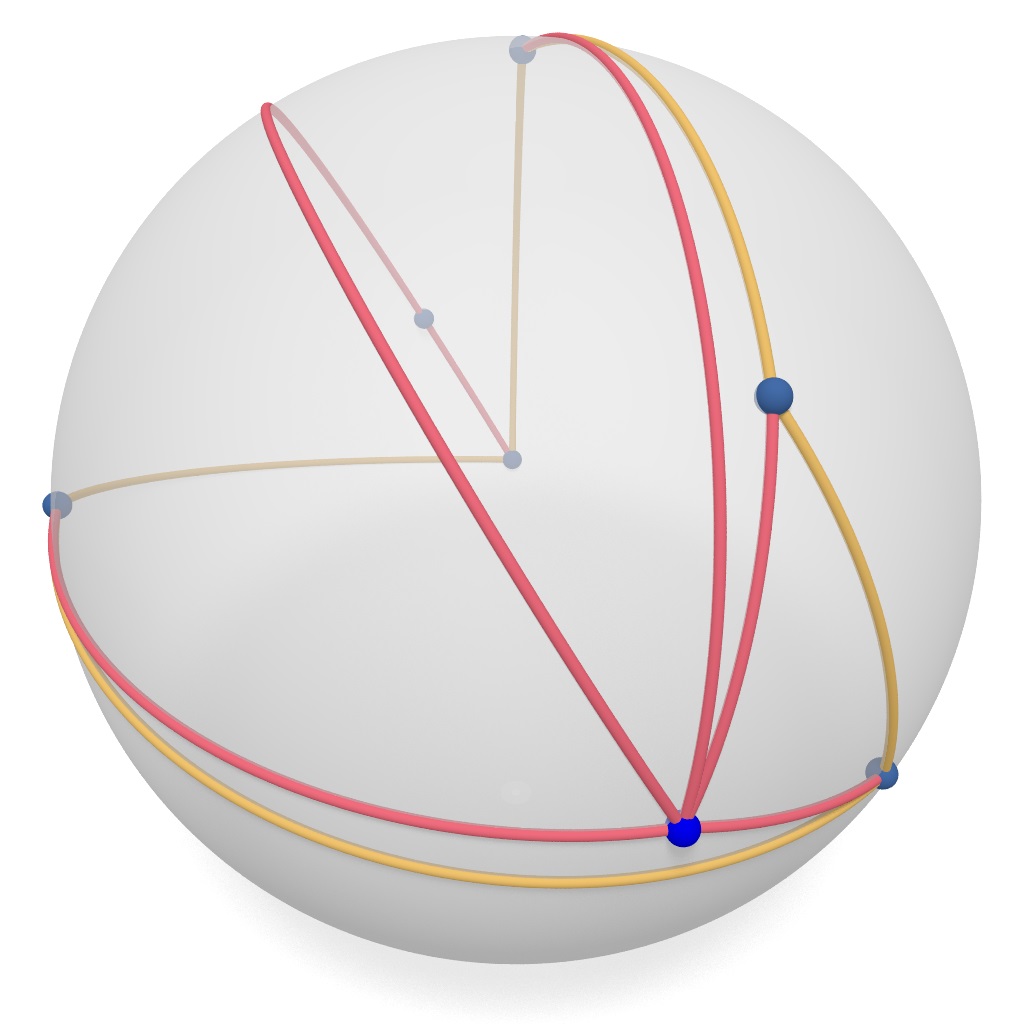}
			\cput{56}{53}{$\vec{n}_{1}$}
			\cput{53}{98}{$\vec{n}_{2}$}
			\cput{83}{59}{$\vec{n}_{3}$}
			\cput{93}{21}{$\vec{n}_{4}$}
			\cput{5}{53}{$\vec{n}_{5}$}
			\cput{43}{70}{$\mbox{-}\vec{n}$}
			\cput{62.5}{19}{\color{blue}{$\vec{n}$}}
		\end{overpic}
	}
	\caption{Theorem~\ref{th:Dupin_negative}~(i): Three components when $\mbox{-}{\vec{n}}$ is contained in $g({\vec{v}})$}\label{fig:Dupin_antipodal}
\end{figure}

\item Assume now that $-{\vec{n}}$ is not contained in $g({\vec{v}})$. Then, the intersection of $E$ with $V$ consists of two distinct polylines. If and only if $g({\vec{v}})$ is star-shaped with respect to ${\vec{n}}$, then the intersection of any such $E$ with $V$ is a \textit{discrete hyperbola}. There exist planes $E$ such that one of the polylines is a single straight line segment if and only if the vertex star contains a face $f$ with $\alpha_f > \pi$ which is not an inflection face, see Fig.~\ref{fig:Dupin_triangle_degenerate}.

\begin{figure}[!ht]
	\centerline{
		\begin{overpic}[height=0.3\textwidth]{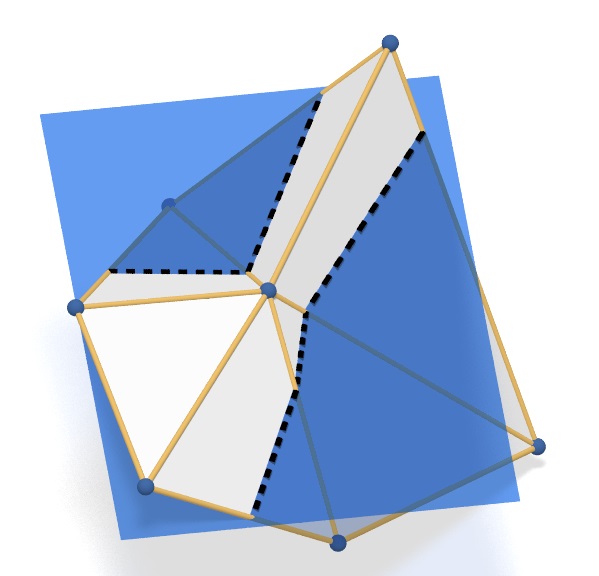}
			\put(25,34){\contour{white}{$f_1$}}
			\put(35,22){\contour{white}{$f_2$}}
			\put(61,26){\contour{white}{$f_3$}}
			\put(60,50){\contour{white}{$f_4$}}
			\put(36,60){\contour{white}{$f_5$}}
			\put(25,50){\contour{white}{$f_6$}}
			\cput{48}{48}{$\vec{v}$}	
		\end{overpic}
		\relax\\
		\begin{overpic}[height=0.3\textwidth]{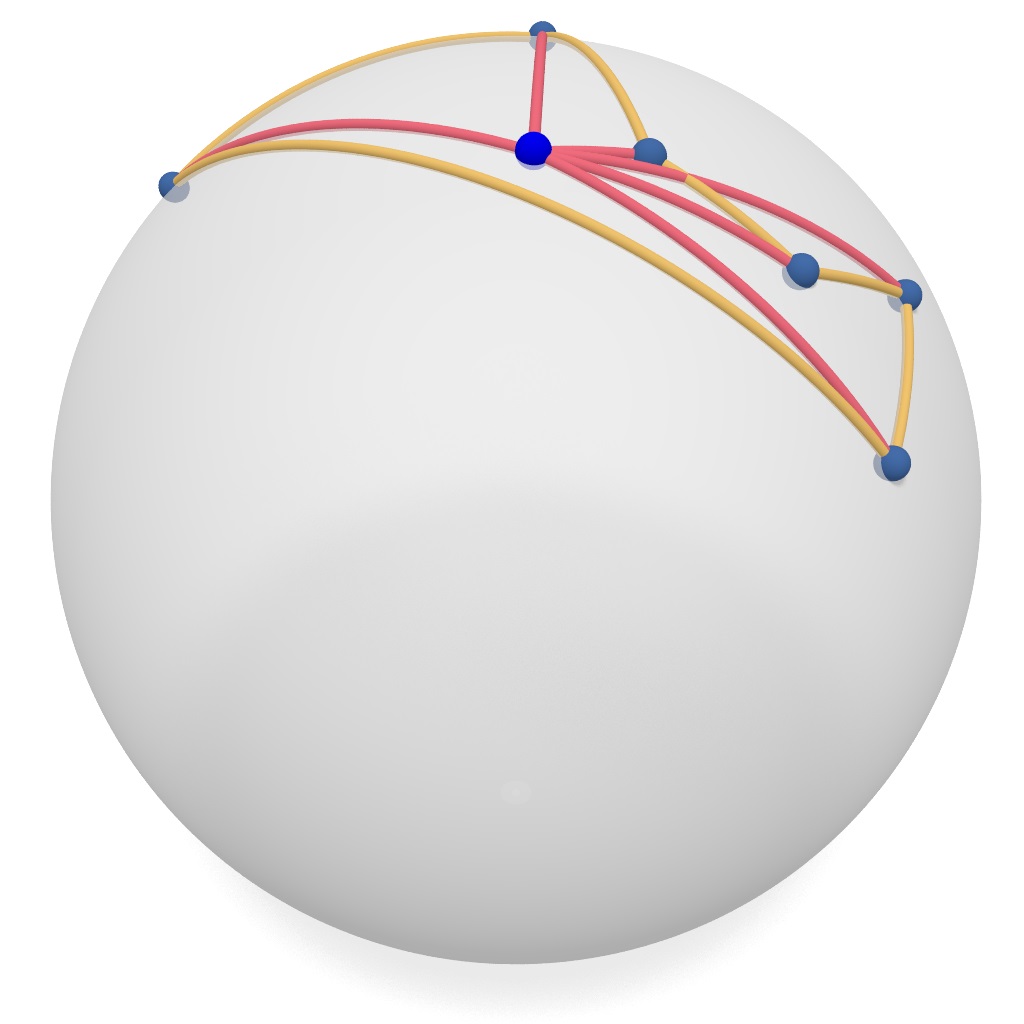}
			\cput{70}{85}{$\vec{n}_{1}$}
			\cput{82}{67}{$\vec{n}_{2}$}
			\cput{95}{69}{$\vec{n}_{3}$}
			\cput{90}{49}{$\vec{n}_{4}$}
			\cput{11}{81}{$\vec{n}_{5}$}
			\cput{52}{99}{$\vec{n}_{6}$}	
				\cput{48}{88}{\color{blue}{$\vec{n}$}}
		\end{overpic}
	}
	\caption{Theorem~\ref{th:Dupin_negative}~(ii), case (ii) of Theorem~\ref{th:shape}: An inflection edge appears when the Gauss image is not star-shaped}\label{fig:Dupin_quad_not}
\end{figure}

\begin{figure}[!ht]
	\centerline{
		\begin{overpic}[height=0.3\textwidth]{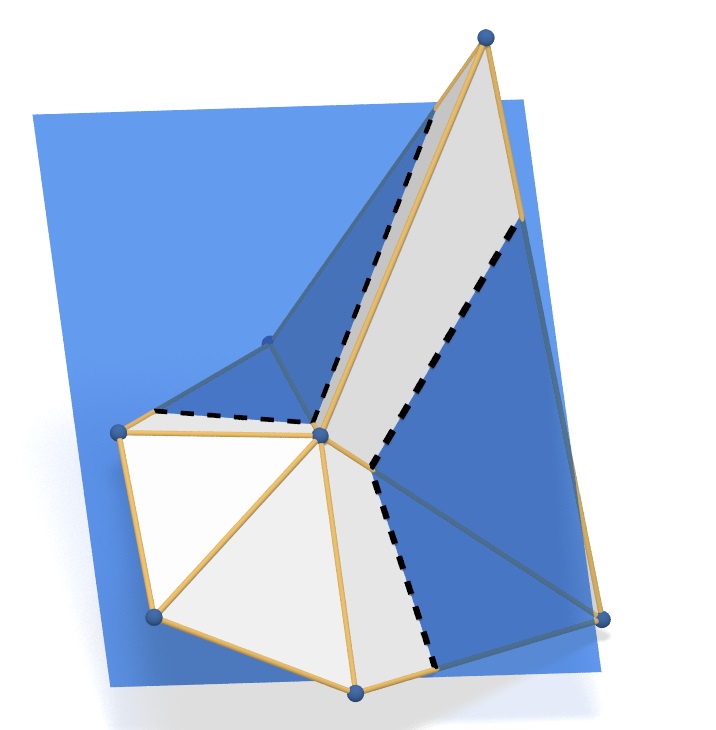}
	\put(25,32){\contour{white}{$f_1$}}
	\put(35,22){\contour{white}{$f_2$}}
	\put(53,20){\contour{white}{$f_3$}}
	\put(60,45){\contour{white}{$f_4$}}
	\put(42,55){\contour{white}{$f_5$}}
	\put(30,43){\contour{white}{$f_6$}}
	\cput{48}{40}{$\vec{v}$}	
		\end{overpic}
		\relax\\
		\begin{overpic}[height=0.3\textwidth]{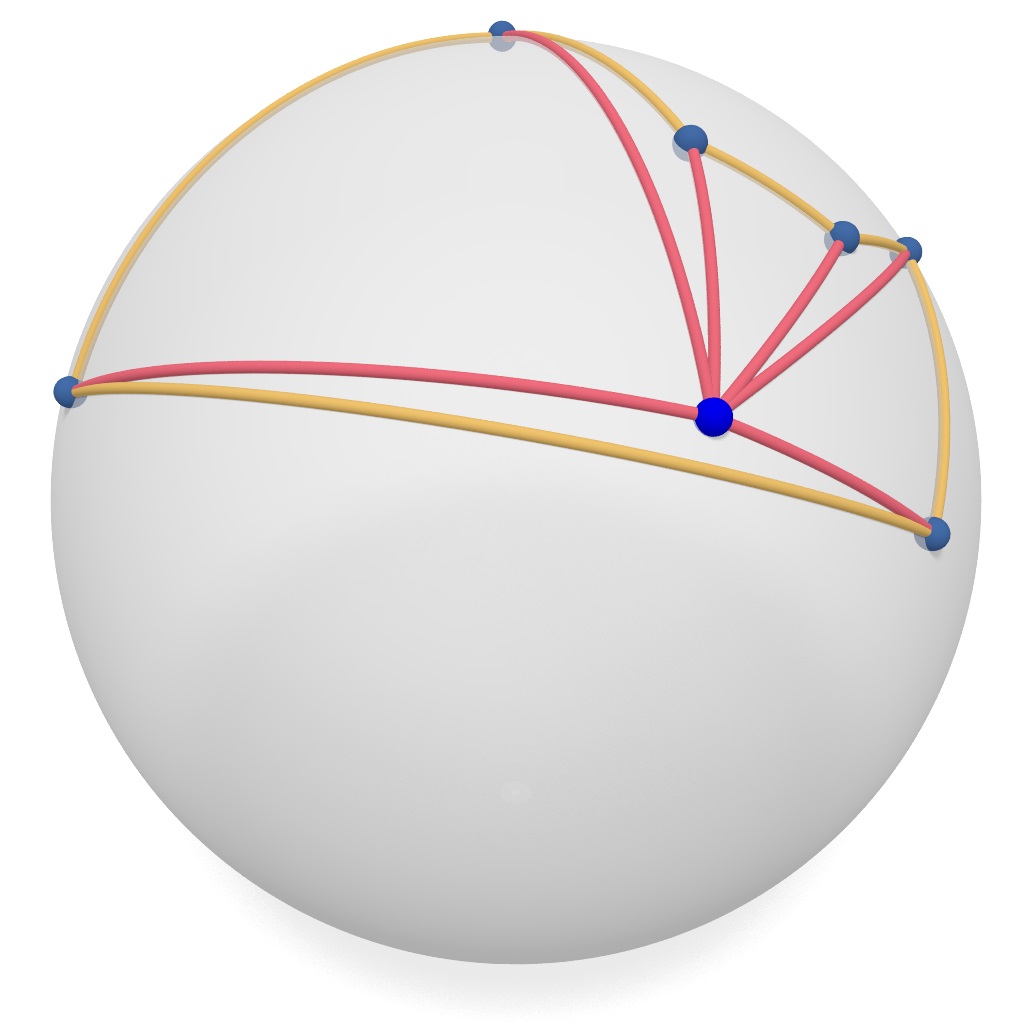}
	\cput{74}{86}{$\vec{n}_{1}$}
	\cput{86}{78.5}{$\vec{n}_{2}$}
	\cput{95}{73}{$\vec{n}_{3}$}
	\cput{90}{42}{$\vec{n}_{4}$}
	\cput{1}{61}{$\vec{n}_{5}$}
	\cput{49}{99}{$\vec{n}_{6}$}	
	\cput{64}{62}{\color{blue}{$\vec{n}$}}
		\end{overpic}
	}
	\caption{Theorem~\ref{th:Dupin_negative}~(ii), case (ii) of Theorem~\ref{th:shape}: Discrete hyperbola as discrete Dupin indicatrix}\label{fig:Dupin_quad}
\end{figure}

\begin{figure}[!ht]
	\centerline{
		\begin{overpic}[height=0.3\textwidth]{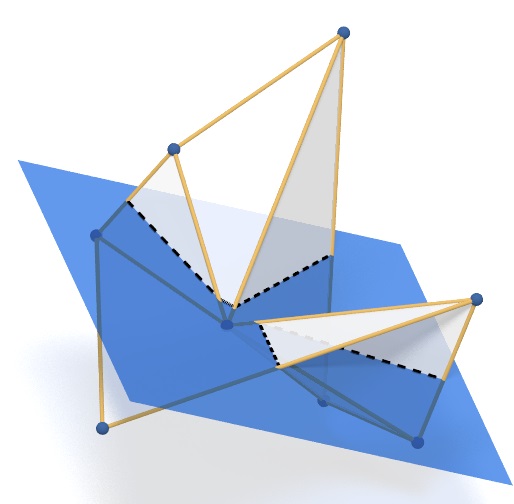}
			\put(30,48){\contour{white}{$f_1$}}
			\put(28,32){\contour{white}{$f_2$}}
			\put(73,26){\contour{white}{$f_3$}}
			\put(60,20){\contour{white}{$f_4$}}
			\put(55,55){\contour{white}{$f_5$}}
			\put(43,60){\contour{white}{$f_6$}}
		\end{overpic}
		\relax\\
		\begin{overpic}[height=0.3\textwidth]{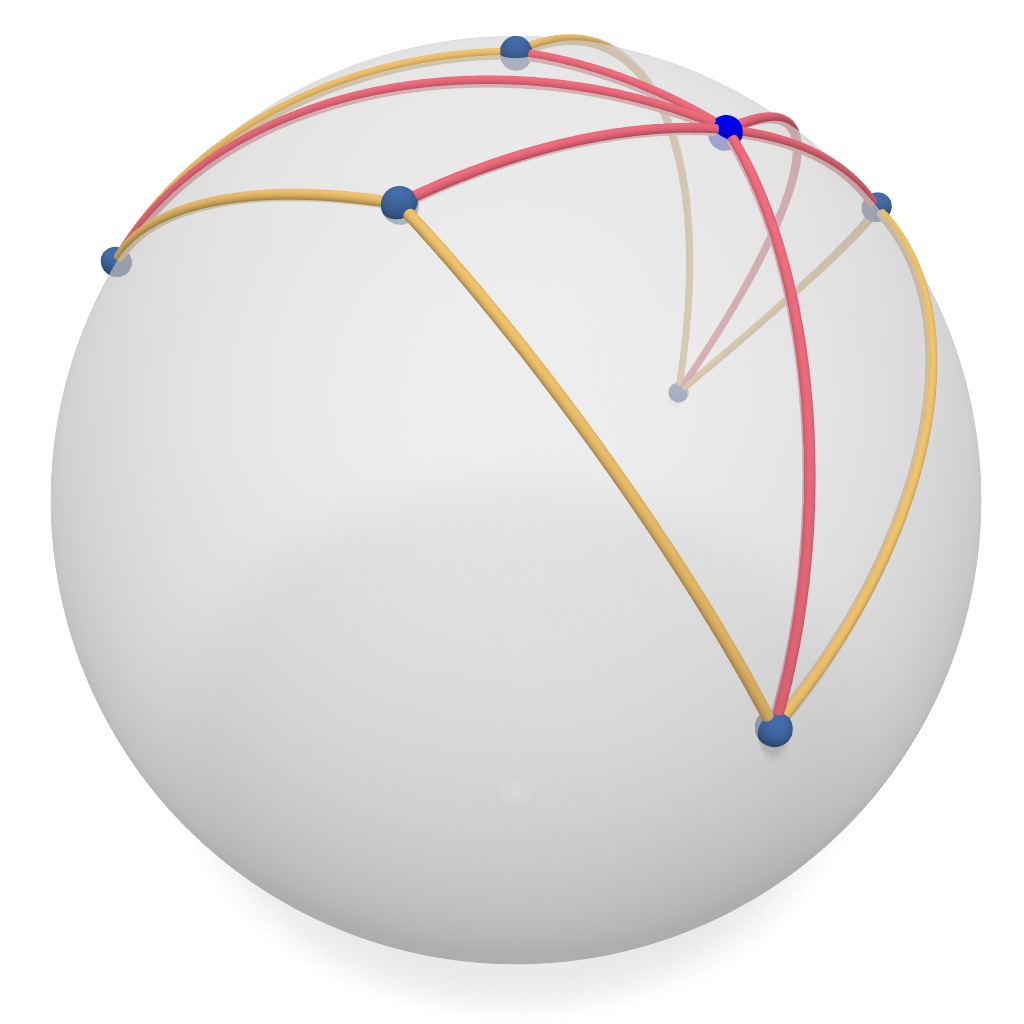}
			\cput{11}{69}{$\vec{n}_{1}$}
			\cput{46}{98}{$\vec{n}_{2}$}
			\cput{66}{56}{$\vec{n}_{3}$}
			\cput{93}{78}{$\vec{n}_{4}$}
			\cput{76}{23}{$\vec{n}_{5}$}
			\cput{38}{74}{$\vec{n}_{6}$}
			\cput{72}{90}{\color{blue}{$\vec{n}$}}
		\end{overpic}
	}
	\caption{Theorem~\ref{th:Dupin_negative}~(ii), case (iii) of Theorem~\ref{th:shape} with four inflection faces: Discrete hyperbola as discrete Dupin indicatrix}\label{fig:Dupin_triangle}
\end{figure}

\begin{figure}[!ht]
	\centerline{
		\begin{overpic}[height=0.3\textwidth]{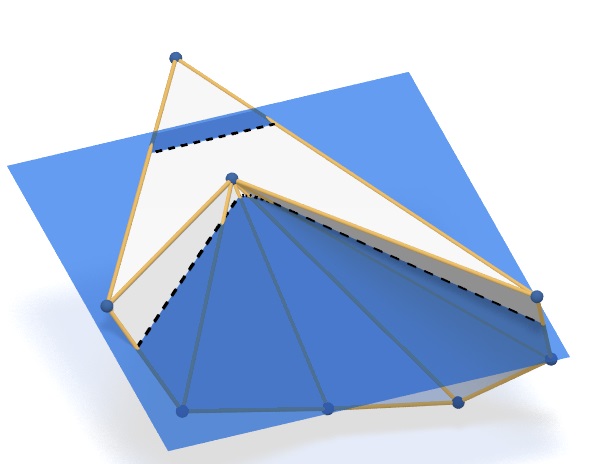}
			\put(78,27){\contour{white}{$f_1$}}
			\put(29,48){\contour{white}{$f_2$}}
			\put(23,23){\contour{white}{$f_3$}}
			\put(38,17){\contour{white}{$f_4$}}
			\put(54,18){\contour{white}{$f_5$}}
			\put(68,22){\contour{white}{$f_6$}}
			\cput{38}{50}{$\vec{v}$}	
		\end{overpic}
		\relax\\
		\begin{overpic}[height=0.3\textwidth]{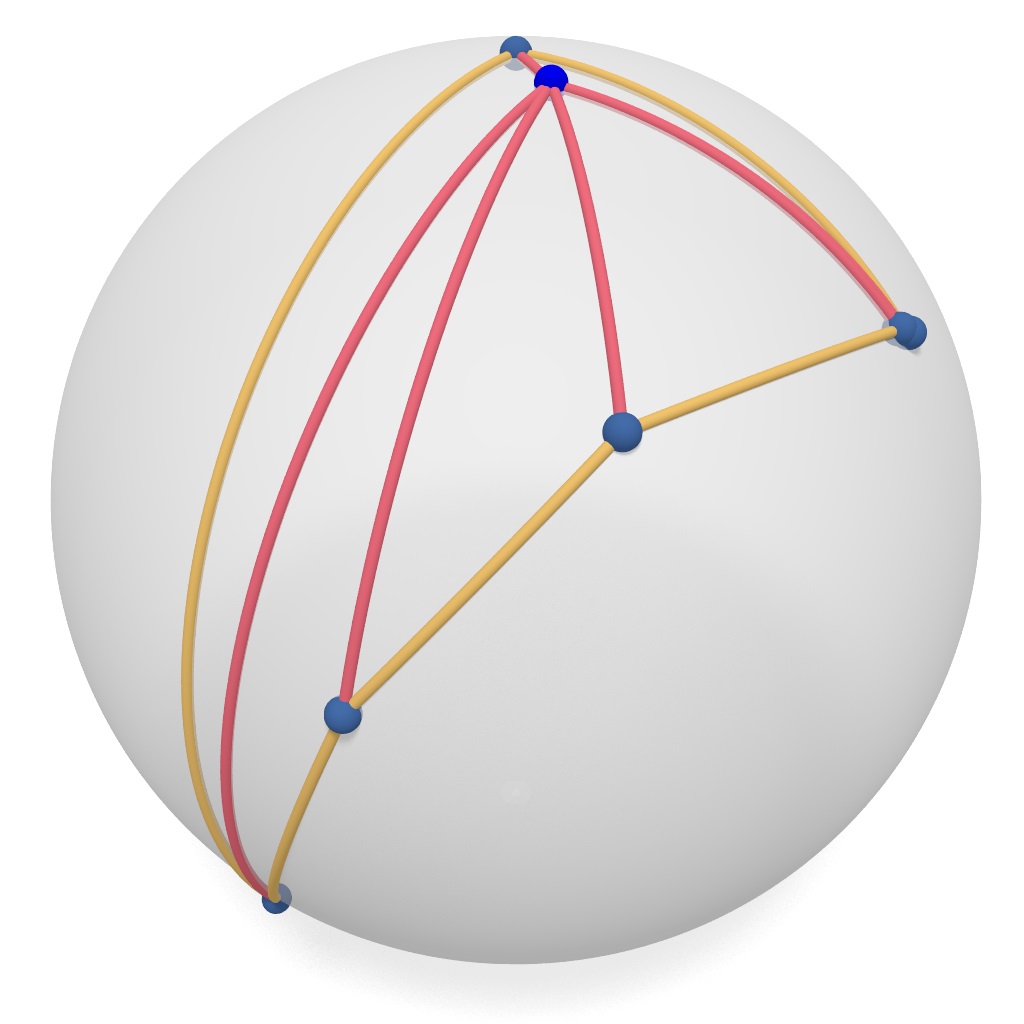}
			\cput{25}{7}{$\vec{n}_{1}$}
			\cput{45}{98}{$\vec{n}_{2}$}
			\cput{92}{70}{$\vec{n}_{3}$}
			\cput{92}{61}{$\vec{n}_{4}$}
			\cput{67}{53}{$\vec{n}_{5}$}
			\cput{40}{25}{$\vec{n}_{6}$}	
			\cput{59}{92}{\color{blue}{$\vec{n}$}}
		\end{overpic}
	}
	\caption{Theorem~\ref{th:Dupin_negative}~(ii), case (iii) of Theorem~\ref{th:shape} with two inflection faces: Discrete hyperbola as discrete Dupin indicatrix}\label{fig:Dupin_triangle_degenerate}
\end{figure}


\begin{figure}[!ht]
	\centerline{
		\begin{overpic}[height=0.3\textwidth]{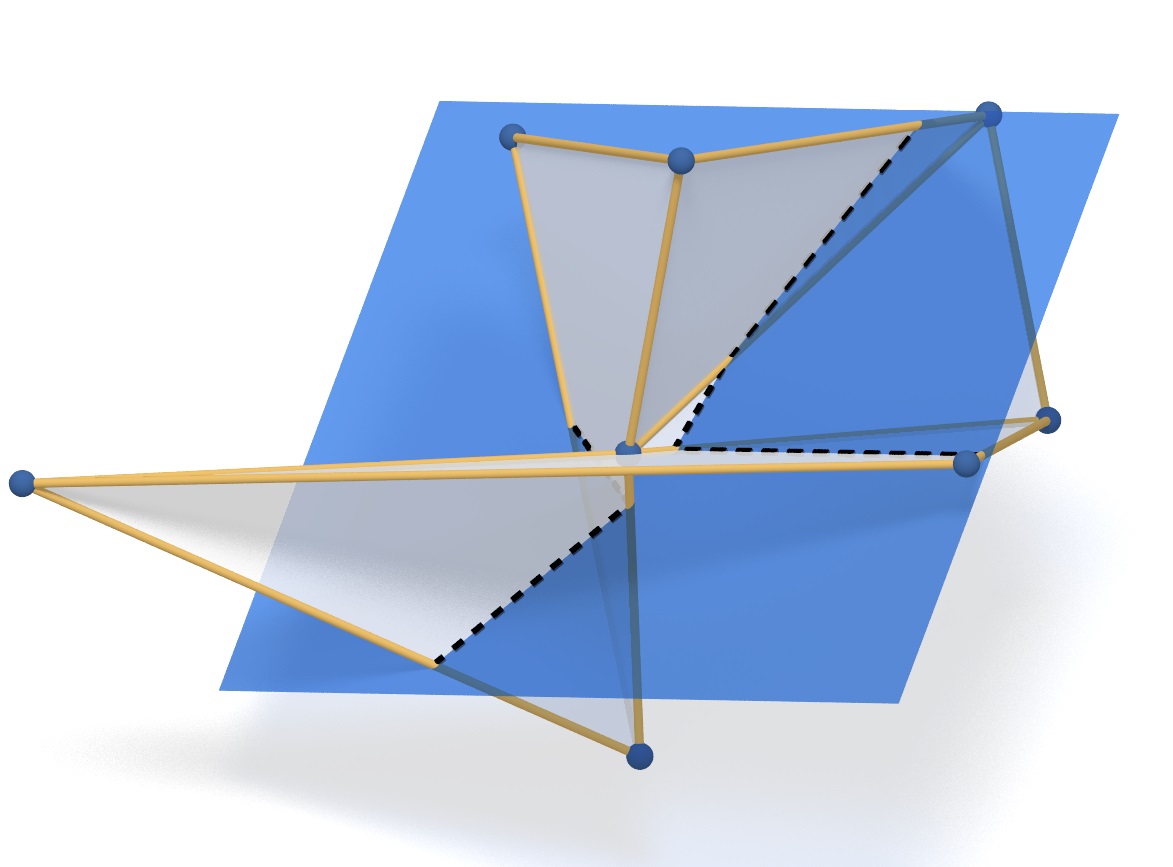}
			\put(36,22){\contour{white}{$f_1$}}
			\put(73,34){\contour{white}{$f_2$}}
			\put(77,45){\contour{white}{$f_3$}}
			\put(60,50){\contour{white}{$f_4$}}
			\put(49,53){\contour{white}{$f_5$}}
		\end{overpic}
		\relax\\
		\begin{overpic}[height=0.3\textwidth]{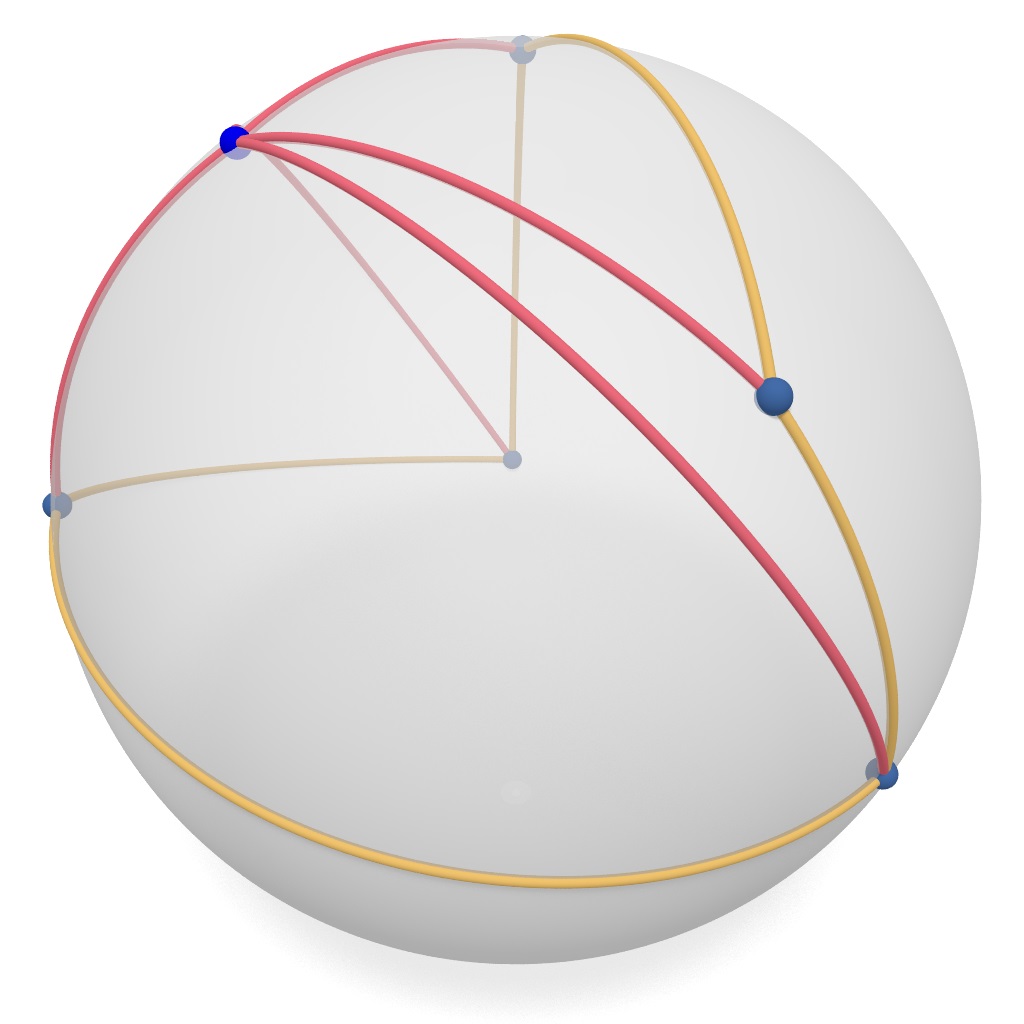}
		\cput{56}{54}{$\vec{n}_{1}$}
		\cput{53}{98}{$\vec{n}_{2}$}
		\cput{83}{61}{$\vec{n}_{3}$}
		\cput{93}{20}{$\vec{n}_{4}$}
		\cput{0}{50}{$\vec{n}_{5}$}
		\cput{23}{89}{\color{blue}{$\vec{n}$}}
		\end{overpic}
	}
	\caption{Theorem~\ref{th:Dupin_negative}~(ii), case (iv) of Theorem~\ref{th:shape}: Discrete hyperbola as discrete Dupin indicatrix}\label{fig:Dupin_Digon}
\end{figure}

\end{enumerate}
\end{theorem}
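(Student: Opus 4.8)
The plan is to intersect $E$ with the infinite triangles of $V$ one at a time and reassemble. Write $E=\{x:\langle x-\vec v,\vec n\rangle=c\}$ with $c\ne 0$. An infinite triangle $T$ of $V$ is the convex cone at $\vec v$ spanned by two unit directions $\vec u,\vec u'$; the ray $\vec v+t\vec u$, $t\ge 0$, meets $E$ precisely when $c$ and $\langle\vec u,\vec n\rangle$ have the same sign, and a short computation shows that, say for $c>0$, $E\cap T$ is empty if $\langle\vec u,\vec n\rangle$ and $\langle\vec u',\vec n\rangle$ are both non-positive, a bounded segment if both are positive, and a single unbounded ray otherwise. Label each ray of $V$ by the sign of its inner product with $\vec n$. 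Two consecutive non-empty pieces $E\cap T$ share exactly the point lying on the common ``$+$''-ray between them, so $E\cap V$ is a disjoint union of polylines, one for each maximal cyclic run of ``$+$''-rays; each such polyline has two unbounded ends, since a closed loop would require a run containing every triangle, i.e.\ the absence of ``$-$''-rays, which is ruled out by the index computation below when $K(\vec v)<0$. (Disjointness of the pieces coming from different runs is routine from the embeddedness of $V$ near $\vec v$.) The case $c<0$ is identical with the roles of the two signs exchanged.

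To count the runs I count sign changes in the cyclic label sequence. A sign change sits exactly at a triangle of $V$ with one ``$+$''- and one ``$-$''-ray, which is the same as saying that $\vec v$ is \emph{middle for $\vec n$} in that triangle in the sense of Definition~\ref{def:index} (applied to the triangulation of the faces of $P$ underlying $V$; the index is triangulation-independent, so this is legitimate). Hence the number of sign changes is $2\,(1-i(\vec v,\vec n))$. By \cite{BG16}, $i(\vec v,\vec n)$ equals the sum of the algebraic multiplicities of $\vec n$ and of $-\vec n$ with respect to $g(\vec v)$; and since $g(\vec v)$ is a simple closed curve with algebraic area $K(\vec v)<0$ (Lemma~\ref{lem:area}), the region $D$ it encloses carries multiplicity $-1$ and $S^2\setminus\overline{D}$ carries multiplicity $0$. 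Thus $i(\vec v,\vec n)=-2$ if $-\vec n\in D$ and $i(\vec v,\vec n)=-1$ if $-\vec n\notin D$, so the cyclic label sequence has $6$ resp.\ $4$ sign changes, i.e.\ $3$ resp.\ $2$ runs of each sign. Reading ``$-\vec n$ contained in $g(\vec v)$'' as ``$-\vec n\in D$'', this proves (i) ($E\cap V$ has three connected components, for both signs of $c$) and the first sentence of (ii) ($E\cap V$ consists of two distinct, unbounded polylines, for every admissible $E$).

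For the remaining assertions in (ii) I identify the edges of the polylines. The piece $E\cap T_f$ lying in the triangle $T_f$ of $V$ coming from a face $f\sim\vec v$ lies in $E$ and in the plane of $f$, hence is parallel to $\vec n\times\vec n_f$. Consequently, along one polyline the successive edge directions are $\vec n\times\vec m_0,\vec n\times\vec m_1,\dots$, where $\vec m_0,\vec m_1,\dots$ is the chain of consecutive vertices of $g(\vec v)$ corresponding to the faces the polyline traverses; and the direction of $\vec n\times\vec m$ depends only on the longitude of $\vec m$ about $\vec n$ (well defined because $-\vec n\notin g(\vec v)$). Hence the polyline is simple, convex and free of inflection edges iff the longitudes of its chain of vertices are monotone. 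The two polylines belong to the two ``$+$''-chains of $g(\vec v)$, and these are separated around $g(\vec v)$ by the two ``$-$''-chains, so if all vertex longitudes are cyclically monotone around $g(\vec v)$ — which is exactly the statement that $g(\vec v)$ is star-shaped with respect to $\vec n$, the radial projection from $\vec n$ wrapping once and monotonically — then the two convex branches are confined to disjoint angular sectors of $E$ and have disjoint convex hulls, so $E\cap V$ is a discrete hyperbola; conversely a failure of star-shapedness yields a non-monotone chain for one of the two signs of $c$, hence a branch with an inflection edge. Finally, a face $f$ with $\alpha_f>\pi$ that is not an inflection face is exactly the situation in which two triangles of $V$ sharing a ray carry the \emph{same} normal $\vec n_f$ (a corner of $g(\vec v)$), so the two polyline edges through them are collinear; choosing $\vec n$ so that one ``$+$''-run consists only of these two triangles makes the corresponding polyline a single straight line segment, and one checks that in the absence of such a face (and for a reflex \emph{inflection} face) neighbouring edge directions $\vec n\times\vec n_f$ are always distinct and this degeneration cannot occur.

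The step I expect to be most delicate is the evaluation of $i(\vec v,\vec n)$ in the pseudo-digon case of Theorem~\ref{th:shape}(iv), where $g(\vec v)$ is not contained in any open hemisphere and $i(\vec v,\vec n)$ is genuinely Banchoff's index rather than an ordinary winding number; this is also the only case that produces situation (i). A secondary point needing care is the disjointness of the convex hulls of the two branches under star-shapedness, for which one must control the total turning of each branch (at most $\pi$) and treat the unbounded pieces properly, using that the two ``$+$''-chains are separated by the non-degenerate longitude ranges of the two ``$-$''-chains.
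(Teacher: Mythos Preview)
Your counting of components via Banchoff's index is exactly the paper's argument, and your observation that the edge of $E\cap V$ lying in the face $f$ has direction $\vec n\times\vec n_f$ (hence depends only on the longitude of $\vec n_f$ about $\vec n$) is a clean reformulation of the paper's angle computation $\alpha=\pi-\angle\vec n_{f'}\vec n\,\vec n_f$. So the framework is right and close to the paper's.

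There is, however, a genuine gap in your ``only if'' direction. You claim that if $g(\vec v)$ is not star-shaped with respect to $\vec n$, then for one sign of $c$ some branch has an inflection edge. This is not always true. In the situation of Theorem~\ref{th:shape}(iii) with only two inflection faces, the reflex non-inflection face $f$ contributes a corner $\vec n_f$ of $g(\vec v)$. One can choose $\vec n$ so that the two inflection-face corners are visible from $\vec n$ but $\vec n_f$ is not; then $g(\vec v)$ fails to be star-shaped, yet both polylines are convex (your own criterion: each chain of longitudes is monotone, the non-monotonicity of the full cycle sitting at a chain junction). The paper treats this case explicitly (its Fig.~\ref{fig:Dupin_triangle_not}): it computes the angle at $\vec v$ between the two consecutive rays of $E_0\cap V$ lying in $f$ and in a neighbouring non-inflection face $f'$ and finds it exceeds $\pi$, so for suitable $E$ the two convex branches are \emph{nested}, one convex hull containing the other. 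That is still not a discrete hyperbola, so the theorem survives, but your stated mechanism (an inflection edge) does not cover it.

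Relatedly, your ``if'' direction needs exactly the complementary piece you flag at the end: knowing that each branch is convex and that the edge-direction ranges of the two branches are disjoint angular intervals is not by itself enough to conclude that the convex hulls are disjoint. The paper closes both ends of the equivalence with the same computation: working in $E_0$ (the plane through $\vec v$), the four rays of $E_0\cap V$ have consecutive angles $\pi-\angle\vec n_{f'}\vec n\,\vec n_f$, and star-shapedness is equivalent to all four of these being in $(0,\pi)$. That single statement gives both the disjointness of the convex hulls when star-shaped and the nesting phenomenon when not. Your longitude framework can be made to say the same thing, but you must argue at the level of the cross $E_0\cap V$ rather than only at the level of edge directions of the branches in $E$.
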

\begin{proof}
In \cite{BG16} it was discussed that $i({\vec{v}},\xi)$ equals the sum of winding numbers of $g({\vec{v}})$ around $\xi$ and $-\xi$. In the case that the Gauss image is negatively oriented and free of self-intersections, $i({\vec{v}},\xi)=0$ if $\xi$ is outside $g({\vec{v}})$, $i({\vec{v}},\xi)=-1$ if $\xi$ is inside $g({\vec{v}})$ and $-\xi$ lies outside $g({\vec{v}})$, and $i({\vec{v}},\xi)=-2$ if $g({\vec{v}})$ contains both $\xi$ and $-\xi$.

(i) \cite{BG16} considered the case of triangulated surfaces. In the definition of the index, we can replace the triangles in the counting by the actual faces of the vertex star, but we have to bear in mind that a face $f$ with $\alpha_f>\pi$ counts twice if $\vec{v}$ is an interior point of the intersection of the vertex star with the plane $E_0$ orthogonal to $\xi$ and passing through $\vec{v}$. In any case, $i({\vec{v}},{\vec{n}})=-2$ translates to six line segments emanating from $\vec{v}$ in the intersection of $E_0$ with the star of $\vec{v}$. Thus, a plane $E$ parallel and close to $E_0$ will intersect the vertex star in three connected components as in Fig.~\ref{fig:Dupin_antipodal}.

(ii) Similar to our discussion in the first part, $i({\vec{v}},{\vec{n}})=-1$ implies that the intersection of $E$ with the star of $\vec{v}$ consists of two connected components, namely two polylines.

These two polylines each bound a convex set if and only if no polyline contains an inflection edge. An inflection edge is a line segment whose neighboring two line segments are on different sides of the line through the inflection edge. Since these line segments are intersections of $E$ with faces of $V$, any inflection edge is the intersection of $E$ with an inflection face in the star of $\vec{v}$. Thus, an inflection edge occurs if and only if $E$ intersects an inflection face of $V$ and both its neighboring faces.

Consider the plane $E_0$ orthogonal to ${\vec{n}}$ that passes through $\vec{v}$. Then, there exists $E$ intersecting an inflection face of the vertex star and both its neighboring faces if and only if $E_0$ intersects this inflection face in $\vec{v}$ only.

Banchoff proved in \cite{B70} that $\vec{v}$ is a middle vertex of a face $f$ with angle $\alpha_f < \pi$ with respect to $\xi \in S^2$ if and only if $\xi$ is contained in one of the two digons in $S^2$ with vertices $\pm{\vec{n}}_f$, opening angle $\alpha_f$, and bounded by the great circles that pass through the arcs of $g({\vec{v}})$ incident to $\vec{v}$. Clearly, $\vec{v}$ is a middle vertex of $f$ with respect to ${\vec{n}}$ if and only if $E_0$ intersects $f$ non-trivially.

By Theorem~\ref{th:shape}, $g({\vec{v}})$ is a spherical pseudo-$n$-gon and its corners correspond to inflection faces $f$ with angle $\alpha_f < \pi$ or a non-inflection face $f$ with angle $\alpha_f > \pi$ (second case of Theorem~\ref{th:shape}~(iii)).

Let us first consider the case that any face $f$ with angle $\alpha_f > \pi$ is an inflection face, see Figs.~\ref{fig:Dupin_quad} and~\ref{fig:Dupin_quad_not} to see the difference between star-shaped and non-star-shaped $g({\vec{v}})$. Then, there are exactly four inflection faces. A spherical pseudo-$n$-gon is star-shaped with respect to ${\vec{n}}$ if and only if its corners can be seen from ${\vec{n}}$, meaning that the great circle arcs connecting the corners with ${\vec{n}}$ lie completely inside $g({\vec{v}})$. Since the faces $f$ with angle $\alpha_f > \pi$ are intersected by $E_0$ anyway, it follows that $g({\vec{v}})$ is star-shaped with respect to ${\vec{n}}$ if and only if $E_0$ intersects exactly all four inflection faces, well fitting to the fact that $i({\vec{v}},{\vec{n}})=-1$ means that the intersection of $E_0$ with $V$ consists of four line segments emanating from $\vec{v}$.

Furthermore, this intersection does not contain a line segment passing through $\vec{v}$, so both polylines in the intersection of $E$ with $V$ consist of at least two line segments. It remains to show that the two polylines always bound disjoint convex sets. We can look equivalently at the intersection of $E_0$ with $V$ and show that the angle between any two consecutive line segments is less than $\pi$. Instead of looking at the intersection angles between $E_0$ and faces $f\sim{\vec{v}}$, we may look at the intersection angles between planes that are orthogonal to both $E_0$ and $f$. Such planes are given by the planes spanned by ${\vec{n}},{\vec{n}}_f$. It easily follows that if the intersections of $E_0$ with $f$ and $f'$ are consecutive in counterclockwise order, then the angle $\alpha$ between the line segments corresponding to $f$ and $f'$ equals $\pi-\angle {\vec{n}}_{f'} {\vec{n}} {\vec{n}}_f$. Since $g({\vec{v}})$ is star-shaped with respect to the interior point ${\vec{n}}$, $0<\angle {\vec{n}}_{f'} {\vec{n}} {\vec{n}}_f<\pi$ and so $0<\alpha<\pi$.

We now come to the second case of Theorem~\ref{th:shape}~(iii), i.e., that there is a face $f\sim{\vec{v}}$ with angle $\alpha_f > \pi$ that is not an inflection face, see Fig.~\ref{fig:Dupin_triangle}. Then, $g({\vec{v}})$ is a spherical pseudo-triangle, two of its corners correspond to inflection faces and the other corner is ${\vec{n}}_f$. The two polylines in the intersection of $E$ with $V$ bound convex sets if and only if the two inflection faces are intersected or equivalently if the normals of the two inflection faces can be seen from ${\vec{n}}$.

Assume that also ${\vec{n}}_f$ can be seen from ${\vec{n}}$. Then, using similar arguments as above, $E_0$ intersects $f$ in a line segment with $\vec{v}$ in its interior (remember that $f$ counts as two inflection faces). It follows that the convex sets which the two polylines in the intersection of $E$ with $V$ bound are disjoint and that one of the polylines is a straight line segment if $\vec{v}$ is on the appropriate side of $E$.

We are left with the case that ${\vec{n}}_f$ cannot be seen from ${\vec{n}}$, see Fig.~\ref{fig:Dupin_triangle_degenerate}. Then, $E_0$ does not intersect $f$ in a line segment containing $\vec{v}$ in its interior, so there has to be a non-inflection face $f'$ that is intersected non-trivially by $E_0$. By the argument sketched above, ${\vec{n}}$ has to lie in one of the digons with vertices $\pm{\vec{n}}_{f'}$ and opening angle $\alpha_{f'}$. It follows that if we compare the orientation of the faces around the vertex star with the reversed orientation of its normals as seen from ${\vec{n}}$, then the order of ${\vec{n}}_{f}$ and ${\vec{n}}_{f'}$ is interchanged.

Clearly, the line segments corresponding to $f$ and $f'$ that appear in the intersection of $E_0$ with $V$ are consecutive if one goes along the vertex star. Let us assume without loss of generality that the intersection with $f'$ comes after the one with $f$ in counterclockwise direction. The angle $\alpha$ between them is given by $\pi-\angle {\vec{n}}_{f'} {\vec{n}} {\vec{n}}_f$. But now, $0>\angle {\vec{n}}_{f'} {\vec{n}} {\vec{n}}_f$, so $\alpha>\pi$. Thus, for suitable $E$ the two polylines in the intersection of $E$ with $V$ bound convex sets such that one is containing the other, see Fig.~\ref{fig:Dupin_triangle_not}.

\begin{figure}[!ht]
	\centerline{
		\begin{overpic}[height=0.3\textwidth]{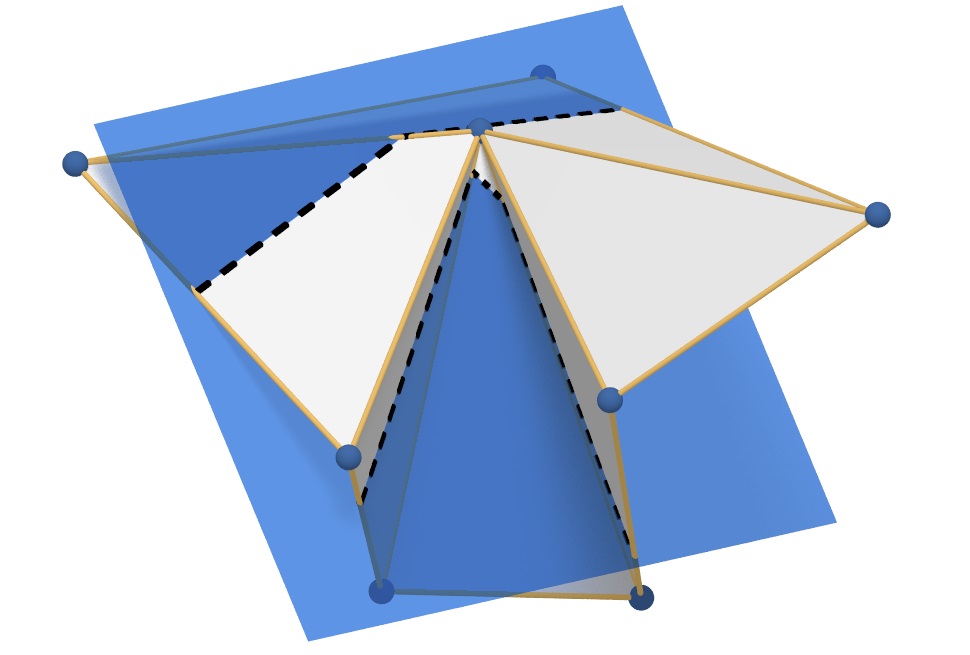}
			\put(63,43){\contour{white}{$f_1$}}
			\put(62,52){\contour{white}{$f_2$}}
			\put(29,40){\contour{white}{$f_3$}}
			\put(37,19){\contour{white}{$f_4$}}
			\put(48,25){\contour{white}{$f_5$}}
			\put(60,20){\contour{white}{$f_6$}}
		\end{overpic}
		\relax\\
		\begin{overpic}[height=0.3\textwidth]{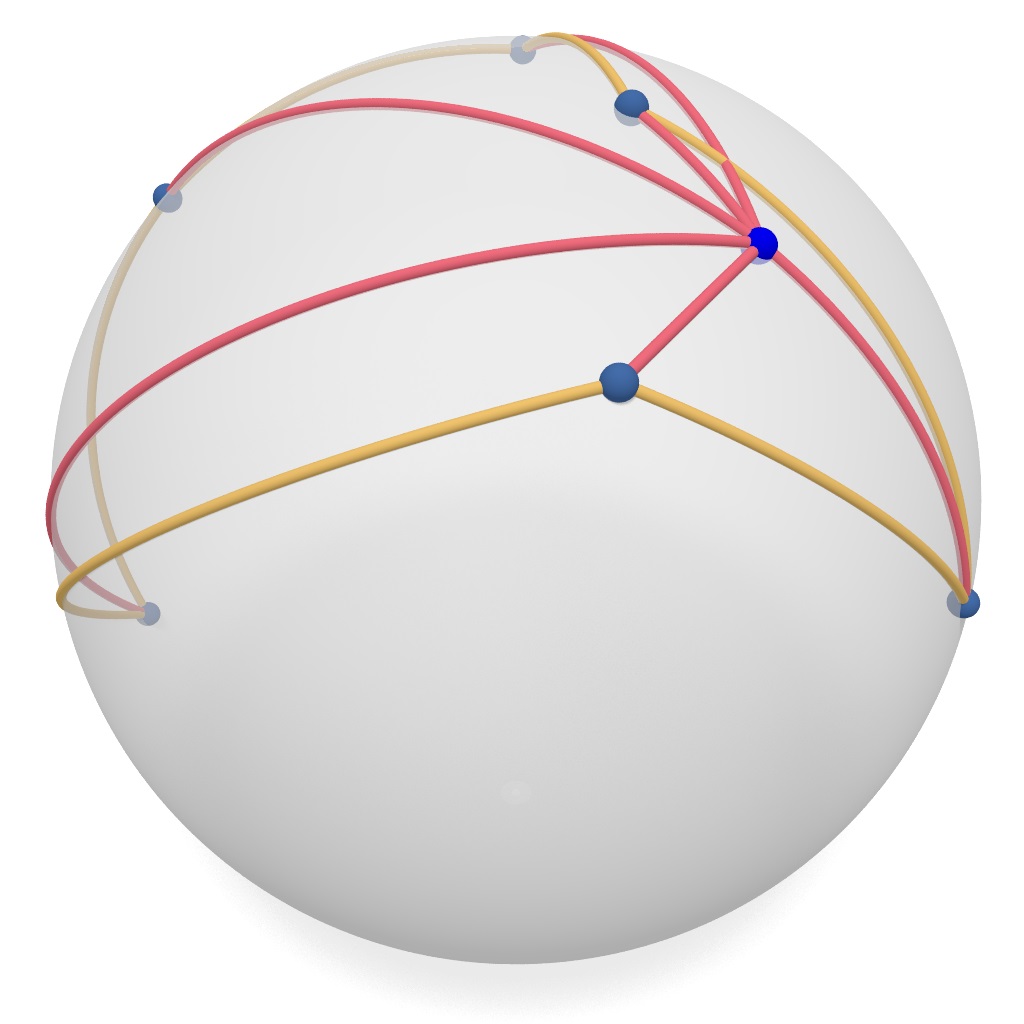}
		\cput{11.5}{80}{$\vec{n}_{1}$}
		\cput{51}{99}{$\vec{n}_{2}$}
		\cput{56}{88}{$\vec{n}_{3}$}
		\cput{96}{35}{$\vec{n}_{4}$}
		\cput{59}{55}{$\vec{n}_{5}$}
		\cput{74}{69}{\color{blue}{$\vec{n}$}}
			\cput{21}{38}{$\vec{n}_{6}$}	
		\end{overpic}
	}
	\caption{Polylines bound convex sets such that one is containing the other}\label{fig:Dupin_triangle_not}
\end{figure}

\end{proof}

In Theorem~\ref{th:Dupin_negative}~(i), we have seen that it is a reasonable condition for smoothness of $P$ to demand that $g({\vec{v}})$ contains no antipodal points. This is guaranteed if $g({\vec{v}})$ is contained in an open hemisphere. In addition, it is more appropriate to require that $g({\vec{v}})$ is star-shaped (with respect to an interior point, see Fig.~\ref{fig:starshaped} for an example that is not star-shaped with respect to the interior, but can be seen from an exterior point) rather than just free of self-intersections. Algorithmically, star-shapedness can be checked more easily \cite{JGWP16}.

\begin{remark}
Motivated by the last paragraph in Section~\ref{sec:selfintersection} and Propositions~\ref{prop:Dupin_positive} and Theorem~\ref{th:Dupin_negative}~(ii), both ${\vec{n}}\in S^2$ such that $g({\vec{v}})$ is star-shaped with respect to ${\vec{n}}$ and  ${\vec{n}}'\in S^2$ such that $g({\vec{v}})$ is contained in the open hemisphere with pole ${\vec{n}}'$ may be reasonable normal vectors at $\vec{v}$. Actually, we have shown that a plane orthogonal to ${\vec{n}}$ and passing through $\vec{v}$ is a reasonable tangent plane: Such tangent planes $E_0$ have the property that planes $E$ parallel and close to $E_0$ intersect a neighborhood of the vertex star in a discrete ellipse or a discrete hyperbola.

\begin{figure}[htbp]
		\centerline{
			\begin{overpic}[height=0.3\textwidth]{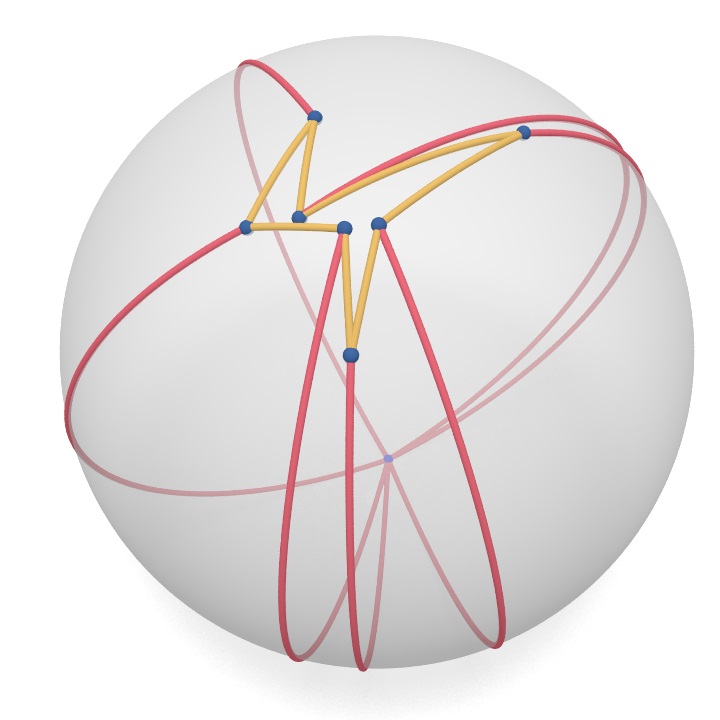}
			\end{overpic}
			\relax\\
			\begin{overpic}[height=0.3\textwidth]{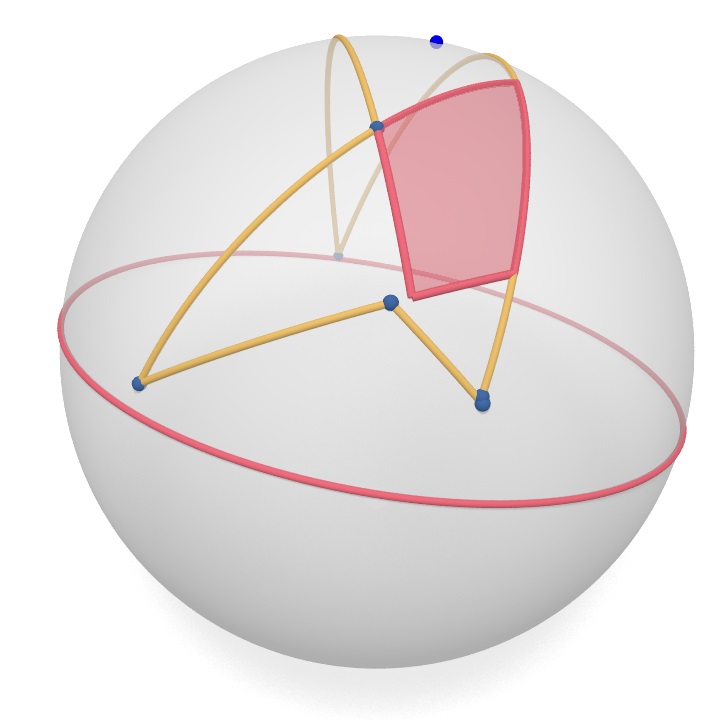}
				\cput{62}{96}{\color{blue}{$\vec{n}$}}	
			\end{overpic}
		}
	\caption{Gauss images that are star-shaped only with respect to the exterior (left) or where no pole of an open hemisphere containing it is in the kernel (right)}\label{fig:starshaped}
\end{figure}

However, Fig.~\ref{fig:starshaped} shows that it might happen that even though $g({\vec{v}})$ is star-shaped and contained in an open hemisphere, it is never contained in an open hemisphere whose pole is in the \textit{kernel} of $g({\vec{v}})$, i.e., the set of points on $S^2$ inside $g({\vec{v}})$ with respect to that $g({\vec{v}})$ is star-shaped. This will not happen if the Gauss image is small enough (for example, if it is contained in an open spherical triangle with three angles $\pi/2$).
\end{remark}

To give a summary so far, our assertion for smoothness of a polyhedral vertex star is the following:

\begin{definition}\label{def:smooth}
The star of an interior point $\vec{v}$ of $P$ is said to be \textit{smooth}, if there exist ${\vec{n}},{\vec{n}}'\in S^2$ inside the Gauss image $g({\vec{v}})$ such that it is star-shaped with respect to ${\vec{n}}$ and contained in the open hemisphere with pole ${\vec{n}}'$. A plane orthogonal to ${\vec{n}}$ and passing through $\vec{v}$ defines a \textit{discrete tangent plane}.
\end{definition}

\begin{remark}
We have already seen in the previous section that ${\vec{n}}'$ is a discrete normal. Now, not the plane orthogonal to ${\vec{n}}'$, but the one orthogonal to ${\vec{n}}$ is the better discretization of a tangent plane. When we discuss projective transformations in Section~\ref{sec:projective}, this difference will become apparent. Whereas ${\vec{n}}'$ defines a direction (i.e. a line) in which the vertex star projects bijectively, ${\vec{n}}$ defines a band of planes that intersect the vertex star in a discrete ellipse or discrete hyperbola. Since projective transformations map lines to lines and planes to planes, but do not preserve angles, these two concepts should be treated differently.
\end{remark}


\subsubsection{Discrete asymptotic directions}\label{sec:asymptotes}

In the smooth case, asymptotic directions at a point of negative Gaussian curvature are the asymptotes of its Dupin indicatrix, which is a hyperbola. Let us now consider a smooth polyhedral vertex star around $\vec{v}$ with $K({\vec{v}})<0$. Motivated by Theorem~\ref{th:Dupin_negative} and its proof, we define discrete asymptotic directions at $\vec{v}$ as follows.

\begin{definition}
Let $\vec{v}$ be an interior point of $P$ of negative discrete Gaussian curvature whose vertex star is smooth. Given a discrete tangent plane at $\vec{v}$, the four directions defined by the four line segments in the discrete tangent plane with a disk neighborhood of $\vec{v}$ are said to be \textit{discrete asymptotic directions}.
\end{definition}

Note that it may happen that two asymptotic directions are collinear as in Fig.~\ref{fig:Dupin_triangle_degenerate}. We will now shortly discuss how all asymptotic directions that can appear if the set of all possible tangent planes is considered are characterized. 

The Gauss image $g({\vec{v}})$ is star-shaped with respect to an interior point ${\vec{n}}$ if and only if all corners can be seen from ${\vec{n}}$. Hence, the kernel of $g({\vec{v}})$ is given by the interior of the intersection of all digons spanned by two spherical arcs incident to a corner of $g({\vec{v}})$. In particular, the kernel $C \subset g({\vec{v}})$ is open and convex.

We know that a tangent plane only intersects inflection faces. Let us first consider the case that the face with a reflex angle (if it exists) is an inflection face. Then, the interior angle of $g({\vec{v}})$ at ${\vec{n}}_f$ equals $\alpha_f$. Consider for each inflection face $f$ the spherical digon $D_f$ spanned by the two tangent arcs from ${\vec{n}}_f$ to the boundary of $C$. Since any point in $C$ can see all ${\vec{n}}_f$, such tangents exist.

If the edges of $f$ incident to $\vec{v}$ are denoted by $e_1$ and $e_2$, let $\alpha_{i}$ be the angle between $D_f$ and the arc in $g({\vec{v}})$ corresponding to $e_i$ (if this arc is contained in $D_f$, we set $\alpha_{i}:=0$). Let $\alpha_{0}$ be the angle of $D_f$. Then, $\alpha_f=\alpha_{0}+\alpha_{1}+\alpha_{2}$. In the same way as we computed the angles between the line segments in the intersection of $E_0$ with the vertex star in the proof of Theorem~\ref{th:Dupin_negative}~(ii), it follows that all possible asymptotic directions in $f$ lie in the open convex cone in $f$ with tip $\vec{v}$ and angle $\alpha_{0}$ whose boundary forms angles $\alpha_{1}$ and $\alpha_{2}$ with $e_1$ and $e_2$, respectively.

It remains the case that there is a face $f$ in the vertex star with angle $\alpha_f > \pi$ that is not an inflection face. Then, ${\vec{n}}_f$ is a corner of $g({\vec{v}})$ and the interior angle equals $\alpha_f-\pi$. Furthermore, the intersection of any tangent plane $E_0$ with $f$ will be (locally around $\vec{v}$) a line segment with $\vec{v}$ in its interior. We define $D_f$ and angles $\alpha_{0},\alpha_{1},\alpha_{2}$ as before and conclude with a similar reasoning as before that $\alpha_f-\pi=\alpha_{0}+\alpha_{1}+\alpha_{2}$ and that any pair of possible asymptotic directions in $f$ lie on a line segment in the open double cone in $f$ with tip $\vec{v}$ and angle $\alpha_{0}$ whose boundary forms angles $\alpha_{1}$ and $\alpha_{2}$ with the edges $e_1$ and $e_2$ of $f$, respectively.

In particular, asymptotic directions lie in the planar cones spanned by the edges incident to $\vec{v}$ of an inflection face $f\sim{\vec{v}}$ with angle $\alpha_f <\pi$, in the cones that are opposite to the planar cones spanned by the edges incident to $\vec{v}$ of an inflection face $f\sim{\vec{v}}$ with angle $\alpha_f >\pi$ (and the whole half-plane containing $f$ if $\alpha_f =\pi$), and in the double cone that is spanned by the edges incident to $\vec{v}$ of a non-inflection face $f\sim{\vec{v}}$ with angle $\alpha_f >\pi$.

\begin{remark}

In \cite{JGWP16}, the relationship between the Gauss image and asymptotic lines was discussed using a regular triangular mesh on a hyperboloid, see Fig.~\ref{fig:asymptotes}. The observation made there was that the Gauss image has no self-intersections if the quadrants bounded by the asymptotic directions do not contain faces except for a pair of faces contained in opposite quadrants. Our discussion above gives now the mathematical explanation for that observation if we identify smoothness with star-shapedness: In order to get a discrete hyperbola as the discrete Dupin indicatrix, all four inflection faces have to be intersected by the tangent plane. A close look on Fig.~\ref{fig:asymptotes} shows that the two non-inflection faces around the origin are lying in the upper and the lower quadrant. Hence, the mesh (ii) is smooth since exactly the four inflection faces each contain an asymptotic direction, in contrast to mesh (i), where the two non-inflection faces contain two asymptotic directions each. The mesh in the left part is a border case where the asymptotic directions lie on edges.

\begin{figure}[htbp]
   \centerline
   {
   	\begin{overpic}[height=0.23\textwidth]{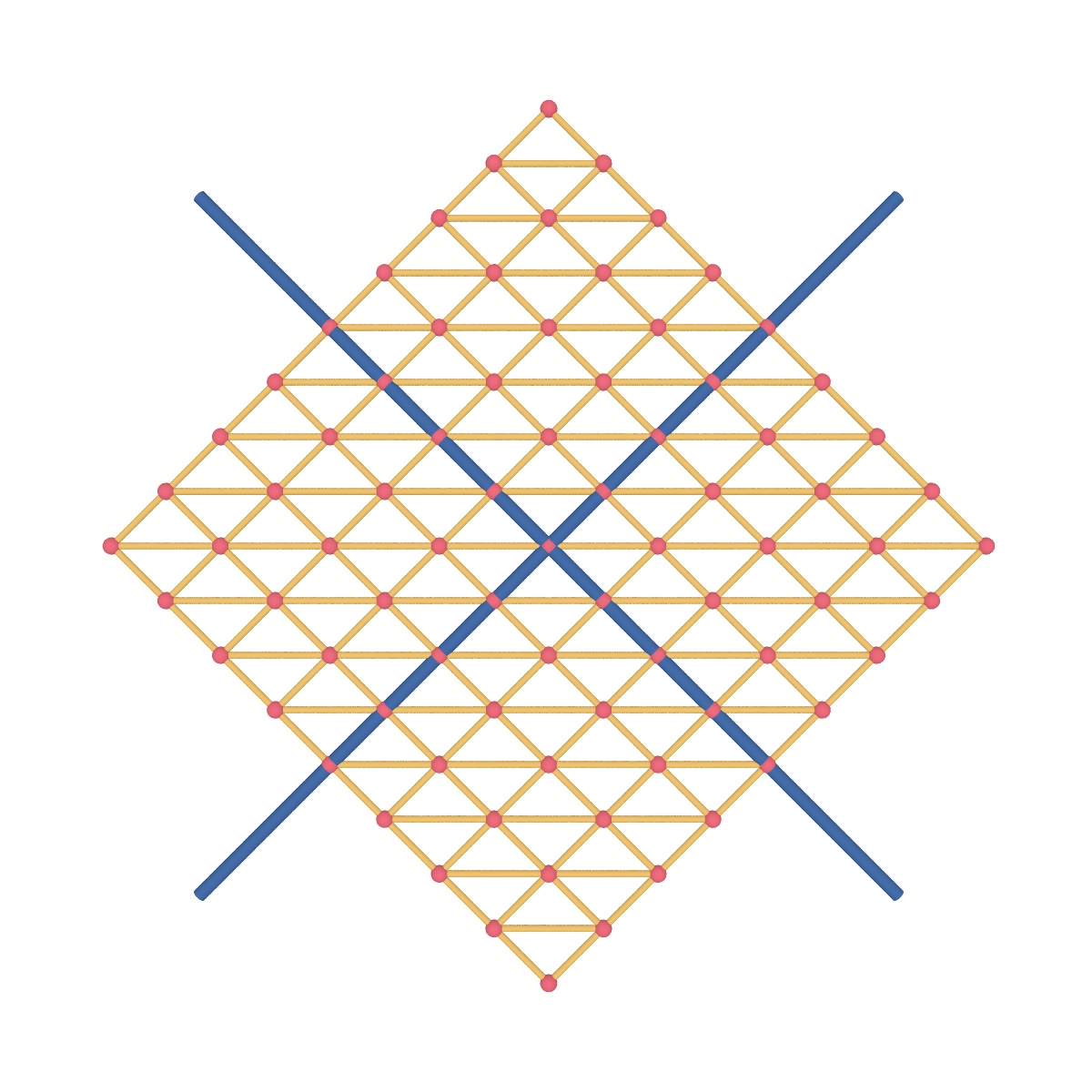}
   		\put(-25,72){\contour{white}{$x+y=0$}}
   		\put(80,72){\contour{white}{$x-y=0$}}
   		\put(-5,12){\contour{white}{(a1)}}
   		 		\put(0,50){\vector(1,0){100}}
   		 	   	\put(50,0){\vector(0,1){100}}
   	\end{overpic}
   	\relax\\
   	\begin{overpic}[height=0.23\textwidth]{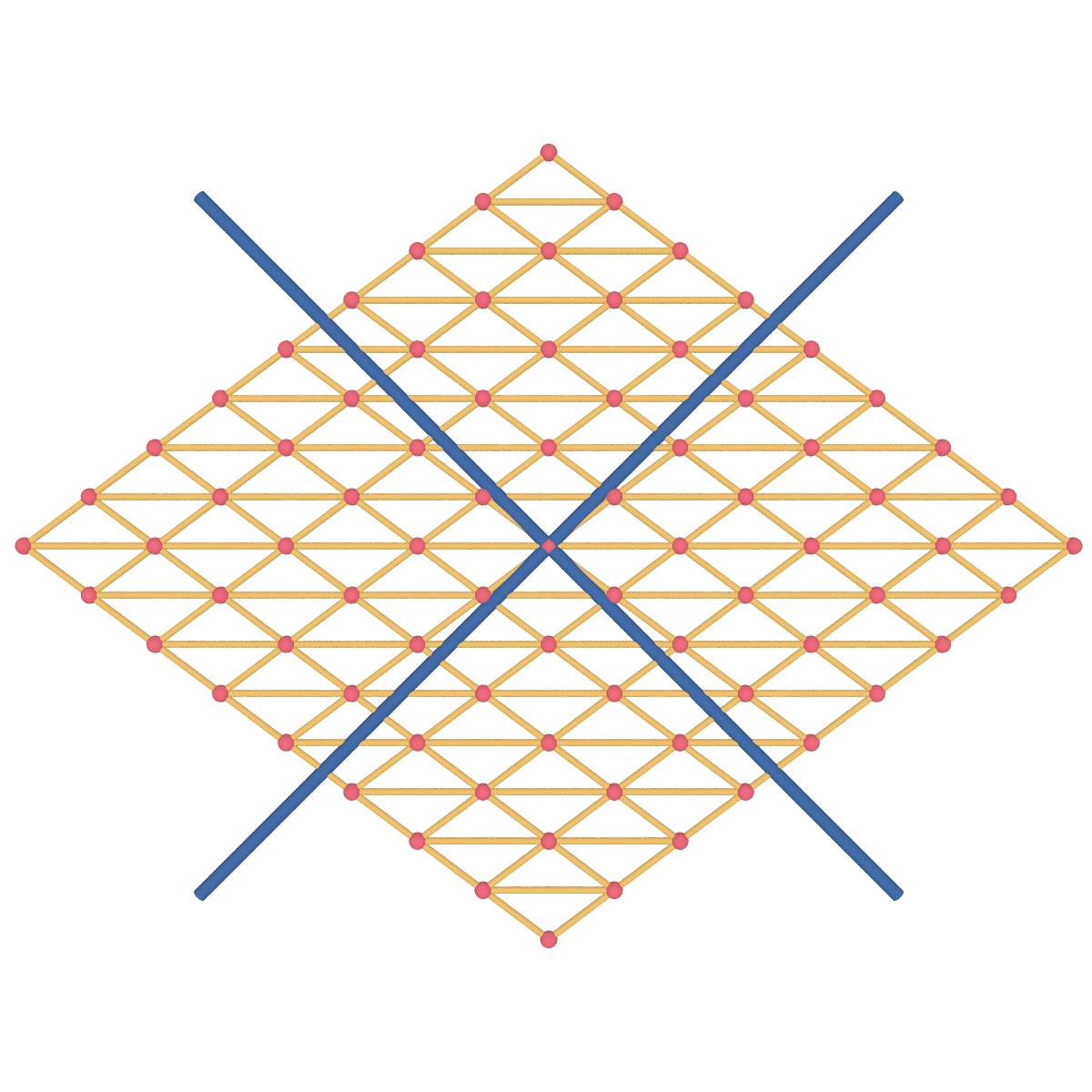}
   	\put(-5,12){\contour{white}{(b1)}}
   	\end{overpic}
  	\relax\\ 	
   	\begin{overpic}[height=0.23\textwidth]{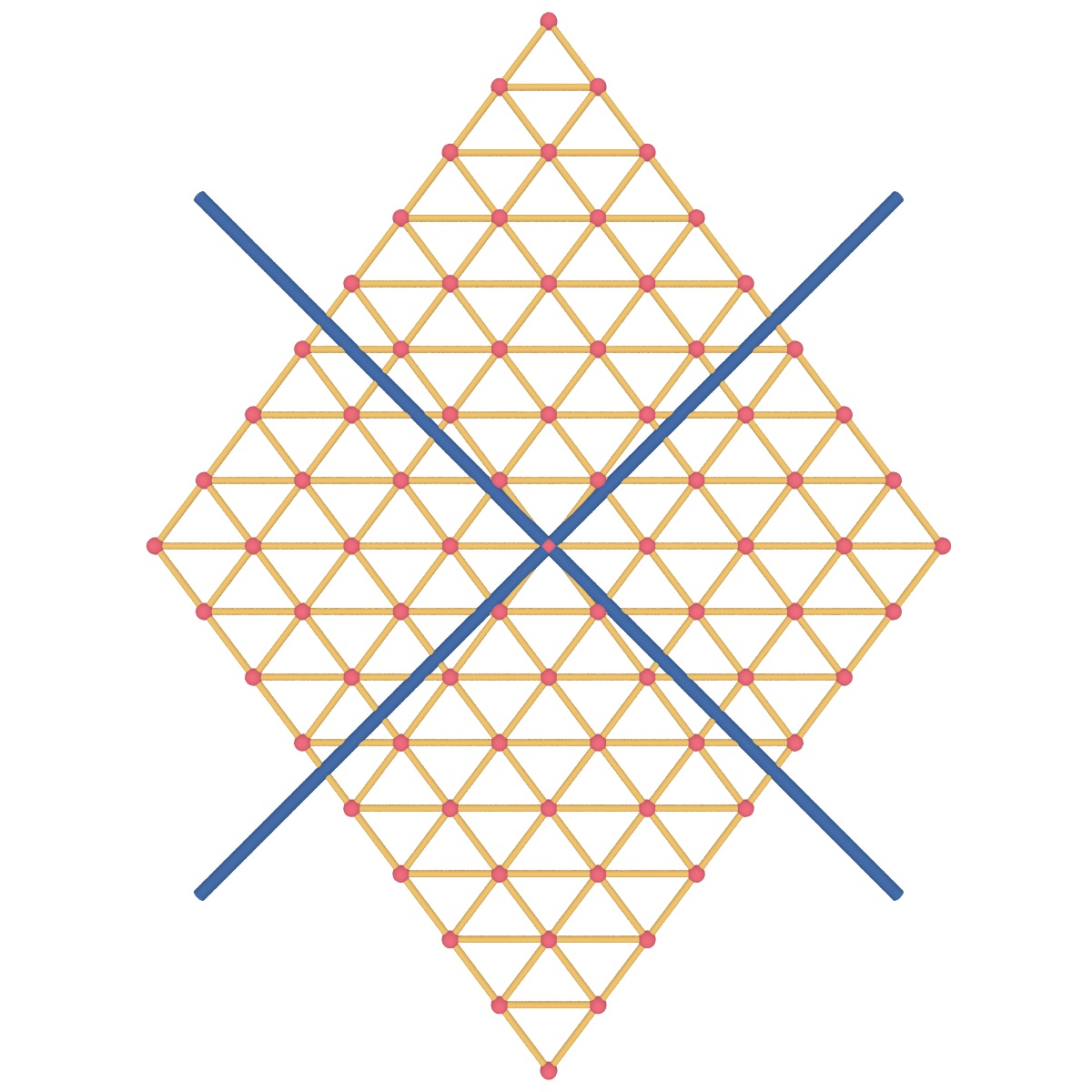}
   	\put(-5,12){\contour{white}{(c1)}}
   	\end{overpic} 
   }
   \centerline
	{
   	\begin{overpic}[height=0.23\textwidth]{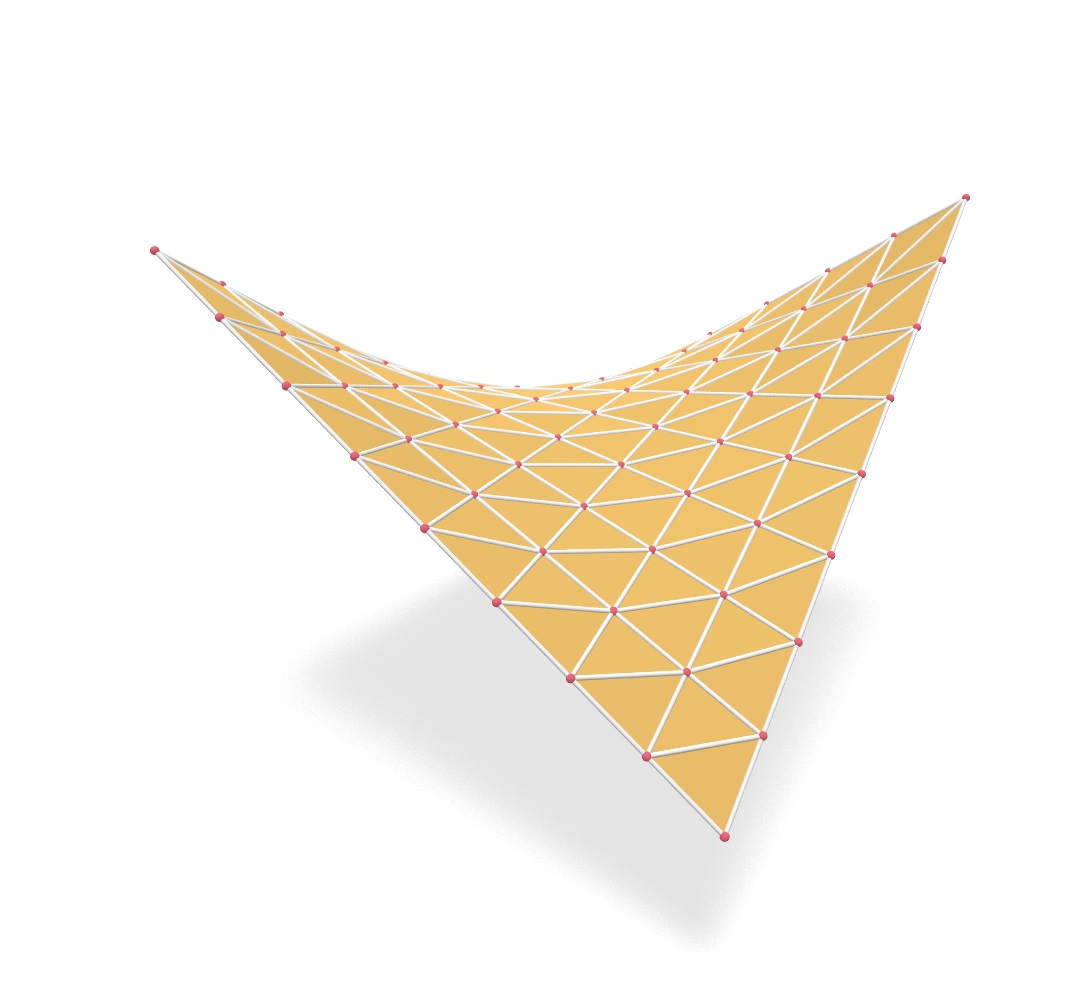}
   	\put(1,12){\contour{white}{(a2)}}
   	\put(-10,32){\contour{white}{$z=x^2-y^2$}}
   	\end{overpic}
   	\relax\\
   	\begin{overpic}[height=0.23\textwidth]{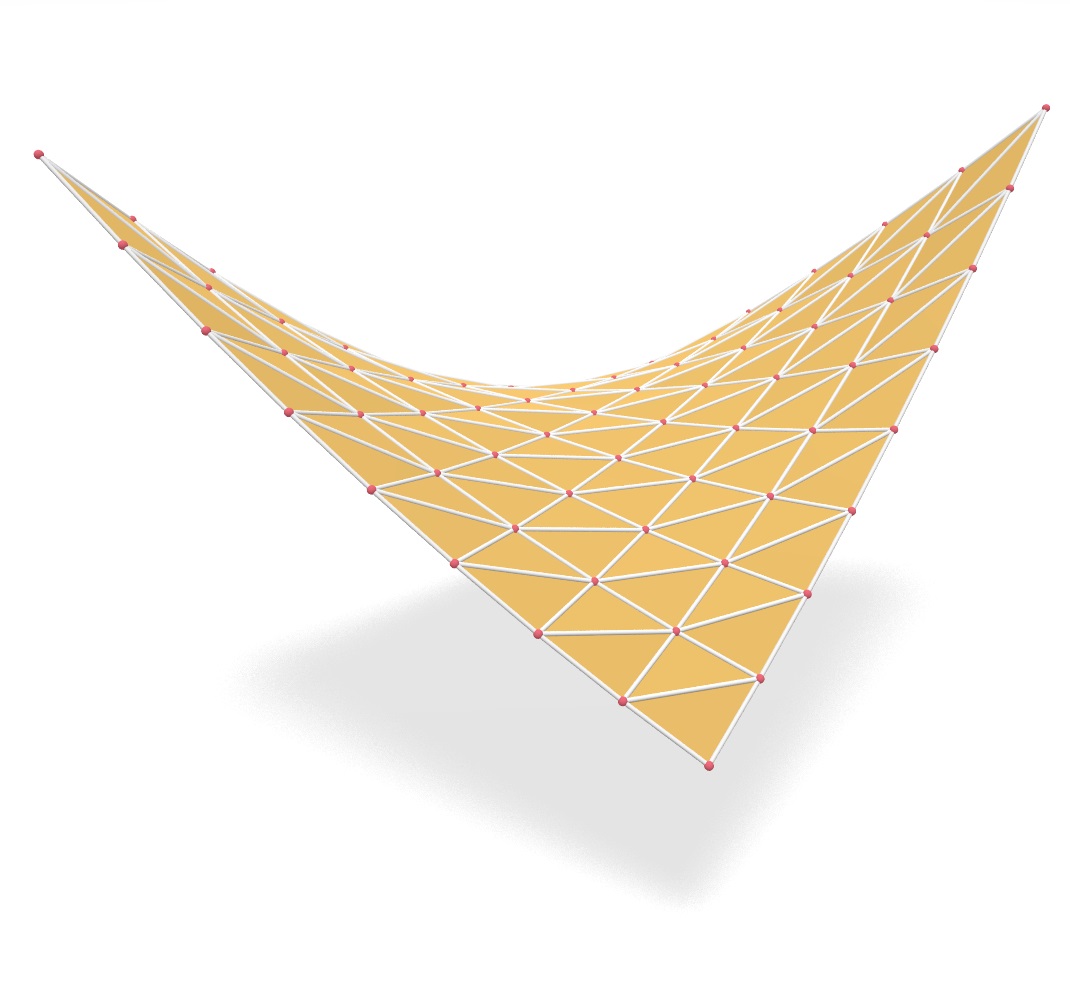}
   		\put(1,12){\contour{white}{(b2)}}
   	\end{overpic}
   	\relax\\
   	\begin{overpic}[height=0.23\textwidth]{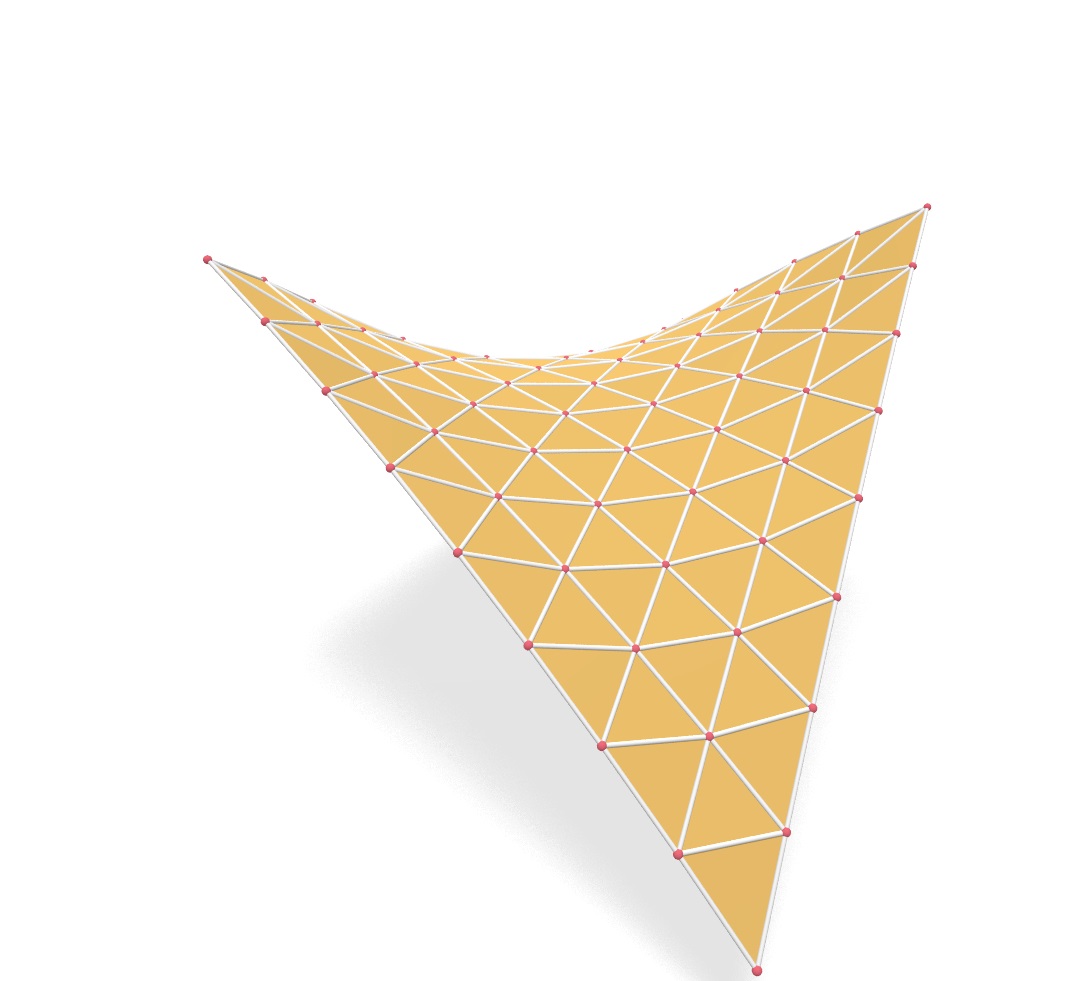}
   		\put(0,12){\contour{white}{(c2)}}
   	\end{overpic}
   }
   \centerline
   {
   	   	\begin{overpic}[height=0.23\textwidth]{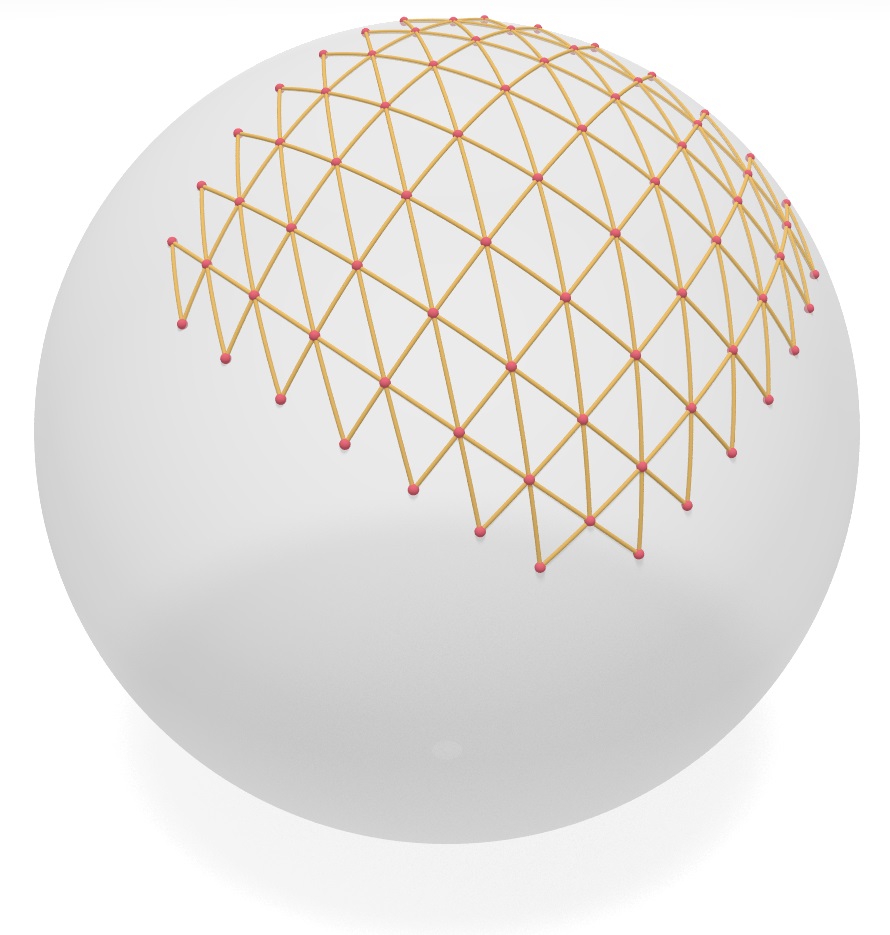}
   	   		\put(-5,12){\contour{white}{(a3)}}
   	   	\end{overpic}
   	   	\relax\\
   	   	\begin{overpic}[height=0.23\textwidth]{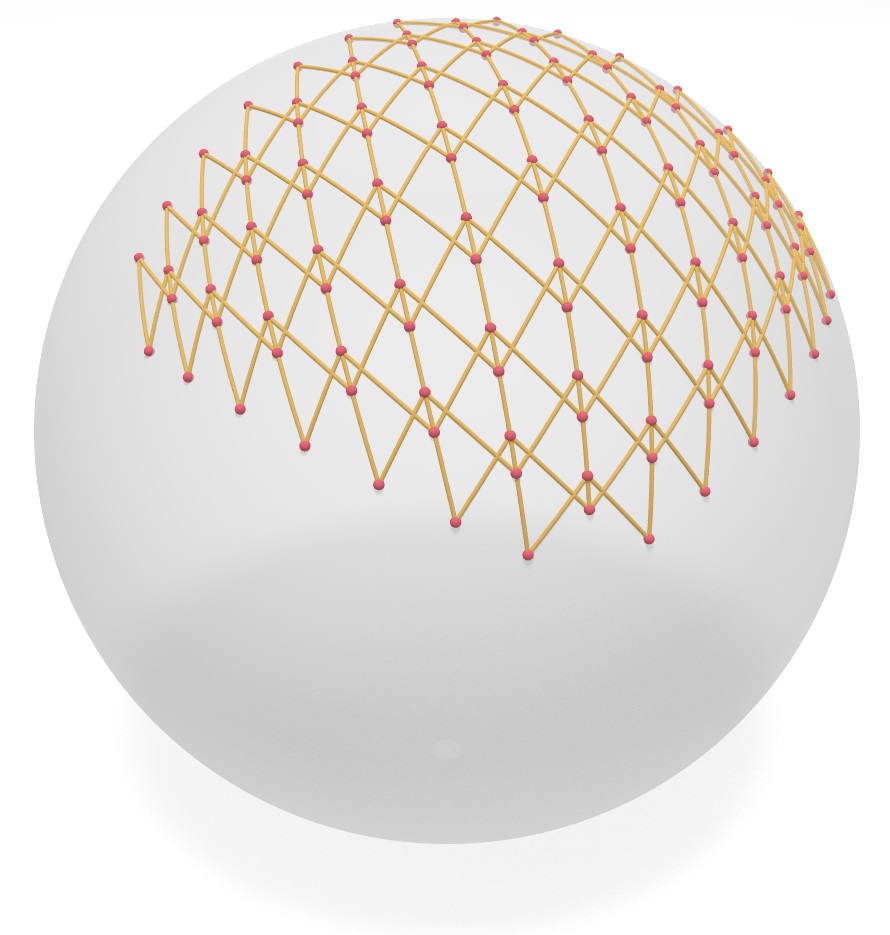}
   	   		\put(-5,12){\contour{white}{(b3)}}
   	   		\put(45,12){{case (ii)}}
   	   	\end{overpic}
   	   	\relax\\
   	   	\begin{overpic}[height=0.23\textwidth]{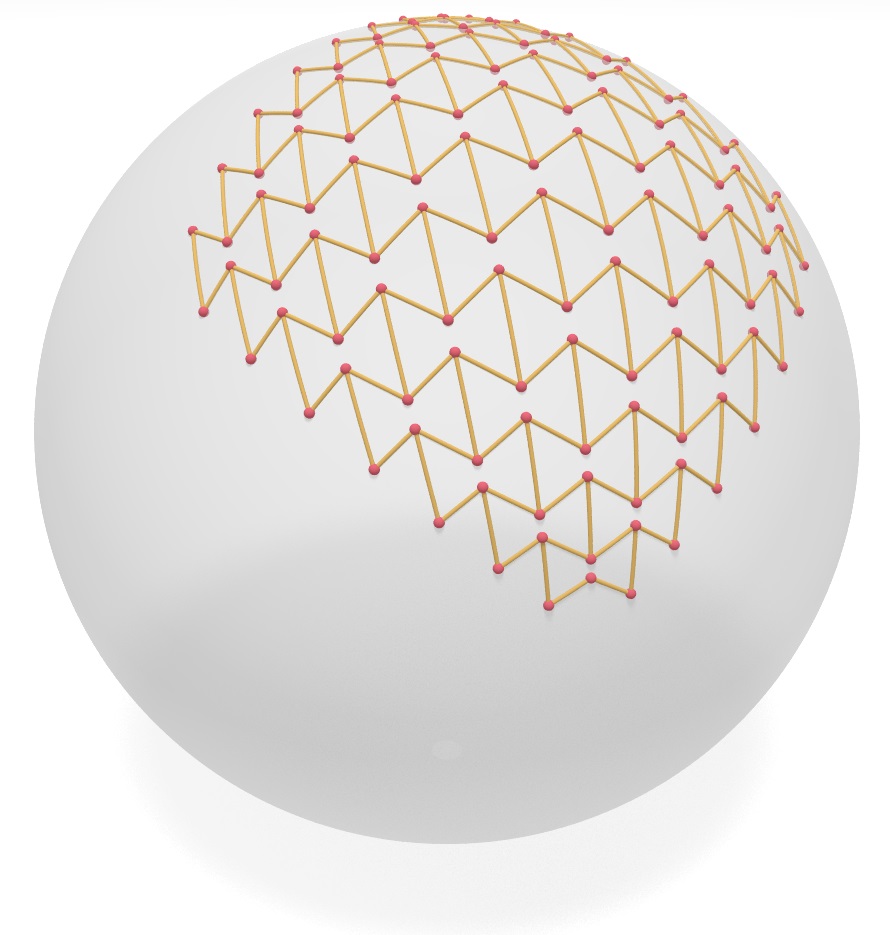}
   	   			\put(-5,12){\contour{white}{(c3)}}
   	   			\put(45,12){{case (i)}}
   	   	\end{overpic}
   	}
   \caption{Relevance of the alignment of the mesh along asymptotic directions for smoothness: The graph of the function $z = x^2+y^2$ carries two families of straight lines which correspond to $x \pm y =\textnormal{const.}$, and which are also the asymptotic directions. Images (a1)--(c1) show different tilings of the $xy$-plane by triangles, which in (a2)--(c2) are lifted to the graph surface and generate a mesh. Their respective Gauss images have dual faces without self-intersections in (c3), and with self-intersections in (b3). Image (a3) illustrates a border case where self-intersections begin to occur, and where the edges coincide with the surface's asymptotic directions}
   \label{fig:asymptotes}
\end{figure}

\end{remark}


\section{Gauss images around faces and shapes of faces}\label{sec:global}

The aim of this section is to assess smoothness of $P$ in a slightly more global way, which we will do by investigating how the Gauss images of the stars of all vertices of a face fit together. If we require that there are locally as few overlaps as possible, then this will impose conditions on the shape of the face. The case of a region of positive curvature in Section~\ref{sec:positive} is fairly easy. In Section~\ref{sec:negative} we will see that faces in negatively curved regions are pseudo-triangles and -quadrilaterals if we are looking for a smooth behavior. Requiring in addition that the face is star-shaped gives us the opportunity to define asymptotic directions within the face. Finally, we will consider areas of both positive and negative discrete Gaussian curvature in Section~\ref{sec:parabolic} and we will discuss parabolic curves.

Whereas we expect few overlaps in either positively or negatively curved areas, it is natural to expect overlaps of the Gauss images near \textit{parabolic} points, i.e. points of zero Gaussian curvature.

A smooth surface of vanishing Gaussian curvature is developable. To study discrete developable surfaces is not the aim of our exposition and for the modeling of discrete developable surfaces using polyhedral surfaces with quadrilateral faces we refer for example to \cite{LPWLW06}.

For this reason, we assume that $K({\vec{v}})\neq 0$ for all $\vec{v}$. Since the faces of $P$ are flat, it is also quite natural to locate the parabolic points on the faces and not on the vertices.

Our main interest lies in generic parabolic points. Isolated parabolic points, that occur for example on a monkey saddle, are not generic since a small perturbation will deform the isolated parabolic point into a closed parabolic curve. Even though we can model polyhedral monkey saddles around faces in Section~\ref{sec:negative}, we will not include such saddles in our notion of smoothness.

Let us consider a face $f_1$ of $P$ and all the stars of vertices adjacent to it. We want to investigate how the corresponding Gauss images fit together if all the Gauss images of vertex stars around $\vec{v}\sim f_1$ have no self-intersections. As a summary, we will give a notion of smooth polyhedral surfaces in Section~\ref{sec:smooth} and compare the discrete definitions with their smooth counterparts by the means of examples in Section~\ref{sec:comparison}.


\subsection{Region of positive discrete Gaussian curvature}\label{sec:positive}

We assume in addition that all $n$ vertices of $f_1$ have positive discrete Gaussian curvature. By Theorem~\ref{th:shape}, the interior angle of $f_1$ at any vertex is less than $\pi$ and $f_1$ is never an inflection face. Furthermore, all vertices are convex corners. It follows that $f_1$ is a convex polygon (in particular, it is star-shaped) and a neighborhood of $f_1$ forms part of the boundary of convex polyhedron as in Fig.~\ref{fig:positive_nice}.  

\begin{figure}[!ht]
	\centerline{
		\begin{overpic}[height=0.25\textwidth]{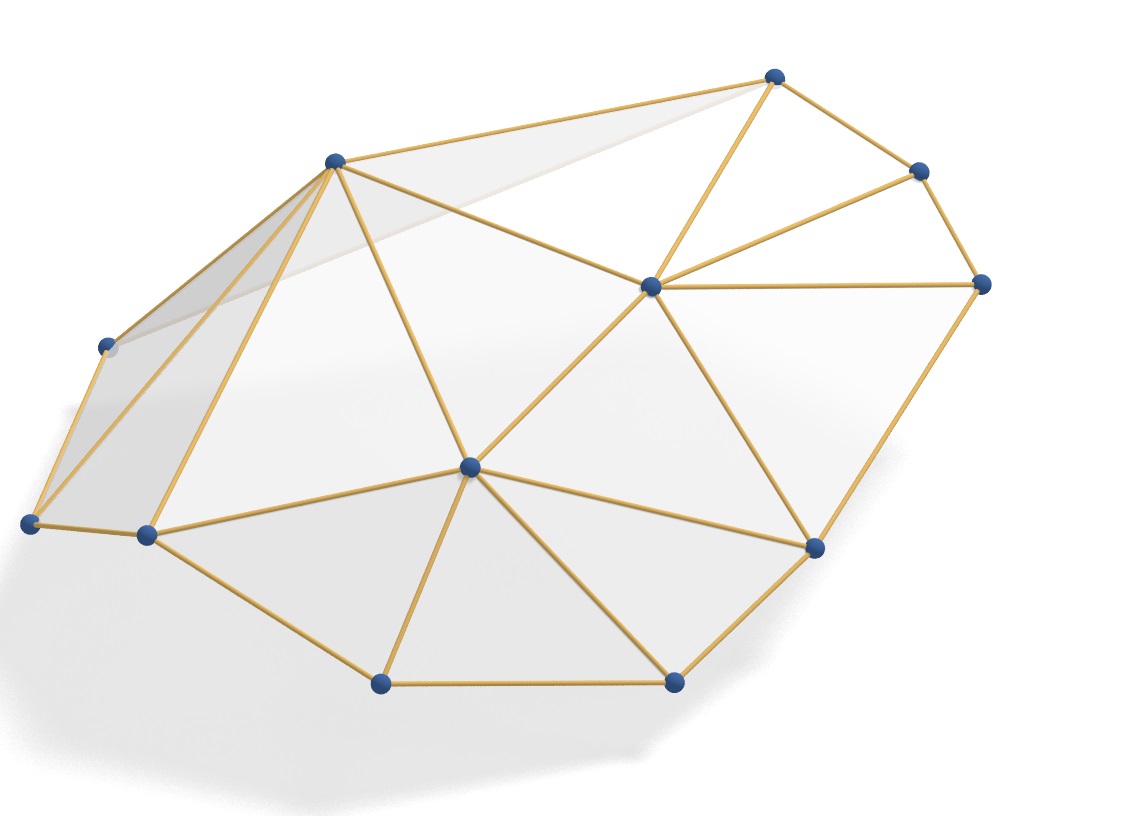}
			\put(41,42){\contour{white}{$f_1$}}
			\put(25,34){\contour{white}{$f_2$}}
			\put(26,20){\contour{white}{$f_3$}}
			\put(41,15){\contour{white}{$f_4$}}
			\put(55,20){\contour{white}{$f_5$}}
			\put(53,32){\contour{white}{$f_6$}}
			\put(69,37){\contour{white}{$f_7$}}
			\put(74,49){\contour{white}{$f_8$}}
			\put(64,54){\contour{white}{$f_9$}}
			\put(49,52){\contour{white}{$f_{10}$}}
			\put(37,56){\contour{white}{$f_{11}$}}
			\put(3,37){\contour{white}{$f_{12}$}}
			\put(5,27){\contour{white}{$f_{13}$}}
		\end{overpic}
		\relax\\
		\begin{overpic}[height=0.3\textwidth]{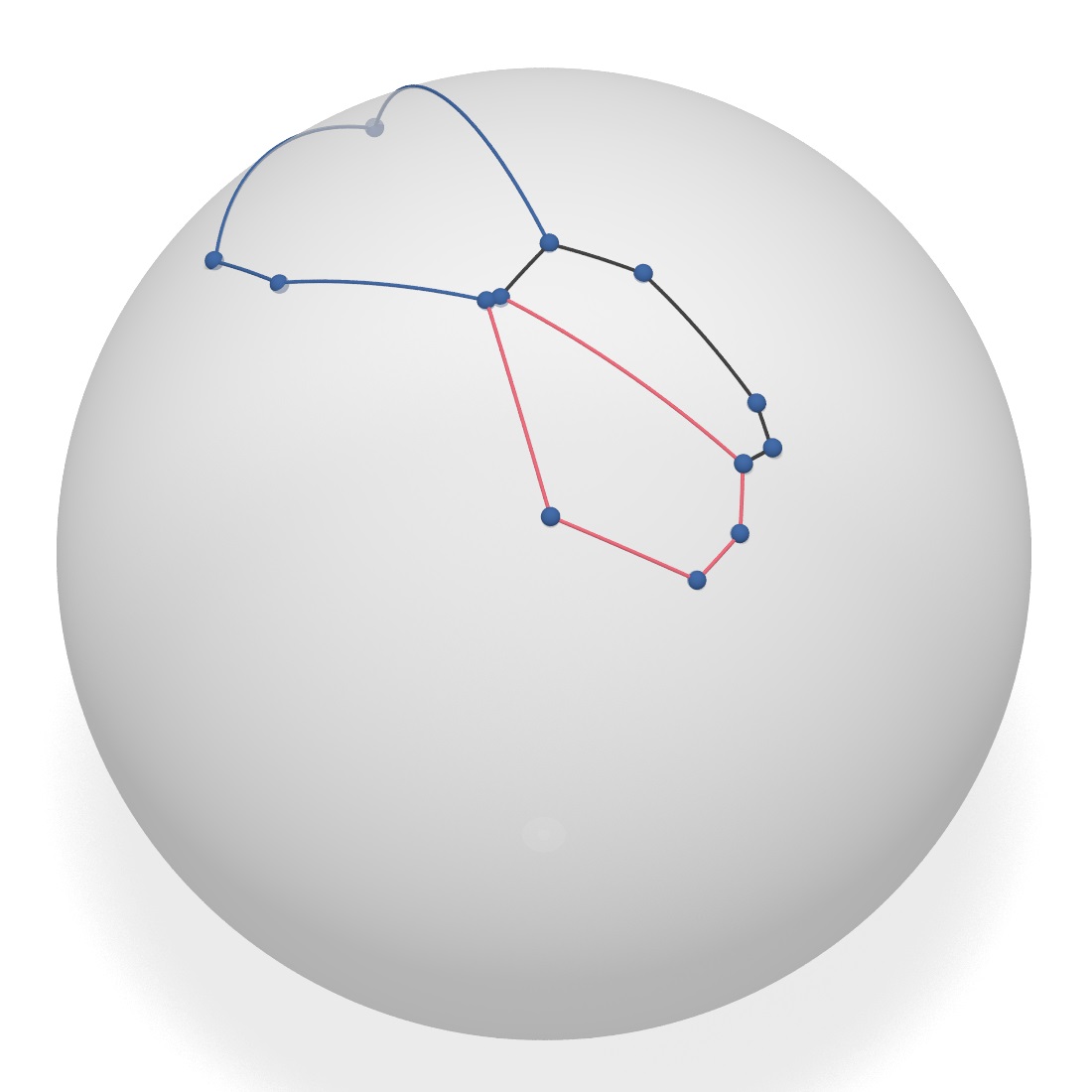}
			\cput{44}{75}{\scalebox{0.75}{$\vec{n}_{1}$}}
			\cput{41}{70}{\scalebox{0.75}{$\vec{n}_{2}$}}
			\cput{46}{51}{\scalebox{0.75}{$\vec{n}_{3}$}}
			\cput{64}{43}{\scalebox{0.75}{$\vec{n}_{4}$}}
			\cput{72}{49}{\scalebox{0.75}{$\vec{n}_{5}$}}
			\cput{64}{56}{\scalebox{0.75}{$\vec{n}_{6}$}}
			\cput{75}{58}{\scalebox{0.75}{$\vec{n}_{7}$}}
			\cput{74}{63}{\scalebox{0.75}{$\vec{n}_{8}$}}
			\cput{62}{76}{\scalebox{0.75}{$\vec{n}_{9}$}}	
			\cput{54}{80}{\scalebox{0.75}{$\vec{n}_{10}$}}		
			\cput{33}{85}{\scalebox{0.75}{$\vec{n}_{11}$}}
			\cput{16}{73}{\scalebox{0.75}{$\vec{n}_{12}$}}	
			\cput{27}{70}{\scalebox{0.75}{$\vec{n}_{13}$}}	
		\end{overpic}
	}
	\caption{Gauss images of the stars of the vertices in a positively curved region}\label{fig:positive_nice}
\end{figure}

From Lemma~\ref{lem:angle} we deduce that the angle of the Gauss image corresponding to the vertex $\vec{v}$ at ${\vec{n}}_{1}$ equals $\pi-\alpha_{\vec{v}}$, where $\alpha_{\vec{v}}$ denotes the interior angle of $f_1$ at $\vec{v}$. Then, \[\sum\limits_{{\vec{v}}\sim f_1} (\pi-\alpha_{\vec{v}})=n\pi-\sum\limits_{{\vec{v}}\sim f_1} \alpha_{\vec{v}}=n\pi-(n-2)\pi=2\pi.\] Using in addition that all Gauss images are convex spherical polygons, it follows that the single Gauss images of the vertex stars fit well together at ${\vec{n}}_{1}$ without intersecting any of the others. In summary, we have proven the following:

\begin{proposition}\label{prop:face_positive}
Let $f$ be a face of $P$. Assume that $K({\vec{v}})>0$ and that the Gauss image of the star of $\vec{v}$ has no self-intersections for all $\vec{v} \sim f$. Then, $f$ is a convex polygon and the Gauss images of the corresponding vertex stars do not intersect.
\end{proposition}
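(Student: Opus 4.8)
The plan is to derive this from the classification of individual vertex stars in Theorem~\ref{th:shape}~(i) together with the angle accounting of Lemma~\ref{lem:angle}, the only genuine work being to turn local information at the face normal $\vec{n}_f$ into a global statement about the Gauss images.

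\emph{Convexity of $f$.} For each $\vec{v}\sim f$ we have $K(\vec{v})>0$ and $g(\vec{v})$ without self-intersections, so Theorem~\ref{th:shape}~(i) tells us that $\vec{v}$ is a convex corner, that the interior angle $\alpha_{\vec{v}}$ of $f$ at $\vec{v}$ is less than $\pi$, and that $f$ is not an inflection face at $\vec{v}$. A simple planar polygon all of whose interior angles are $<\pi$ is convex, hence $f$ is convex. Since moreover the cone at every $\vec{v}\sim f$ is convex and $f$ is nowhere an inflection face, the faces of the one-ring around $f$ all lie to one side of the plane of $f$ and bend consistently; thus a neighborhood of $f$ is locally convex and can be completed to part of the boundary of a convex polyhedron. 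This convexity will be used again in the last step.

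\emph{Local fit at $\vec{n}_f$.} Because $\alpha_{\vec{v}}<\pi$ and $f$ is not an inflection face at $\vec{v}$, Lemma~\ref{lem:angle}~(i) gives that the interior angle of $g(\vec{v})$ at $\vec{n}_f$ equals $\pi-\alpha_{\vec{v}}$. Writing $n$ for the number of vertices of $f$,
\[
\sum_{\vec{v}\sim f}\bigl(\pi-\alpha_{\vec{v}}\bigr)=n\pi-\sum_{\vec{v}\sim f}\alpha_{\vec{v}}=n\pi-(n-2)\pi=2\pi .
\]
Two consecutive vertices $\vec{v},\vec{v}'$ of $f$ are joined by an edge $e$ of $P$, and both $g(\vec{v})$ and $g(\vec{v}')$ contain the short great-circle arc from $\vec{n}_f$ to the normal of the face of $P$ across $e$; so the $n$ convex spherical polygons $g(\vec{v})$ sit cyclically around $\vec{n}_f$, glued along these arcs, each contributing an angle $\pi-\alpha_{\vec{v}}<\pi$ and the angles summing to exactly $2\pi$. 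Hence in a neighborhood of $\vec{n}_f$ they form a non-overlapping fan.

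\emph{Passing from local to global --- the main obstacle.} What remains, and what I expect to be the crux, is to promote non-overlap near $\vec{n}_f$ to disjointness of the open Gauss images everywhere. The cleanest route uses the local convexity established above: the polygons $g(\vec{v})$ are precisely the spherical images (normal cones) of the vertices of a convex polyhedral surface, and the spherical images of distinct vertices of a convex polyhedron have pairwise disjoint interiors. Alternatively, one can argue that each $g(\vec{v})$ is convex and --- by Theorem~\ref{th:shape}~(i) together with Proposition~\ref{prop:transverse} --- contained in an open hemisphere, so that an overlap between two non-consecutive $g(\vec{v})$ would have to manifest as a self-overlap of the cyclic fan around $\vec{n}_f$, contradicting the previous step. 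I would spell out the convex-polyhedron argument, the delicate point being a careful justification that the one-ring of $f$ genuinely lies on a convex surface.
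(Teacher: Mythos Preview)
Your proposal is correct and follows essentially the same route as the paper: invoke Theorem~\ref{th:shape}~(i) to get $\alpha_{\vec v}<\pi$, no inflection, and convex corners (hence $f$ convex and its neighbourhood part of a convex surface), then use Lemma~\ref{lem:angle}~(i) for the angle sum $\sum(\pi-\alpha_{\vec v})=2\pi$ at $\vec n_f$. The paper dispatches the non-intersection in one line --- ``using in addition that all Gauss images are convex spherical polygons'' --- whereas you are more explicit about why the local fan at $\vec n_f$ forces global disjointness; your convex-polyhedron/normal-cone argument is a clean way to justify what the paper leaves implicit.
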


If we now think of the $n$ Gauss images as the central projection of $n$ polygons onto the sphere, then we can choose these polygons in such a way that they form part of the boundary of a convex polyhedron. Actually, we consider the projective dual surface as we will discuss in more detail in Section~\ref{sec:projective}. The fact that the Gauss images fit well together at ${\vec{n}}_{1}$ translates into the plane orthogonal to ${\vec{n}}_{1}$ being a transverse plane for the projective dual surface.

\begin{remark}
On a smooth surface, isolated parabolic points may be surrounded by elliptic points (positive Gaussian curvature). In this case, the Gauss image of a neighborhood of the parabolic point behaves as if the curvature would be throughout positive and the parabolic point would not exist. The situation in the discrete setup is similar: We have seen that a face $f_1$ all of whose vertices have positive discrete Gaussian curvature is necessarily convex and the Gauss images of the corresponding vertex stars will go once around the normal of $f_1$. So there may or may not be an isolated parabolic point on $f_1$, we cannot tell the difference. Therefore, we cannot reasonably model isolated parabolic points surrounded by elliptic points on polyhedral surfaces. 
\end{remark}

For a generic smooth surface, the tangent plane of a point does not contain any neighboring points. Since the plane through a face $f$ is an obvious tangent plane of the polyhedral surface, we now want to specify a point of contact of that plane with the surface. In the case of positive discrete Gaussian curvature, the face is convex and any interior point can serve as the point of contact.

\begin{definition}
Let $f$ be a face of $P$ and assume that all the stars of its vertices are convex. Then, we can choose any interior point of $f$ as the \textit{point of contact} of the \textit{discrete tangent plane} given by the plane through $f$.
\end{definition}


\subsection{Region of negative discrete Gaussian curvature}\label{sec:negative}

We now assume that all vertices of $f_1$ have negative discrete Gaussian curvature and that all the Gauss images of vertex stars have no self-intersections. If we go around the vertices of $f_1$ in counterclockwise order, the corresponding star-shaped Gauss images are attached at ${\vec{n}}_{1}$ in clockwise order. Their interior angles will form a multiple of $2\pi$. In the case that $f_1$ does not contain an isolated parabolic point, the angles should sum up to exactly $2\pi$ as in Fig.~\ref{fig:negative_nice}.

\begin{figure}[!ht]
	\centerline{
		\begin{overpic}[height=0.25\textwidth]{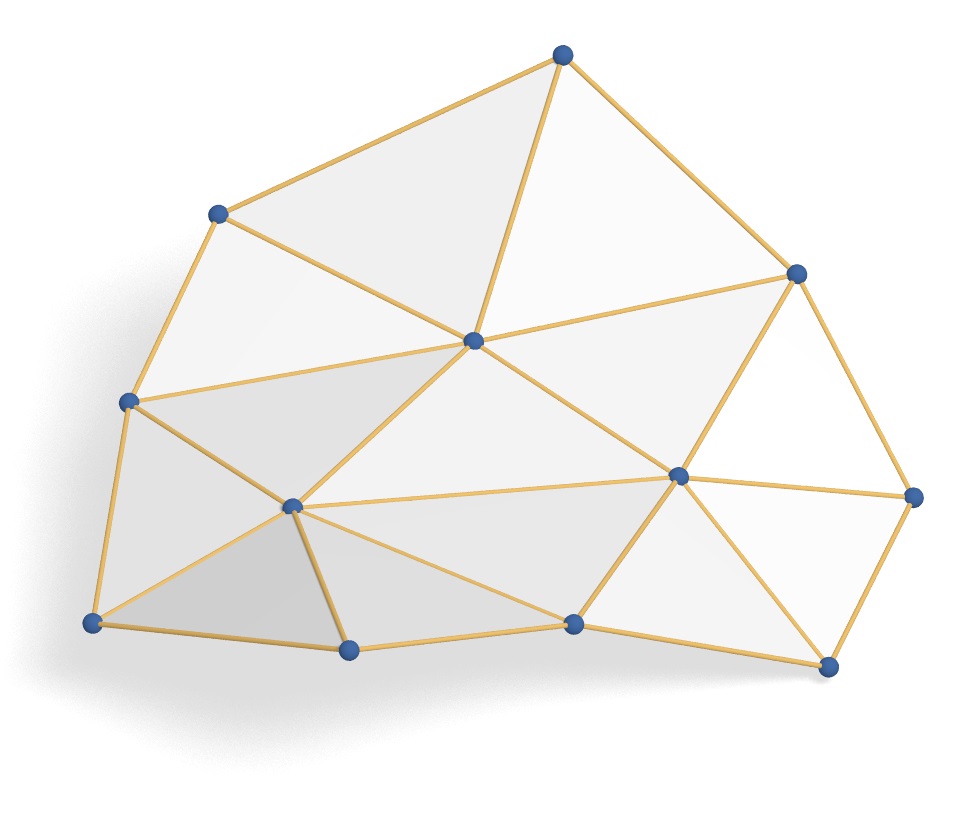}
			\put(47,40){\contour{white}{$f_1$}}
			\put(64,44){\contour{white}{$f_2$}}
			\put(60,60){\contour{white}{$f_3$}}
			\put(39,62){\contour{white}{$f_4$}}
			\put(25,50){\contour{white}{$f_5$}}
			\put(25,39){\contour{white}{$f_6$}}
			\put(13,32){\contour{white}{$f_7$}}
			\put(23,23){\contour{white}{$f_8$}}
			\put(36,22){\contour{white}{$f_9$}}
			\put(48,28){\contour{white}{$f_{10}$}}
			\put(65,23){\contour{white}{$f_{11}$}}
			\put(78,28){\contour{white}{$f_{12}$}}
			\put(77,42){\contour{white}{$f_{13}$}}
		\end{overpic}
		\relax\\
		\begin{overpic}[height=0.3\textwidth]{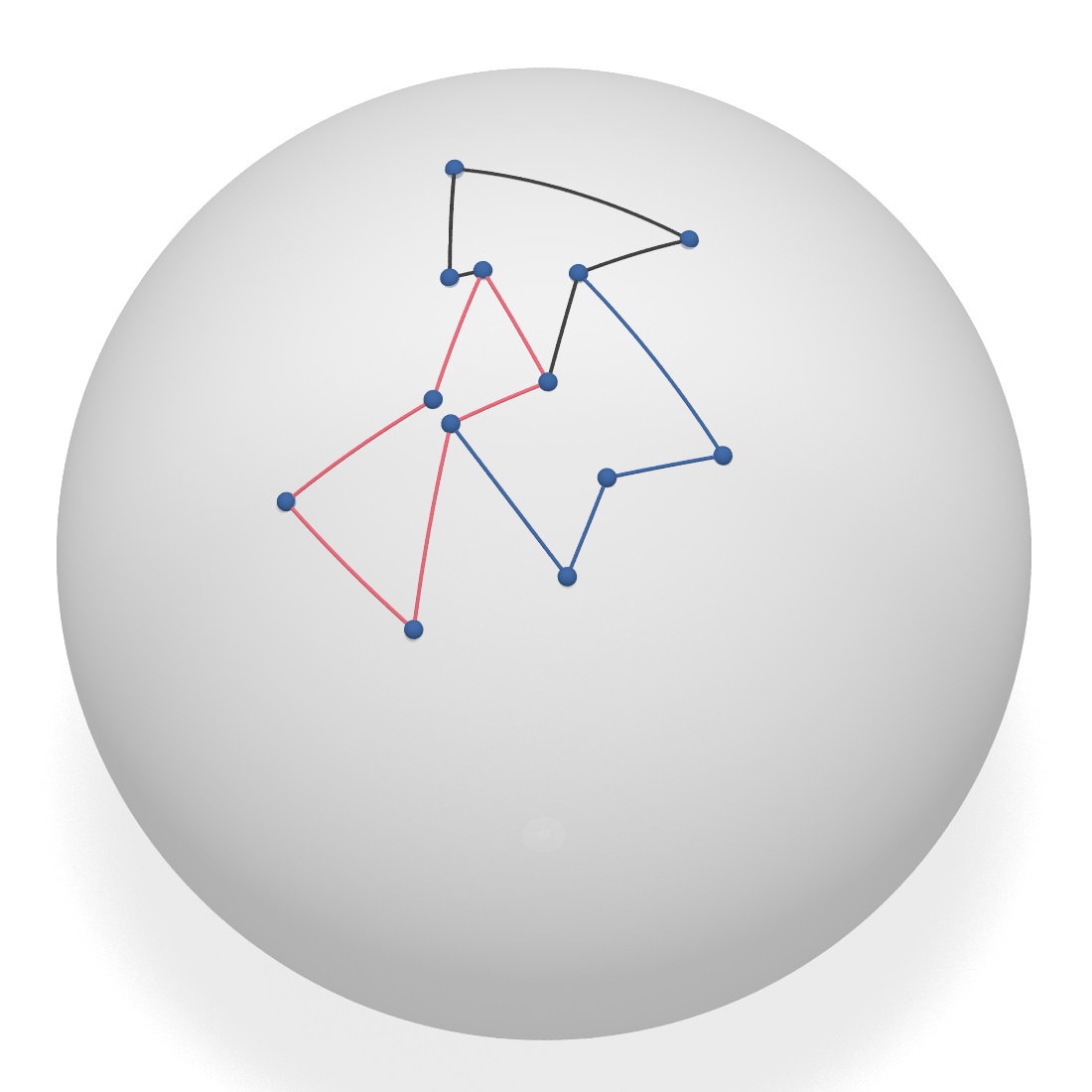}
			\cput{55}{63}{\scalebox{0.75}{$\vec{n}_{1}$}}
			\cput{46}{59.5}{\scalebox{0.75}{$\vec{n}_{2}$}}
			\cput{37}{39}{\scalebox{0.75}{$\vec{n}_{3}$}}
			\cput{22}{53}{\scalebox{0.75}{$\vec{n}_{4}$}}
			\cput{35}{64}{\scalebox{0.75}{$\vec{n}_{5}$}}
			\cput{46}{76.5}{\scalebox{0.75}{$\vec{n}_{6}$}}
			\cput{39}{71}{\scalebox{0.75}{$\vec{n}_{7}$}}
			\cput{43}{86}{\scalebox{0.75}{$\vec{n}_{8}$}}	
			\cput{65}{79.5}{\scalebox{0.75}{$\vec{n}_{9}$}}	
			\cput{60}{73}{\scalebox{0.75}{$\vec{n}_{10}$}}		
			\cput{72}{57}{\scalebox{0.75}{$\vec{n}_{11}$}}	
			\cput{61}{53}{\scalebox{0.75}{$\vec{n}_{12}$}}
			\cput{54}{43}{\scalebox{0.75}{$\vec{n}_{13}$}}
		\end{overpic}
	}
	\caption{Gauss images of the stars of the vertices of $f_1$ do not overlap: $f_6$ and $f_{10}$ are not inflection faces in the left vertex star, $f_1$ and $f_{12}$ are not inflection faces in the right vertex star, and $f_2$ and $f_5$ are not inflection faces in the upper vertex star}\label{fig:negative_nice}
\end{figure}

Similar to Section~\ref{sec:selfintersection} where we considered only a neighborhood of the vertex under consideration, we just observe how the individual Gauss images fit together at ${\vec{n}}_{1}$. They may not intersect any of the others as in Fig.~\ref{fig:negative_nice}, but one of them could intersect another Gauss image transversally as in Fig.~\ref{fig:negative_notnice}. 

\begin{figure}[!ht]
	\centerline{
		\begin{overpic}[height=0.25\textwidth]{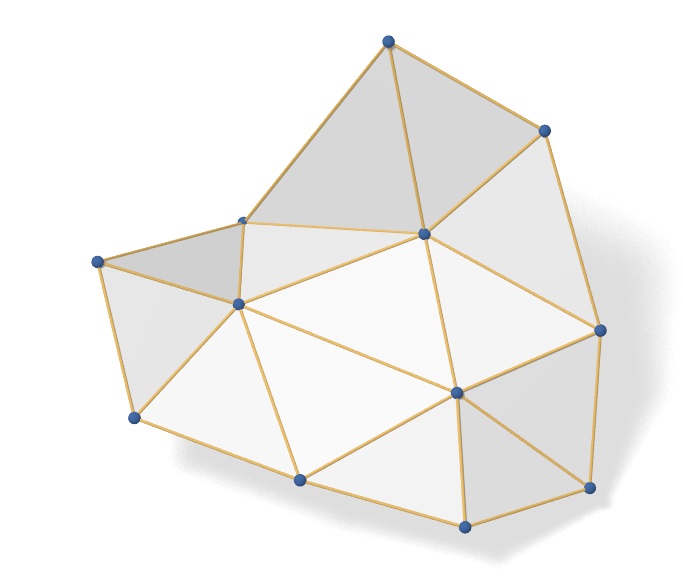}
			\put(51,37){\contour{white}{$f_1$}}
			\put(69,35){\contour{white}{$f_2$}}
			\put(71,50){\contour{white}{$f_3$}}
			\put(62,62){\contour{white}{$f_4$}}
			\put(47,60){\contour{white}{$f_5$}}
			\put(37,45){\contour{white}{$f_6$}}
			\put(26,44){\contour{white}{$f_7$}}
			\put(20,33){\contour{white}{$f_8$}}
			\put(29,24){\contour{white}{$f_9$}}
			\put(44,26){\contour{white}{$f_{10}$}}
			\put(55,16){\contour{white}{$f_{11}$}}
			\put(68,16){\contour{white}{$f_{12}$}}
			\put(75,25){\contour{white}{$f_{13}$}}
		\end{overpic}
		\relax\\
		\begin{overpic}[height=0.3\textwidth]{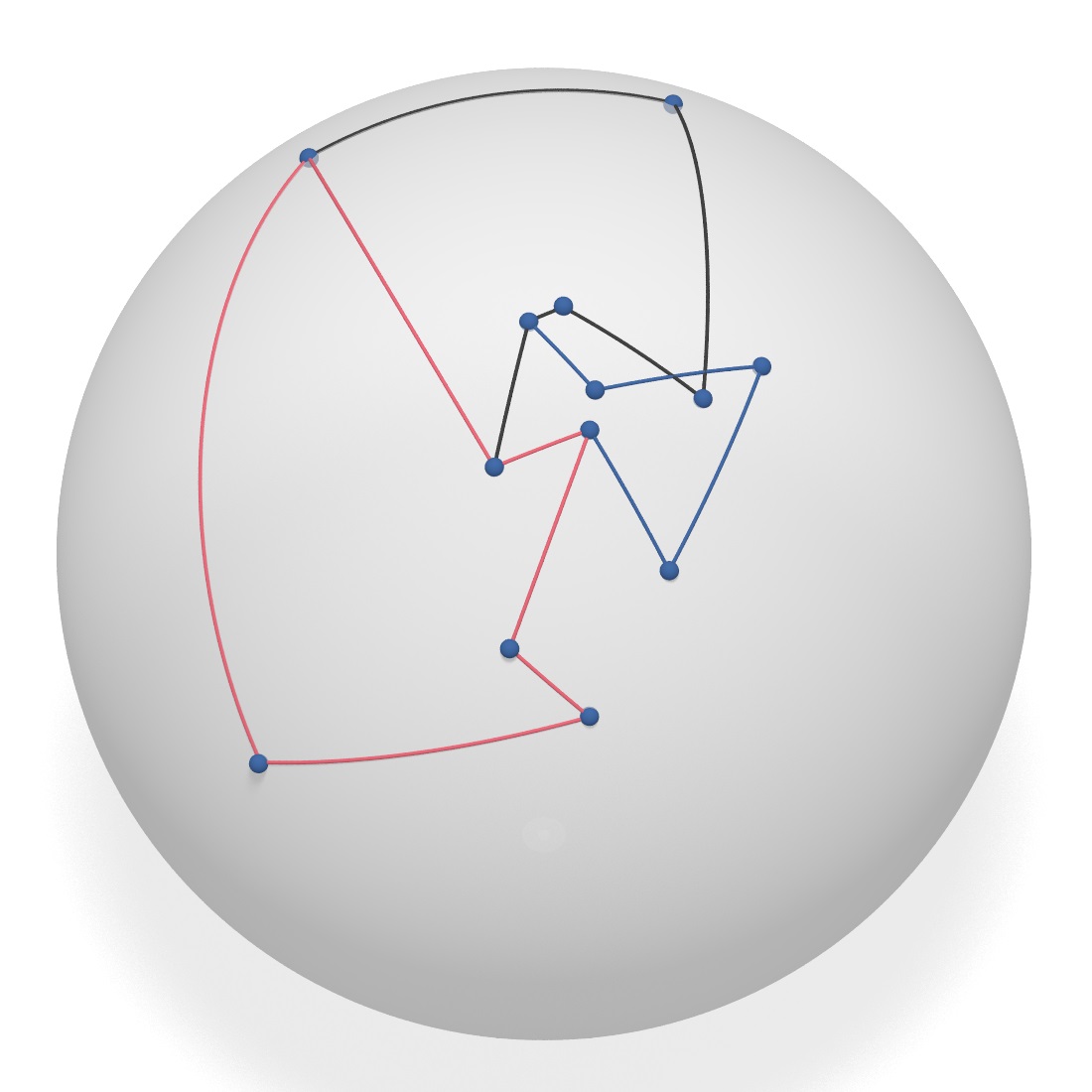}
			\cput{46}{53}{\scalebox{0.75}{$\vec{n}_{1}$}}
			\cput{55}{56}{\scalebox{0.75}{$\vec{n}_{2}$}}
			\cput{42}{39}{\scalebox{0.75}{$\vec{n}_{3}$}}
			\cput{55}{30}{\scalebox{0.75}{$\vec{n}_{4}$}}
			\cput{24}{26}{\scalebox{0.75}{$\vec{n}_{5}$}}
			\cput{26}{88}{\scalebox{0.75}{$\vec{n}_{6}$}}
			\cput{66}{90}{\scalebox{0.75}{$\vec{n}_{7}$}}
			\cput{64}{60}{\scalebox{0.75}{$\vec{n}_{8}$}}	
			\cput{52}{73}{\scalebox{0.75}{$\vec{n}_{9}$}}	
			\cput{43}{70}{\scalebox{0.75}{$\vec{n}_{10}$}}		
			\cput{49}{63}{\scalebox{0.75}{$\vec{n}_{11}$}}	
			\cput{76}{65}{\scalebox{0.75}{$\vec{n}_{12}$}}	
			\cput{64}{43.5}{\scalebox{0.75}{$\vec{n}_{13}$}}	
		\end{overpic}
	}
	\caption{Gauss images of the stars of the vertices of $f_1$ partially overlap: $f_9$ and $f_{10}$ are not inflection faces in the left vertex star, $f_2$ and $f_{11}$ are not inflection faces in the lower vertex star, and $f_1$ and $f_3$ are not inflection faces in the upper vertex star}\label{fig:negative_notnice}
\end{figure}

What we actually can exclude is an overlap as in Fig.~\ref{fig:monkeysaddle2} assuming that also the angles of Gauss images of the negatively curved vertex stars around $f_2$ sum up to $2\pi$ at ${\vec{n}}_{2}$.

\begin{figure}[!ht]
	\centerline{
		\begin{overpic}[height=0.25\textwidth]{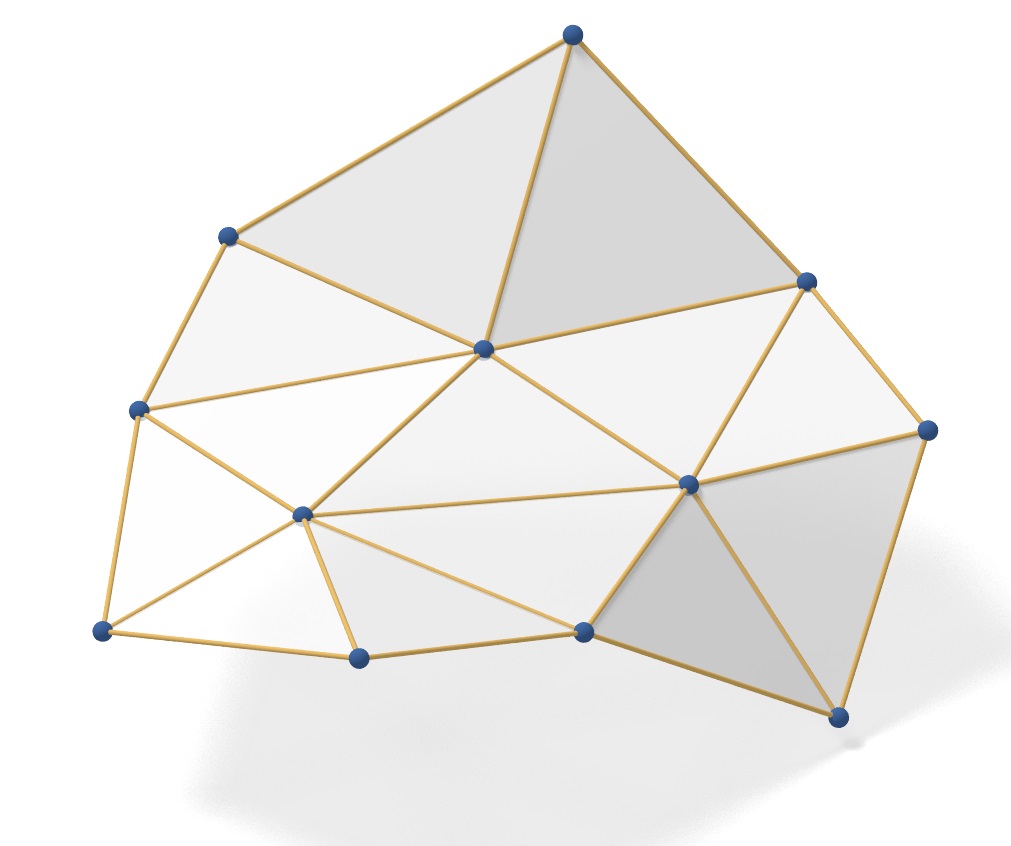}
		\put(47,39){\contour{white}{$f_1$}}
		\put(62,45){\contour{white}{$f_2$}}
		\put(60,58){\contour{white}{$f_3$}}
		\put(40,60){\contour{white}{$f_4$}}
		\put(27,50){\contour{white}{$f_5$}}
		\put(26,38){\contour{white}{$f_6$}}
		\put(15,31){\contour{white}{$f_7$}}
		\put(22,22){\contour{white}{$f_8$}}
		\put(37,22){\contour{white}{$f_9$}}
		\put(49,27){\contour{white}{$f_{10}$}}
		\put(65,22){\contour{white}{$f_{11}$}}
		\put(75,28){\contour{white}{$f_{12}$}}
		\put(76,43){\contour{white}{$f_{13}$}}
		\end{overpic}
		\relax\\
		\begin{overpic}[height=0.3\textwidth]{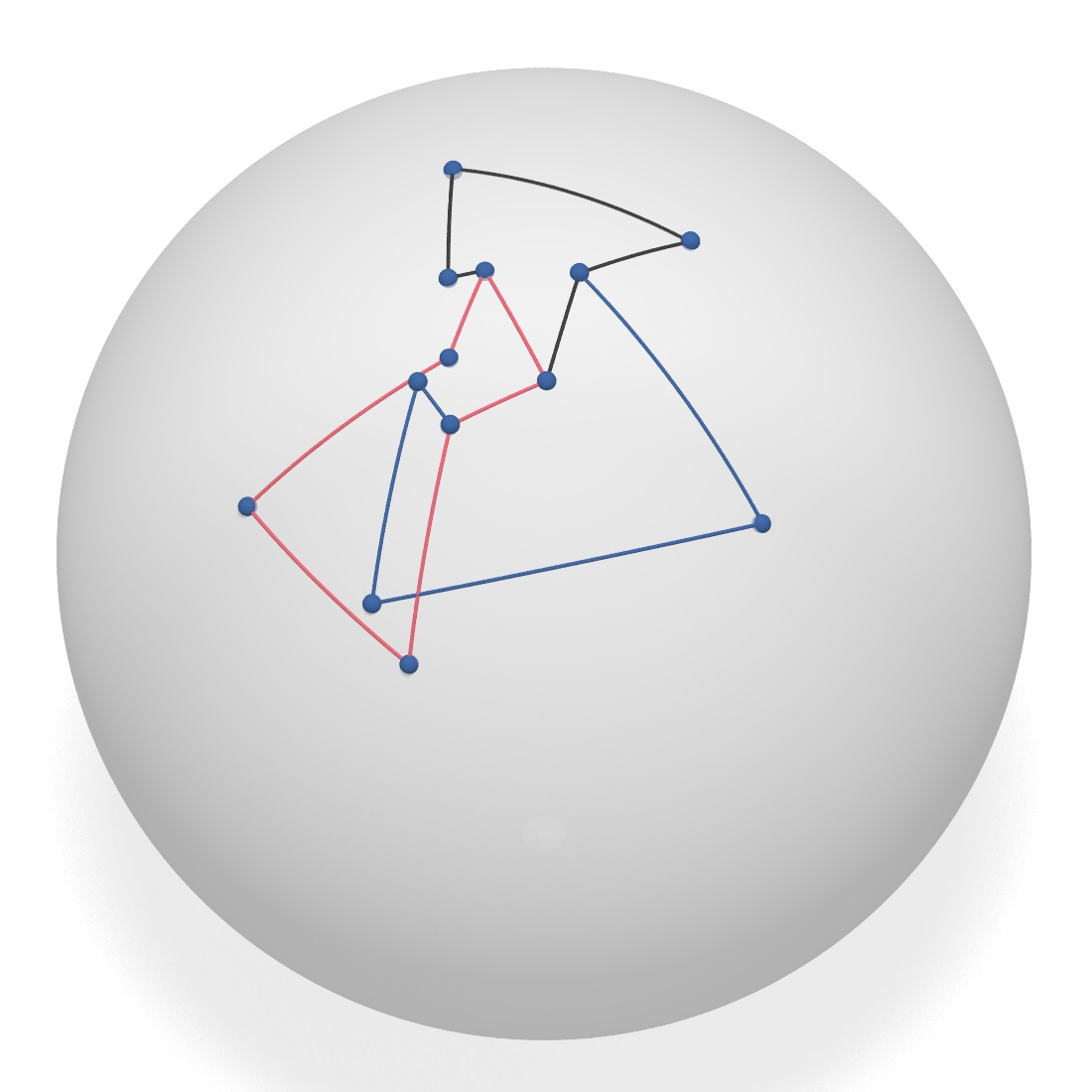}
		\cput{55}{63}{\scalebox{0.75}{$\vec{n}_{1}$}}
		\cput{45}{58}{\scalebox{0.75}{$\vec{n}_{2}$}}
		\cput{40}{35}{\scalebox{0.75}{$\vec{n}_{3}$}}
		\cput{18}{53}{\scalebox{0.75}{$\vec{n}_{4}$}}
		\cput{42}{69}{\scalebox{0.75}{$\vec{n}_{5}$}}
		\cput{45}{77}{\scalebox{0.75}{$\vec{n}_{6}$}}
		\cput{37}{74}{\scalebox{0.75}{$\vec{n}_{7}$}}
		\cput{41}{87}{\scalebox{0.75}{$\vec{n}_{8}$}}	
		\cput{65}{80}{\scalebox{0.75}{$\vec{n}_{9}$}}	
		\cput{60}{73}{\scalebox{0.75}{$\vec{n}_{10}$}}		
		\cput{76}{50}{\scalebox{0.75}{$\vec{n}_{11}$}}	
		\cput{33}{47}{\scalebox{0.75}{$\vec{n}_{12}$}}	
		\cput{33}{66}{\scalebox{0.75}{$\vec{n}_{13}$}}		
		\end{overpic}
	}
	\caption{Polyhedral monkey saddle at $f_2$: $f_6$ and $f_{10}$ are not inflection faces in the left vertex star, $f_1$ and $f_2$ are not inflection faces in the right vertex star, and $f_2$ and $f_5$ are not inflection faces in the upper vertex star}\label{fig:monkeysaddle2}
\end{figure}

\begin{remark}
In fact, such a behavior mimics a polyhedral version of a monkey saddle. If we think about a smooth monkey saddle, then it is a parabolic point $p$ surrounded by hyperbolic points. The Gauss image of a neighborhood of $p$ has a branching of order two at the normal of $p$. Such a behavior can be observed in Fig.~\ref{fig:monkeysaddle} as well: All faces adjacent to $f_1$ lie on the same side of the plane through $f_1$ and all vertices incident to $f_1$ are hyperbolic. In the Gauss image, we see a branching of order two at ${\vec{n}}_{1}$, so we indeed model a polyhedral version of a monkey saddle. Compared to the polyhedral monkey saddle Banchoff described in \cite{B70}, the parabolic point in our setup is located on the face and not on a vertex.
\end{remark}

\begin{figure}[!ht]
	\centerline{
		\begin{overpic}[height=0.25\textwidth]{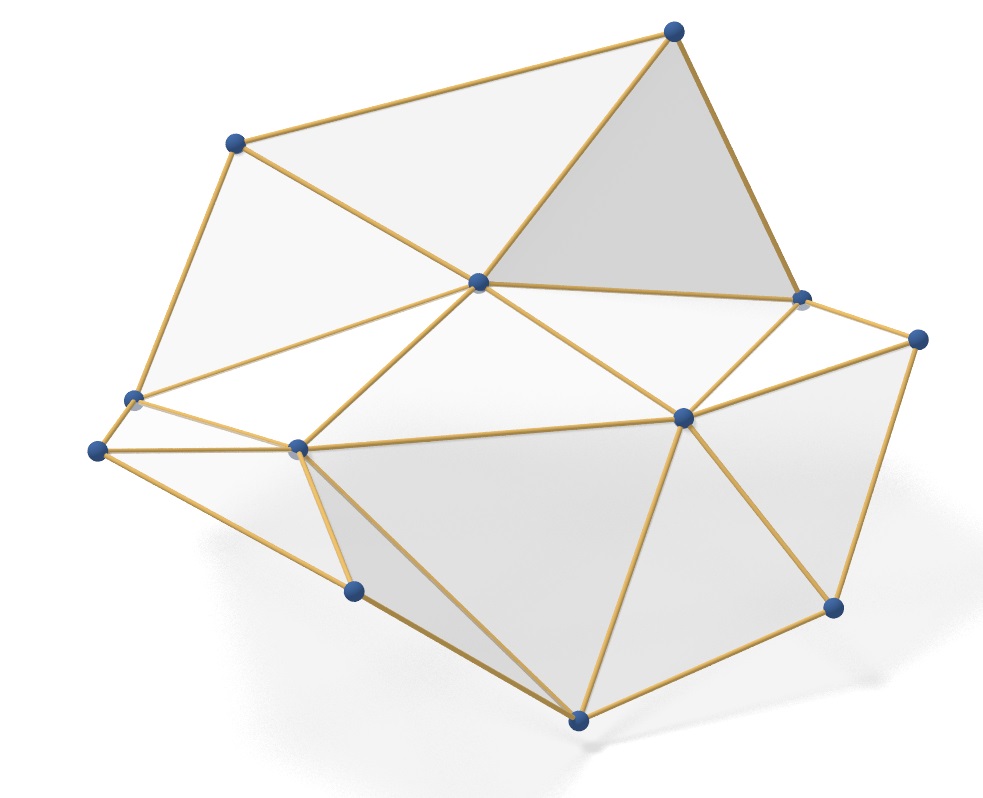}
	\put(47,41){\contour{white}{$f_1$}}
	\put(63,45){\contour{white}{$f_2$}}
	\put(63,58){\contour{white}{$f_3$}}
	\put(42,62){\contour{white}{$f_4$}}
	\put(26,52){\contour{white}{$f_5$}}
	\put(26,39){\contour{white}{$f_6$}}
	\put(15,36){\contour{white}{$f_7$}}
	\put(23,29){\contour{white}{$f_8$}}
	\put(37,22){\contour{white}{$f_9$}}
	\put(49,27){\contour{white}{$f_{10}$}}
	\put(67,22){\contour{white}{$f_{11}$}}
	\put(76,32){\contour{white}{$f_{12}$}}
	\put(76,43){\contour{white}{$f_{13}$}}
		\end{overpic}
		\relax\\
		\begin{overpic}[height=0.3\textwidth]{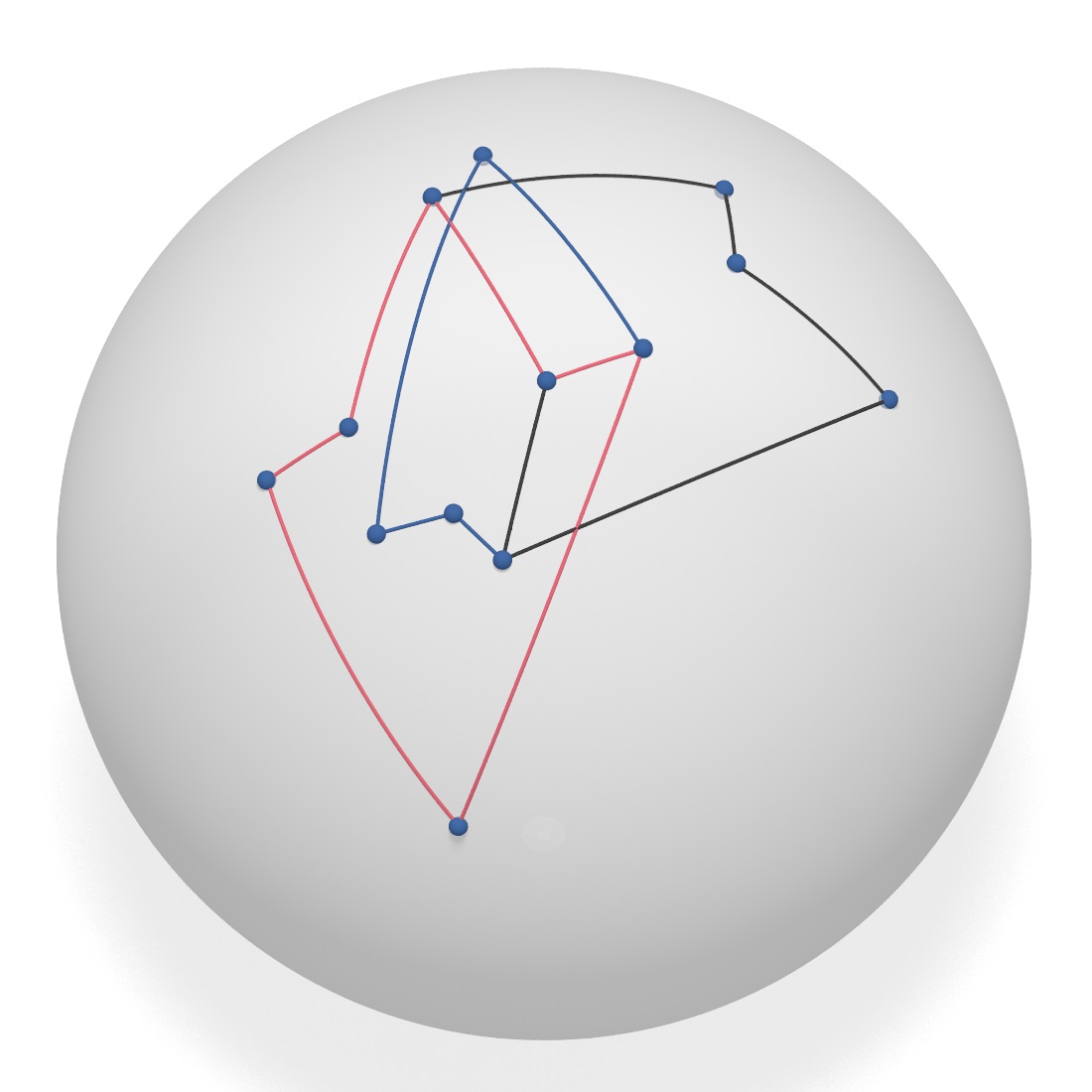}
		\cput{54}{62}{\scalebox{0.75}{$\vec{n}_{1}$}}
		\cput{63}{67}{\scalebox{0.75}{$\vec{n}_{2}$}}
		\cput{41}{21}{\scalebox{0.75}{$\vec{n}_{3}$}}
		\cput{20}{55}{\scalebox{0.75}{$\vec{n}_{4}$}}
		\cput{28}{61}{\scalebox{0.75}{$\vec{n}_{5}$}}
		\cput{38}{84}{\scalebox{0.75}{$\vec{n}_{6}$}}
		\cput{67}{84}{\scalebox{0.75}{$\vec{n}_{7}$}}
		\cput{72}{75}{\scalebox{0.75}{$\vec{n}_{8}$}}	
		\cput{86}{62}{\scalebox{0.75}{$\vec{n}_{9}$}}	
		\cput{45}{45}{\scalebox{0.75}{$\vec{n}_{10}$}}		
		\cput{41}{55}{\scalebox{0.75}{$\vec{n}_{11}$}}	
		\cput{33}{47}{\scalebox{0.75}{$\vec{n}_{12}$}}	
		\cput{43}{88}{\scalebox{0.75}{$\vec{n}_{13}$}}		
		\end{overpic}
	}
	\caption{Polyhedral monkey saddle at $f_1$: $f_1$ and $f_8$ are not inflection faces in the left vertex star, $f_1$ and $f_{11}$ are not inflection faces in the right vertex star, and $f_1$ and $f_5$ are not inflection faces in the upper vertex star}\label{fig:monkeysaddle}
\end{figure}

For a polygon $f_1$ with more than just three vertices and branchings in the Gauss image of higher order, corresponding higher order saddles can be modeled as well.

But as we mentioned before, isolated parabolic points are not generic, so we will not include them in our notion of smoothness. The observation that small deformations of the smooth monkey saddle perturb the parabolic point into a small parabolic curve can be transfered to the polyhedral case as follows. We assume that $f_1$ is a triangle and put a point $P$ inside the triangle. That point $P$ is now moved slightly inside the half-space opposite to the one the faces adjacent to $f_1$ are contained in. $f_1$ will be replaced by the three triangles defined by $P$ and the three edges of $f_1$. Then, we will get a convex corner $P$ and three ordinary saddles at the three original vertices of $f_1$, see Fig.~\ref{fig:monkeysaddleP}.

\begin{figure}[htbp]
		\centerline{
			\begin{overpic}[height=0.25\textwidth]{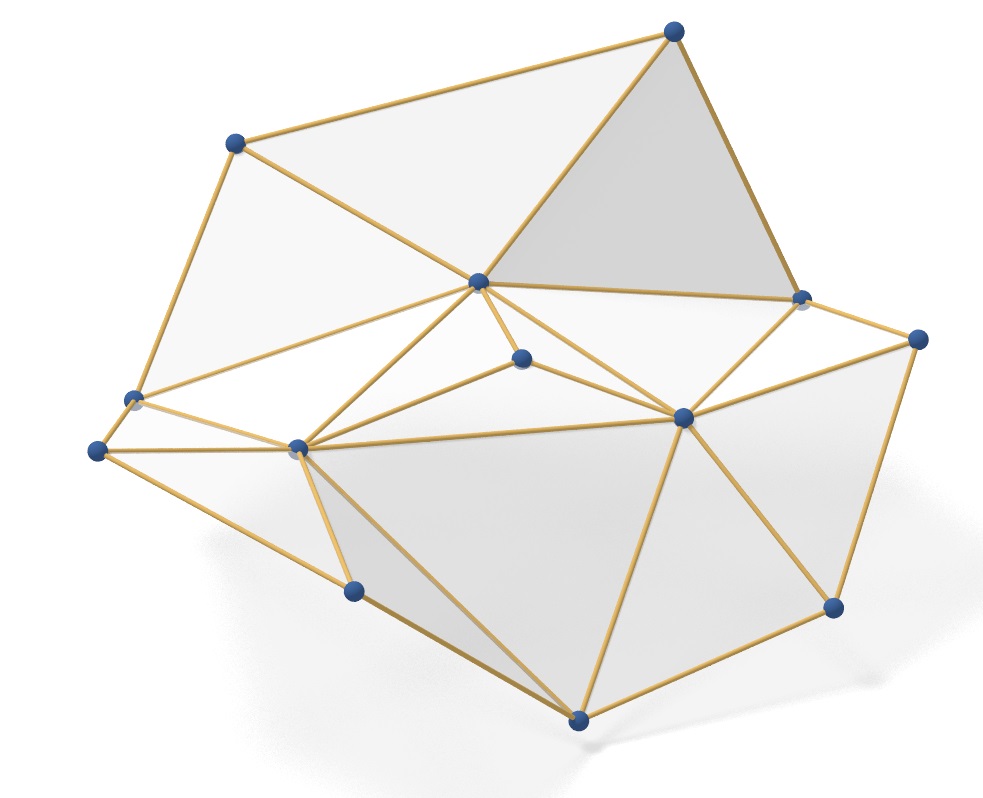}
				\put(47,39){\scalebox{0.75}{\contour{white}{$f_{1,10}$}}}
				\put(54,44){\scalebox{0.75}{\contour{white}{$f_{1,2}$}}}
				\put(41,45){\scalebox{0.75}{\contour{white}{$f_{1,6}$}}}
				\put(64,45){\contour{white}{$f_2$}}
				\put(63,58){\contour{white}{$f_3$}}
				\put(42,62){\contour{white}{$f_4$}}
				\put(26,52){\contour{white}{$f_5$}}
				\put(26,39){\contour{white}{$f_6$}}
				\put(15,36){\contour{white}{$f_7$}}
				\put(23,29){\contour{white}{$f_8$}}
				\put(37,22){\contour{white}{$f_9$}}
				\put(49,27){\contour{white}{$f_{10}$}}
				\put(67,22){\contour{white}{$f_{11}$}}
				\put(76,32){\contour{white}{$f_{12}$}}
				\put(76,43){\contour{white}{$f_{13}$}}
			\end{overpic}
			\relax\\
			\begin{overpic}[height=0.3\textwidth]{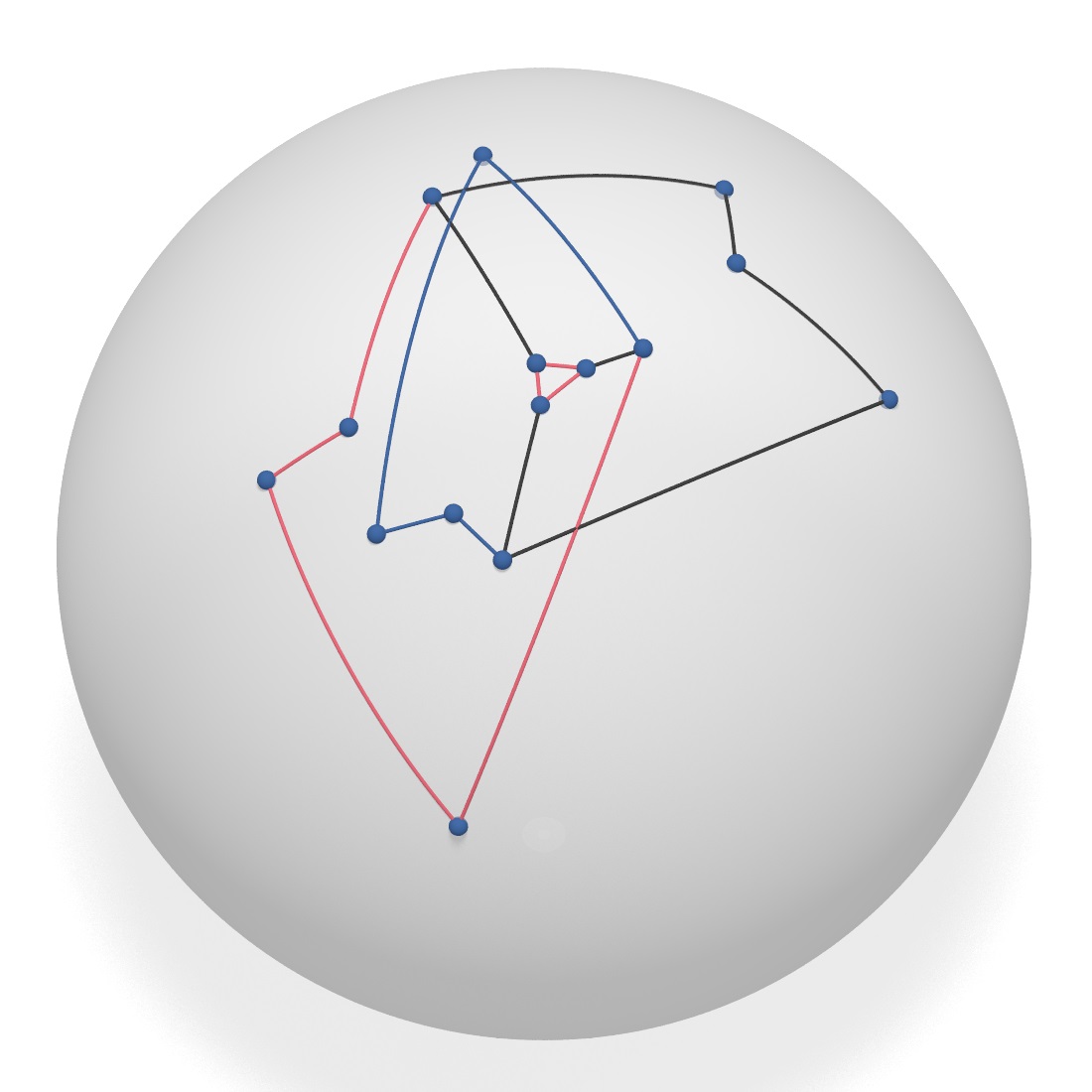}
				\cput{49}{60}{\scalebox{0.75}{$\vec{n}_{1,10}$}}
				\cput{52}{69}{\scalebox{0.75}{$\vec{n}_{1,2}$}}
				\cput{43}{66}{\scalebox{0.75}{$\vec{n}_{1,6}$}}
				\cput{63}{66}{\scalebox{0.75}{$\vec{n}_{2}$}}
				\cput{42}{20}{\scalebox{0.75}{$\vec{n}_{3}$}}
				\cput{20}{55}{\scalebox{0.75}{$\vec{n}_{4}$}}
				\cput{28}{61}{\scalebox{0.75}{$\vec{n}_{5}$}}
				\cput{38}{84}{\scalebox{0.75}{$\vec{n}_{6}$}}
				\cput{67}{84}{\scalebox{0.75}{$\vec{n}_{7}$}}
				\cput{72}{75}{\scalebox{0.75}{$\vec{n}_{8}$}}	
				\cput{86}{62}{\scalebox{0.75}{$\vec{n}_{9}$}}	
				\cput{45}{45}{\scalebox{0.75}{$\vec{n}_{10}$}}		
				\cput{41}{55}{\scalebox{0.75}{$\vec{n}_{11}$}}	
				\cput{33}{48}{\scalebox{0.75}{$\vec{n}_{12}$}}	
				\cput{43}{88}{\scalebox{0.75}{$\vec{n}_{13}$}}		
			\end{overpic}
		}
   \caption{Replacing a polyhedral monkey saddle by three saddles and a convex corner}
   \label{fig:monkeysaddleP}
\end{figure}

 By Lemma~\ref{lem:angle} we know that the interior angle of the Gauss image of the star of $\vec{v}\sim f_1$ at ${\vec{n}}_{1}$ equals
\begin{align*}
2\pi-(\pi-\alpha_{\vec{v}})&=\alpha_{\vec{v}}+\pi \textnormal{ if } \alpha_{\vec{v}}<\pi \textnormal{ and } f_1 \textnormal{ is not an inflection face at } {\vec{v}},\\
2\pi-(2\pi-\alpha_{\vec{v}})&=\alpha_{\vec{v}} \textnormal{ if } \alpha_{\vec{v}}<\pi \textnormal{ and } f_1 \textnormal{ is an inflection face at } {\vec{v}},\\
2\pi-(3\pi-\alpha_{\vec{v}})&=\alpha_{\vec{v}}-\pi \textnormal{ if } \alpha_{\vec{v}}>\pi \textnormal{ and } f_1 \textnormal{ is not an inflection face at } {\vec{v}},\\
2\pi-(2\pi-\alpha_{\vec{v}})&=\alpha_{\vec{v}} \textnormal{ if } \alpha_{\vec{v}}>\pi \textnormal{ and } f_1 \textnormal{ is an inflection face at } {\vec{v}}.
\end{align*}

Here, $\alpha_{\vec{v}}$ denotes the interior angle of $f_1$ at one of its $n$ vertices $\vec{v}$. Let $c_i$, $i=1,2,3,4$, be the number of vertices that correspond to the $i$th line in the list above. Then, \[2\pi=\sum\limits_{{\vec{v}}\sim f_1}\alpha_{\vec{v}}+c_1\pi-c_3\pi=(n-2)\pi+c_1\pi-c_3\pi.\] It follows that $c_1-c_3=4-n$. Now, $c_1+c_2+c_3+c_4=n$, such that \[2c_1+c_2+c_4=4.\] Since $f_1$ is planar, it has to have at least three corners, i.e., $c_1+c_2 \geq 3$. We end up with the following three cases that are illustrated in Fig.~\ref{fig:negative_face}:

\begin{enumerate}
\item $c_1=0$, $c_2=4$, $c_3=n-4$, $c_4=0$: $f_1$ is a pseudo-quadrilateral and $f_1$ is an inflection face exactly in the vertex stars of its corners;
\item $c_1=0$, $c_2=3$, $c_3=n-4$, $c_4=1$: $f_1$ is a pseudo-triangle and $f_1$ is an inflection face exactly in the vertex stars of its three corners and in one further vertex;
\item $c_1=1$, $c_2=2$, $c_3=n-3$, $c_4=0$: $f_1$ is a pseudo-triangle and $f_1$ is an inflection face exactly in the vertex stars of two of its corners.
\end{enumerate}

\begin{figure}[!ht]
\subfloat{		
		\begin{overpic}[height=0.3\textwidth]{f8_12}
			\put(20,43){\contour{white}{$f_1$}}
			\put(36,30){\contour{white}{$f_2$}}
			\put(57,32){\contour{white}{$f_3$}}
			\put(70,54){\contour{white}{$f_4$}}
			\put(54,64){\contour{white}{$f_5$}}
			\put(37,58){\contour{white}{$f_6$}}
		\end{overpic}
		\relax\\
		\begin{overpic}[height=0.3\textwidth]{f8_22}
			\cput{58}{66}{$\vec{n}_{1}$}
			\cput{68}{57}{$\vec{n}_{2}$}
			\cput{81}{60}{$\vec{n}_{3}$}
			\cput{70}{29}{$\vec{n}_{4}$}
			\cput{16}{72}{$\vec{n}_{5}$}
			\cput{66}{81}{$\vec{n}_{6}$}	
		\end{overpic}
		\relax\\		
		\begin{overpic}[height=0.26\textwidth]{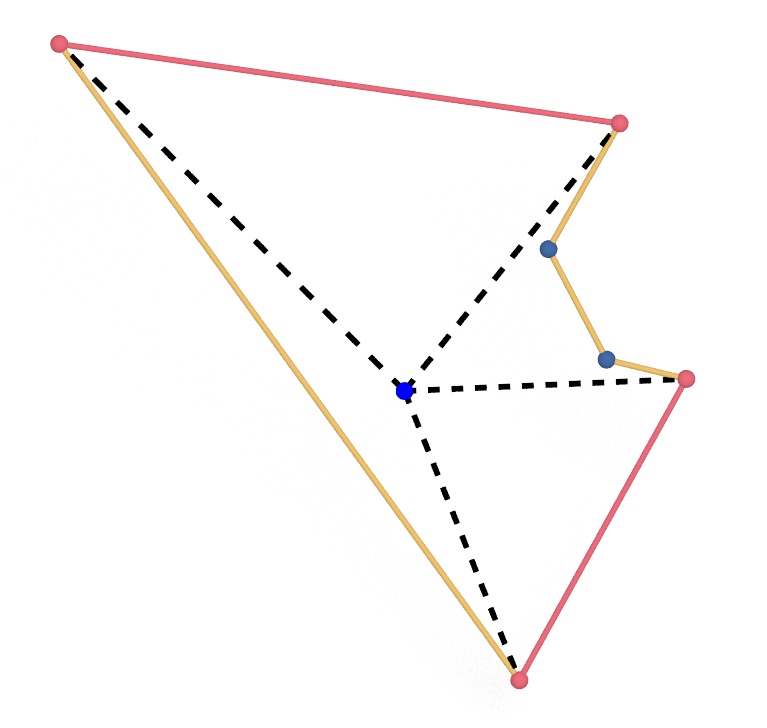}
			\cput{52}{47}{\color{red}{$A$}}
		\end{overpic}		
	}
    \\
	\subfloat{	
		\begin{overpic}[height=0.25\textwidth]{f9_1}
			\put(42,52){\contour{white}{$f_1$}}
			\put(25,32){\contour{white}{$f_2$}}
			\put(75,18){\contour{white}{$f_3$}}
			\put(63,15){\contour{white}{$f_4$}}
			\put(66,42){\contour{white}{$f_5$}}
			\put(55,55){\contour{white}{$f_6$}}	
		\end{overpic}
		\relax\\
		\begin{overpic}[height=0.28\textwidth]{f9_2}
				\cput{11}{65}{$\vec{n}_{1}$}
				\cput{47}{98}{$\vec{n}_{2}$}
				\cput{68}{60}{$\vec{n}_{3}$}
				\cput{84}{86}{$\vec{n}_{4}$}
				\cput{84}{28}{$\vec{n}_{5}$}
				\cput{41}{71}{$\vec{n}_{6}$}
		\end{overpic}
				\relax\\		
				\begin{overpic}[height=0.26\textwidth]{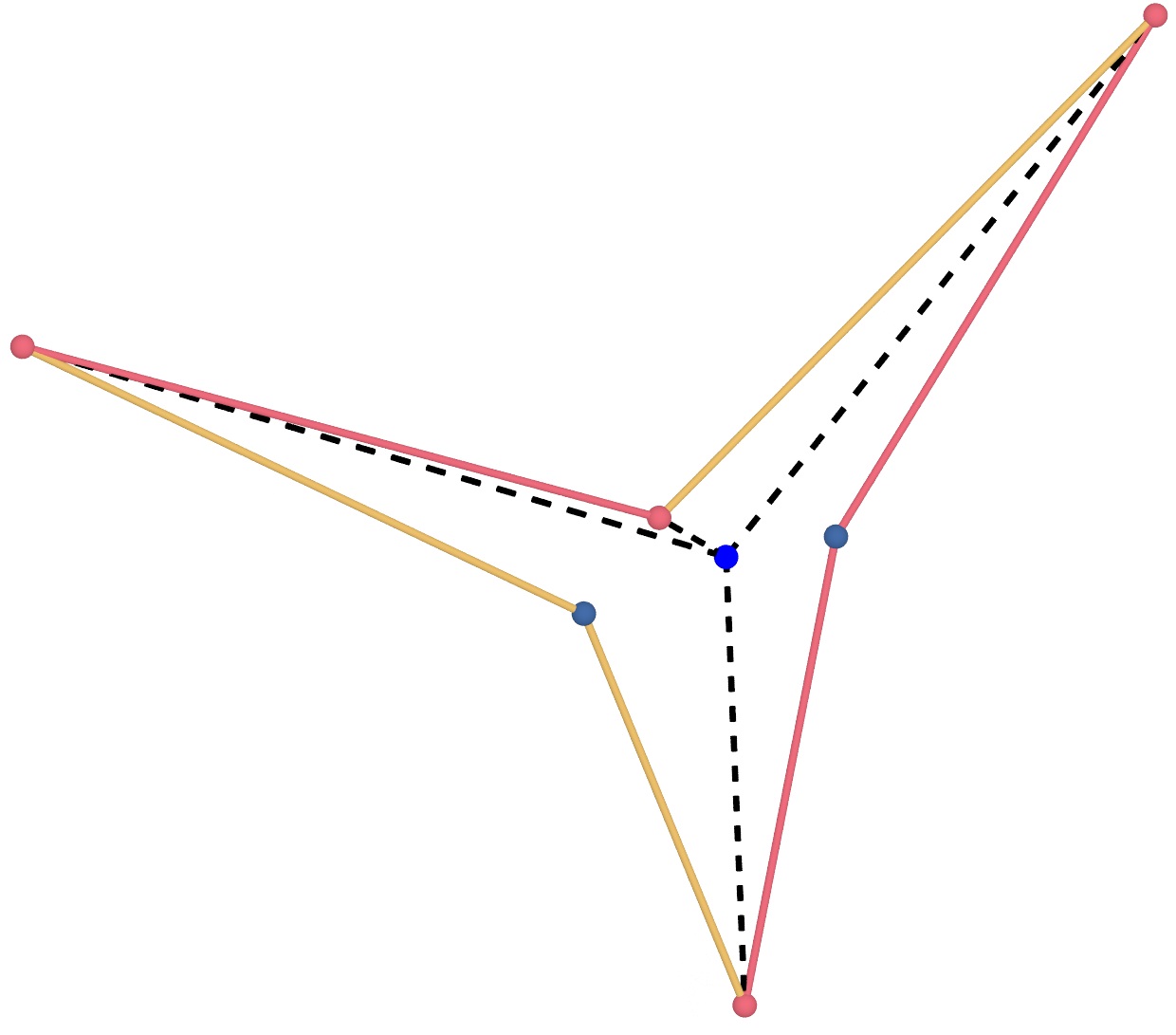}
			\cput{58}{34}{\color{red}{$A$}}	
				\end{overpic}	
		}
\\
\subfloat{
				\begin{overpic}[height=0.28\textwidth]{f10_12}
					\put(76,47){\contour{white}{$f_1$}}
					\put(18,65){\contour{white}{$f_2$}}
					\put(17,36){\contour{white}{$f_3$}}
					\put(31,21){\contour{white}{$f_4$}}
					\put(50,18){\contour{white}{$f_5$}}
					\put(66,28){\contour{white}{$f_6$}}
				\end{overpic}
				\relax\\
				\begin{overpic}[height=0.28\textwidth]{f10_22}
					\cput{38}{29}{$\vec{n}_{1}$}
					\cput{48}{97}{$\vec{n}_{2}$}
					\cput{94}{52}{$\vec{n}_{3}$}
					\cput{82}{57}{$\vec{n}_{4}$}
					\cput{65}{52}{$\vec{n}_{5}$}
					\cput{52}{43}{$\vec{n}_{6}$}	
				\end{overpic}
				\relax\\		
				\begin{overpic}[height=0.28\textwidth]{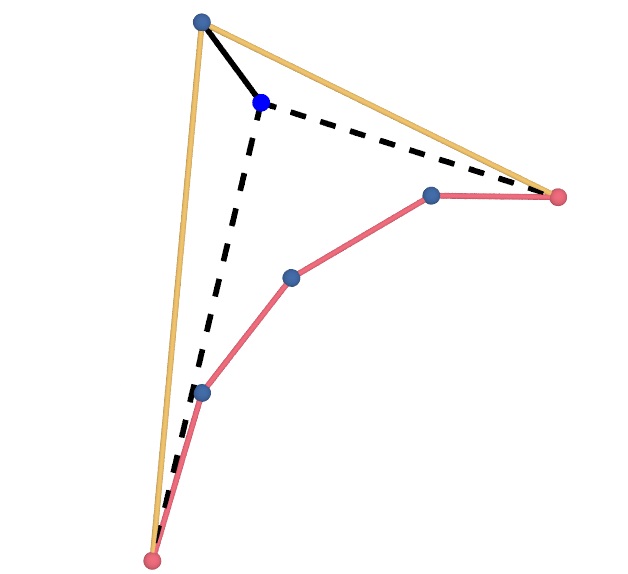}
			\cput{44}{68}{\color{red}{$A$}}	
				\end{overpic}	
	}
		\caption{Three cases for planar faces in negatively curved regions and their projective duals in Theorem~\ref{th:shape}. Vertices where the face is an inflection face are colored red and separate different colored edges. The two colors of edges correspond to adjacent faces that lie below or above the face. Asymptotic directions are marked by dashed lines. In the lower-right picture, the solid line corresponds to two asymptotic directions}\label{fig:negative_face}
\end{figure}

From our discussion of asymptotic directions in Section~\ref{sec:asymptotes} we remember that at each vertex $\vec{v}\sim f_1$ where $f_1$ is an inflection face the asymptotic direction is pointing inward the cone spanned by the two edges of $f_1$ incident to $\vec{v}$ (in the case that $\alpha_{\vec{v}}>\pi$, it is actually the cone opposite to the one spanned by the two incident edges). At vertices $\vec{v}\sim f_1$ with $\alpha_{\vec{v}} > \pi$ where $f_1$ is not an inflection face, the asymptotic directions lie in the double cone that is spanned by the edges incident to $\vec{v}$. 

If we assume in addition that $f_1$ is star-shaped with respect to some point $A \in f_1$, then we can define \textit{discrete asymptotic directions} in $A$.

In the first case that $f_1$ is a pseudo-quadrilateral, line segments connecting the corners of $f_1$ with $A$ are suitable asymptotic directions. Indeed, these line segments lie in the cones where the asymptotic directions at the vertices lie. Furthermore, let us consider an intersection of a plane parallel and either slightly above or below $f_1$ with a neighborhood of $f_1$. Since $f_1$ is an inflection face exactly at the four corners, we obtain in the limit of the plane coming closer to $f_1$ that these intersections converge to one pair of opposite pseudo-edges, i.e., the two opposite paths that each connect two neighboring corners, see the upper-right picture in Fig.~\ref{fig:negative_face}. In particular, the intersections are discrete hyperbolas in the limit. The asymptotic directions that we defined in $A$ interpolate their asymptotes.

In the second case that $f_1$ is a pseudo-triangle and $f_1$ is an inflection face at four of its vertices, then the line segments connecting $A$ with these four vertices should model asymptotic directions. Again, they lie in the cones where the asymptotic directions at these vertices lie. Furthermore, the intersections of planes parallel and close to $f_1$ with a neighborhood of $f_1$ will again be discrete hyperbolas in the limit, even though one branch of one hyperbola and one of the other form together again a branch of a discrete hyperbola, see the middle-right picture in Fig.~\ref{fig:negative_face}. Still, the asymptotic directions in $A$ somehow interpolate their asymptotes.

In contrast to the smooth case, the asymptotic directions we defined either at vertices or in faces do not form a pair of two lines in general. Hence, it is more adequate to consider these asymptotic directions as discrete asymptotic curves passing through a vertex or a point $A$ in a face.

The third case that $f_1$ is a pseudo-triangle, but $f_1$ is an inflection face at just two of its corners, is special. If we have a look at the limit of the intersection of planes parallel and close to $f_1$ with a neighborhood of the face, then it will be either one pseudo-edge or the combination of the other two, depending on the planes being above or below $f_1$. Hence, the limit consists of either a single branch of a discrete hyperbola or two branches of discrete hyperbolas that share a vertex. It is immediate that the two line segments connecting $A$ with the two vertices of $f_1$ where it is an inflection face are reasonable asymptotic directions. If we see these line segments as discrete asymptotic curves, they should both continue to the remaining corner $\vec{v}$ of $f_1$ following the asymptotes of the two branches of discrete hyperbolas that share this vertex. In this sense, the line segment connecting $A$ with $\vec{v}$ counts twice as a discrete asymptotic direction, see the lower-right picture in Fig.~\ref{fig:negative_face}. However, this line segment cannot be an asymptotic direction for $\vec{v}$.

\begin{remark}
The case of $f_1$ being a pseudo-triangle and $f_1$ being an inflection face at just two of its corners should be seen as a degeneracy of the first case. Indeed, if we are in the case of a pseudo-quadrilateral $f_1$ and shrink one pseudo-edge to a vertex, $f_1$ becomes a pseudo-triangle, two vertices where $f_1$ was an inflection face constitute a vertex where $f_1$ is not an inflection face any longer, the pair of opposite pseudo-edges that formed discrete hyperbola degenerate to a single branch of a discrete hyperbola (and a singular point) and two branches of a discrete hyperbola that share a vertex, and the two asymptotic directions that connected $A$ with the two corners that degenerate to just one vertex are overlapping in the end.

Another interpretation will be discussed in more detail in Section~\ref{sec:projective} when we consider the projective dual surface. Figure~\ref{fig:negative_face} depicts the three shapes of faces next to the three dual shapes of vertex stars and their Gauss images. To apply Theorem~\ref{th:correlation}, we need the existence of a point that does not lie on any plane tangent to the neighborhood of the face. This does not always have to be the case, for example it is not for a square with two opposite neighboring faces being slightly above and the other two opposite face being slightly below that square.

We will show in Theorem~\ref{th:correlation} that the discrete asymptotic directions in this degenerate case correspond to the discrete asymptotic directions in the shape discussed in case (iii) of Theorem~\ref{th:shape} with two inflection faces, see Fig.~\ref{fig:Dupin_triangle_degenerate}. There, the face with a reflex angle at the vertex $\vec{v}$ was counting as two inflection faces even though it was not an inflection face, and two asymptotic directions lay on a line passing through the vertex $\vec{v}$. 
\end{remark}

Note that in the case of a simplicial surface of negative curvature, we are always in this degenerate case where the asymptotic directions and the shape of the face near the vertex $\vec{v}$ whose Gauss image has a reflex angle at ${\vec{n}}_f$ resemble the behavior at a parabolic point of a smooth surface. This also explains why we observed in experiments that it seems to be quite hard to construct simplicial surfaces that are smooth in the sense above.

We summarize our results in the following:

\begin{proposition}\label{prop:face_negative}
Let $f$ be a face of $P$. Assume that $K({\vec{v}})<0$ and that the Gauss image of the star of $\vec{v}$ has no self-intersections for all $\vec{v} \sim f$. Assuming that the interior angles of the Gauss images of the corresponding vertex stars add up to $2\pi$ at ${\vec{n}}_f$, $f$ has one of the following shapes that are accompanied with the following restrictions at which vertices $f$ is an inflection face:
\begin{enumerate}
\item $f$ is a pseudo-quadrilateral and $f$ is an inflection face exactly in the vertex stars of its corners;
\item $f$ is a pseudo-triangle and $f$ is an inflection face exactly in the vertex stars of its three corners and in one further vertex;
\item $f$ is a pseudo-triangle and $f$ is an inflection face exactly in the vertex stars of two of its corners.
\end{enumerate}
\end{proposition}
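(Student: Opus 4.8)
The plan is to carry out cleanly the angle count sketched in the discussion just above the statement. Write $n$ for the number of vertices of $f$, and for $\vec{v}\sim f$ let $\alpha_{\vec{v}}$ denote the interior angle of $f$ at $\vec{v}$ and $\hat{\alpha}_{\vec{v}}$ the interior angle of the Gauss image $g(\vec{v})$ at $\vec{n}_f$. Since $K(\vec{v})<0$ and $g(\vec{v})$ has no self-intersections, $g(\vec{v})$ is traversed with the reversed orientation, so $\hat{\alpha}_{\vec{v}}=2\pi-\alpha'$ with $\alpha'$ the spherical angle supplied by Lemma~\ref{lem:angle}; substituting the four cases of that lemma, and using that $P$ has no interior angle equal to $\pi$ (so exactly one of $\alpha_{\vec{v}}<\pi$, $\alpha_{\vec{v}}>\pi$ holds), one obtains
\[
\hat{\alpha}_{\vec{v}}=
\begin{cases}
\alpha_{\vec{v}}+\pi,& \alpha_{\vec{v}}<\pi \text{ and } f \text{ is not an inflection face at }\vec{v},\\
\alpha_{\vec{v}},& \alpha_{\vec{v}}<\pi \text{ and } f \text{ is an inflection face at }\vec{v},\\
\alpha_{\vec{v}}-\pi,& \alpha_{\vec{v}}>\pi \text{ and } f \text{ is not an inflection face at }\vec{v},\\
\alpha_{\vec{v}},& \alpha_{\vec{v}}>\pi \text{ and } f \text{ is an inflection face at }\vec{v}.
\end{cases}
\]
Let $c_1,c_2,c_3,c_4$ be the numbers of vertices $\vec{v}\sim f$ falling into the four respective cases.

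Next I would sum over $\vec{v}\sim f$. The hypothesis gives $\sum_{\vec{v}\sim f}\hat{\alpha}_{\vec{v}}=2\pi$ (it is precisely the requirement that this sum is exactly $2\pi$, and not a larger multiple, that rules out the higher-order polyhedral saddles from the preceding discussion), while $f$ being a planar simple $n$-gon gives $\sum_{\vec{v}\sim f}\alpha_{\vec{v}}=(n-2)\pi$. Plugging in the list yields $2\pi=(n-2)\pi+(c_1-c_3)\pi$, hence $c_1-c_3=4-n$; together with $c_1+c_2+c_3+c_4=n$ this collapses to $2c_1+c_2+c_4=4$. The second use of planarity is that a simple planar polygon without a straight interior angle has at least three convex vertices, and the convex vertices of $f$ are exactly the $c_1+c_2$ vertices with $\alpha_{\vec{v}}<\pi$, so $c_1+c_2\ge 3$.

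The proof then finishes with a short case split on $c_1$. If $c_1\ge 2$, then $2c_1+c_2+c_4=4$ forces $c_1=2$, $c_2=c_4=0$, contradicting $c_1+c_2\ge 3$; so $c_1\in\{0,1\}$. If $c_1=0$ then $c_2+c_4=4$ with $c_2\ge 3$, so $(c_2,c_4)$ is $(4,0)$ or $(3,1)$ and $c_3=n-4$, giving cases~(i) and~(ii); if $c_1=1$ then $c_2+c_4=2$ with $c_2\ge 2$, so $c_2=2$, $c_4=0$, $c_3=n-3$, giving case~(iii). It then remains only to read off the claimed descriptions: the corners of $f$ are the $c_1+c_2$ vertices with $\alpha_{\vec{v}}<\pi$, and the vertices at which $f$ is an inflection face are exactly the $c_2+c_4$ vertices appearing in the second and fourth lines of the list, which yields four corners all of which carry an inflection face in case~(i), three corners together with one reflex inflection-face vertex in case~(ii), and three corners two of which carry an inflection face in case~(iii).

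The only delicate point is the very first step, namely the orientation bookkeeping that converts Lemma~\ref{lem:angle} into the displayed list, and in particular the verification that none of the four cases degenerates — which is exactly where the standing assumption that no interior angle of $P$ equals $\pi$ enters. Once that list is established, the remainder is the elementary arithmetic of the $c_i$ together with the ``at least three convex corners'' fact, and there is no further geometric obstacle.
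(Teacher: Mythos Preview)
Your proof is correct and follows essentially the same approach as the paper: compute the interior angles $\hat{\alpha}_{\vec{v}}$ at $\vec{n}_f$ via Lemma~\ref{lem:angle} with the orientation reversal, introduce the counts $c_1,c_2,c_3,c_4$, combine the angle-sum hypothesis $\sum\hat{\alpha}_{\vec{v}}=2\pi$ with $\sum\alpha_{\vec{v}}=(n-2)\pi$ to obtain $2c_1+c_2+c_4=4$, and then use $c_1+c_2\ge 3$ to enumerate the three cases. Your final case split on $c_1$ is slightly more explicit than the paper's enumeration, but the content is identical.
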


\begin{remark}
In contrast to Proposition~\ref{prop:face_positive}, we cannot claim anymore that the Gauss images of vertex stars do not intersect. But since just transversal intersections are possible, it can only happen that two Gauss images of stars of vertices of $f$ intersect if they enclose a region on the sphere that is not covered by any of the Gauss images of vertex stars around $f$. But this region cannot be covered by Gauss images of vertex stars if we assume that our assumptions on $f$ are valid for all faces under consideration. In this sense, the Gauss images of vertex stars around $f$ generically do not intersect. 
\end{remark}

\begin{definition}
Let $f$ be a face of $P$ and consider the conditions of Proposition~\ref{prop:face_negative}. In addition, we assume that $f$ is star-shaped with respect to an interior point $A$. $A$ is said to be a \textit{point of contact} of the \textit{discrete tangent plane} given by the plane through $f$. \textit{Discrete asymptotic directions} are given by the line segments connecting $A$ with the vertices of $f$ where $f$ is an inflection face. In the case that there are just two such vertices, the line segment connecting $A$ with the other corner of $f$ counts as two discrete asymptotic directions.
\end{definition}

\begin{remark}
In analogy to the smooth case, discrete asymptotic directions are defined in the point of contact. But whereas opposite asymptotic directions in the smooth case are collinear, this is generally not the case for polyhedral surfaces.
\end{remark}


\subsection{Region of mixed discrete Gaussian curvature}\label{sec:parabolic}

We now consider a face $f_1$ all of whose vertex stars have Gauss images without self-intersections, but the discrete Gaussian curvature does not have the same sign for all vertices. Then, if we go along the boundary of $f_1$ in counterclockwise direction, the Gauss images of positively curved vertex stars will arrange in counterclockwise order around $f_1$ and the Gauss images of negatively curved vertex stars in clockwise order. In particular, the Gauss images will overlap as in Fig.~\ref{fig:parabolic_line_smooth}.

\begin{figure}[!ht]
	\centerline{
		\begin{overpic}[height=0.25\textwidth]{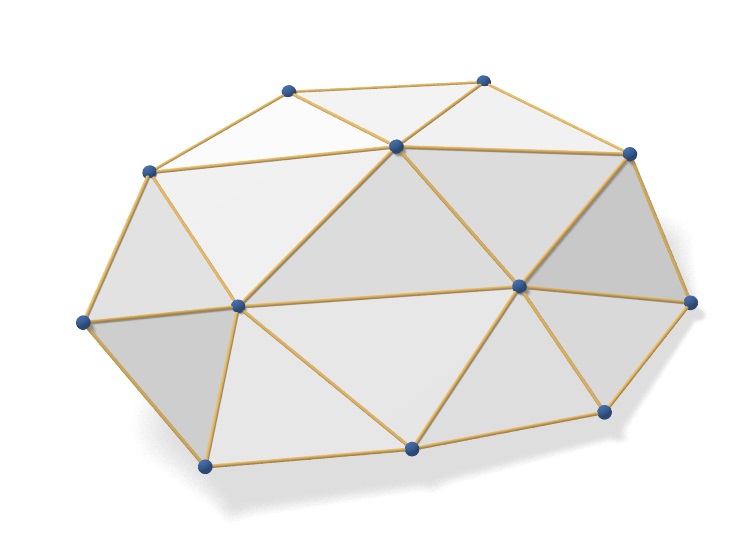}
	\put(49,41){\contour{white}{$f_1$}}
	\put(65,45){\contour{white}{$f_2$}}
	\put(63,58){\contour{white}{$f_3$}}
	\put(48,60){\contour{white}{$f_4$}}
	\put(34,56){\contour{white}{$f_5$}}
	\put(33,45){\contour{white}{$f_6$}}
	\put(18,37){\contour{white}{$f_7$}}
	\put(21,25){\contour{white}{$f_8$}}
	\put(35,18){\contour{white}{$f_9$}}
	\put(49,27){\contour{white}{$f_{10}$}}
	\put(67,22){\contour{white}{$f_{11}$}}
	\put(77,29){\contour{white}{$f_{12}$}}
	\put(79,41){\contour{white}{$f_{13}$}}
		\end{overpic}
		\relax\\
		\begin{overpic}[height=0.3\textwidth]{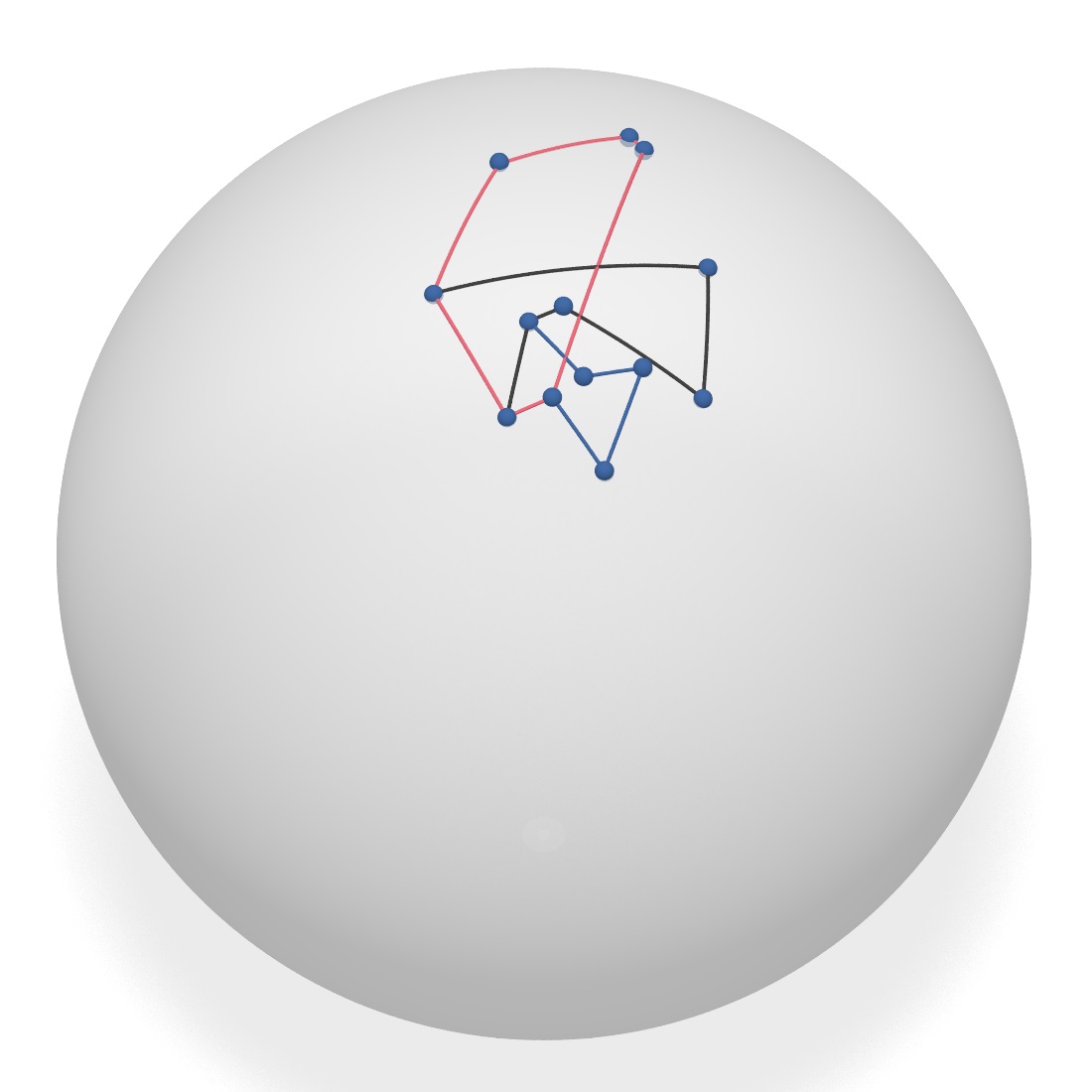}
			\cput{45}{58}{\scalebox{0.75}{$\vec{n}_{1}$}}
			\cput{52}{59}{\scalebox{0.75}{$\vec{n}_{2}$}}
			\cput{63}{85}{\scalebox{0.75}{$\vec{n}_{3}$}}
			\cput{58}{89}{\scalebox{0.75}{$\vec{n}_{4}$}}
			\cput{41}{84}{\scalebox{0.75}{$\vec{n}_{5}$}}
			\cput{35}{72}{\scalebox{0.75}{$\vec{n}_{6}$}}
			\cput{69}{74}{\scalebox{0.75}{$\vec{n}_{7}$}}
			\cput{69}{62}{\scalebox{0.75}{$\vec{n}_{8}$}}	
			\cput{52}{73}{\scalebox{0.75}{$\vec{n}_{9}$}}	
			\cput{43}{69}{\scalebox{0.75}{$\vec{n}_{10}$}}		
			\cput{54}{67}{\scalebox{0.75}{$\vec{n}_{11}$}}	
			\cput{63}{68}{\scalebox{0.75}{$\vec{n}_{12}$}}	
			\cput{57}{53}{\scalebox{0.75}{$\vec{n}_{13}$}}		
		\end{overpic}
	}
	\caption{$f_1$ with change of sign of discrete Gaussian curvature, overlapping in Gauss image: $f_9$ and $f_{10}$ are not inflection faces in the left vertex star, $f_2$ and $f_{11}$ are not inflection faces in the right vertex star, and there are no inflection faces in the upper vertex star}\label{fig:parabolic_line_smooth}
\end{figure}

We will now develop criteria for smoothness. First, the number of changes between vertices with positive and negative curvature should be minimal, that is, two. If we think of a parabolic curve separating the regions of positive and negative Gaussian curvature, that means that $f_1$ should contain just one connected segment of a parabolic curve. In the generic smooth setting, parabolic curves do not become arbitrarily close or intersect each other.

Next, the contributions of positive and negative discrete Gaussian curvature should be balanced, i.e., the sum of oriented interior angles of Gauss images at ${\vec{n}}_{1}$ should be zero. Furthermore, the single contributions should not be too large, so we demand that the corresponding angle sums of Gauss images of either positively or negatively curved vertex stars shall be less than $2\pi$. This means that neither the Gauss images of the positively curved vertex stars nor the Gauss images of the negatively curved vertices surround ${\vec{n}}_{1}$ once. The corresponding behavior is observed along a parabolic curve on a generic smooth surface. Actually, we will slightly strengthen this requirement later to obtain a projectively invariant condition.

To easier compare with our discussion in the previous section, we count the interior angle at ${\vec{n}_{1}}$ of a Gauss image of a positively curved vertex star negative and the corresponding angle for a negatively curved vertex star positive. If the interior angle at a vertex $\vec{v}$ of $f_1$ is $\alpha_{\vec{v}}$, then by Theorem~\ref{th:shape} the corresponding interior angle of the Gauss image at ${\vec{n}_{1}}$ is $\alpha_{\vec{v}}-\pi$ if $K({\vec{v}})>0$ (and in particular $\alpha_{\vec{v}}<\pi$) and it is given by one of the angles given in the list in Section~\ref{sec:negative} if $K({\vec{v}})<0$.

Using the same notation of $c_i$ as in the previous Section~\ref{sec:negative} and denoting by $n_+,n_-$ the number of vertices of $f_1$ with positive and negative Gaussian curvature, respectively, our condition for balancedness translates to
\begin{align*}
0&=\sum\limits_{{\vec{v}}\sim f_1} \alpha_{\vec{v}} - n_+\pi + c_1\pi - c_3\pi=(n_++n_--2-n_++c_1-c_3)\pi\\
\Rightarrow n_--2&=c_3-c_1.
\end{align*}
However, $c_1+c_2+c_3+c_4=n_-$. Thus, $c_1$ can be only zero or one.

\subsubsection{One vertex of the first type}\label{sec:c11}

Let us start with the case $c_1=1$. Then, $c_3=n_--1$ and $c_2=c_4=0$. It follows that $f_1$ is never an inflection face at one of its vertices. Furthermore, all vertices of negative discrete Gaussian curvature but one have a reflex angle. For the remaining vertex $\vec{v}_-$, there are two different cases: Either $\vec{v}_-$ is adjacent to a vertex of positive discrete Gaussian curvature or both its neighbors have negative curvature, see Fig.~\ref{fig:mixed1}. Due to our assumption that the sum of interior angles $\pi-\alpha_{\vec{v}}$ of the Gauss images of positively curved vertex stars at ${\vec{n}}_{1}$ is less than $2\pi$, the turning angle of the polyline defined by all edges incident to the corresponding vertices is less than $2\pi$. As a result, $f_1$ is a pseudo-$(n_++1)$-gon with either at most one or exactly two non-trivial pseudo-edges, see the left and the right picture of Fig.~\ref{fig:mixed1}, respectively. Its convex hull contains $\vec{v}_-$.

\begin{figure}[htbp]
   \centering
    \subfloat{
	\begin{overpic}[height=0.3\textwidth]{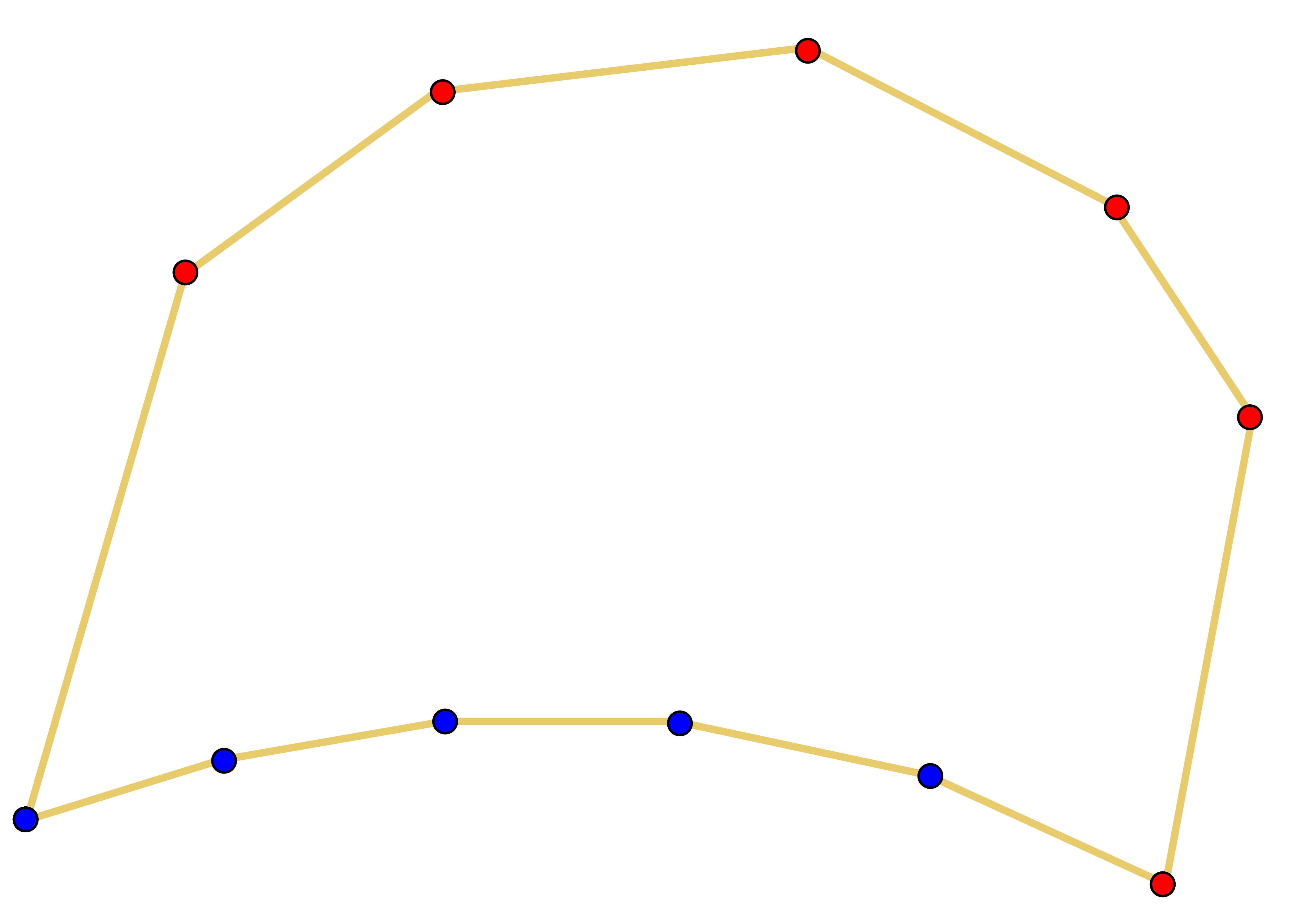}
		\cput{5}{3}{$\vec{v}_-$}
	\end{overpic}	
}
		\qquad
		\subfloat{
			\begin{overpic}[height=0.3\textwidth]{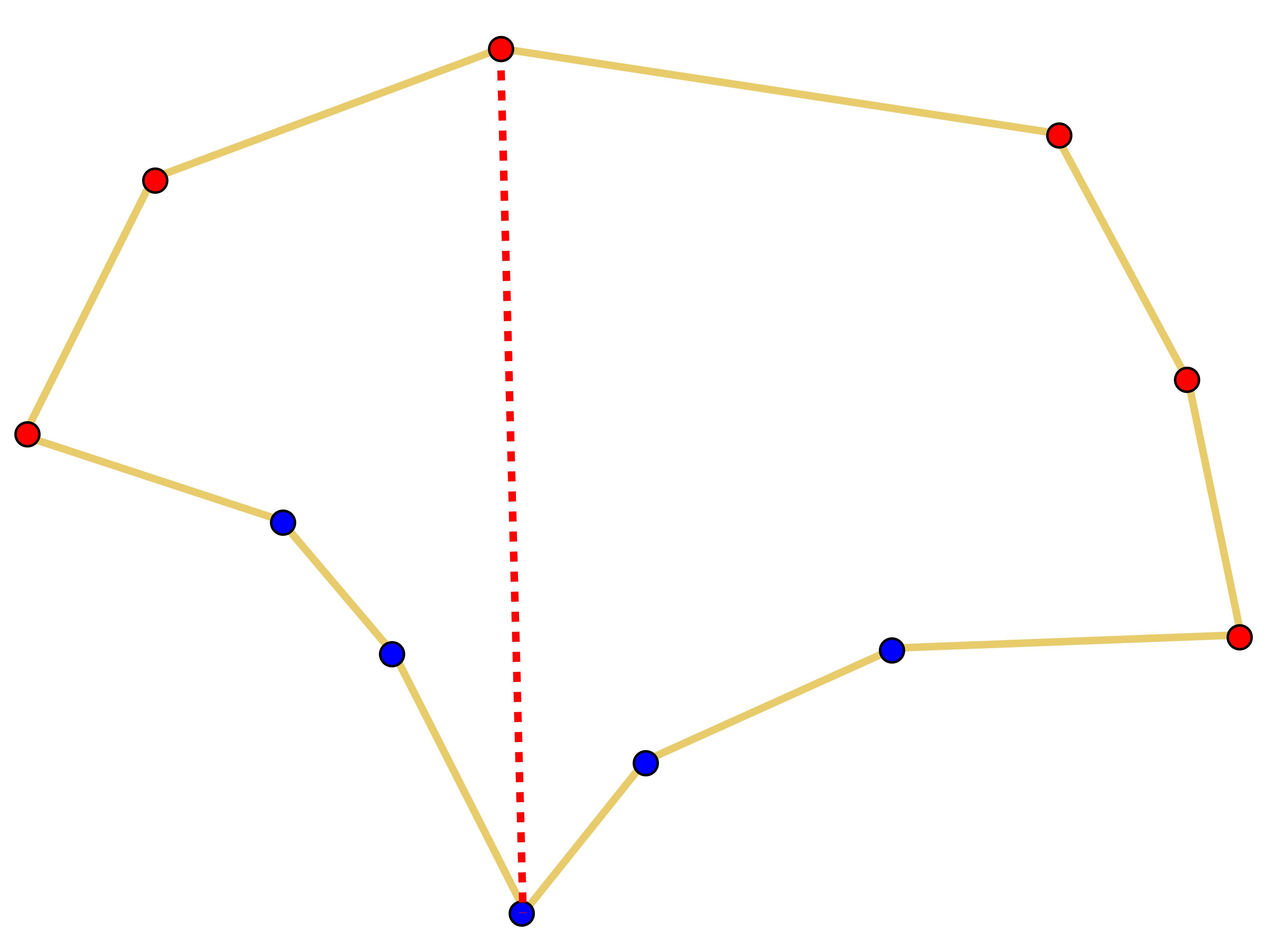}
			\cput{46}{0}{$\vec{v}_-$}	
			\end{overpic}}
   \caption{Two cases for the shape of a face with mixed discrete Gaussian curvature in the case of one vertex $\vec{v}_-$ of negative discrete Gaussian curvature that is a corner and where the face is not an inflection face. Vertices of positive discrete Gaussian curvature are colored red, the others blue. The dotted line indicates how the right figure can be decomposed into two parts of the left type}
   \label{fig:mixed1}
\end{figure}

If we are in the case that $\vec{v}_-$ is not adjacent to a vertex of $f_1$ of positive curvature, then we can divide $f_1$ along $\vec{v}_-$ into two faces, as depicted by the dotted line in the right picture of Fig.~\ref{fig:mixed1}. For this, it may be necessary to add an additional vertex of zero discrete Gaussian curvature on an edge. The resulting two parts now both correspond to the case that $\vec{v}_-$ is adjacent to a vertex of positive curvature.

\subsubsection{No vertices of the first or fourth type}\label{sec:c10a}

We now come to the case $c_1=0$. Then, $c_3=n_--2$ and $c_2+c_4=2$. Due to our assumption that the sum of the interior angles of the Gauss images of negatively curved vertex stars at ${\vec{n}}_{1}$ is less than $2\pi$ we have $c_4 \leq 1$. We start with the case $c_4=0$ and $c_2=2$.

It follows that $f_1$ is an inflection face at two of its vertices $\vec{v}_-$, $\vec{v}'_-$ where the interior angle is less than $\pi$. All other vertices of negative discrete Gaussian curvature have a reflex angle. We now have three options for the positions of $\vec{v}_-$, $\vec{v}'_-$: Both, one, or none of them are adjacent to vertices of positive discrete Gaussian curvature, see Fig.~\ref{fig:mixed2}. As in Section~\ref{sec:c11}, the turning angle of the polyline defined by all edges incident to the corresponding vertices is less than $2\pi$. As a result, $f_1$ is either a pseudo-$(n_++1)$-gon with at most one non-trivial pseudo-edge as before (but it may be more sickle-shaped), a pseudo-$(n_++2)$-gon with two non-trivial pseudo-edges, or a pseudo-$(n_++2)$-gon with three non-trivial pseudo-edges or two non-trivial pseudo-edges connected by a single edge, see Fig.~\ref{fig:mixed2}. Its convex hull contains at least one of $\vec{v}_-$ and $\vec{v}'_-$.

\begin{figure}[htbp]
   \centering
    \subfloat{
    	\begin{overpic}[height=0.18\textwidth]{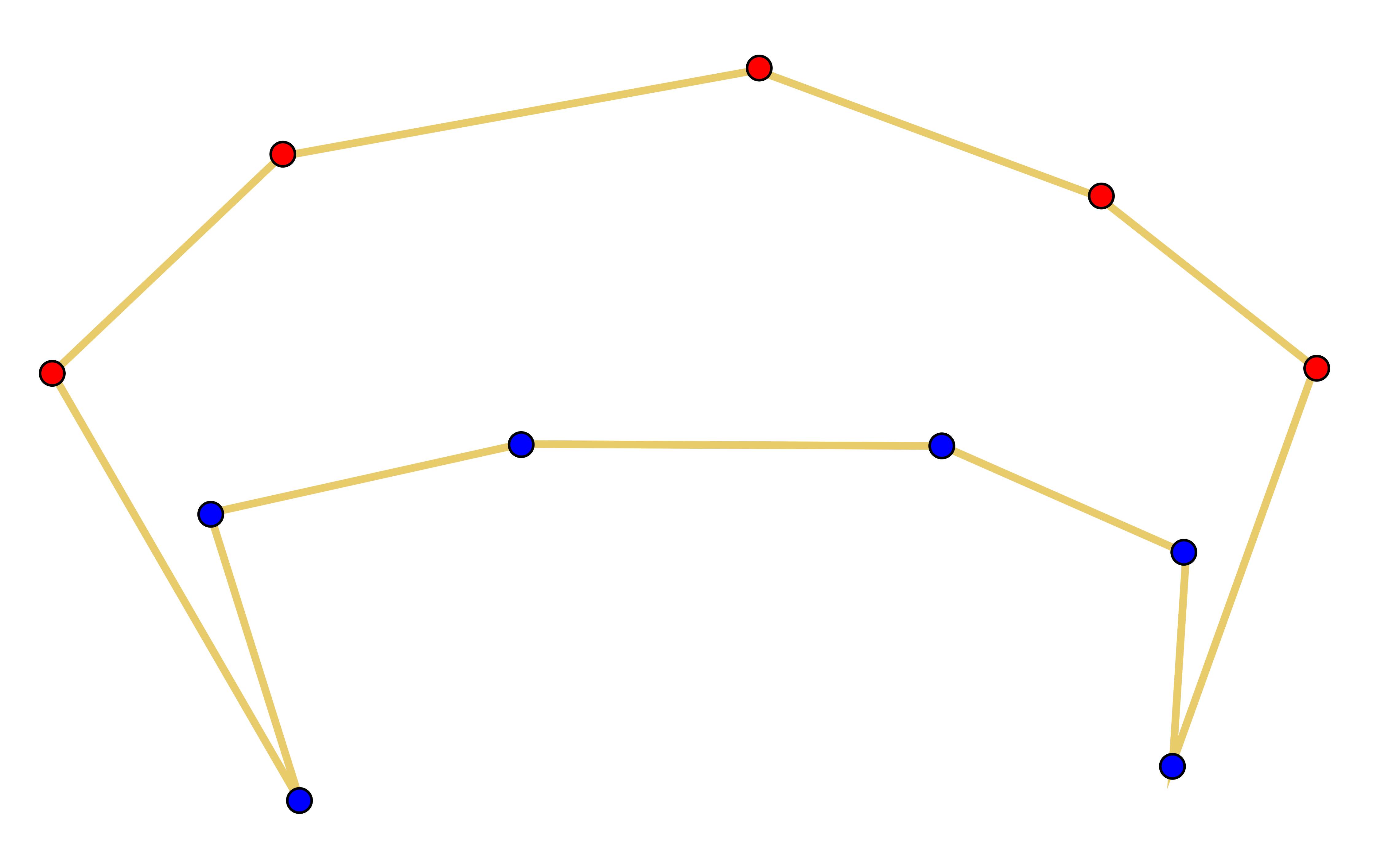}
    		\cput{20}{0}{$\vec{v}_-$} 
    	   	\cput{86}{0}{$\vec{v}'_-$}		
    	\end{overpic}
    	}
		\qquad
		\subfloat{
    	\begin{overpic}[height=0.18\textwidth]{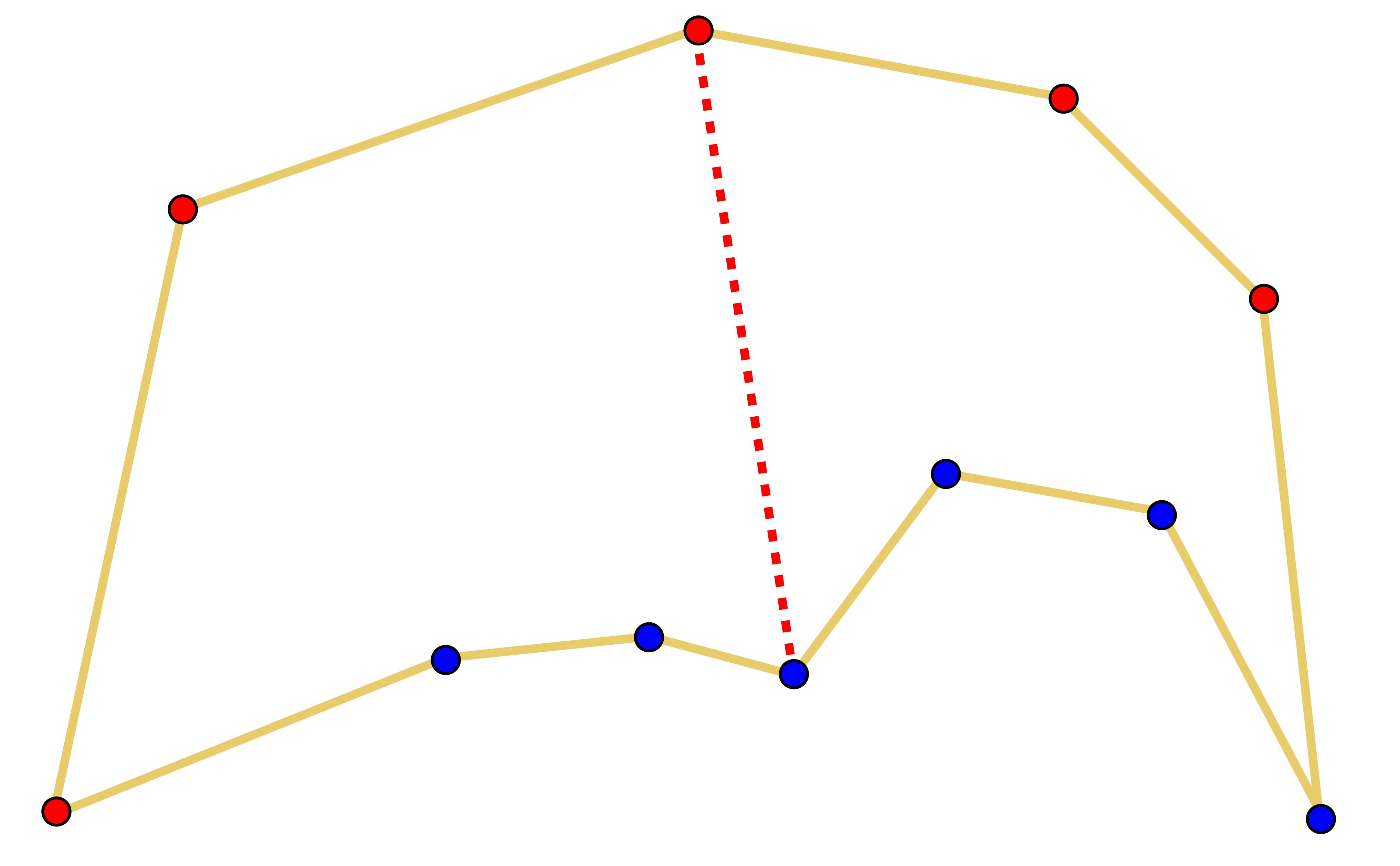}
    		\cput{60}{7}{$\vec{v}_-$}
    			\cput{90}{0}{$\vec{v}'_-$}	
    	\end{overpic}
		}
		\qquad
		\subfloat{
    	\begin{overpic}[height=0.18\textwidth]{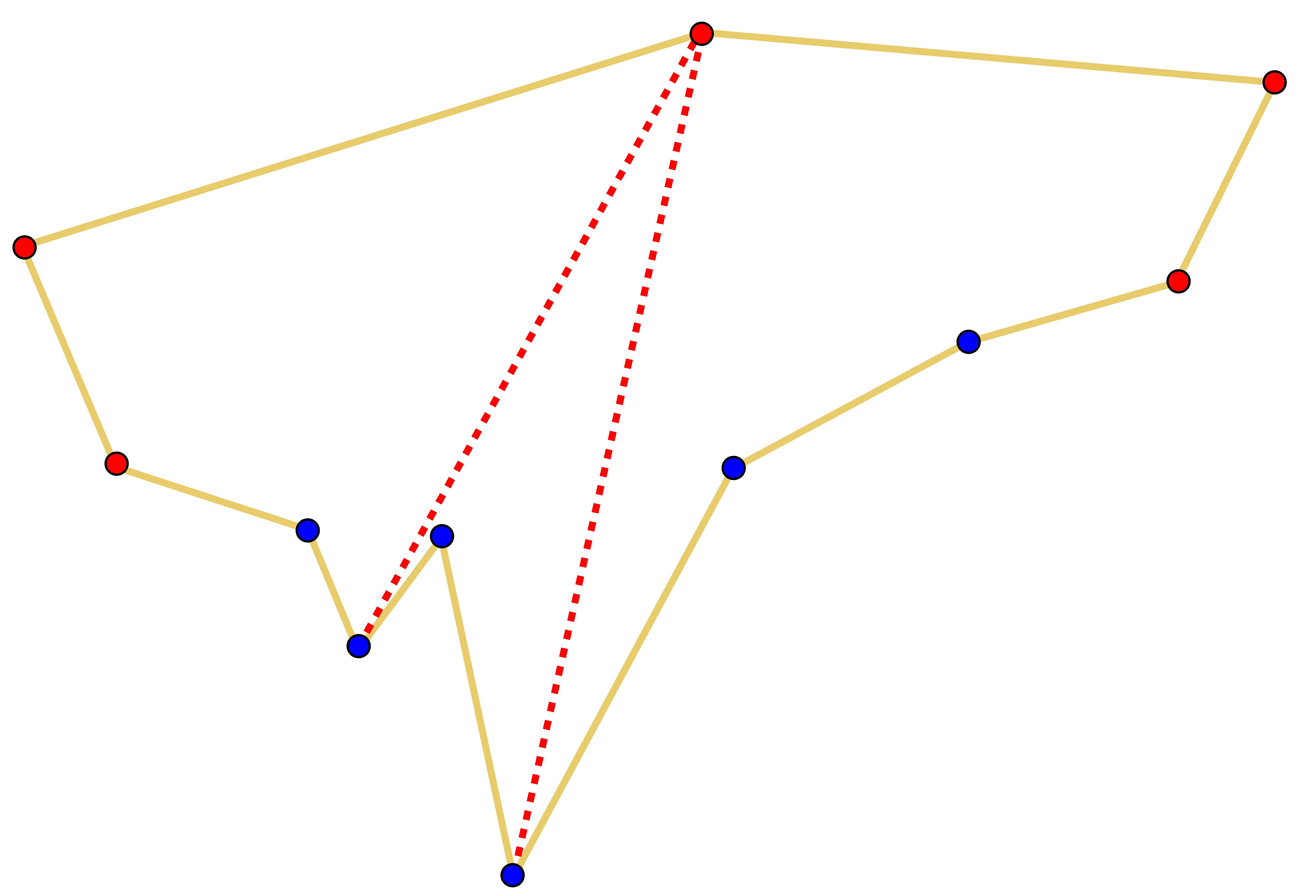}
    		\cput{30}{13}{$\vec{v}_-$}
    		\cput{48}{0}{$\vec{v}'_-$}
    	\end{overpic}
		}
   \caption{Three cases for the shape of a face with mixed discrete Gaussian curvature in the case of two vertices $\vec{v}_-$, $\vec{v}'_-$ of negative discrete Gaussian curvature that are corners of the face and where the face is an inflection face. Vertices of positive discrete Gaussian curvature are colored red, the others blue. The dotted lines indicate how the middle and the right figure can be decomposed into parts of the left type and of the type described in Fig.~\ref{fig:mixed1}}
   \label{fig:mixed2}
\end{figure}

In the case that $\vec{v}_-$ or $\vec{v}'_-$ are not adjacent to a vertex of $f_1$ of positive curvature, we divide $f_1$ along one or possibly two of the vertices $\vec{v}_-, {\vec{v}}'_-$ into two or three faces, as depicted by the dotted lines in the middle and the right picture of Fig.~\ref{fig:mixed2}. Again, it may be necessary to add an additional vertex of zero discrete Gaussian curvature on an edge. In the resulting parts, both vertices are adjacent to vertices of positive curvature, so each part corresponds to either the left picture in Fig.~\ref{fig:mixed1} or in Fig.~\ref{fig:mixed2}. Note that only one of the two parts at $\vec{v}_-$ or $\vec{v}'_-$ inherits the inflection-property in such a way that the number of inflections around each part is still even.

\subsubsection{No vertex of the first type, one of the second and one of the fourth type}\label{sec:c10b}

Within the case $c_1=0$, we are left with the case $c_2=c_4=1$. Then, $f_1$ is an inflection face at two of its vertices. At one vertex $\vec{v}_-$, the interior angle is less than $\pi$, at the other $\vec{v}'_-$ it is greater than $\pi$. All other vertices of negative discrete Gaussian curvature have a reflex angle. Thus, we obtain the shapes of Section~\ref{sec:c11}. In contrast to our discussion in that section, $f_1$ is an inflection face $\vec{v}_-$ and at one further vertex, see Fig.~\ref{fig:mixed3}.

\begin{figure}[htbp]
   \centering
    \subfloat{
    	    	\begin{overpic}[height=0.25\textwidth]{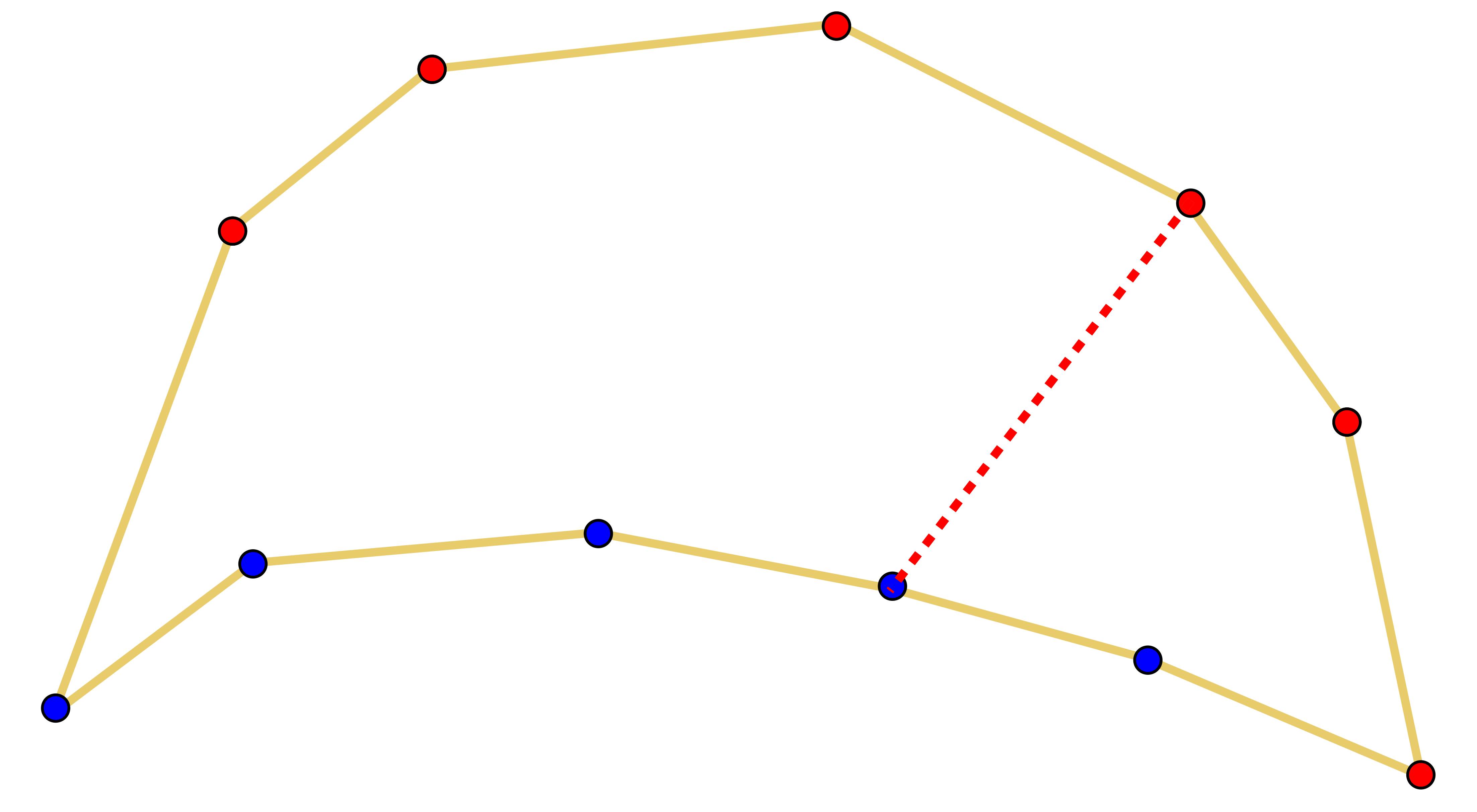}
    		\cput{6}{2}{$\vec{v}_-$}
    		\cput{61}{10}{$\vec{v}'_-$}	
    	    	\end{overpic}
    }
		\qquad
		\subfloat{
			    	\begin{overpic}[height=0.25\textwidth]{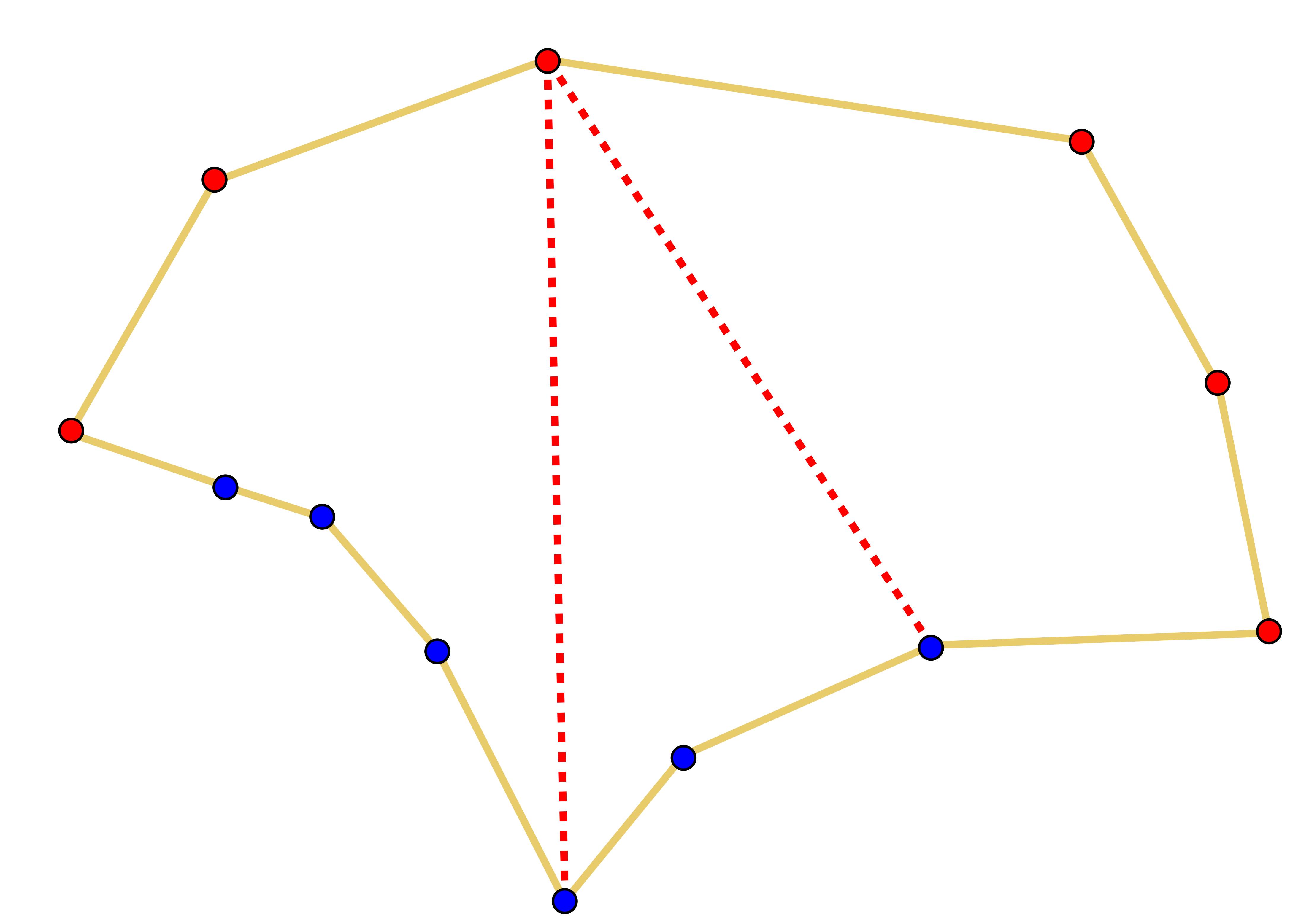}
			    		\cput{50}{0}{$\vec{v}_-$}
			    		\cput{74}{15}{$\vec{v}'_-$}	
			    	\end{overpic}
		}
   \caption{Two cases for the shape of a face with mixed discrete Gaussian curvature in the case of vertices $\vec{v}_-$ and $\vec{v}'_-$ of negative discrete Gaussian curvature where the face is an inflection face such that $\vec{v}_-$ is a corner and $\vec{v}'_-$ is not. Vertices of positive discrete Gaussian curvature are marked red, the others blue. The dotted line indicates how the figures can be decomposed into parts described in Figs.~\ref{fig:mixed1} and~\ref{fig:mixed2}}
   \label{fig:mixed3}
\end{figure}

As before, we can divide the faces along $\vec{v}'_-$ and possibly $\vec{v}_-$ into two or three parts, as depicted by the dotted lines in Fig.~\ref{fig:mixed3}. We do it in such a way that the reflex angle splits into two convex angles. Again, it may be necessary to add an additional vertex of zero discrete Gaussian curvature on an edge. Assigning the inflection-property of each of the vertices to just one of the two adjacent faces in such a way that the number of inflections around faces is even leads to parts corresponding to the left pictures in Fig.~\ref{fig:mixed1} and Fig.~\ref{fig:mixed2}. 

\subsubsection{Summary and discrete parabolic curves}\label{sec:mixedsummary}

The following proposition summarizes our discussion in the previous sections.

\begin{proposition}\label{prop:face_mixed}
Let $f$ be a face of $P$. Assume that $K({\vec{v}})\neq0$ and that the Gauss image of the star of $\vec{v}$ has no self-intersections for all $\vec{v} \sim f$. In addition, we assume that the sign of discrete Gaussian curvature changes along exactly two edges, that the interior angles of the Gauss images of the corresponding vertex stars add up to $0$ at ${\vec{n}}_f$ if they are oriented according to the sign of discrete Gaussian curvature, and that the sum of interior angles that correspond to vertices of positive curvature (or of negative curvature) is less than $2\pi$.

\begin{figure}[htbp]
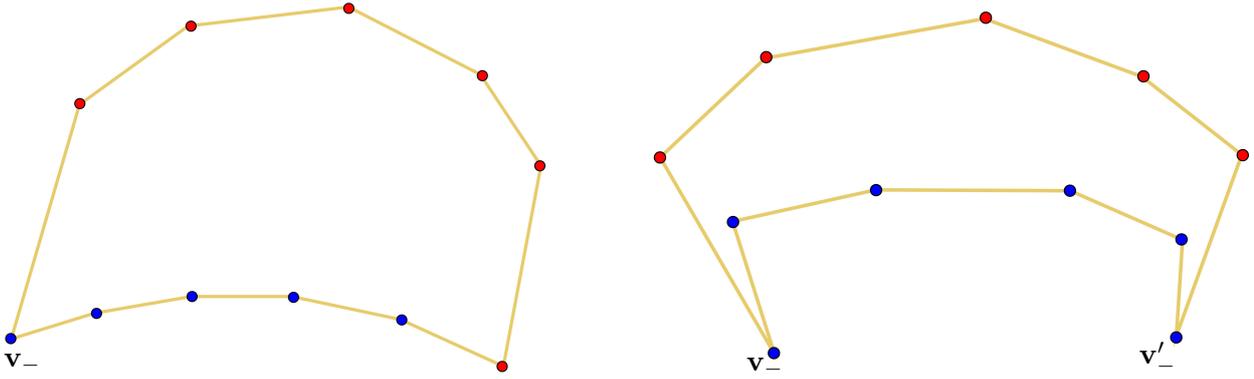

   \centering
    \subfloat{
				    	\begin{overpic}[height=0.3\textwidth]{mixed_1a_1}
				    	\cput{4}{3}{$\vec{v}_-$}	
				    	\end{overpic}
    }
		\qquad
		\subfloat{
				    	\begin{overpic}[height=0.3\textwidth]{mixed_2a_1}
    		\cput{20}{2}{$\vec{v}_-$}
    		\cput{81}{3}{$\vec{v}'_-$}
				    	\end{overpic}
		}
   \caption{Shapes of (parts of) faces of mixed discrete Gaussian curvature. The face on the left has no vertices where it is an inflection face, the face on the right is an inflection face exactly at $\vec{v}_-$ and $\vec{v}'_-$}
   \label{fig:mixed4}
\end{figure}

Then, by adding additional vertices of zero discrete Gaussian curvature on edges if necessary, $f$ can be split into parts that are pseudo-$(n+1)$-gons with at most one non-trivial pseudo-edge, see Fig.~\ref{fig:mixed4}. Each of the new parts is an inflection face either at no vertex or at the two corners of the pseudo-edge. In the first case, one of the corners is of negative discrete Gaussian curvature, in the second case both corners are of negative curvature. The outer part of the pseudo-$(n+1)$-gon (with convex angles) consists of vertices of positive discrete Gaussian curvature, the inner part (with reflex angles) of vertices of negative discrete Gaussian curvature.
\end{proposition}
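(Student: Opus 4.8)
The plan is to reproduce, in streamlined form, the case analysis carried out in Sections~\ref{sec:c11}, \ref{sec:c10a} and~\ref{sec:c10b}. First I would record the relevant angle data. Since the sign of $K$ changes along exactly two edges of $f$, the vertices of positive discrete Gaussian curvature form one contiguous boundary arc of $f$ and those of negative curvature the complementary arc. By Theorem~\ref{th:shape}~(i) every positively curved vertex $\vec{v}$ is a convex corner with $\alpha_{\vec{v}}<\pi$, and the interior angle of $g(\vec{v})$ at $\vec{n}_f$ equals $\pi-\alpha_{\vec{v}}$, which we count with a minus sign in the oriented sum. For the negatively curved vertices Lemma~\ref{lem:angle} gives the four possible interior angles of $g(\vec{v})$ at $\vec{n}_f$ listed in Section~\ref{sec:negative}, and I partition these vertices into the four classes counted by $c_1,c_2,c_3,c_4$ (respectively: $\alpha_{\vec{v}}<\pi$ with $f$ not an inflection face at $\vec{v}$; $\alpha_{\vec{v}}<\pi$ with $f$ an inflection face; $\alpha_{\vec{v}}>\pi$ with $f$ not an inflection face; $\alpha_{\vec{v}}>\pi$ with $f$ an inflection face).

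Next I would exploit the two numerical hypotheses. Writing the balancedness condition as
\[
0=\sum_{\vec{v}\sim f}\alpha_{\vec{v}}-n_+\pi+c_1\pi-c_3\pi
\]
and using $\sum_{\vec{v}\sim f}\alpha_{\vec{v}}=(n_++n_--2)\pi$ yields $n_--2=c_3-c_1$, which together with $c_1+c_2+c_3+c_4=n_-$ forces $2c_1+c_2+c_4=2$; in particular $c_1\le 1$. Moreover a type-$4$ vertex contributes an interior angle $\alpha_{\vec{v}}>\pi$ to the sum of Gauss-image angles of the negatively curved vertex stars, so the hypothesis that this sum is less than $2\pi$ forces $c_4\le 1$. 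Hence exactly three cases remain: $c_1=1$, hence $c_2=c_4=0$ (the situation of Section~\ref{sec:c11}); $c_1=0$, $c_2=2$, $c_4=0$ (Section~\ref{sec:c10a}); and $c_1=0$, $c_2=1$, $c_4=1$ (Section~\ref{sec:c10b}). In every case all but at most two of the negatively curved vertices are reflex corners at which $f$ is not an inflection face (the $c_3$-vertices), and the distinguished negatively curved vertices $\vec{v}_-$ and $\vec{v}'_-$ are precisely those not of type $c_3$.

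I would then pin down the global shape of $f$ and perform the splitting. The hypothesis that the sum of the $\pi-\alpha_{\vec{v}}$ over positively curved $\vec{v}$ is less than $2\pi$ says that the turning of the polyline formed by the edges at positively curved vertices is less than $2\pi$, so $f$ is a pseudo-$(n_++1)$-gon, or a sickle-shaped pseudo-$(n_++2)$-gon, whose convex outer chain consists of the positively curved vertices and whose reflex inner chain consists of the $c_3$-vertices. Cutting $f$ along each distinguished vertex $\vec{v}_-,\vec{v}'_-$ — inserting an auxiliary vertex of zero discrete Gaussian curvature on an opposite edge whenever the cut endpoint is not adjacent to a positively curved vertex, and splitting any reflex angle so created into two convex ones — decomposes $f$ into pieces that are pseudo-$(n+1)$-gons with at most one non-trivial pseudo-edge, the convex part carried by positively curved vertices and the reflex part by negatively curved ones. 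Finally I would distribute the inflection-face property of each original cut vertex to exactly one of the two incident pieces so that each piece is an inflection face at an even number of vertices, as required by Proposition~\ref{prop:face_negative}; this produces the dichotomy in the statement (no inflection vertex, with one corner of negative curvature; or two inflection vertices at the corners of the pseudo-edge, both of negative curvature).

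The angle bookkeeping is routine. The hard part will be the last step: verifying that the cuts can always be organised so that every resulting piece carries an even number of vertices at which it is an inflection face (equivalently, that the parity built into Proposition~\ref{prop:face_negative} is preserved), and that when $\vec{v}_-$ or $\vec{v}'_-$ has a reflex angle the cut can be chosen to split that angle into two convex angles without destroying the pseudo-$(n+1)$-gon structure of the pieces. These are exactly the points treated case by case in Sections~\ref{sec:c11}--\ref{sec:c10b}.
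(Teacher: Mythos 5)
Your proposal is correct and follows essentially the same route as the paper: the same $c_1,\dots,c_4$ bookkeeping yielding $2c_1+c_2+c_4=2$, the same use of the two angle-sum hypotheses to force $c_4\le 1$ and to bound the turning of the positively curved chain, and the same case-by-case splitting along $\vec{v}_-,\vec{v}'_-$ with auxiliary zero-curvature vertices and parity-preserving assignment of the inflection property. The point you flag as the remaining hard step is precisely what the paper settles by the explicit case analysis of Sections~\ref{sec:c11}--\ref{sec:c10b}.
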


\begin{remark}
By dividing faces into individual parts and adding additional vertices of zero discrete Gaussian curvature on edges we are violating our conditions that faces that share an edge should not be coplanar, interior angles are not equal to $\pi$, and that the discrete Gaussian curvature should never be zero. In such a case, for example the concept of an inflection face is not well-defined. However, we could slightly move vertices and bend edges in such a way, that we are again fulfilling all our conditions, that the faces are inflection faces at the right place, and that the discrete Gaussian curvature of additional vertices will be positive. For such a small variation, our assessments of smoothness will not be affected.
\end{remark}

So far, we just demanded that the turning angle of the polyline defined by all edges incident to positively curved vertices is less than $2\pi$. It could happen that the polyline winds inside its own convex hull. We would like to have that knowing just the shape of $f$ around positively curved vertices does not tell us whether $f$ has negatively curved vertices or not. One option would be to demand that the positively curved vertices define a convex polygon that the polyline does not enter. This definition also has the advantage of being projectively invariant. But the faces could still be shaped as very thin sickles. Then, the parabolic curve separating positively from negatively curved vertices would lie inside this face and would therefore be strongly bended. Thinking about a fine discretization of a smooth surface, it is natural to ask for a straight discrete parabolic curve inside the face. This is possible if the convex hull of positively curved vertices does not contain any negatively curved vertex. We add this condition to our assessments of smoothness and will show in Section~\ref{sec:projective} that this condition is also projectively invariant.

\begin{remark}
All faces shown in Figs.~\ref{fig:mixed1}, \ref{fig:mixed2}, and~\ref{fig:mixed3} possess the property that the convex hull of positively curved vertices does not contain any vertex of negative discrete Gaussian curvature.
\end{remark}

\begin{figure}[htbp]
   \centering
   \subfloat{
   	\begin{overpic}[height=0.3\textwidth]{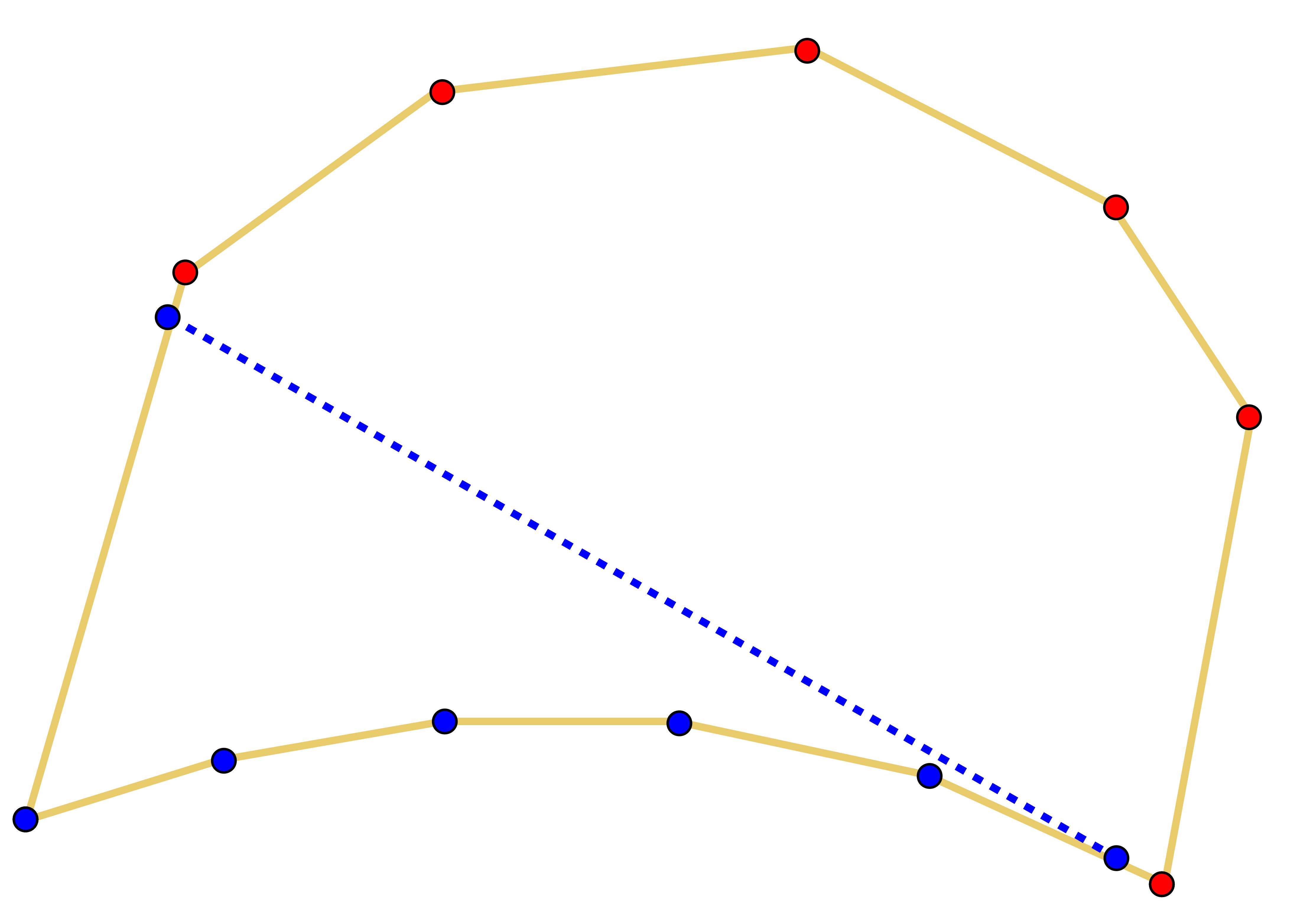}
   		\cput{4}{3}{$\vec{v}_-$}	
   	\end{overpic}
   }
   \qquad
   \subfloat{
   	\begin{overpic}[height=0.3\textwidth]{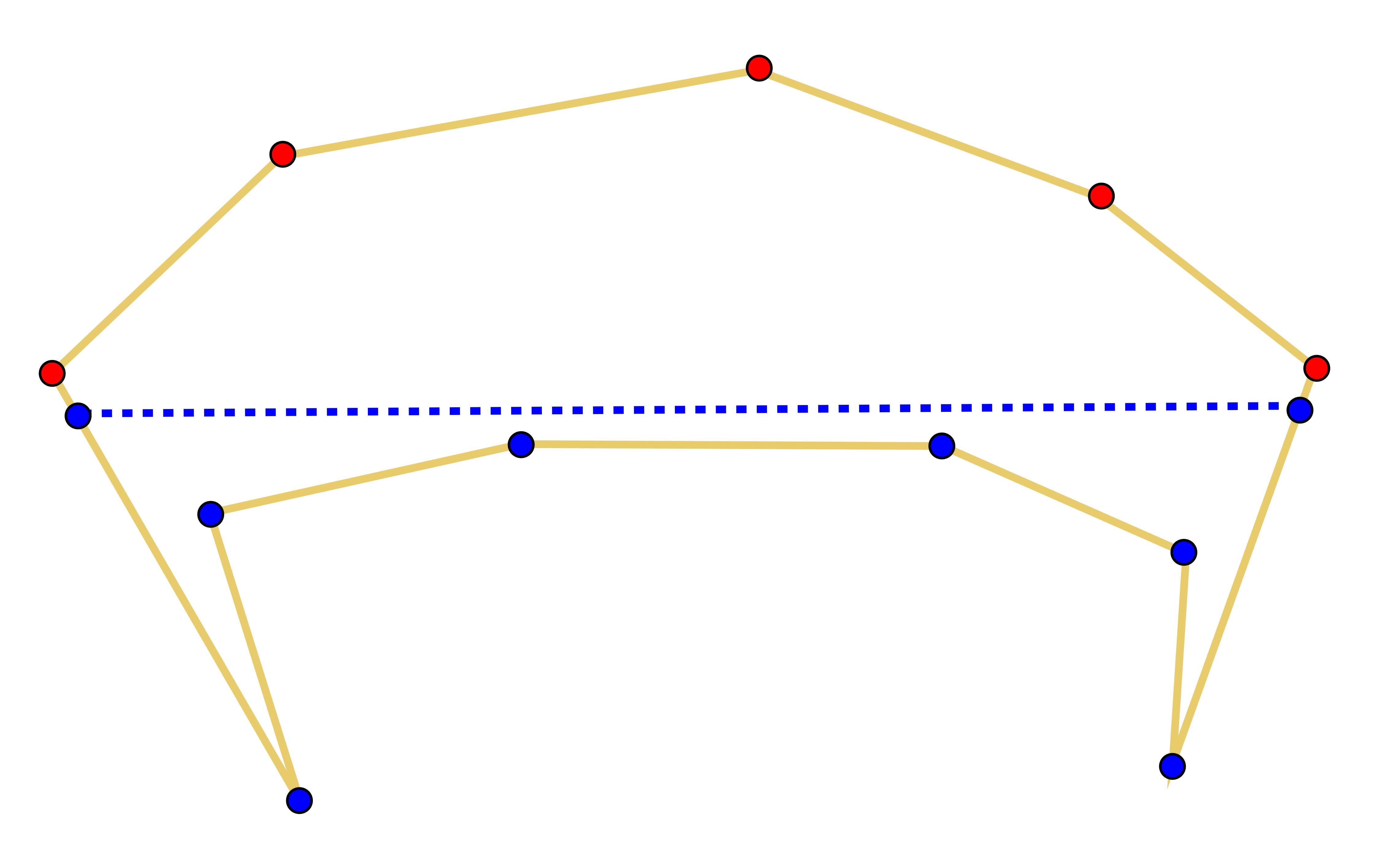}
   		\cput{20}{2}{$\vec{v}_-$}
   		\cput{81}{3}{$\vec{v}'_-$}
   	\end{overpic}
   }
   \caption{Discrete parabolic curve (dashed) in the two ``building blocks'' of faces of mixed curvature}
   \label{fig:mixed5}
\end{figure}

\begin{definition}
Let $f$ be a face of $P$ that has vertices of both positive and negative discrete Gaussian curvature. Furthermore, the sign of curvature changes at exactly two edges and the Gauss images of all vertex stars have no self-intersections. Also, the convex hull of all positively curved vertices shall not contain any negatively curved vertex.

Then, a line segment inside $f$ connecting two interior points of the two edges where the sign of curvature changes is said to be a \textit{discrete parabolic curve}.
\end{definition}

Let us shortly discuss the relation between parabolic curves and asymptotic directions. If the points of a parabolic curve on a smooth surface have a common tangent plane (e.g. on a torus), then the parabolic curve envelopes the asymptotic lines \cite{HCV52}. We know from our discussion in Section~\ref{sec:asymptotes} that the discrete asymptotic directions in all vertices of $f$ of negative discrete Gaussian curvature that are not corners of $f$ and where $f$ is not an inflection face lie in the double cone inside $f$ spanned by the two incident edges. Furthermore, discrete asymptotic directions in the corners of $f$, supposing that $f$ is an inflection face there, lie in the cone spanned by the adjacent edges. Hence, the discrete parabolic curve envelopes the discrete asymptotic directions in the two building blocks depicted in Fig.~\ref{fig:mixed5}.

If the parabolic curve on a smooth surface has a variable tangent plane, then the asymptotic lines have cusps along the parabolic curve \cite{HCV52}. Looking at the right picture in Fig.~\ref{fig:mixed1}, the discrete asymptotic directions along the two pseudo-edges come together at $\vec{v}_-$, so we actually have a cusp-like behavior. Being in analogy to the smooth case, the tangent plane should change there, which is modeled by the division of $f$ into two parts along the dotted line. The partitions shown in Figs.~\ref{fig:mixed2} and~\ref{fig:mixed3} can be motivated in a similar way.


\subsection{Smooth polyhedral surfaces}\label{sec:smooth}

Based on our previous discussion of smoothness around a face, we are now ready to give a definition of a smooth polyhedral surface.

\begin{definition}\label{def:smooth2}
An orientable polyhedral surface $P$ immersed into three-dimen\-sional Euclidean space is said to be \textit{smooth}, if the following conditions are satisfied:
\begin{enumerate}
\item $K({\vec{v}})\neq 0$ for all interior vertices $\vec{v}$.
\item For each interior vertex $\vec{v}$, the spherical polygon defined by the normals ${\vec{n}}_f$ of incident faces is star-shaped with respect to an interior point and contained in an open hemisphere.
\item For each face $f$ not at the boundary, the sign of discrete Gaussian curvature changes at either zero or two edges.
\item For each face $f$ not at the boundary, the interior angles of Gauss images of stars of vertices $\vec{v} \sim f$ oriented according to the sign of $K({\vec{v}})$ add up to $2\pi$ or $0$ depending on whether $K({\vec{v}})$ has the same sign for all $\vec{v} \sim f$ or not. In the first case, we demand in addition that $f$ is star-shaped; in the latter case, we require that the convex hull of vertices of positive discrete Gaussian curvature does not contain any vertex of negative curvature.
\end{enumerate}
\end{definition}

Note that the second condition is always fulfilled for vertices of positive discrete Gaussian curvature. Theorem~\ref{th:shape}~(i) to~(iii) describes which shapes Gauss images of vertex stars can have if the second condition is satisfied, Propositions~\ref{prop:face_positive}, \ref{prop:face_negative}, and \ref{prop:face_mixed} describe the shapes of faces $f$ of $P$ in the cases of positive, negative, and mixed discrete Gaussian curvature, respectively.

Discrete tangent planes at vertices are given by the planes orthogonal to vectors with respect to which the Gauss image of the vertex star is star-shaped, discrete normals by the poles of open hemispheres containing the Gauss image, and the points of contact of the face tangent planes are points with respect to which the face is star-shaped (assuming that the sign of discrete Gaussian curvature does not change). Discrete asymptotic directions were discussed in Sections~\ref{sec:asymptotes} and~\ref{sec:negative}, a discrete parabolic curve was defined in Section~\ref{sec:mixedsummary}.

\begin{remark}
It is tempting to demand in addition that each face is a transverse plane for its neighborhood, even though this would exclude surfaces such as a tetrahedron or a cube from being smooth. This requirement would have the advantage that transverse planes for vertex stars automatically exist and that we could choose a discrete normal as a normal vector of a discrete tangent plane. Furthermore, Theorem~\ref{th:dual} could be applied for any neighborhood of a face with vertices with the same sign of discrete Gaussian curvature, such that the dual pictures in Fig.~\ref{fig:negative_face} are indeed projective duals of neighborhoods of smooth polyhedral vertices. However, already the condition that the neighborhood of a face should have a transverse plane is not invariant under projective transformations, in contrast for the condition of a vertex star having a transverse plane. The reason for that difference is that a projective transformation around a vertex can be approximated by an affine transformation, but this is not any longer true around a face.
\end{remark}


\subsection{Comparison between a smooth surface and its discretizations}\label{sec:comparison}

Our discussion of the shapes of faces of a smooth polyhedral surface nicely explains the transformations of shapes that occurred in the examples generated in \cite{JTVWP15}. One example we want to take a closer look at is a Dupin cyclide and two smooth discretizations of it. One of the polyhedral surfaces in Fig.~\ref{fig:overlap} consists of triangles only, the other realizes the dual situation of having only vertices of valence three.

\begin{figure}[htbp]
   \centering
    \includegraphics[height=0.4\textwidth]{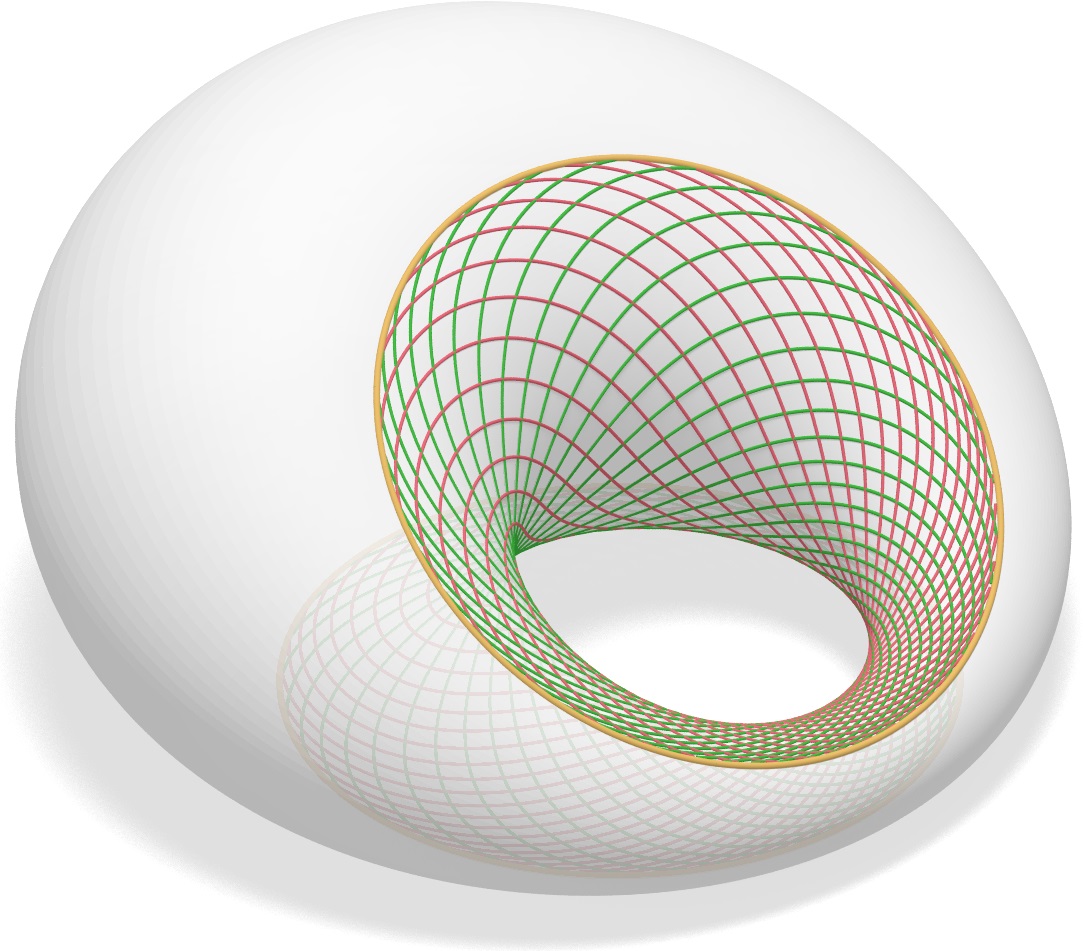}
   \caption{Dupin cyclide with parabolic curves (yellow) and asymptotic curves (red and green)}
   \label{fig:cyclide}
\end{figure}

The Dupin cyclide shown in Fig.~\ref{fig:cyclide} is generated by a spherical inversion of a torus. The cyclide is tangent to two planes along two circles that equal its parabolic curves. They separate the region of positive curvature from the region of negative curvature. In the latter domain, we illustrated two families of asymptotic curves.

\begin{figure}[htbp]
   \subfloat{
   	\includegraphics[height=0.4\textwidth]{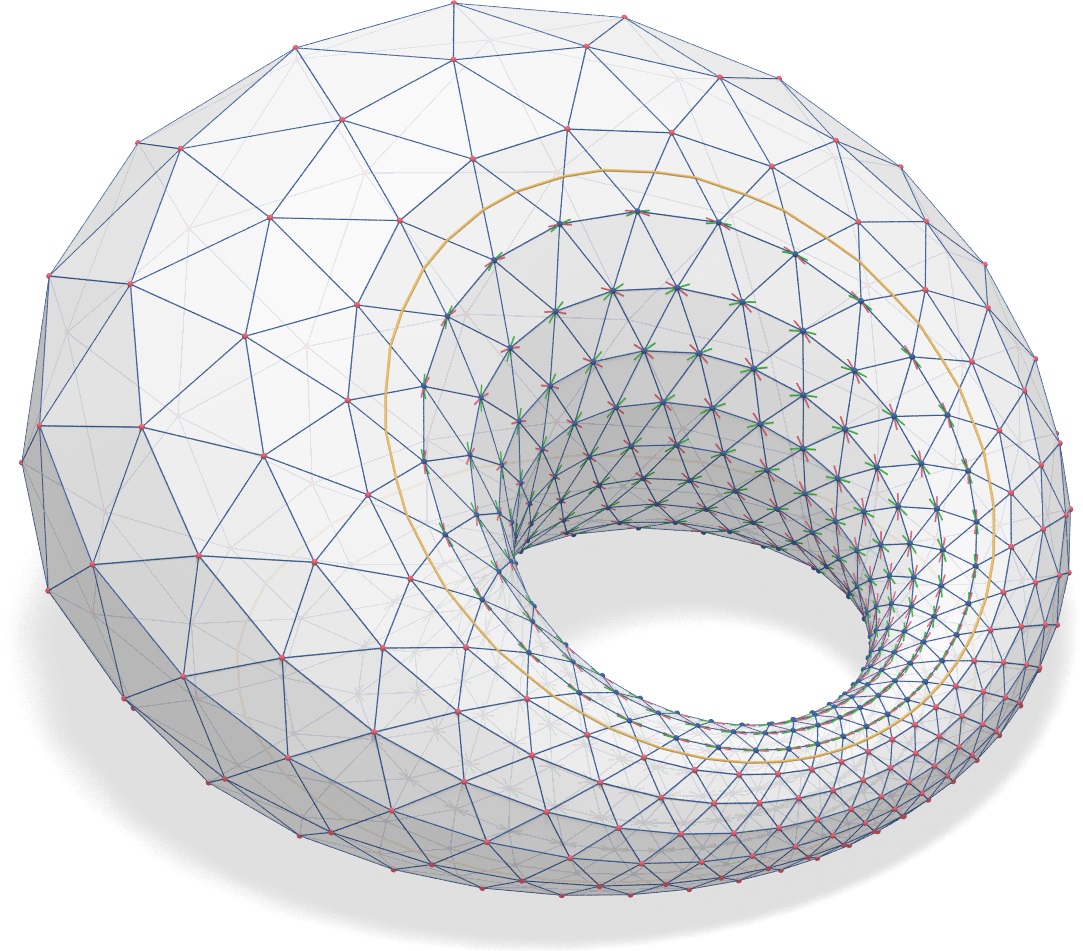}
   }
	\quad
   \subfloat{
   	\includegraphics[height=0.4\textwidth]{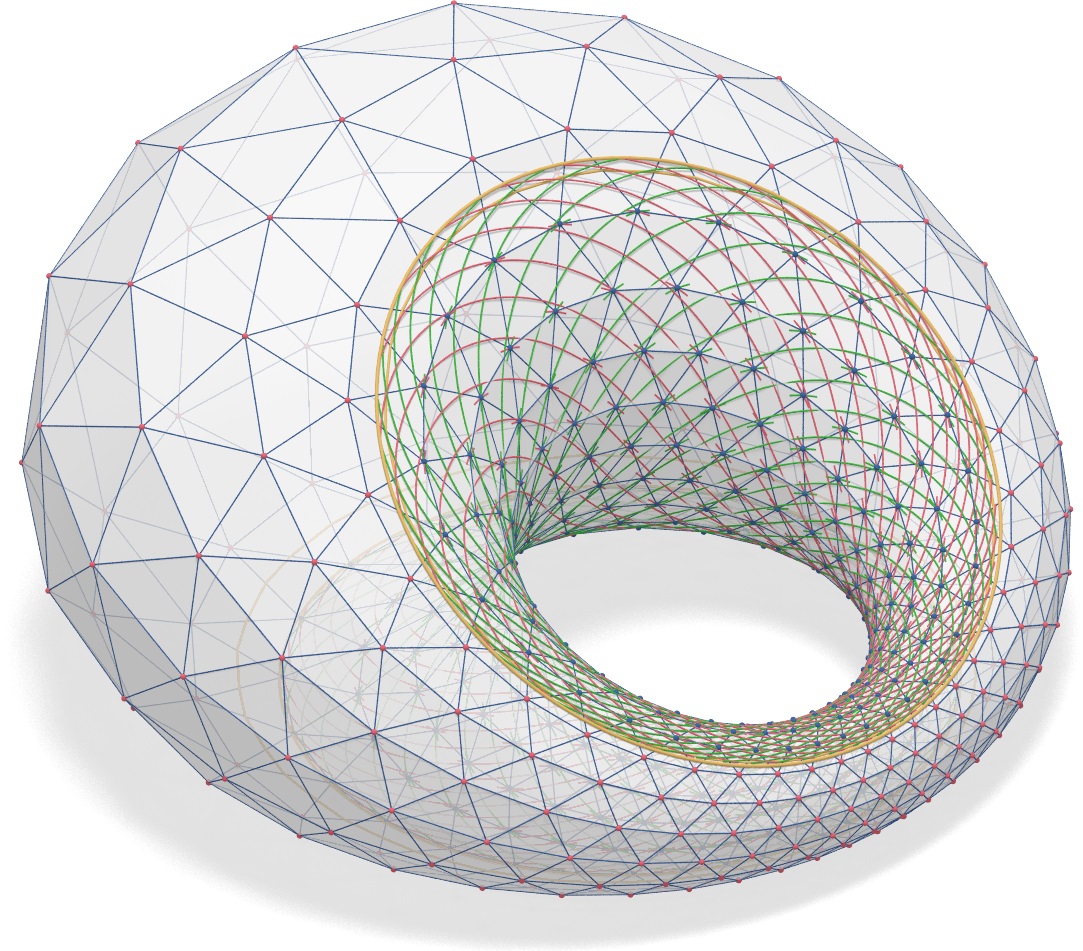}
   }\\
   \subfloat{
   	\includegraphics[height=0.4\textwidth]{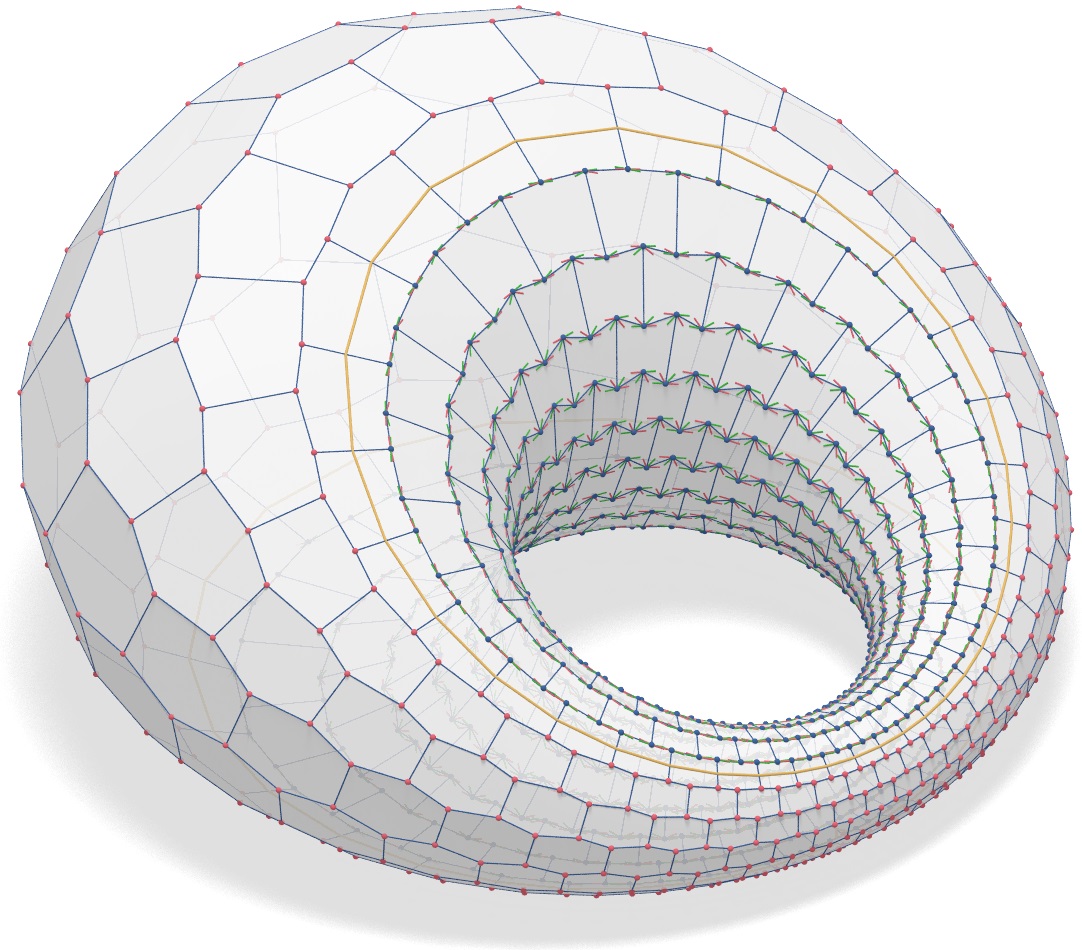}
   }
   \quad
   \subfloat{
   	\includegraphics[height=0.4\textwidth]{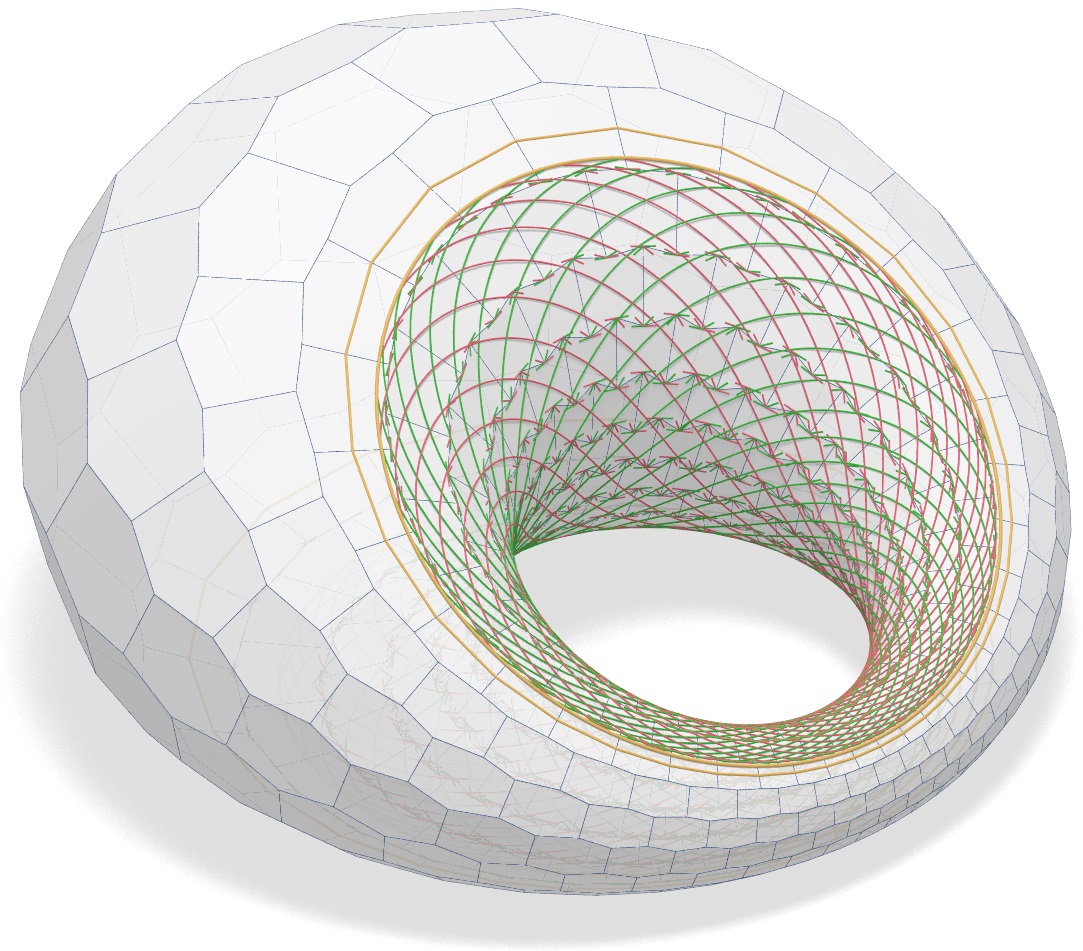}
   }
   \caption{Two smooth polyhedral surfaces that discretize a Dupin cyclide. One of them consists of triangles only, the other contains only vertices of valence three. The yellow polylines are discrete parabolic curves; at each vertex, four discrete asymptotic directions are drawn in with colors corresponding to the colors of the smooth asymptotic directions. The overlaps of the discrete and the smooth image show the alignment of the discrete objects with their smooth counterparts}
   \label{fig:overlap}
\end{figure}

In the case of the polyhedral surface consisting of hexagons, we can see how the shapes of the faces adapt to the sign of curvature exactly as we described in the previous sections. The hexagons are convex if the curvature is positive, they are pseudo-quadrilaterals if the curvature is negative, and along the discrete parabolic curve we recognize the shapes of Fig.~\ref{fig:mixed5}.

For both discrete cyclides, the discrete parabolic curves are close to the smooth parabolic curve. Having the freedom in locating the vertices of the discrete parabolic curve in mind, we could have aligned discrete and smooth parabolic curves even better.

When we come to the comparison of asymptotic directions, we see that the discrete asymptotic directions align very well with the asymptotic curves in the case of the triangulated surface. For the polyhedral surface with vertices of valence three, we are always in the situation of Fig.~\ref{fig:Dupin_triangle_degenerate}. That means, that two asymptotic directions are collinear. If we ignore them and consider for each vertex the other two discrete asymptotic directions only, we obtain a good accordance with the asymptotic directions of the smooth cyclide at these points. Dual to this case is the situation described in the lower right picture of Fig.~\ref{fig:negative_face} and that is exactly what we observe on the triangulated surface. If we had depicted the discrete asymptotic directions inside faces, the two corresponding to the line segments connecting to the vertices where the triangle is an inflection face would align with the smooth asymptotic curves, but the line segment counting as two discrete asymptotic directions would not. Again, it is reasonable to ignore these two directions.


\section{Projective transformations}\label{sec:projective}

In this last section, we want to study projective transformations of polyhedral surfaces and how they affect smoothness. When we compare our discussions for smoothness around a vertex and around a face, we observe several analogies. For example, the Gauss image of a smooth vertex star is either a convex spherical polygon or a spherical pseudo-$n$-gon ($n=3,4$) depending on the sign of discrete Gaussian curvature, and a face of a smooth polyhedral surface all whose vertices have the same sign of discrete Gaussian curvature is either a convex polygon or a pseudo-$n$-gon ($n=3,4$). Such properties hint at an invariance of our notion with respect to correlations in projective geometry. In particular, our notion of smoothness should be already invariant under projective transformations that do not map any point of the surface to infinity.

\begin{theorem}\label{th:projective}
Let $P$ be a smooth polyhedral surface immersed into three-di\-mensional Euclidean space that we consider as the space of finite points in three-dimensional projective space. Furthermore, let $\pi$ be a projective transformation (collineation) that does not map any point of $P$ to infinity. Then, $\pi(P)$ is a smooth polyhedral surface. Moreover, discrete tangent planes, discrete asymptotic directions, discrete parabolic curves, and points of contact are mapped to the corresponding objects of $\pi(P)$.
\end{theorem}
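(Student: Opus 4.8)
The strategy is to verify that $\pi(P)$ satisfies the four conditions of Definition~\ref{def:smooth2}, carrying the auxiliary objects along the way. First I would record the routine preservations: a collineation maps planes bijectively to planes, preserves collinearity, and on the affine part of projective space (which by hypothesis contains all of $P$) restricts to a homeomorphism; hence $\pi(P)$ is again an orientable immersed polyhedral surface in Euclidean space with simple planar faces, with adjacent faces non-coplanar, with no interior angle equal to $\pi$ (consecutive vertices of a face stay non-collinear), and with interior vertices and non-boundary faces corresponding under $\pi$. It then remains to check conditions (i)--(iv) of Definition~\ref{def:smooth2} and the behaviour of discrete tangent planes, asymptotic directions, parabolic curves and points of contact.

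For an interior vertex $\vec{v}$, the collineation induces a projective isomorphism $\beta_{\vec{v}}$ from the pencil of planes through $\vec{v}$ onto the pencil of planes through $\pi(\vec{v})$ (planes transform linearly in homogeneous plane coordinates); identifying each pencil with $\mathbb{P}^2$ via normal directions, $\beta_{\vec{v}}$ is a projective transformation of $\mathbb{P}^2$, hence carries great circles to great circles. The crux of the vertex-level argument is the identity $g_{\pi(P)}(\pi(\vec{v})) = \beta_{\vec{v}}(g(\vec{v}))$: assuming condition (ii), $g(\vec{v})$ lies in an open hemisphere $H$, so its edges are the shorter (geodesic) arcs between consecutive $\vec{n}_f$, which stay inside $H$; applying the lift of $\beta_{\vec{v}}$ that sends $H$ to the hemisphere meeting the new normals $\vec{n}_{\pi(f)}$ yields a geodesic polygon inside that hemisphere with the correct vertices, and a geodesic arc inside a hemisphere is again the shorter arc, so this polygon is exactly $g_{\pi(P)}(\pi(\vec{v}))$. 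Since $\beta_{\vec{v}}$ is a homeomorphism of $\mathbb{P}^2$ preserving geodesics, it preserves ``embedded'', ``contained in an open hemisphere'' (for a connected curve this is equivalent to avoiding some great circle, which is preserved) and ``star-shaped with respect to an interior point'' (the kernel maps onto the kernel); moreover $K(\vec{v})\neq 0$ follows from star-shapedness, and by Theorem~\ref{th:shape}(i) the sign of $K(\vec{v})$ is $+$ exactly when $g(\vec{v})$ is geodesically convex, a property again preserved by $\beta_{\vec{v}}$. This establishes conditions (i)--(ii) and the projective invariance of $\operatorname{sign}K(\vec{v})$; and since a discrete tangent plane at $\vec{v}$ is by definition a plane whose normal lies in the kernel of $g(\vec{v})$, its image under $\pi$ is a discrete tangent plane at $\pi(\vec{v})$.

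Condition (iii) is then immediate from the invariance of $\operatorname{sign}K$ and of the combinatorics of $P$. For condition (iv), although $\pi$ does not preserve angles, it does preserve, at each vertex $\vec{v}$ of a face $f$, the two binary data that (via Lemma~\ref{lem:angle}) determine the contribution of $g(\vec{v})$ to the angle sum at $\vec{n}_f$: whether the interior angle of $f$ at $\vec{v}$ is $<\pi$ or $>\pi$, and whether $f$ is an inflection face at $\vec{v}$. Both are affine-local properties --- near $\vec{v}$ the map $\pi$ is a diffeomorphism taking lines to lines and planes to planes, hence preserving ``which side of the line of an edge in the plane of $f$'' and ``which side of the plane of $f$''. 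Applying Lemma~\ref{lem:angle} to $\pi(P)$, the interior angle of $g_{\pi(P)}(\pi(\vec{v}))$ at $\vec{n}_{\pi(f)}$ is $\varepsilon_{\vec{v}}\pi+\alpha^{\pi(P)}_{\vec{v}}$ with the same correction $\varepsilon_{\vec{v}}\in\{-1,0,1\}$ as for $f$; summing over $\vec{v}\sim f$ and using $\sum_{\vec{v}\sim\pi(f)}\alpha^{\pi(P)}_{\vec{v}}=(n-2)\pi=\sum_{\vec{v}\sim f}\alpha_{\vec{v}}$ (both are interior-angle sums of an $n$-gon), the total angle equals $(n-2)\pi+\pi\sum_{\vec{v}}\varepsilon_{\vec{v}}$, the same value as for $P$; so it is $2\pi$ (same-sign case) or $0$ (mixed case) precisely when it was. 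Star-shapedness of $f$ in the same-sign case is preserved because $\pi$ carries each segment from a kernel point $A$ to a point of $f$ --- a bounded subset of $P$, hence not sent to infinity --- onto the corresponding segment of $\pi(f)$; this also shows points of contact map to points of contact.

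It remains to treat the convex-hull condition in the mixed case and the remaining objects. I would rephrase ``$\operatorname{conv}\{\vec{v}_i\}$ contains no negatively curved vertex $\vec{w}$'' by separation: it holds iff some plane $\Pi$ strictly separates $\vec{w}$ from $\{\vec{v}_i\}$, i.e.\ every segment $[\vec{w},\vec{v}_i]$ crosses $\Pi$ transversally while no segment $[\vec{v}_i,\vec{v}_j]$ does; such crossing/non-crossing relations among finitely many segments are preserved by $\pi$ provided none of those segments nor $\Pi$ meets the critical plane $\pi^{-1}(\text{plane at infinity})$. \textbf{This is the step I expect to be the main obstacle:} unlike the other conditions, convexity of the hull of the positive vertices is \emph{not} an affine-local property, so one must argue that the relevant bounded configuration --- essentially $\operatorname{conv}(\text{vertices of }f\cup\{\vec{w}\})$ --- can be kept off the critical plane (automatic when $P$ stays uniformly away from that plane, and otherwise one exploits the restricted shapes of $f$ from Proposition~\ref{prop:face_mixed}, which confine the negative vertices to a controlled region); a convenient simplification is to factor $\pi$ as an affine map (which preserves convex hulls outright) followed by a single elementary projective map and dispose of the latter by hand. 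Finally, the discrete asymptotic directions (at a vertex: directions of the segments cut out of the star by a discrete tangent plane; in a face: the segments joining the point of contact to the corners where $f$ is an inflection face) and the discrete parabolic curve (a segment joining interior points of the two sign-change edges) are finite unions of line segments lying on $P$ inside faces, so $\pi$ takes each such segment to the corresponding segment of $\pi(P)$; combined with the already established invariance of discrete tangent planes, points of contact, the inflection-face property and $\operatorname{sign}K$, this identifies each of these objects with the corresponding object of $\pi(P)$, completing the plan.
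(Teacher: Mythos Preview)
Your plan is correct and, at the vertex level, genuinely different from the paper's argument. You pass directly to the induced projective map $\beta_{\vec v}$ on the pencil of planes through $\vec v$ (equivalently on $\mathbb{RP}^2$ via normal directions) and read off preservation of ``embedded'', ``contained in a hemisphere'', ``star-shaped'' and ``convex'' from the fact that $\beta_{\vec v}$ carries great circles to great circles; this immediately gives you the sign of $K$ and sends kernels to kernels, hence discrete tangent planes to discrete tangent planes. The paper instead linearises $\pi$ at $\vec v$ to an affine map $\rho$, joins $\rho$ to the identity by a path in $GL(3,\mathbb R)$, and argues by continuity that no self-intersection can appear along the path (three normals would have to become collinear, forcing two edges of the star to become parallel); preservation of the discrete tangent plane is then obtained not via the kernel but through the equivalent Dupin-indicatrix characterisation of Theorem~\ref{th:Dupin_negative}, since $\rho$ takes the band of planes parallel to $E_0$ to another such band and the discrete-hyperbola shape of the intersections cannot degenerate along the path. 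Your route is more conceptual and shorter; the paper's route has the side benefit that the asymptotic directions at $\vec v$ fall out of the Dupin argument directly. For condition~(iv) your use of Lemma~\ref{lem:angle} together with the invariance of ``$\alpha\lessgtr\pi$'' and of ``inflection face'' is exactly the paper's argument, only the paper phrases these invariances as the existence of certain line segments inside $f$, respectively of lines near $\vec v$ meeting $f$ and its two neighbours. For the convex-hull condition in the mixed case the paper does not use a separation argument: it simply argues by contradiction that if some $\pi(\vec w)$ lay in $\operatorname{conv}\{\pi(\vec v_i)\}$ then the segments $[\pi(\vec w),\pi(\vec v_i)]$ would lie in $\pi(f)$ and pulling back by $\pi^{-1}$ (finite on $\pi(f)$) would put $\vec w$ in $\operatorname{conv}\{\vec v_i\}$; this is terser than your outline, though your instinct that the step ``those segments lie in $\pi(f)$'' deserves justification is a fair one.
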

\begin{proof}
Since the faces incident to a vertex $\vec{v}$ of $P$ did not intersect in a neighborhood of $\vec{v}$, their images will not intersect in a neighborhood of $\pi({\vec{v}})$. In particular, $\pi(P)$ is immersed in three-dimensional Euclidean space.

Let us investigate how the Gauss image of the star of $\vec{v}$ changes. The normal vectors of incident faces are already defined by the edges emanating from $\vec{v}$. Thus, the change of the Gaussian image is encoded in the change of an infinitesimal neighborhood of the vertex. If we differentiate $\pi$ at a vertex $\vec{v}$ of $P$, we obtain an affine transformation $\rho$. For this, note that $\pi$ does not map points of $P$ to infinity.

$\rho$ can be seen as a concatenation of a bijective linear transformation and a translation. Since the translation does not affect the Gauss images, we can assume without loss of generality that $\rho$ is a linear transformation of non-vanishing determinant. Since the negative of the identity matrix defines a point reflection in the origin, which does not significantly change the surface besides reversing the orientation, we may assume that $\rho$ is orientation-preserving. Since the general linear group has exactly two connected components that can be distinguished by the determinant, there is a continuous curve $\gamma$ in this space connecting the identity with $\rho$, each point of $\gamma$ defining an affine transformation.

If we move along $\gamma$, also the image of the polyhedral surface as well as the new Gauss images will change continuously. In particular, if $g(\pi({\vec{v}}))$ has a self-intersection but $g({\vec{v}})$ has not, then somewhere along $\gamma$ there is an affine transformation that generates a Gauss image where three normals lie on a common great circle. The planes passing through the arcs in the Gauss image and the origin are orthogonal to the corresponding edge in the star of the vertex. If three normals lie on a common great circle, two edges possess a common orthogonal plane, i.e., they are collinear. But this cannot happen under a linear transformation of non-vanishing determinant. It follows that $g(\pi({\vec{v}}))$ is free of self-intersections. Furthermore, the sign of discrete Gaussian curvature does not change.

Since $P$ possesses a transverse plane at $\vec{v}$, there exist ${\vec{n}}' \in S^2$ such that a disk neighborhood of the star of $\vec{v}$ orthogonally projects to a plane orthogonal to ${\vec{n}}$ in a one-to-one way. We want to prove that $\rho({\vec{n}}')/|\rho({\vec{n}}')|$ does the same for $g(\pi({\vec{v}}))$.

The condition that a disk neighborhood of the star of $\vec{v}$ orthogonally projects to a plane orthogonal to ${\vec{n}}'$ in a one-to-one way is equivalent to the condition that such a neighborhood bijectively projects to any plane that is not parallel to ${\vec{n}}'$, where the direction of projection is determined by that vector. Equivalently, any line parallel to ${\vec{n}}'$ and close to $\vec{v}$ intersects the disk neighborhood in either exactly one interior point of a face or exactly one interior point of an edge or only in $\vec{v}$. The linear transformation $\rho$ preserves parallelity and maps a disk neighborhood of $\vec{v}$ to a disk neighborhood of $\rho({\vec{v}})$. It follows that the star of $\pi({\vec{v}})$ bijectively projects to any plane that is not parallel to $\rho({\vec{n}}')$. Due to the continuity argument of above, $\rho({\vec{n}}')/|\rho({\vec{n}}')|$ lies inside $g(\pi({\vec{v}}))$. In particular, the rescaled image $\rho({\vec{n}}')/|\rho({\vec{n}}')|$ is a discrete normal.

If the discrete Gaussian curvature is positive, the Gauss image is star-shaped by Proposition~\ref{prop:Dupin_positive}. If the discrete Gaussian curvature is negative, we can use Theorem~\ref{th:Dupin_negative} to equivalently describe star-shapedness by the condition that the intersection of a band of planes parallel to a given plane through the vertex intersect the star of the vertex in a discrete hyperbola. For the star of $\vec{v}$, such a plane is given by the tangent plane $E_0$. We want to argue that $\rho(E_0)$ defines a tangent plane through $\pi({\vec{v}})$.

The band of planes parallel to $E_0$ is mapped under $\rho$ to a band of planes parallel to $\rho(E_0)$. If we consider the curve $\gamma$ of linear transformations, then the intersections of the image planes with the star of the vertex will change continuously. A significant change in the shape of the intersection only happens if an edge in a polyline shrinks to a point or two adjacent edges become parallel. In the first case, the original face would not intersect the band of planes anymore, and in the second case, two faces would become coplanar. Both situations cannot occur for an affine transformation. Hence, the images of the discrete hyperbolas will still be discrete hyperbolas, meaning that $g(\pi({\vec{v}}))$ is star-shaped and that discrete tangent planes are mapped to discrete tangent planes. Also, discrete asymptotic directions at vertices are mapped to discrete asymptotic directions.

We have proven that the star of $\pi({\vec{v}})$ is smooth, so we are left with the situation around faces. We have already shown that the sign of discrete Gaussian curvature does not change, in particular, the number of edges of a face where the sign changes remains invariant. Clearly, star-shaped faces remain star-shaped: $f$ is star-shaped with respect to $A$ if and only if all line segments connecting $A$ with one of the vertices of $f$ lie completely in $f$. Since $\pi$ does not map any point of $f$ to infinity, $\pi(f)$ will be star-shaped with respect to $\pi(A)$. It follows that points of contact and discrete asymptotic directions inside faces are mapped to their counterparts in the projective image.

Next, we want to show that neither the property of an angle being greater or less than $\pi$ nor the property of a face being an inflection face for a vertex does change under the projective transformation $\pi$. For this, we will give different characterizations in terms of intersections.

In fact, an angle $\alpha_f$ of a face $f$ is less than $\pi$ if and only if there exists a line segment connecting the two edges inside $f$. The image of that line segment under $\pi$ will be again a line segment connecting the two image edges. Since $\pi$ does not map any point of $P$ to infinity, that line segment will be contained in $\pi(f)$.

A face $f$ is an inflection face for a vertex $\vec{v}\sim f$ if and only if there exist lines $l$ arbitrarily close to $\vec{v}$ that intersect $f$ and both its neighboring faces. The lines $\pi(l)$ will then intersect $\pi(f)$ and both its neighboring faces and are arbitrarily close to $\pi({\vec{v}})$.

In addition to the sign of discrete Gaussian curvature only the properties of a face $f$ having a angle greater or less than $\pi$ at $\vec{v}\sim f$ and being an inflection face or not were necessary to determine the interior angle of the Gauss image of the star of $\vec{v}$ at ${\vec{n}}_f$ by Lemma~\ref{lem:angle}. Since these properties do not change under the projective transformation $\pi$, it follows from our discussion in Section~\ref{sec:positive} that the sum of suitably oriented interior angles of Gauss images (which can be only a multiple of $2\pi$) does not change. In particular, if the sign of discrete Gaussian curvature is the same for all vertices of a face, then smoothness is preserved under $\pi$.

The only statement we are left to show is that in the case of different signs of discrete Gaussian curvature around $f$, the convex hull of all positively curved vertices again does not contain any vertex of negative discrete Gaussian curvature. If this was not the case, all line segments connecting a certain negatively curved vertex with all positively curved vertices in the image would lie inside the image face. Inverting $\pi$ would then yield to the contradiction that the original negatively curved vertex would have been contained in the convex hull. It is also easy to see that the discrete parabolic curve is mapped to a discrete parabolic curve of the image. So $\pi(P)$ is indeed a smooth polyhedral surface if $P$ was. 
\end{proof}

We conclude with an investigation of polyhedral surfaces $P^*$ which are projective duals to $P$. Numerical experiments show an interesting phenomenon, see Fig.~\ref{fig:dual}: A smooth polyhedral surface $P$ whose faces are star-shaped in an area of negative discrete Gaussian curvature has a Gauss image which looks like the dual polyhedral surface $P^*$ which again is a smooth negatively curved polyhedral surface with star-shaped faces. This observation can be made more precise:

\begin{figure}[htbp]
   \begin{center}
    \includegraphics[height=0.6\textwidth]{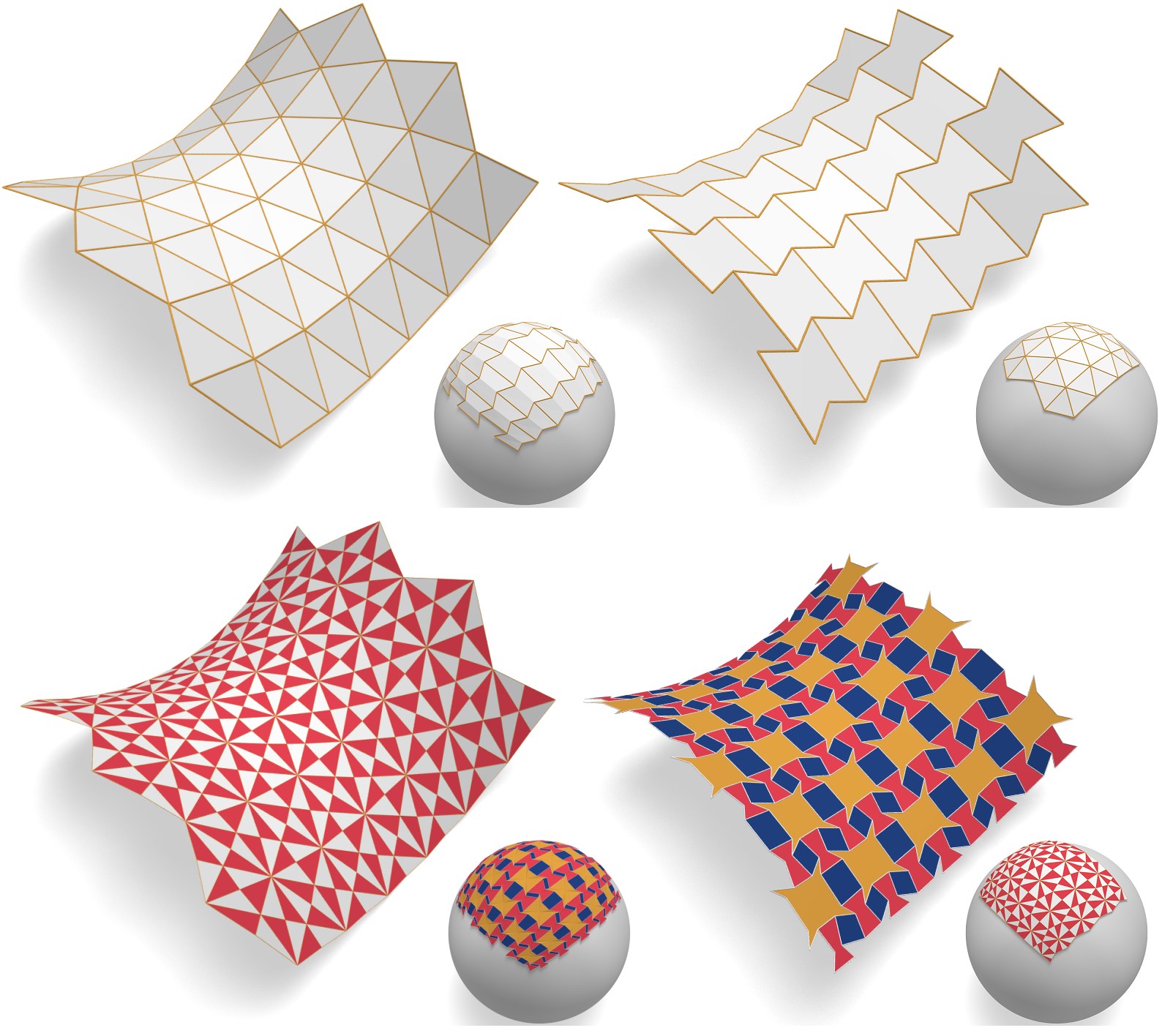}
   \caption{Dual pairs of negatively curved polyhedral surfaces and their Gauss images}
   \label{fig:dual}
	\end{center}
\end{figure}

\begin{theorem}\label{th:dual}
Let $P$ be a smooth polyhedral surface $P$ of either positive or negative discrete Gaussian curvature. If its Gauss image is contained in an open hemisphere, then it can be obtained by projecting a dual polyhedral surface $P^*$ to $S^2$. This dual surface is smooth, has the same sign of curvature as $P$, and the projection center agrees with the center $O$ of the Gaussian sphere $S^2$.
\end{theorem}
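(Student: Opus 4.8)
The plan is to realize $P^*$ as the image of $P$ under the classical projective polarity with respect to a suitably placed sphere, and then to translate each clause of Definition~\ref{def:smooth2} for $P^*$ back to a property of $P$ that we already have in hand. Since the Gauss image of $P$ lies in an open hemisphere, there is $\vec n_0\in S^2$ with $\langle\vec n_f,\vec n_0\rangle>0$ for every face $f$; placing the projection centre $O$ far enough in the direction $-\vec n_0$, every face plane of $P$ acquires a positive oriented distance $h_f:=\langle q_f-O,\vec n_f\rangle>0$, where $q_f$ is any vertex of $f$ (the value being independent of the choice), and simultaneously the star of every vertex projects injectively from $O$. Taking $O$ as origin, let $\sigma$ be the polarity with respect to the unit sphere $S^2$ centred at $O$: it is an incidence-reversing projective correlation with $\sigma^2=\mathrm{id}$, sending a plane $\{\langle y,\vec n\rangle=h\}$ with $h\neq 0$ to the point $\vec n/h$ and a point $\vec p\neq O$ to the plane $\{\langle y,\vec p\rangle=1\}$. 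I set $P^*:=\sigma(P)$: its vertices are $w_f:=\vec n_f/h_f$ (one per face $f$ of $P$), its faces $\phi_{\vec v}$ are the polygons cut out in the polar planes $\{\langle y,\vec v\rangle=1\}$ (one per vertex $\vec v$ of $P$), and its combinatorics is dual to that of $P$, so that $w_f\sim\phi_{\vec v}$ exactly when $\vec v$ is a vertex of $f$. Each $\phi_{\vec v}$ is planar by construction, since its vertices satisfy $\langle w_f,\vec v\rangle=h_f/h_f=1$.

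The next step is to identify the central projection $\kappa\colon y\mapsto(y-O)/|y-O|$, restricted to $P^*$, with the Gauss map of $P$, and symmetrically its restriction to $P$ with the Gauss map of $P^*$. Because $h_f>0$ one has $\kappa(w_f)=\vec n_f$; a segment missing $O$ projects onto the minor great-circle arc between its endpoints, and since $P$ is immersed $\vec n_f\neq\pm\vec n_{f'}$ for adjacent faces, so the edge $[w_f,w_{f'}]$ of $P^*$ projects onto the shorter arc from $\vec n_f$ to $\vec n_{f'}$. Hence, after fixing the orientation of $P^*$ so that the outward normal of $\phi_{\vec v}$ is $+\vec v/|\vec v|=\kappa(\vec v)$, one gets $\kappa(\phi_{\vec v})=g(\vec v)$, and $\kappa(P^*)$ is precisely the Gauss image of $P$, projected from the centre $O$ of $S^2$ as asserted. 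Dually, since $O$ lies at distance $1/|\vec v|>0$ from the plane of each $\phi_{\vec v}$, the same reasoning applied to $P^*$ yields $g_{P^*}(w_f)=\kappa(f)$ and $\kappa(P)=\bigcup_f\kappa(f)=$ Gauss image of $P^*$. Immersion and orientability of $P^*$ then follow: near $w_f$ the map $\kappa$ is a local diffeomorphism, the faces $\phi_{\vec v}$ around $w_f$ project to the Gauss images $g(\vec v)$ of the vertices $\vec v$ of $f$, and by Propositions~\ref{prop:face_positive} and~\ref{prop:face_negative} together with the remark following the latter these tile a disk neighbourhood of $\vec n_f$; orientability transfers through the homeomorphism $\sigma$.

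It then remains to check the four conditions of Definition~\ref{def:smooth2} for $P^*$. For a vertex $w_f$, Lemma~\ref{lem:area} identifies $K_{P^*}(w_f)$ with the algebraic area of $g_{P^*}(w_f)=\kappa(f)$; as $f$ is a simple planar polygon with $O$ off its plane, $\kappa(f)$ is a simple spherical polygon of nonzero algebraic area — the signed solid angle of $f$ at $O$ — contained in the open hemisphere bounded by the plane through $O$ parallel to $f$, and star-shaped because $f$ is star-shaped (which, for a face of a smooth $P$ all of whose vertices share the same curvature sign, is exactly what Definition~\ref{def:smooth2}(4) provides). This gives conditions (i) and (ii) for $P^*$, and the common nonzero sign of all $K_{P^*}(w_f)$ gives condition (iii). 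For condition (iv) I would use that $\langle\vec n_f,\vec v-O\rangle=h_f>0$ for every $f\sim\vec v$, so $\kappa(\vec v)$ is a pole of an open hemisphere containing $g(\vec v)$; by Proposition~\ref{prop:transverse} the star of $\vec v$ projects injectively along $\kappa(\vec v)$, hence from $O$ (after the initial choice of $O$), so the central images $\kappa(f)=g_{P^*}(w_f)$ with $f\sim\vec v$ tile a disk neighbourhood of $\kappa(\vec v)=\vec n_{\phi_{\vec v}}$ without overlap and their interior angles there sum to $2\pi$; finally $\phi_{\vec v}$ is star-shaped because $g(\vec v)=\kappa(\phi_{\vec v})$ is, and $\kappa$ restricts to a bijection $\phi_{\vec v}\to g(\vec v)$ carrying segments through a kernel point to minor arcs through its image.

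Finally one must check that the common sign of $K_{P^*}$ equals the sign of $K_P$, and that discrete tangent planes, discrete asymptotic directions, discrete parabolic curves and points of contact are carried to their counterparts by $\sigma$ — the latter following from the characterisations used in the proof of Theorem~\ref{th:projective} (intersection properties of line segments and lines with faces), which survive a correlation just as they do a collineation. I expect the main obstacle to be the orientation bookkeeping hidden in the curvature-sign statement: one must pin down compatible orientations on $P$ and $P^*$ and verify that $\sigma$ matches "counterclockwise around a vertex" on one surface with "counterclockwise around the dual face" on the other, so that the signs of the algebraic areas of the relevant spherical polygons — equivalently, of the discrete Gaussian curvatures — on $P$ and $P^*$ genuinely coincide rather than being opposite. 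Everything else is a careful but routine transcription of facts already established for $P$ through the correlation $\sigma$ and the projection $\kappa$.
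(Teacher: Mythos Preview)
Your approach is essentially the paper's: both construct $P^*$ via the polarity $\Pi$ with respect to $S^2$ centred at a point $O$ chosen far along $-\vec n_0$ so that no tangent plane of $P$ passes through it, then use that $\Pi$ interchanges face planes with dual vertices and identifies central projection of $P^*$ with the Gauss image of $P$ (and vice versa by applying $\Pi$ twice). Your verification of the smoothness conditions for $P^*$ is more explicit than the paper's, but the paper does supply the curvature-sign argument you flag as the main obstacle: dualization reverses or preserves orientation according to the sign of curvature, and since $\Pi^2=\mathrm{id}$ recovers the original orientation, the sign must agree on $P$ and $P^*$. Two minor points: the transport of discrete tangent planes, asymptotic directions, and points of contact under the correlation is the content of the subsequent Theorem~\ref{th:correlation}, not of this one; and discrete parabolic curves do not arise here since $P$ has uniform curvature sign.
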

\begin{proof}
First of all, we choose a point $O$ that should not lie in any plane that is orthogonal to a point contained in the Gauss image of $P$ and passing through the corresponding vertex -- in some sense, $O$ should not be contained in any plane that is tangent to $P$. This is possible since the Gauss image is contained in a hemisphere: Let ${\vec{n}}$ be the pole of such a hemisphere and consider a ray $r$ with direction $-{\vec{n}}$ that starts in an interior point of a face of $P$. Due to our assumption, no plane tangent to $P$ can be parallel to $r$. Hence, a point $O$ far away on $r$ will satisfy the condition we are looking for. We choose $O$ to be the origin of the coordinate system and place the center of $S^2$ there.

By definition, the tangent planes $G_f$ of $S^2$ at Gauss image points ${\vec{n}}_f$ are parallel to the planes $P_f$ of the faces $f$ of $P$. Let us now apply the polarity $\Pi$ with respect to $S^2$. The polyhedral surface $P$ is mapped to a dual polyhedral surface $P^*$. The vertices $\vec{v}^*_f$ of $P^*$ are the polar images of the face planes $P_f$ (as $P_f$ does not pass through $O$, $\vec{v}^*_f$ is not at infinity). Tangent planes $G_f$ of $S^2$ are mapped via $\Pi$ to their contact points ${\vec{n}}_f$. Parallel planes are mapped to points on a line through the center of $O$ of $S^2$. Thus, for each face $f$, points $\vec{v}^*_f$, ${\vec{n}}_f$, and $O$ are collinear. On the line passing through these points, $O$ lies either in between the two points or not. Due to our assumption that $O$ is not contained in any plane that is orthogonal to a normal vector in the Gauss image of $P$ and passing through the corresponding vertex it follows that the position of $O$ with respect to the points $\vec{v}^*_f$ and ${\vec{n}}_f$ is the same for all faces $f$ of $P$. Hence, the Gauss image of $P$ is generated by projecting the polar mesh $P^*$ from $O$ onto $S^2$.

Since the interior angles of the Gauss image of $P$ add up to $2\pi$ at ${\vec{n}}_f$, the plane orthogonal to ${\vec{n}}_f$ and passing through $\vec{v}^*_f$ serves as a transverse plane for the star of $\vec{v}^*_f$. The faces of $P^*$ are star-shaped because the Gauss image of $P$ has star-shaped faces.

When we apply $\Pi$ again, $\Pi:P^* \to P^{**}=P$, we see that the Gauss image of $P^*$ is obtained by projecting $P$ from $O$ onto $S^2$. Using that the planes orthogonal to the line from $O$ to $\vec{v}$ and passing through that vertex serve as transverse planes for the star of $\vec{v}$, the interior angles of the Gauss images of $P^*$ add up to $2\pi$ at ${\vec{n}}_{{\vec{v}}^*}$. Furthermore, the faces of the Gauss image of $P^*$ are star-shaped since the faces of $P$ are.

Since the Gauss images of the vertices of $P^*$ have non-vanishing area, the discrete Gaussian curvature of $P^*$ is nowhere vanishing. When we consider the dual surface, the orientation changes if the curvature is negative and stays the same if it is positive. The dual of the dual surface has again the original orientation, so the sign of curvature has to be the same for $P$ and $P^*$.
\end{proof}

\begin{proposition}\label{prop:inflection_dual}
Let $P$ be a smooth polyhedral surface such that there exists a finite point that does not lie in any plane that is orthogonal to a normal vector contained in the Gauss image of $P$ and passing through the corresponding vertex of $P$. Let $P^*$ be its dual as in Theorem~\ref{th:dual}.

Then, if $f$ is an inflection face at the vertex $\vec{v}\sim f$, then the face of $P^*$ dual to $\vec{v}$ is an inflection face at the vertex $\vec{v}^*_f$ dual to $f$ (and vice versa).
\end{proposition}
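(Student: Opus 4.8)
The plan is to express ``$f$ is an inflection face at $\vec v$'' through two auxiliary equivalences, to do the same on the dual side, and then read off the statement from a one-line Boolean identity. First I would invoke Theorem~\ref{th:dual}: it forces $P$ to have constant sign of discrete Gaussian curvature, produces $P^*$ as the polar image under the polarity $\Pi$ with respect to a sphere $S^2$ centred at the given point $O$, and yields $P^{**}=P$ together with the incidences $\vec v\leftrightarrow F$ (where $F$ denotes the face of $P^*$ dual to $\vec v$, lying in the plane $\Pi(\vec v)$) and $f\leftrightarrow\vec v^*_f$, with $\vec v^*_f$ a vertex of $F$. If the curvature sign is positive there is nothing to prove, since by Theorem~\ref{th:shape}~(i) no face of any vertex star is an inflection face, on either surface. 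So from now on $K<0$ everywhere.

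The first auxiliary equivalence is a ``corner criterion''. Inspecting the four subcases in the proof of Theorem~\ref{th:shape} (legitimate because $K(\vec v)<0$ and, by smoothness, $g(\vec v)$ has no self-intersections), the interior angle of $g(\vec v)$ at $\vec n_f$ is $<\pi$ exactly when either $f$ is an inflection face at $\vec v$ and $\alpha_f<\pi$, or $f$ is not an inflection face at $\vec v$ and $\alpha_f>\pi$; equivalently, writing $B$ for the statement ``$\alpha_f>\pi$'', the face $f$ is an inflection face at $\vec v$ iff exactly one of ``$\vec n_f$ is a corner of $g(\vec v)$'' and $B$ holds. The second auxiliary equivalence identifies $g(\vec v)$ with a central projection. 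Recalling from the proof of Theorem~\ref{th:dual} that each vertex $\vec v^*_g$ of $F$ lies on the line through $O$ and $\vec n_g$ (consistently on one side, for all $g\sim\vec v$), and that each edge $e^*$ of $F$ lies in the plane through $O$ orthogonal to the direction of $e$ --- the very plane carrying the corresponding great-circle arc of $g(\vec v)$ --- one sees that projecting $F$ from $O$ onto $S^2$ reproduces $g(\vec v)$, up to the antipodal map, which is irrelevant for angles. Since $O$ lies on neither $\Pi(\vec v)$ nor $P_f$, and since by smoothness $g(\vec v)$ and $g(\vec v^*_f)$ lie in open hemispheres and are free of self-intersections, this central projection is the restriction of a projective map that sends nothing to infinity in the region in question, hence it preserves the ``convex versus reflex'' status of polygon angles. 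Therefore ``$\vec n_f$ is a corner of $g(\vec v)$'' is equivalent to ``the interior angle $\alpha_F$ of $F$ at $\vec v^*_f$ is $<\pi$'', i.e.\ to $\neg A$, where $A$ stands for ``$\alpha_F>\pi$''.

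Next I would run the same two equivalences for $P^*$ in place of $P$, using $P^{**}=P$: the dual of the vertex $\vec v^*_f$ is the face $f$, its vertex corresponding to the face $F$ of the star is $\vec v$, the Gauss image $g(\vec v^*_f)$ is the central projection of $f$ from $O$, so ``$\vec n_F$ is a corner of $g(\vec v^*_f)$'' is equivalent to $\neg B$, and ``$F$ is an inflection face at $\vec v^*_f$'' is equivalent to ``exactly one of `$\vec n_F$ is a corner of $g(\vec v^*_f)$' and $A$ holds''. Substituting the corner criteria into the inflection criteria gives ``$f$ is an inflection face at $\vec v$'' $\iff(\neg A)\oplus B$ and ``$F$ is an inflection face at $\vec v^*_f$'' $\iff(\neg B)\oplus A$, and since $(\neg A)\oplus B=\neg(A\oplus B)=(\neg B)\oplus A$, the two inflection conditions coincide; this is the assertion, the ``vice versa'' being the symmetry $P^{**}=P$. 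The step I expect to be the main obstacle is the second auxiliary equivalence: one must verify carefully that the central projection from $O$ carries convex polygon angles to convex spherical angles --- this is exactly where the hypothesis on $O$ and the containment of the Gauss images in open hemispheres enter --- and line up orientations so that the projection of $F$ genuinely is $g(\vec v)$ and not a version of it wrapped around the sphere.
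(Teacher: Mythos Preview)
Your argument is correct and rests on the same two ingredients as the paper's proof: the angle formulae of Lemma~\ref{lem:angle} (packaged in Theorem~\ref{th:shape}) and the fact from Theorem~\ref{th:dual} that the face $F=\vec v^*$ centrally projects to $g(\vec v)$. The difference is organizational. The paper runs through the four cases $\alpha_f\lessgtr\pi$, inflection/non-inflection one at a time, reading off from Lemma~\ref{lem:angle} whether the dual angle $\alpha_{\vec v}^*$ is convex or reflex and then feeding this back into Lemma~\ref{lem:angle} on the dual side. You instead compress the case split into the single identity $I\iff C\oplus B$ and its dual $I^*\iff C^*\oplus A$, then close with $(\neg A)\oplus B=(\neg B)\oplus A$; this is tidier and makes the symmetry $P^{**}=P$ do the bookkeeping. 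Your second auxiliary equivalence---that central projection from $O$ preserves the convex/reflex dichotomy of polygon angles---is precisely the step the paper uses implicitly when it writes ``Lemma~\ref{lem:angle} then shows that the angle $\alpha_{\vec v}^*$ \ldots\ is less than or greater than $\pi$'' while simultaneously noting that $\alpha_{\vec v}^*$ does not equal the spherical angle at $\vec n_f$; you are right to flag it, and your justification (gnomonic projection from an open hemisphere to the plane of $F$ is a diffeomorphism carrying the interior of $g(\vec v)$ to the interior of $F$, hence preserving on which side of an edge the interior lies) is adequate and in fact more explicit than what the paper offers.
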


\begin{proof}
The key of proving this statement lies in Lemma~\ref{lem:angle}. If the angle $\alpha_f$ of $f$ at $\vec{v}$ is a reflex angle, then we are in the setting of negative discrete Gaussian curvature by Theorem~\ref{th:shape}. Due to Theorem~\ref{th:dual}, $P^*$ is also negatively curved. Lemma~\ref{lem:angle} then shows that the angle $\alpha_{\vec{v}}^*$ of the face dual to $\vec{v}^*$ at $\vec{v}^*_f$ is less than or greater than $\pi$ if $f$ is an inflection face or not, respectively. Note that the angle $\alpha_{\vec{v}}^*$ does not coincide in general with the corresponding spherical angle at ${\vec{n}}_f$.

In the first case, we have a reflex angle $\alpha_{\vec{v}}^*$ whose dual angle is also a reflex angle. By Lemma~\ref{lem:angle}, this means that the face dual to $\vec{v}$ is an inflection face at $\vec{v}^*_f$. In the second case, when $\alpha_{\vec{v}}^*$ is convex and its dual angle is not, it follows from Lemma~\ref{lem:angle} that the face dual to $\vec{v}$ is not an inflection face at $\vec{v}^*_f$.

Let us now assume that $\alpha_f<\pi$. If $f$ is an inflection face, then we are again in the setting of negative curvature. Using that the dual angle is also convex, we conclude that the face dual to $\vec{v}$ is an inflection face at $\vec{v}^*_f$.

If $f$ is not an inflection face and the discrete Gaussian curvature is negative or positive, then the dual angle is a reflex or a convex angle, respectively. Since the dual of the dual angle is convex, we use again Lemma~\ref{lem:angle} to deduce that the face dual to $\vec{v}$ is not an inflection face at $\vec{v}^*_f$.
\end{proof}

Note that we can use Theorem~\ref{th:dual} in combination with Theorem~\ref{th:projective} to show that the projective dual surface of a smooth polyhedral surface of either positive or negative discrete Gaussian curvature whose Gauss image is contained in an open hemisphere is smooth for more general correlations than the one we used in Theorem~\ref{th:dual}.

\begin{theorem}\label{th:correlation}
Let $P$ be a smooth polyhedral surface of either positive or negative discrete Gaussian curvature. We assume that there exists a finite point that does not lie in any plane that is orthogonal to a normal vector contained in the Gauss image of $P$ and passing through the corresponding vertex of $P$. Let $\Gamma$ be a projective correlation that maps no such plane to a point at infinity. 

Then, the dual surface $P^*=\Gamma(P)$ is a smooth polyhedral surface that has the same sign of curvature as $P$. If $E$ is a discrete tangent plane in $\vec{v}$, then $\Gamma(E)$ is a point of contact of the face $\vec{v}^*$. Conversely, $\Gamma(A)$ is a discrete tangent plane in the vertex $\vec{v}^*_f$ dual to the face $f$ of $P$ if $A$ is a point of contact in $f$. Furthermore, $\Gamma$ maps discrete asymptotic directions to discrete asymptotic directions.
\end{theorem}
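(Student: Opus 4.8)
\emph{Plan of proof.} The idea is to factor $\Gamma$ through the polarity used in Theorem~\ref{th:dual} and then invoke Theorems~\ref{th:dual} and~\ref{th:projective}. Let $O$ be the finite point supplied by the hypothesis and let $\Pi$ be the polarity with respect to a sphere $S$ centred at $O$. Every projective correlation is the composite of a fixed polarity with a projective collineation, so we may write $\Gamma=\pi\circ\Pi$ for a collineation $\pi$; the radius of $S$ is immaterial since changing it alters $\Pi(P)$ only by a homothety centred at $O$, which can be absorbed into $\pi$. Under this factorisation $\Pi$ sends the vertices of $P$ to the face planes of $Q:=\Pi(P)$ and the planes tangent to $P$ --- those through a vertex $\vec v$ orthogonal to a normal in $g(\vec v)$, among which is every face plane $P_f$ --- to the points of $Q$; in particular $P_f$ goes to the vertex $\vec v^*_f$ of $Q$ dual to $f$, and the family of tangent planes at a single vertex $\vec v$ is carried onto the whole face of $Q$ dual to $\vec v$.

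First I would treat the polarity factor. The proof of Theorem~\ref{th:dual} uses the hypothesis ``Gauss image contained in an open hemisphere'' only to produce a finite point lying on no tangent plane of $P$; everything afterwards --- the collinearity of $O$, $\vec v^*_f$ and $\vec n_f$, the consistent position of $O$ relative to these pairs, the transverse planes and the star-shaped faces of $Q$, and the non-vanishing and the sign of the discrete Gaussian curvature of $Q$ --- uses only the existence of such a point. Hence, taking this point to be $O$ from our hypothesis, $Q$ is a smooth polyhedral surface of the same sign of curvature as $P$. I would then read off the correspondence of discrete objects. Central projection from $O$ identifies the face of $Q$ dual to $\vec v$ with $g(\vec v)$; therefore a direction $\vec n$ in the interior of $g(\vec v)$ with respect to which $g(\vec v)$ is star-shaped corresponds to an interior point of that dual face with respect to which it is star-shaped, i.e.\ to a point of contact, and the discrete tangent plane $E=\vec v+\vec n^{\perp}$ polarises precisely to that point (as $E$ passes through $\vec v$, its polar lies on the plane $\Pi(\vec v)$ carrying the dual face). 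Dually, a point of contact $A$ of a face $f$ polarises to a discrete tangent plane at $\vec v^*_f$. For asymptotic directions I would use Proposition~\ref{prop:inflection_dual}: $f$ is an inflection face at $\vec v$ exactly when the face of $Q$ dual to $\vec v$ is an inflection face at $\vec v^*_f$, so the segments from a point of contact to the inflection vertices --- which define the discrete asymptotic directions on $P$ and on $Q$ by Section~\ref{sec:negative} and Theorem~\ref{th:Dupin_negative} --- are interchanged by $\Pi$.

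Next I would treat the collineation $\pi$. To apply Theorem~\ref{th:projective} to $\pi$ acting on $Q$, I must check that $\pi$ sends no point of $Q$ to infinity. Since $\Pi$ carries the family of tangent planes of $P$ onto $Q$, as $E$ ranges over these planes $\Gamma(E)=\pi(\Pi(E))$ ranges over $\pi(Q)$; the hypothesis that $\Gamma$ maps no such plane to a point at infinity thus says exactly that $\pi(Q)$ has no point at infinity, i.e.\ that $\pi$ sends no point of $Q$ to infinity. Theorem~\ref{th:projective} then yields that $P^*=\pi(Q)=\Gamma(P)$ is a smooth polyhedral surface of the same sign of curvature as $Q$, hence as $P$, and that $\pi$ carries the discrete tangent planes, points of contact and discrete asymptotic directions of $Q$ to those of $P^*$. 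Composing the two steps, $\Gamma=\pi\circ\Pi$ maps a discrete tangent plane at $\vec v$ to a point of contact of the face $\vec v^*$, a point of contact of a face $f$ to a discrete tangent plane at $\vec v^*_f$, and discrete asymptotic directions to discrete asymptotic directions, which is the assertion.

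The main obstacle is the middle step. Theorem~\ref{th:dual} as stated asserts only smoothness, the sign of curvature and the projection picture, so the correspondences between discrete tangent planes and points of contact and between asymptotic directions on the two sides must be extracted from its proof together with Theorem~\ref{th:Dupin_negative}, Proposition~\ref{prop:inflection_dual} and the discussion of Section~\ref{sec:negative}, while one simultaneously checks that that proof goes through with the hemisphere hypothesis replaced by the bare existence of $O$. A further point needing care is the bookkeeping that $\Pi$ maps the family of tangent planes of $P$ onto all of $Q$ and not just onto its vertices, since it is precisely this that turns the hypothesis on $\Gamma$ into the hypothesis required by Theorem~\ref{th:projective}.
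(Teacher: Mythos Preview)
Your proposal follows essentially the same route as the paper: factor $\Gamma=\pi\circ\Pi$ with $\Pi$ the polarity of Theorem~\ref{th:dual}, apply Theorem~\ref{th:dual} to get smoothness of $Q=\Pi(P)$, then apply Theorem~\ref{th:projective} to $\pi$; you are in fact more explicit than the paper about why the hemisphere hypothesis in Theorem~\ref{th:dual} may be replaced by the given point $O$ and about why the hypothesis on $\Gamma$ translates into $\pi$ sending no point of $Q$ to infinity.

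The one place your sketch is thinner than the paper's proof is the asymptotic-direction correspondence under $\Pi$. You invoke Proposition~\ref{prop:inflection_dual} and say the segments from a point of contact to the inflection vertices are interchanged, but this covers only the generic situations. In the degenerate case---a vertex star with exactly two inflection faces and one non-inflection face $f'$ with reflex angle (second case of Theorem~\ref{th:shape}(iii)), dual to case~(iii) of Proposition~\ref{prop:face_negative} where a pseudo-triangle has only two inflection corners---two of the discrete asymptotic directions lie on a single line through the non-inflection face $f'$, and on the dual side the segment to the corresponding corner $\vec v^*_{f'}$ counts as two asymptotic directions even though $\vec v^*$ is \emph{not} an inflection face there. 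The paper handles this case separately, checking via Theorem~\ref{th:shape}(iii) that $\vec v^*_{f'}$ is indeed a corner of $\vec v^*$ so that the doubled segment on one side matches the doubled segment on the other. You should add this case to complete the argument.
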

\begin{proof}
The assumption that $\Gamma$ does not map any plane tangent to $P$ to infinity guarantees that the dual surface $P^*$ is indeed a polyhedral surface that does not contain any point at infinity. Let $\Pi$ be the polarity that we constructed in Theorem~\ref{th:dual} and let $P_{\Pi}^*$ be the corresponding dual surface. $\Pi$ and $\Gamma$ then define a projective transformation $\pi$ that maps $P_{\Pi}^*$ to $P^*$.

Since $P_{\Pi}^*$ is a smooth polyhedral surface with the same sign of discrete Gaussian curvature as $P$ by Theorem~\ref{th:dual}, we can apply Theorem~\ref{th:projective} to obtain smoothness of $P^*$.

By Theorem~\ref{th:projective}, discrete tangent planes, points of contact, and discrete asymptotic directions are mapped to their counterparts. In particular, it is sufficient to show the duality statements for the surfaces $P$ and $P_{\Pi}^*$. In what follows, let $P^*=P_{\Pi}^*$

If $E$ is a discrete tangent plane in the vertex $\vec{v}$ of $P$, then the Gauss image $g({\vec{v}})$ is star-shaped with respect to the normal ${\vec{n}}\in g({\vec{v}})$ of $E$. It follows from our considerations in the proof of Theorem~\ref{th:projective} that the face $\vec{v}^*$ is star-shaped with respect to the intersection of the face with the line passing through ${\vec{n}}$ and the center of projection, so $\Pi(E)$ is indeed a point of contact.

Conversely, if $A$ is a point of contact in the face $f$ of $P$, $f$ is star-shaped with respect to $A$. If we apply the polarity $\Pi$, this means that the Gauss image of the star of the dual vertex $\vec{v}^*_f$ is star-shaped with respect to the normal of $\Pi(A)$ that lies inside the Gauss image. In particular, $\Pi(A)$ is a discrete tangent plane.

It remains to consider the discrete asymptotic directions, so we are in the case that both $P$ and $P_{\Pi}^*$ are negatively curved. Let $\vec{v}$ be an interior vertex of $P$. Then, we defined the discrete asymptotic directions in Section~\ref{sec:asymptotes} as the four directions defined by the four line segments in the discrete tangent plane with a disk neighborhood of $\vec{v}$.

In the case that the vertex star contains four inflection faces, these four line segments are exactly defined by the intersections of the discrete tangent plane with these four inflection faces. Under the polarity $\Pi$, the lines are mapped to the four lines in $\vec{v}^*$ each connecting a point of contact with one of the four vertices of $f$ that are dual to the four inflection faces in the star of $\vec{v}$. By Proposition~\ref{prop:inflection_dual}, $\vec{v}^*$ is an inflection face at these four vertices. It then follows from our discussion in Section~\ref{sec:negative} that the projective duals of the discrete asymptotic directions are again discrete asymptotic directions.

In the case that the vertex star contains only two inflection faces, two discrete asymptotic directions are the intersections of the discrete tangent plane with the two inflection faces and the other two asymptotic directions are collinear and are given by the intersection with the face $f'$ that has a reflex angle. Similar to the previous paragraph, the first two discrete asymptotic directions correspond to the lines connecting a point of contact with the two corners of $\vec{v}^*$ where the face is an inflection face. The latter two discrete asymptotic directions are both mapped to a line connecting the vertex $\vec{v}^*_{f'}$ dual to $f'$ with the point of contact. Since $f'$ is not an inflection face, the interior angle of the Gauss image of the star of $\vec{v}$ at ${\vec{n}}_{f'}$ is less than $\pi$ by Theorem~\ref{th:shape}~(iii). Thus, $\vec{v}^*_{f'}$ is a corner of  $\vec{v}^*$. Since we defined the line segment connecting the point of contact with that corner as two discrete asymptotic directions, we have shown that the discrete asymptotic directions in $\vec{v}$ correspond to the discrete asymptotic directions in $\vec{v}^*$.

Using the same ideas as above, it also follows that the discrete asymptotic directions in a face $f$ of $P$ correspond to the discrete asymptotic directions in the dual vertex $\vec{v}_f^*$.
\end{proof}


\section{Future research}\label{sec:future}

In our paper we did not address questions of convergence and approximation:
\begin{itemize}
\item Given a sequence of smooth polyhedral surfaces converging to a smooth surface, do the discrete normal vectors, discrete tangent planes, discrete asymptotic directions, and the discrete parabolic curves converge to their smooth counterparts?
\item Given a smooth surface $S$ and $\varepsilon>0$, is it possible to find a smooth polyhedral surface $P$ within a $\varepsilon$-neighborhood of $S$? Can $P$ be chosen to be a simplicial surface? As $\varepsilon \to 0$, do some shapes of vertex stars and faces shown in Fig.~\ref{fig:negative_face} not occur anymore?
\item In the previous papers \cite{JGWP16,JTVWP15}, only the regularization of a given mesh was investigated. What is missing is an algorithm which finds a good initial mesh to start with. As was shown in these papers, the combinatorics of the polyhedral surface and the initial realization of it may prevent a successful optimization. We expect that a careful comparison of smooth and discrete asymptotic directions will be essential for finding smooth polyhedral approximations of negatively curved surfaces.
\end{itemize}

\vspace{-0.5cm}
\section*{Acknowledgments}
The authors are grateful to Thomas Banchoff for fruitful discussions concerning the Gauss image of a vertex star and to G\"unter Rote for pointing out the connection to \cite{ORSSSW04}. This research was initiated during the first author's stay at the Erwin Schr\"odinger International Institute for Mathematical Physics in Vienna and continued during his stays at the Institut des Hautes \'Etudes Scientifiques in Bures-sur-Yvette and the Max Planck Institute for Mathematics in Bonn. The first author thanks the institutes for their hospitality and the European Post-Doctoral Institute for Mathematical Sciences for the opportunity to visit the afore mentioned research institutes. The first and last author are grateful for support by the DFG Collaborative Research Center TRR 109 ``Discretization in Geometry and Dynamics'' and corresponding FWF grants I 706-N26 and I 2978-N35. 

\bibliographystyle{plain}
\bibliography{Polyhedral_smooth}

\end{document}